\newcommand{\A}{\mathcal{A}}
\newcommand{\G}{\mathcal{G}}
\newcommand{\J}{\mathcal{J}}
\newcommand{\AG}{\mathcal{AG}}
\newcommand{\U}{\mathcal{U}}
\newcommand{\V}{\mathcal{V}}
\newcommand{\T}{\mathcal{T}}
\renewcommand{\H}{\mathcal{H}}
\renewcommand{\ll}{l}
\newcommand{\rr}{r}
\newcommand{\Ll}{\mathcal{L}}
\newcommand{\Ss}{\textbf{s}}
\newcommand{\Tt}{\textbf{t}}
\newcommand{\tto}{\rightrightarrows}
\newcommand{\M}{\mathfrak{M}}
\newcommand{\X}{\mathfrak{X}}
\renewcommand{\to}{\rightarrow}
\newcommand{\prf}[1]{\begin{proof}#1\end{proof}}
\renewcommand{\geq}{\geqslant}
\newcommand{\ol}{\overline}
\newcommand{\be}{\begin{equation}}
\newcommand{\ee}{\end{equation}}
\newcommand{\ben}{\begin{enumerate}}
\newcommand{\een}{\end{enumerate}}
\newcommand{\bex}{\begin{example}}
\newcommand{\eex}{\begin{flushright}$\diamondsuit$\end{flushright}\end{example}}
\newtheorem{thm}{Theorem}[section]
\newtheorem{cor}[thm]{Corrolary}
\newtheorem{lem}[thm]{Lemma}
\newtheorem{prop}[thm]{Proposition}
\theoremstyle{definition}
\newtheorem{example}{Example}
\numberwithin{example}{section}
\newtheorem{rem}[thm]{Remark}
\numberwithin{equation}{section}
\renewcommand{\emph}[1]{{\bf #1}}
\begin{document}
\title{Fibre-wise linear Poisson structures related to $W^*$-algebras}
\author{Anatol Odzijewicz,\ Grzegorz Jakimowicz,\ Aneta Sli\.{z}ewska \\Institute of Mathematics\\
University in Bia{\l}ystok
\\ Cio{\l}kowskiego 1M, 15-245 Bia{\l}ystok, Poland}

\maketitle
\tableofcontents
\vspace{1cm}
\begin{abstract}
In this paper we investigate fiber-wise linear complex Banach sub-Poisson structures defined canonically  by the structure of a $W^*$-algebra $\M$. In particular we show that these structures are arranged  in the short exact sequence of complex Banach sub-Poisson $\mathcal{VB}$-groupoids with the groupoid  $\G(\M)\tto \Ll(\M)$ of partially invertible elements of $\M$ as the side groupoid.

\end{abstract}
\section*{Introduction}
Poisson geometry is of fundamental importance for description of properties of finite as well as infinite dimensional classical Hamiltonian systems, \cite{chern,zung,MR}.  Discovery of the symplectic groupoid  over a Poisson manifold, \cite{kar,wei3,zak1,zak2}, which generalize the cotangent groupoid $T^*G\tto \mathfrak{g}^*$ over the Lie-Poisson space $\mathfrak{g}^*$, where $\mathfrak{g}^*$ is the dual of the Lie algebra $\mathfrak{g}$ of a Lie group $G$, implemented the Lie groupoid theory to Poisson geometry. On the other hand the natural correspondence between fibre-wise Poisson structure and Lie algebroids allows us to consider the Lie algebroid theory as a part of Poisson geometry. From the above and from the fact that Lie algebroids are the infinitesimal  version of Lie groupoids, \cite{zung, mac}, one can incorporate the theory of them to the geometric methods of classical mechanics.

No less crucial then Poisson geometry for description of the classical physical systems is the theory of operator algebras, especially the theory of $W^*$-algebras, for the description of quantum physical systems. By definition  a $W^*$-algebra (von Neumann algebra) is a $C^*$-algebra $\M$ which has a Banach predual space $\M_*$, i.e. $\M=(\M_*)^*$, see \cite{sakai} for the details. This property guarantees  that the complete lattice $\Ll(\M)$ of orthogonal projections  from $\M$ has  plenty of elements and, thus  allows one to interpret the $W^*$-algebras theory as non-commutative probability theory and leads to the von Neumann theory of quantum measurement, \cite{jauch}. In consequence, the propositional calculus of quantum mechanics, called quantum logic, is based on the lattice structure of $\Ll(\M)$. The quantum observables are defined as $\sigma$-homomorphisms from the lattice $\mathcal{B}(\mathbb{R})$ of the Borel sets on the real line  into $\Ll(\M)$, i.e. they are $\Ll(\M)$-valued spectral measures. The states of the quantum system which corresponds to $\M$ are normalized positive elements of $\M_*$. The case when  the $W^*$-algebra $\M$ is the algebra of bounded linear operators $L^\infty(\H)$ on the complex separable Hilbert space $\H$ implements a standard model of quantum mechanics.

One of the most intriguing  problems of mathematical physics is to describe the passage from the classical Hamiltonian systems to the quantum ones, known as a quantization procedure. Though this question will not be touched upon in the present paper we show, however, that the $W^*$-algebra structure defines in a natural way a family of  complex (real) fibre-wise linear Banach sub-Poisson structures on the complex Banach vector bundles over $\Ll(\M)$ and over other Banach manifolds related to $\M$. As an example of such type of structure one can take the Banach Lie-Poisson structure on $\M_*$, which in the case $\M=L^\infty(\H)$ allows to consider Liouville-von Neumann equation as a Hamiltonian equation on $L^1(\H)\cong L^\infty(\H)_*$, see \cite{OR}.

Independently from this physical application the theory of $W^*$-algebras is one of the crucial topics of contemporary mathematics which interconnects  analysis with algebra, \cite{connes,Takes}.

It is rather unexpected that to the category of $W^*$-algebras corresponds in a functorial  way a category of Banach-Lie groupoids and since of that the Banach-Lie algebroids. Namely in \cite{OS}, the complex Banach-Lie groupoid $\G(\M)\tto\Ll(\M)$ of partially invertible elements of $\M$ was introduced and investigated. The Banach-Lie algebroids corresponding to these groupoids are described in detail in \cite{OJS}.
Here we will continue the investigation of the mentioned structures as well as we will study fibre-wise linear sub-Poisson structures which are related to them in a canonical way. Through out the paper we use the noncommutative coordinates for the description of the structures under investigation. The calculus in these coordinates is based on the combining  the algebra structure of $\M$ with the groupoid structure of $\G(\M)\tto \Ll(\M)$.

The structure of this paper is as follows. In Section \ref{sec:grou}, following \cite{OS}, 
 we briefly  discuss   the structure of the complex Banach-Lie groupoid  $\G(\M)\tto \Ll(\M)$ and show  that it splits into the transitive Banach-Lie subgroupoids $\G_{p_0}(\M)\tto \Ll_{p_0}(\M)$, $p_0\in \Ll(\M)$. These subgroupoids are closed-open complex Banach submanifolds of $\G(\M)\tto \Ll(\M)$ isomorphic to the corresponding gauge groupoids, see Proposition \ref{prop:11} and Proposition \ref{prop:14}.  Additionally to \cite{OS}, we prove that $\G(\M)\tto \Ll(\M)$ is a Hausdorf  topological groupoid with respect to the topology underlying its complex Banach manifold structure, see Proposition \ref{prop:Hausd}. In Proposition \ref{prop:central} we characterize the set of central projections of $\M$ in terms of this underlying topology.

The tangent prolongation groupoid $T\G_{p_0}(\M)\tto T\Ll_{p_0}(\M)$ of the groupoid $\G_{p_0}(\M)\tto \Ll_{p_0}(\M)$ is studied in Section \ref{sec:tangent}. The main results of this section are presented in diagrams (\ref{Atiyah11}), (\ref{Atiyah211}) and (\ref{duzy}).

The Banach-Lie algebroid $\A\G(\M)\to \Ll(\M)$ and the Atiyah sequence (\ref{Atiyahalgebr}) of the groupoid $\G(\M)\tto \Ll(\M)$ are described in Section \ref{sec:Atiyah}, see Proposition \ref{rem:31} and Proposition \ref{prop:32}. In particular we present the explicit "coordinate" formula for the algebroid Lie bracket and the algebroid anchor map, see Proposition \ref{prop:33} and Proposition \ref{prop:34}.

Generalizing  the results of \cite{MOS} to Banach sub Poisson case in the last two sections of this paper we investigete the fibre-wise linear Poisson structures related to a $W^*$-algebra. So, in Section \ref{sec:preAtiyah} we show that the short exact sequence (\ref{Atiyahalgebrdual}) predual to the Atiyah sequence (\ref{Atiyahalgebr}) is a short exact sequence of the fibre-wise linear complex Banach sub Poisson manifolds. The description of their structure, including the structure of foliation on the symplective leaves, is presented in Proposition \ref{prop:Palg}, Theorem \ref{prop:43} and Theorem \ref{prop:44}. The exact sequence (\ref{Atiyapredual}) of the complex Banach sub Poisson $\mathcal{VB}$-groupoids with $\G_{p_0}(\M)\tto \Ll_{p_0}(\M)$, $p_0\in \Ll(\M)$, as the side groupoid is investigated in Section \ref{sec:Poisson}. The main result is given in Theorem \ref{thm:54}.

In Section 6 we shortly mention some questions which naturally arise in the investigated theory and which will be the subject of subsequel papers.

In the Appendix we collect some basic notions concerning $\mathcal{VB}$-groupoids theory.


\section{Groupoid $\G(\M)\tto \Ll(\M)$ of partially invertible elements of $W^*$-algebra}\label{sec:grou}
According to \cite{OS} we consider the subset $\G(\M)$ of a $W^*$-algebra $\M$  which consists of such elements $x\in\M$ 
 for which
 $|x|=(x^*x)^{\frac{1}{2}}$ is an invertible element of the $W^*$-subalgebra
$p\M p\subset \M$, where the orthogonal projection $p$ is the support of $|x|$. We have natural
maps $\ll:\G(\M)\to \Ll(\M)$ and $\rr:\G(\M)\to \Ll(\M)$  of $\G(\M)$ on the lattice
$\Ll(\M)$ of orthogonal projections of the $W^*$-algebra $\M$, which are defined as the left and right support of $x\in \G(\M)$, respectively. The set $\G(\M)$ posseses a structure of the groupoid where the target $\Tt:\G(\M)\to \Ll(\M)$ and source $\Ss:\G(\M)\to \Ll(\M)$ maps are  given by  $\ll$ and $\rr$, respectively. The partial multiplication of $x,y\in \G(\M)$ is the
algebra product in $\M$ restricted to such pairs $(x,y)\in \G(\M)\times \G(\M) $ for which $\Tt(y)=\Ss(x)$. The identity section $\mathbf{1}:\Ll(\M)\to \G(\M)$ is defined as  the
 inclusion map and the inversion $\iota:\G(\M)\to \G(\M)$ is defined by 
 \be \iota(x)=x^{-1}:=|x|^{-1}u^*,\ee where the partial isometry  $u$  and $|x|$ are uniquely defined by the polar decomposition $x=u|x|$ of  $x\in \M$, if one assumes that $u^*u$ is equal to the support of $|x|$, \cite{sakai}.

In general the groupoid $\G(\M)\tto \Ll(\M)$ is not a topological groupoid with respect to any natural topology on $\M$, see \cite{OS}. However, the  complex Banach manifold structures, consistent with the groupoid structure of $\G(\M)\tto \Ll(\M)$,  could be defined on $\G(\M)$ and   on $\Ll(\M)$.  Therefore, following \cite{OS} for any $p\in \Ll(\M)$  we define  the subset $\Pi_p\subset\Ll(\M)$ of orthogonal projections $q\in\Ll(\M)$ for  which  the
Banach splitting \be\label{split}\M=q\M\oplus (1-p)\M\ee of  $\M$ is valid. Using  (\ref{split}) we decompose  
 \be\label{rozklad} p=x_p-y_p\ee

the projection  $p\in\Ll(\M)$, where $ x_p\in\  q\M p$ and $y_p\in (1-p)\M p$, and in this way  define a bijection 
\be\label{varphi}\Pi_p\ \ni\ q\mapsto y_p=:\varphi_p(q)=(pq)^{-1}-p \ \in\ (1-p)\M p,\ee
and the local section
\be\label{sigma}\Pi_p\ \ni\ q\mapsto x_p=:\sigma_p(q)= (pq)^{-1}\in\  \Tt^{-1}(\Pi_p)\ee
of the target map $\Tt:\G(\M)\to \Ll(\M)$. Note here that $pq\in \G_q^p(\M):=\Tt^{-1}(p)\cap\Ss^{-1}(q)\subset p\M q$.

In \cite{OS} it is shown that the atlas consisting of charts $(\Pi_p, \ \varphi_p)$, where $p\in  \Ll(\M)$,
defines a complex Banach manifold structure on $\Ll(\M)$. The transition maps $\varphi_{p'}\circ \varphi_p^{-1}:\varphi_{p'}(\Pi_{p'}\cap\Pi_p)\to \varphi_p(\Pi_{p'}\cap\Pi_p)$ for the charts (\ref{varphi}) are the following:
\be\label{trans} y_{p'}=(\varphi_{p'}\circ\varphi^{-1}_{p})(y_p)=(b+dy_p)(a+cy_p)^{-1},\ee
where $a=p'p$, $\ b=(1-p')p$, $\ c=p'(1-p)$ and $\ d=(1-p')(1-p)$.

The complex Banach manifold structure on $\G(\M)$ is defined by the atlas which consists of charts:
$$ \Omega_{p\tilde{p}}:=\Tt^{-1}(\Pi_{p})\cap \Ss^{-1}(\Pi_{\tilde{p}})\not=\emptyset\qquad\qquad$$ 
 \be\label{psippmap}\psi_{p\tilde{p}}:\Omega_{p\tilde{p}}\to (1-p)\M p\oplus p\M \tilde{p}\oplus(1-\tilde{p})\M\tilde{p},\ee 
where $( p,\tilde p)\in \Ll(\M)\times \Ll(\M)$ and
 \be\label{psipp}\psi_{p\tilde{p}}(x):=\left(\varphi_{p}(\Tt(x)),(\sigma_{p}(\Tt(x)))^{-1}x \sigma_{\tilde{p}}(\Ss(x)),
\varphi_{\tilde{p}}(\Ss(x)) \right)=:(y_p,z_{ p \tilde p},\tilde y_{\tilde{p}}).\ee  
We note that $z_{p\tilde{p}}\in \G^p_{\tilde{p}}(\M)$ and $\G^p_{\tilde{p}}(\M)$ is an open subset of $p\M\tilde{p}$.
The transition  maps $\psi_{p'\tilde{p'}}\circ\psi_{p\tilde{p}}^{-1}:\psi_{p\tilde{p}}(\Omega_{p'\tilde{p'}}\cap\Omega_{p\tilde{p}})\to
 \psi_{p'\tilde{p'}}(\Omega_{p'\tilde{p'}}\cap\Omega_{p\tilde{p}})$
 between the above type  charts are given by
\be\label{atlas2}\begin{array}{l} y_{p'}=(b+dy_p)(a+cy_p)^{-1},\\
 z_{p'\tilde p'}=(a+cy_p)z_{p\tilde p}(\tilde{a}+\tilde{c}\tilde{y}_{\tilde p})^{-1}\\
 \tilde{y}_{\tilde p'}=(\tilde{b}+\tilde {d}\tilde{y}_{\tilde p})(\tilde{a}+\tilde{c}\tilde{y}_{\tilde p})^{-1},\end{array}\ee
where
 $$(y_{p'},z_{p'\tilde p'},\tilde{y}_{\tilde p'})=(\psi_{p'\tilde{p'}}\circ\psi_{p\tilde{p}}^{-1})(y_p,z_{p\tilde{p}},\tilde{y}_{\tilde p})$$
and $\tilde a=\tilde p'\tilde p$, $\ \tilde b=(1-\tilde p')\tilde p$, $\ \tilde c=\tilde p'(1-\tilde p)$ and $\ \tilde d=(1-\tilde p')(1-\tilde p)$. For more details we refer to \cite{OS}.

The above complex Banach manifold structures on $\Ll(\M)$ and on $\G(\M)$ define the corresponding underlying topological structures. Recall that the base of the underlying topology of a manifold is given by the domains of charts of the maximal atlas, see \cite{bou}, \S 5.1.
\begin{prop}\label{prop:Hausd}
The complex Banach groupoid $\G(\M)\tto \Ll(\M)$ endowed with the underlying topology is a Hausdorff topological groupoid.
\end{prop}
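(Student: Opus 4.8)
The plan is to verify the two ingredients of the statement separately: that the underlying spaces $\G(\M)$ and $\Ll(\M)$ are Hausdorff, and that the five structure maps $\Tt,\Ss,\mathbf 1,\iota$ and the partial multiplication are continuous for the manifold topologies. For Hausdorffness I would not try to separate points directly (an atlas alone never forces this, as the ``line with two origins'' shows); instead I would exhibit a continuous injection of each space into a genuinely Hausdorff space. The natural target is $\M$ itself with its norm topology: since a continuous injection into a Hausdorff space has Hausdorff domain, it suffices to prove that the set-theoretic inclusions $\Ll(\M)\hookrightarrow\M$ and $\G(\M)\hookrightarrow\M$ are continuous, and continuity being local this reduces to checking that $\varphi_p^{-1}$ and $\psi_{p\tilde p}^{-1}$, read as maps into $(\M,\|\cdot\|)$, are norm-continuous on each chart image.

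For $\Ll(\M)$ the crux is an explicit, manifestly continuous closed form for $\varphi_p^{-1}$. From $\sigma_p(q)=(pq)^{-1}=p+\varphi_p(q)$ one has $\Tt(\sigma_p(q))=q$, so $q$ is the range projection (left support) of $w:=p+y_p$. A short computation using $py_p=0$ gives $w^*w=p+y_p^*y_p$, which is invertible in $p\M p$ because $w^*w\geq p$, whence
\[ \varphi_p^{-1}(y_p)=q=w(w^*w)^{-1}w^*=(p+y_p)(p+y_p^*y_p)^{-1}(p+y_p^*). \]
The right-hand side is a composition of norm-continuous operations (multiplication, involution, and inversion of invertibles in the unital Banach algebra $p\M p$), so $\varphi_p^{-1}$ is norm-continuous and $\Ll(\M)$ is Hausdorff. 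The same mechanism handles $\G(\M)$: inverting the definition of $\psi_{p\tilde p}$ gives $x=\sigma_p(\Tt(x))\,z_{p\tilde p}\,\sigma_{\tilde p}(\Ss(x))^{-1}$, and using the groupoid-inverse formula $\sigma_{\tilde p}(\Ss(x))^{-1}=(w^*w)^{-1}w^*$ with $w=\tilde p+\tilde y_{\tilde p}$ expresses $\psi_{p\tilde p}^{-1}$ through the same norm-continuous operations; hence $\G(\M)\hookrightarrow\M$ is continuous and $\G(\M)$ is Hausdorff.

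For the topological-groupoid property I would work entirely in the charts $(\Omega_{p\tilde p},\psi_{p\tilde p})$, where the maps become transparent. In coordinates $\psi_{p\tilde p}(x)=(y_p,z_{p\tilde p},\tilde y_{\tilde p})$ the target and source are the outer coordinate projections $(y_p,z,\tilde y)\mapsto y_p$ and $(y_p,z,\tilde y)\mapsto\tilde y$, hence continuous. For the unit section one checks $z_{pp}=\sigma_p(q)^{-1}q\,\sigma_p(q)=p$, so $\mathbf 1$ reads $y_p\mapsto(y_p,p,y_p)$, which is continuous. For inversion one computes $\psi_{\tilde p p}(x^{-1})=(\tilde y_{\tilde p},z_{p\tilde p}^{-1},y_p)$, so the only nontrivial point is continuity of the groupoid inverse on $\G^p_{\tilde p}(\M)$, which follows from the closed form $z^{-1}=(z^*z)^{-1}z^*$, the inverse being taken in $\tilde p\M\tilde p$. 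Finally, for the partial multiplication I would take $x_1\in\Omega_{p\hat p}$, $x_2\in\Omega_{\hat p\tilde p}$ with $\Ss(x_1)=\Tt(x_2)$, note $x_1x_2\in\Omega_{p\tilde p}$, and use unit absorption in the groupoid to collapse the expression for $x_1x_2$ to the clean rule $(y_p,z_1,\hat y),(\hat y,z_2,\tilde y)\mapsto(y_p,z_1z_2,\tilde y)$; since the middle coordinate is simply the algebra product $z_1z_2$, multiplication is continuous.

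The main obstacle is the Hausdorffness step, specifically the norm-continuity of $\varphi_p^{-1}$: taking supports or range projections is \emph{not} a norm-continuous operation on $\M$ in general, so the naive argument ($q=\Tt(p+y_p)$ with $\Tt$ the left-support map) fails and is, presumably, exactly why the groupoid is not topological for the ambient norm topology. The resolution is the observation that on the special ``graph'' elements $p+y_p$ the range projection is given by the explicit formula above, whose denominator $p+y_p^*y_p$ is automatically invertible. Everything else --- continuity of the groupoid inverse $z^{-1}=(z^*z)^{-1}z^*$ and the reduction of multiplication to $z_1z_2$ --- then follows from the same algebraic identities together with the openness of $\G^p_{\tilde p}(\M)$ in $p\M\tilde p$ recorded above.
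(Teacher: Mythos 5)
Your proposal is correct, but it follows a genuinely different route from the paper, so a comparison is in order. For Hausdorffness the paper argues by contradiction inside the charts: assuming $x_1\neq x_2$ cannot be separated, it intersects shrinking neighborhoods $\psi_{p_i\tilde p_i}^{-1}\bigl(K_{\varepsilon_i}(0)\times K_{\delta_i}(z^0_{p_i\tilde p_i})\times K_{\tilde\varepsilon_i}(0)\bigr)$ and passes to the limit in the transition relations (\ref{atlas2}), obtaining $p_1p_2=p_2$, $p_2p_1=p_1$, $\tilde p_1\tilde p_2=\tilde p_2$, $\tilde p_2\tilde p_1=\tilde p_1$ and $z^0_{p_1\tilde p_1}=z^0_{p_2\tilde p_2}$, whence $x_1=x_2$. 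You instead produce a continuous injection of $\G(\M)$, with its manifold topology, into the Hausdorff space $(\M,\|\cdot\|)$, resting on the closed formulas $\varphi_p^{-1}(y_p)=(p+y_p)(p+y_p^*y_p)^{-1}(p+y_p^*)$ and $\psi_{p\tilde p}^{-1}(y_p,z,\tilde y)=(p+y_p)\,z\,(\tilde p+\tilde y^*\tilde y)^{-1}(\tilde p+\tilde y^*)$; both are valid (the first is indeed the range projection of $p+y_p$, the inverse taken in $p\M p$ where $p+y_p^*y_p\geq p$ guarantees invertibility), and your observation that the support map is not norm-continuous on $\M$ in general but admits this manifestly norm-continuous expression on elements of the form $p+y_p$ is exactly the point that makes the argument work. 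Your route buys strictly more than the paper's: it shows the manifold topology refines the norm topology inherited from $\M$, something the paper's limiting argument (whose ``in the limit'' step is left rather informal) does not yield. Moreover, the paper's proof only establishes the separation property, leaving the continuity of the structure maps to the consistency of the manifold structure with the groupoid structure established in \cite{OS}; your chart computations --- $\Tt$ and $\Ss$ as coordinate projections, $\mathbf 1$ as $y_p\mapsto(y_p,p,y_p)$, inversion as $(y_p,z,\tilde y)\mapsto(\tilde y,(z^*z)^{-1}z^*,y_p)$, and multiplication as $\bigl((y_p,z_1,\hat y),(\hat y,z_2,\tilde y)\bigr)\mapsto(y_p,z_1z_2,\tilde y)$ --- supply that half of the statement explicitly, and they agree with the formulas the paper uses in later sections.
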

\begin{proof} Let us assume that in $\G(\M)$ there exist  $x_1\not= x_2$ such that any open neighborhoods $\Omega_1\ni x_1$ and  $\Omega_2\ni x_2$ intersect $\Omega_1\cap\Omega_2\not=\emptyset$.  For $i=1,2$ let us take $\Omega_i=\Omega_{\varepsilon_i\delta_i\tilde{\varepsilon}_i}:=\psi^{-1}_{p_i\tilde{p}_i}(K_{\varepsilon_i}(0)\times K_{\delta_i}(z^0_{p_i\tilde{p}_i})\times K_{\tilde{\varepsilon}_i}(0))$, where $p_i=\Tt(x_i)$, $\tilde{p}_i=\Ss(x_i)$, $K_{\varepsilon_i}(0)\subset (1-p_i)\M p_i$ and 
$K_{\tilde{\varepsilon}_i}(0)\subset (1-\tilde{p}_i)\M \tilde{p}_i$ are open balls of the radiuses $\varepsilon_i$ and $\tilde\varepsilon_i$ centered at zero; also $K_{\delta_i}(z^0_{p_i\tilde{p}_i})\subset \G^{p_1}_{\tilde{p}_1}\subset p_i\M \tilde {p}_i$ is an open ball centered in $z^0_{p_i\tilde{p}_i}$, where $\psi_{p_i\tilde{p}_i}(x_i)=(0,z^0_{p_i\tilde{p}_i},0)$. If $x\in \Omega_1\cap\Omega_2$ then from (\ref{atlas2}) one obtains
\be\label{cos111}\begin{array}{l} y_{p_2}(a+cy_{p_1})=b+dy_{p_1},\\
 z_{p_2\tilde {p_2}}(\tilde{a}+\tilde{c}\tilde{y}_{\tilde {p}_1})=(a+cy_{p_1})z_{p_1\tilde{p}_1}\\
 \tilde{y}_{\tilde {p}_2}(\tilde{a}+\tilde{c}\tilde{y}_{\tilde p_1})=\tilde{b}+\tilde {d}\tilde{y}_{\tilde p_1},\end{array}\ee
where
 $$\psi_{p_2\tilde{p_2}}^{-1}(y_{p_2},z_{p_2\tilde p_2},\tilde{y}_{\tilde p_2})=\psi_{p_1\tilde{p_1}}^{-1}(y_{p_1},z_{p_1\tilde{p_1}},\tilde{y}_{\tilde p_1})=x.$$
The other quantities in (\ref{cos111}) are given by $a=p_2p_1$, $\ b=(1-p_2)p_1$, $\ c=p_2(1-p_1)$,  $\ d=(1-p_2)(1-p_1)$,
 $\tilde a=\tilde p_2\tilde p_1$, $\ \tilde b=(1-\tilde p_2)\tilde p_1$, $\ \tilde c=\tilde p_2(1-\tilde p_1)$ and $\ \tilde d=(1-\tilde p_2)(1-\tilde p_1)$. The elements $x\in \Omega_{\varepsilon_1\delta_1\tilde{\varepsilon}_1}\cap \Omega_{\varepsilon_2\delta_2\tilde{\varepsilon}_2}$ in the limit $\varepsilon_1,\delta_1,\tilde{\varepsilon}_1,\varepsilon_2,\delta_2,\tilde{\varepsilon}_2\to 0$  go to $x\to x_1$ and $x\to x_2$. Thus and from (\ref{cos111}) we have
$$ 0=b$$
\be z^0_{p_2\tilde{p}_2}\tilde a=az^0_{p_1\tilde{p}_1}\ee
$$ 0=\tilde{b}$$
The above gives 
$$ 0=p_1p_2-p_2$$
\be\label{cos11} z^0_{p_2\tilde{p}_2}=p_2 z^0_{p_1\tilde{p}_1}\tilde{p}_2\ee
$$  0=\tilde{p}_1\tilde{p}_2-\tilde{p}_2$$
Taking in (\ref{atlas2}) the transition map $\psi_{p_1\tilde{p_1}}\circ\psi_{p_2\tilde{p_2}}^{-1}$ instead of $\psi_{p_2\tilde{p_2}}\circ\psi_{p_1\tilde{p_1}}^{-1}$ and repeating  the analogous considerations we obtain
$$ 0=p_2p_1-p_1$$
\be\label{cos12} z^0_{p_1\tilde{p}_1}=p_1 z^0_{p_2\tilde{p}_2}\tilde{p}_1\ee
$$  0=\tilde{p}_2\tilde{p}_1-\tilde{p}_1$$
From (\ref{cos11}) and (\ref{cos12}) we find that $p_1=p_2$, $\ \tilde{p}_1=\tilde{p}_2$ and $z^0_{p_1\tilde{p}_1}=z^0_{p_2\tilde{p}_2}$. It means that $x_1=x_2$ which is in  the contradiction to the assumption that $x_1\not=x_2$. Thus we conclude that  the topology underlying the complex Banach manifold strucure of $\G(\M)$ is Hausdorff one. Since  $\Ll(\M)$ is a submanifold of $\G(\M)$  the underlying topology of $\Ll(\M)$ is also a Hausdorff one.
\end{proof}

Keeping in mind Proposition \ref{prop:Hausd} and the definition of Lie groupoid in the finite dimensional case (see e.g. \cite{zung, mac}) we will consider $\G(\M)\tto\Ll(\M)$ as a Banach-Lie groupoid.

\bigskip

	Let us fix $p_0\in \Ll(\M)$ and define   $\Ll_{p_0}(\M):=\{p\in \Ll(\M): \ p\sim p_0\}$  and $\G_{p_0}(\M):=\Tt^{-1}(\Ll_{p_0}(\M))\cap\Ss^{-1}(\Ll_{p_0}(\M))$, where $p\sim p'$ denotes the Murray - von Neumann equivalence of projections. Then $\G_{p_0}(\M)\tto \Ll_{p_0}(\M)$ is a Banach-Lie subgroupoid  of $\G(\M)\tto \Ll(\M)$ defined unambiguously by the choice of $p_0\in \Ll(\M)$. The total space  $\G_{p_0}(\M)$ as well as the base  $\Ll_{p_0}(\M)$ of the groupoid $\G_{p_0}(\M)\tto \Ll_{p_0}(\M)$ are subsets of $\G(\M)$ and  $\Ll(\M)$, respectively, open with respect to the topology defined by their Banach manifold structures. If  $p\sim p_0$  then the groupoid $\G_{p}(\M)\tto \Ll_{p}(\M)$ coincides with $\G_{p_0}(\M)\tto \Ll_{p_0}(\M)$. If $\Pi_p\cap \Ll_{p_0}(\M)\not=\emptyset$ then for $q\in \Pi_p\cap \Ll_{p_0}(\M)$ one has $q\sim  p$ and $q\sim p_0$. Thus one obtains $p\sim p_0$, and so $\Pi_p\subset \Ll_{p_0}(\M)$. Hence,  for $p\not\sim p_0$ one has $\Pi_p\cap \Ll_{p_0}(\M)=\emptyset$. From the above it follows that, for $p\in \Ll(\M)$,  domain of chart $\varphi_p:\Pi_p\to (1-p)\M p$ is contained in $\Ll_{p_0}(\M)$ if and only if  $p\sim p_0$.  As a conclusion we have
	\begin{prop}\label{prop:11}
	The Banach-Lie groupoid $\G(\M)\tto \Ll(\M)$ is a disjoint union of Banach-Lie subgroupoids $\G_{p_0}(\M)\tto \Ll_{p_0}(\M)$, $p_0\in \Ll(\M)$, which are its closed-open Banach subgroupoids.
	\end{prop}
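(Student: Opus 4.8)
The plan is to reduce the whole statement to two facts already at hand: that Murray--von Neumann equivalence $\sim$ is an equivalence relation, and that every chart domain meets only one $\sim$-class, as recorded in the discussion preceding the proposition. I would treat the partition, the subgroupoid axioms, and the clopen property in turn.

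For the partition I would first note that $\{\Ll_{p_0}(\M)\}$ are precisely the $\sim$-classes, so reflexivity, symmetry and transitivity give $\Ll(\M)=\bigsqcup \Ll_{p_0}(\M)$ with distinct classes disjoint; in particular $\Ll_p(\M)=\Ll_{p_0}(\M)$ whenever $p\sim p_0$. On the arrow level the key observation is that every $x\in\G(\M)$ satisfies $\Tt(x)\sim\Ss(x)$: the partial isometry $u$ of the polar decomposition $x=u|x|$ obeys $u^*u=\Ss(x)$ and $uu^*=\Tt(x)$, which is exactly a witness of the equivalence. Hence $\Tt(x)\sim p_0$ iff $\Ss(x)\sim p_0$, so $\Tt^{-1}(\Ll_{p_0}(\M))=\Ss^{-1}(\Ll_{p_0}(\M))=\G_{p_0}(\M)$, every $x$ belongs to $\G_{p_0}(\M)$ for $p_0=\Tt(x)$, and the pieces attached to distinct classes are disjoint; this yields $\G(\M)=\bigsqcup\G_{p_0}(\M)$.

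The subgroupoid axioms then follow mechanically from $\G_{p_0}(\M)=\Tt^{-1}(\Ll_{p_0}(\M))=\Ss^{-1}(\Ll_{p_0}(\M))$: source and target restrict to maps $\G_{p_0}(\M)\tto\Ll_{p_0}(\M)$; the identity section maps $\Ll_{p_0}(\M)$ into $\G_{p_0}(\M)$ since $\Tt(\mathbf{1}(p))=\Ss(\mathbf{1}(p))=p$; inversion preserves $\G_{p_0}(\M)$ because it swaps $\Tt$ and $\Ss$; and a composable product $xy$ stays inside because $\Tt(xy)=\Tt(x)$ and $\Ss(xy)=\Ss(y)$ are both $\sim p_0$.

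The remaining, genuinely topological, claim is that each $\G_{p_0}(\M)\tto\Ll_{p_0}(\M)$ is closed-open, and here I would invoke the manifold structure through its charts. For the base, $p\in\Pi_p$ and, when $p\sim p_0$, the inclusion $\Pi_p\subset\Ll_{p_0}(\M)$ exhibits an open chart-neighborhood of $p$ inside $\Ll_{p_0}(\M)$, so $\Ll_{p_0}(\M)$ is open; it is also closed since its complement $\bigcup_{p\not\sim p_0}\Ll_p(\M)$ is a union of such open sets. For the total space I would argue identically with the charts $\Omega_{p\tilde p}$: any $x\in\G_{p_0}(\M)$ lies in $\Omega_{\Tt(x)\Ss(x)}$, and $\Omega_{p\tilde p}\subset\G_{p_0}(\M)$ as soon as $p\sim p_0$, whence $\G_{p_0}(\M)$ is open, and closedness again follows by complementation. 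The step carrying the real content---and the point deserving the most care---is the implication $q\in\Pi_p\Rightarrow q\sim p$ on which all of this rests: it holds because the splitting (\ref{split}) makes $pq$ a partially invertible element of $\G^p_q(\M)$, whose polar decomposition furnishes a partial isometry implementing $p\sim q$. This is precisely what confines every chart to a single equivalence class and thereby makes the decomposition into clopen pieces possible.
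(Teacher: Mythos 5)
Your proof is correct and takes essentially the same route as the paper: both rest on the key observation that a chart domain is confined to a single Murray--von Neumann class, because $q\in\Pi_p$ forces $pq\in\G^p_q(\M)$ and hence $q\sim p$, so that $\Pi_p\subset\Ll_{p_0}(\M)$ if and only if $p\sim p_0$, giving the clopen decomposition. You merely make explicit what the paper leaves implicit (the arrow-level equivalence $\Tt(x)\sim\Ss(x)$ via the polar decomposition, the subgroupoid axioms, and the openness of $\G_{p_0}(\M)$ through the charts $\Omega_{p\tilde p}$), which is a faithful elaboration rather than a different argument.
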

	
	Let us denote by $C(\M)$ the center of $\M$. The next proposition characterizes the set $C(\M)\cap\Ll(\M)$ of central projections of the $W^*$-algebra $\M$ in the terms of the underlying topology of the complex Banach manifold structure of $\Ll(\M)$.
	
	\begin{prop}\label{prop:central} One has the following statements:
	\begin{enumerate}[(i)]
	\item  $p\in C(\M)\cap\Ll(\M)$  if and only if $\Pi_p=\{p\}$;
	\item If $p\notin C(\M)\cap\Ll(\M)$ then $\Pi_p\cap C(\M)=\emptyset$.
	\end{enumerate}
	\end{prop}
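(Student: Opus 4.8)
The plan is to treat the two statements separately, exploiting that the chart $\varphi_p\colon\Pi_p\to(1-p)\M p$ of \eqref{varphi} is a bijection which sends $p$ to $0$: indeed, for $q=p$ one has $pq=p$, whose groupoid inverse is $p$ itself, so $\varphi_p(p)=(pp)^{-1}-p=p-p=0$. Since $\varphi_p$ identifies $\Pi_p$ with the linear space $(1-p)\M p$, the domain $\Pi_p$ collapses to the single point $\{p\}$ if and only if $(1-p)\M p=\{0\}$. Thus statement (i) reduces to the purely algebraic equivalence: $(1-p)\M p=\{0\}$ iff $p\in C(\M)$.

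For that equivalence the implication $\Leftarrow$ is immediate, as $p$ central gives $(1-p)mp=(1-p)pm=0$ for every $m$. For $\Rightarrow$ I would assume $(1-p)mp=0$, i.e. $mp=pmp$ for all $m\in\M$, and apply the involution to get $pm^{*}=pm^{*}p$, that is $pn=pnp$ for all $n\in\M$. Combining $mp=pmp$ with $pm=pmp$ yields $mp=pm$ for all $m$, so $p\in C(\M)$. This settles (i).

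For (ii) I would argue by contradiction: suppose $p\notin C(\M)$ while some $q\in\Pi_p\cap C(\M)$ exists. The decisive point is that centrality of $q$ forces $pq=qp$, so the four products $e_1=pq$, $e_2=(1-p)q$, $e_3=p(1-q)$, $e_4=(1-p)(1-q)$ are mutually orthogonal projections summing to $1$, with $e_1+e_2=q$ and $e_2+e_4=1-p$. Hence $q\M=e_1\M\oplus e_2\M$ and $(1-p)\M=e_2\M\oplus e_4\M$. The defining splitting $\M=q\M\oplus(1-p)\M$ then yields two constraints. First, $e_2\M\subseteq q\M\cap(1-p)\M=\{0\}$ gives $e_2=0$, i.e. $q\leq p$. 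Only after this does the spanning condition $q\M+(1-p)\M=\M$ read $(e_1+e_4)\M=\M$, whence $e_1+e_4=1$ and therefore $e_3=0$, i.e. $p\leq q$. Thus $p=q\in C(\M)$, contradicting $p\notin C(\M)$.

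The routine ingredients are the bijectivity of $\varphi_p$, already supplied by \eqref{varphi}, and the elementary projection algebra. The one step to handle with care in (ii) is extracting \emph{both} $e_2=0$ and $e_3=0$ from the \emph{single} hypothesis $\M=q\M\oplus(1-p)\M$: the vanishing of the intersection produces $e_2=0$, and only with $e_2=0$ does the spanning half force $e_3=0$ (using that a projection $f$ with $f\M=\M$ must equal $1$). A conceptual alternative, if preferred, is to observe that $q\in\Pi_p$ yields $q\sim p$ and $(1-q)\sim(1-p)$ by comparing complements of $(1-p)\M$ and of $q\M$, and then to invoke that Murray--von Neumann equivalent projections share central carriers, with $z(q)=q$ and $z(1-q)=1-q$ for the central $q$; this again gives $p\leq q\leq p$.
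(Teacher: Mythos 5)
Your proof is correct, and both parts take genuinely different routes from the paper's. In (i), the reduction of $\Pi_p=\{p\}$ to $(1-p)\M p=\{0\}$ via bijectivity of $\varphi_p$ in \eqref{varphi} is the same as the paper's, but for the hard implication the paper invokes Lemma 1.7 of Chapter V of Takesaki to translate $(1-p)\M p=0$ into the vanishing $c(1-p)c(p)=0$ of a product of central supports, and then deduces $c(p)=p$; you instead argue by bare hands: $(1-p)mp=0$ means $mp=pmp$, the involution turns this into $pm=pmp$ for all $m$, and the two identities combine to $mp=pm$. This eliminates the central-support machinery entirely. In (ii), the paper stays inside the chart formalism: it uses part (i) to get $\varphi_q(q)=0$ and then evaluates the transition map \eqref{trans} to find $\varphi_p(q)=ba^{-1}$ with $a=pq$, $b=(1-p)q$ projections, so $\varphi_p(q)=ba=0$ and hence $q=p$ by injectivity of $\varphi_p$. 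You instead go back to the defining splitting \eqref{split}: since $q$ is central, the four products $pq$, $(1-p)q$, $p(1-q)$, $(1-p)(1-q)$ are mutually orthogonal projections summing to $1$, and you are right to insist on the order of deductions --- triviality of $q\M\cap(1-p)\M$ first gives $(1-p)q=0$, and only then does the spanning half of the splitting force $p(1-q)=0$ --- yielding $p=q$, a contradiction. What each approach buys: the paper's argument is shorter because it reuses structures already set up (the charts, their transition maps, and standard $W^*$-theory), while yours is more elementary and self-contained --- in particular your (ii) needs neither part (i) nor the transition maps, only the definition of $\Pi_p$ --- at the cost of redoing some projection algebra by hand. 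Both arguments are complete and valid.
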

	\begin{proof} \begin{enumerate}[(i)]
	\item If $p$ is a central projection  then $(1-p)\M p=0$. So, for $q\in \Pi_p$ one has $\varphi_p(q)=y_p=0$, and thus $\Pi_p=\{p\}$.
	
	Let $p$ be a projection such that $\Pi_p=\{p\}$. This means that \be\label{1ort}(1-p)\M p=0,\ee because $\varphi_p:\Pi_p\to (1-p)\M p$ is a bijection. According to Lemma 1.7 of Chapter V in \cite{Takes} the condition (\ref{1ort}) is equivalent to 
	\be\label{2ort} c(1-p)c(p)=0,\ee
	where $ c(1-p)$ and $c(p)$ are central supports of $1-p$ and $p$, respectively. Since $c(1-p)\geq 1-p$ and $c(p)\geq p$ from (\ref{2ort}) one has
	\be\label{3ort} c(1-p)+c(p)=(c(1-p)+c(p))(1-p+p)=1-p+p=1.\ee
	From (\ref{2ort}) and (\ref{3ort}) follows that $c(1-p)=1-p$ and $c(p)=p$. So $p$ is a central projection.
	\item Let us assume that $\Pi_p\cap C(\M)\not=\emptyset$ and take $q\in \Pi_p\cap C(\M)$. It follows from (i) of this proposition that $\Pi_q=\{q\}$ and, thus $\varphi_q(q)=0$. From (\ref{trans}) one obtains 
	\be \varphi_p(q)=(\varphi_p\circ \varphi_q^{-1})(0)=ba^{-1},\ee
	where $a=pq$ and $b=(1-p)q$. Since $q$ is a central projection the elements $a$ and $b$ are also projections. So, $a^{-1}=a$, and thus $ba=0$. The above shows that $\varphi_p(q)=0$, i.e. $p=q\in C(\M)\cap\Ll(\M)$. This is in the contradiction with $p\notin \Pi_p\cap C(\M)$.
	\end{enumerate}
	\end{proof}
	
	We conclude from Proposition \ref{prop:central} that $ C(\M)\cap\Ll(\M)$ coincides with the set of elements of $\Ll(\M)$ which are open-closed one element subsets of $\Ll(\M)$. In the case when $\M$ is a commutative $W^*$-algebra one has $\Pi_p=\{p\}$ for any $p\in \Ll(\M)$. Therefore, for such $\M$ the Banach manifold structure of $\Ll(\M)$ defined by the atlas (\ref{sigma}) is trivial, i.e. $\Ll(\M)$ is $0$-dimensional manifold.
	
	Let us recall here that the inner subgroupoid  $\J\tto B$ of a groupoid $G\tto B$ is defined as $\J:=\bigcup_{b\in B}\Ss^{-1}(b)\cap\Tt^{-1}(b)$. The inner subgroupoid of $\G(\M)\tto \Ll(\M)$ we will denote by $\J(\M)\tto\Ll(\M)$. For a commutative algebra $\M$  one has $l(x)=xx^{-1}=x^{-1}x=r(x)$. So, the groupoid $\G(\M)\tto \Ll(\M)$ coincides with its inner subgroupoid  
	$\J(\M)\tto\Ll(\M)$.	
	
	Summing up the facts mentioned above we see that in the case of a commutative $W^*$-algebra $\M$ one can consider $\G(\M)\tto \Ll(\M)$ as the disjoint union of Banach-Lie groups $G(p\M p)$ enumerated by $p\in \Ll(\M).$
	
	\bigskip

From  Proposition \ref{prop:11} it follows that in order to study the Banach-Lie groupoid structure of 	$\G(\M)\tto \Ll(\M)$ it is enough  to investigate the structure of Banach-Lie subgroupoids  $\G_{p_0}(\M)\tto \Ll_{p_0}(\M)$, $p_0\in \Ll(\M)$. For this reason let us note that the source map $\Ss:\G_{p_0}(\M)\to \Ll_{p_0}(\M)$ is a surjective submersion which in the coordinates (\ref{psipp}) assumes the form $(y_p,z_{p\tilde{p}}, \tilde{y}_{\tilde{p}})\mapsto \tilde{y}_{\tilde{p}}$. So, the fibre $P_0:=\Ss^{-1}(p_0)$ of the source map $\Ss:\G(\M)\tto \Ll_{p_0}(\M)$ is a Banach submanifold of $\G_{p_0}(\M)$. Restricting (\ref{psippmap}) and (\ref{psipp}) to $P_0\cap\Omega _{pp_0}=\Tt^{-1}(\Pi_p)\cap\Ss^{-1}(p_0)=\pi_0^{-1}(\Pi_p)$,
where $\pi_0:=\Tt|_{P_0}$, one obtains the charts 
\be\label{othercharts} \psi_p:\pi_0^{-1}(\Pi_p)\stackrel{\sim}{\to}(1-p)\M p\times \G_{p_0}^p(\M)\ee
which define the atlas $(\pi_0^{-1}(\Pi_p),\psi_p),\ p\in \Ll_{p_0}(\M)$, on $P_0$. For  $y_p\in (1-p)\M p$, $z_{pp_0}\in \G_{p_0}^p(\M)$ and $\eta\in \pi_0^{-1}(\Pi_p)$ one has 
\be\label{eta} \eta=\psi_p^{-1} (y_p,z_{pp_0})=(p+y_p)z_{pp_0}\ee
and $\psi_p(\eta)=(y_p,z_{pp_0})$, where
\be\begin{array}{l}\label{1yp} y_p=\eta(p\eta)^{-1}-p\\
 z_{pp_0}=p\eta\end{array}.\ee

Let us note that $P_0$ is an open subset of the Banach space $\M p_0$ and, thus one can consider $(P_0, id)$ as a chart on $P_0$. The transition maps (\ref{eta}) and (\ref{1yp}) show that $(P_0,id)$ belongs to the maximal atlas of the manifold $P_0$ defined by $(\pi_0^{-1}(\Pi_p),\psi_p),\ p\in \Ll_{p_0}(\M)$. Hence we conclude that the topologies of $P_0$ inherited from $\G_{p_0}(\M)$ and from $\M p_0$  are the same. The free right actions of the Banach-Lie group $G_0:=G(p_0\M p_0)$ of the invertible elements of the $W^*$-subalgebra $p_0\M p_0$ on $P_0$  and on $P_0\times P_0$ are defined by 
    \be\label{Gaction} \kappa: P_0\times G_0\ni(\eta,g)\mapsto \eta g\in P_0\ee
		and  by     
		\be\label{Gaction2} \kappa_2:P_0\times P_0\times G_0\ni(\eta,\xi,g)\mapsto (\eta g, \xi g)\in P_0\times P_0,\ee
respectively. They are consistent with the atlas $(\pi_0^{-1}(\Pi_p),\psi_p),\ p\in \Ll(\M)$ defined in (\ref{othercharts}), and  the atlas $(\pi_0^{-1}(\Pi_p)\times \pi_0^{-1}(\Pi_{\tilde{p}}),\psi_p\times \psi_{\tilde{p}}),\ p,\tilde p\in \Ll(\M)$, defined by
\be\label{119} (\pi_0^{-1}(\Pi_p)\times \pi_0^{-1}(\Pi_{\tilde{p}})\ni (\eta,\xi)\mapsto (y_p,z_{pp_0},y_{\tilde p},z_{\tilde p p_0}), \ee
i.e.  one has $\psi_p(\eta g)=(y_p,z_{pp_0}g)$ and $(\psi_p\times\psi_q)(\eta g,\xi g)=(y_p,z_{pp_0}g,y_q,z_{qq_0}g)$. The orbits of $G_0$ on $P_0$ and $P_0\times P_0$ coincide with the fibres of the submersions
\be\label{1gauge} \varphi:P_0\ni\eta\mapsto \varphi(\eta):=\eta\eta^{-1}\in \Ll_{p_0}(\M),\ee
\be\label{2gauge} \phi:P_0\times P_0\ni(\eta,\xi)\mapsto \phi(\eta,\xi):=\eta\xi^{-1}\in \G_{p_0}(\M).\ee
So, the equivalence relations defined by the actions (\ref{Gaction}) and (\ref{Gaction2}) are regular in sense of the definition given in 5.9.5 of \cite{bou}. Thus the quotient spaces $P_0/G_0$ and $\frac{P_0\times P_0}{G_0}$ are Banach manifolds isomorphic to $\Ll_{p_0}(\M)$ and $\G_{p_0}(\M)$, respectively.

Let us consider the pair groupoid $P_0\times P_0\tto P_0$ and the action groupoid $P_0\rtimes G_0\tto P_0$ which are, as one can easily see, a Banach-Lie groupoids. The definition of the action groupoid one can find in \cite{mac}. We define $\iota:P_0\rtimes G_0\to P_0\times P_0$ by
\be \iota(\eta,g):=(\eta g,\eta).\ee
\begin{prop}\label{prop:14}
\begin{enumerate}[(i)]
\item One has the following (non exact) sequence of groupoid morphisms
	  \unitlength=5mm \be\label{1propduzy}\begin{picture}(11,4.6)
        \put(-2,4){\makebox(0,0){$P_0\rtimes G_0$}}
 \put(-2,-1){\makebox(0,0){$P_0$}}
		\put(5,4){\makebox(0,0){$P_0\times P_0$}}
       \put(5,-1){\makebox(0,0){$P_0$}}
 \put(12,4){\makebox(0,0){$\G_{p_0}(\M)$}}
    \put(12,-1){\makebox(0,0){$\Ll_{p_0}(\M).$}}
   
    \put(5.2,3){\vector(0,-1){3}}
    \put(4.7,3){\vector(0,-1){3}}
    \put(-2.3,3){\vector(0,-1){3}}
    \put(-1.8,3){\vector(0,-1){3}}
    \put(12.2,3){\vector(0,-1){3}}
    \put(11.7,3){\vector(0,-1){3}}
    \put(7,4){\vector(1,0){2.7}}
    \put(6.3,-1){\vector(1,0){4}}
 \put(-0.8,-1){\vector(1,0){4.5}}
 \put(-0.5,4){\vector(1,0){3}}
      \put(8.5,4.4){\makebox(0,0){$\phi$}}
    \put(8.5,-0.5){\makebox(0,0){$\varphi$}}
\put(1.5,-0.5){\makebox(0,0){$ id$}}
\put(1.5,4.4){\makebox(0,0){$\iota$}}
    \end{picture}\ee		
\bigskip\newline
where the pairs of maps $(\iota,id)$ and $(\phi,\varphi)$ define groupoid monomorphism and epimorphism, respectively.
\item The quotient groupoid (gauge groupoid) $\frac{P_0\times P_0}{G_0}\tto P_0/G_0$ and the groupoid $\G_{p_0}(\M)\tto\Ll_{p_0}(\M)$ are isomorphic, where the isomorphism
  \unitlength=5mm \be\label{gaugeisom}\begin{picture}(11,4.6)
    \put(1,4){\makebox(0,0){$\frac{P_0\times P_0}{G_0}$}}
    \put(8,4){\makebox(0,0){$\G_{p_0}(\M)$}}
    \put(1,0){\makebox(0,0){$P_0/G_0$}}
    \put(8,0){\makebox(0,0){$\Ll_{p_0}(\M)$}}
    \put(1.2,3){\vector(0,-1){2}}
    \put(0.7,3){\vector(0,-1){2}}
    \put(8.2,3){\vector(0,-1){2}}
    \put(7.7,3){\vector(0,-1){2}}
    \put(3,4){\vector(1,0){3}}
    \put(2.7,0){\vector(1,0){3.7}}
    \put(0.1,1.4){\makebox(0,0){$\ $}}
    \put(2.2,1.4){\makebox(0,0){$\ $}}
    \put(9.1,1.9){\makebox(0,0){$\ $}}
    \put(6.8,1.9){\makebox(0,0){$\ $}}
    \put(4.5,4.5){\makebox(0,0){$[\phi]$}}
    \put(4.5,0.5){\makebox(0,0){$[\varphi ]$}}
    \end{picture},\ee
    \bigskip
		\ 
		is given by the quotienting of (\ref{1gauge}) and (\ref{2gauge}).
\item The quotient groupoid $\frac{P_0\rtimes G_0}{G_0}\tto P_0/G_0$ of $P_0\rtimes G_0\tto P_0$ is isomorphic to the inner subgroupoid $\J_{p_0}(\M)\tto\Ll_{p_0}(\M)$ of the groupoid $\G_{p_0}(\M)\tto\Ll_{p_0}(\M)$.\end{enumerate}
All groupoid morhisms mentioned above are morphisms of Banach-Lie groupoids.
\end{prop}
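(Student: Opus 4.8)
The plan is to prove all three parts by direct computation inside $\G(\M)\tto\Ll(\M)$, using repeatedly the defining groupoid identities $xx^{-1}=\Tt(x)$ and $x^{-1}x=\Ss(x)$, together with $\eta p_0=\eta$ for $\eta\in P_0\subset\M p_0$ and $(\eta g)^{-1}=g^{-1}\eta^{-1}$ for $g\in G_0$ (these last two follow from the polar decomposition and the formula for the inversion). For (i) I would first record that for $\eta\in P_0$ and $g\in G_0$ the product $\eta g$ is defined, since $\Ss(\eta)=p_0=\Tt(g)$, with $\Ss(\eta g)=p_0$ and $\Tt(\eta g)=\Tt(\eta)$, so $\iota(\eta,g)=(\eta g,\eta)$ does land in the pair groupoid over $P_0$; compatibility with the identity base map, with units, inverses and multiplication is then a direct check once the source and target of the action groupoid are fixed. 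Injectivity of $\iota$ is exactly freeness of the action $\kappa$ of (\ref{Gaction}): from $\iota(\eta,g)=\iota(\eta',g')$ one gets $\eta=\eta'$ and $\eta g=\eta g'$, hence $g=g'$. For $(\phi,\varphi)$ I compute $\Tt(\eta\xi^{-1})=\Tt(\eta)=\varphi(\eta)$ and $\Ss(\eta\xi^{-1})=\Tt(\xi)=\varphi(\xi)$, so the pair respects source and target of $(\eta,\xi)$ read as the arrow from $\xi$ to $\eta$; multiplicativity collapses to $(\eta\xi^{-1})(\xi\zeta^{-1})=\eta(\xi^{-1}\xi)\zeta^{-1}=\eta p_0\zeta^{-1}=\eta\zeta^{-1}=\phi(\eta,\zeta)$. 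Surjectivity of $\varphi$ and $\phi$ comes from Murray--von Neumann equivalence: for $p\sim p_0$ a partial isometry $\eta$ with $\eta^*\eta=p_0$, $\eta\eta^*=p$ lies in $P_0$ and has $\varphi(\eta)=p$, while for $x\in\G_{p_0}(\M)$ one picks $\xi\in P_0$ with $\Tt(\xi)=\Ss(x)$ and sets $\eta=x\xi$, giving $\phi(\eta,\xi)=x\xi\xi^{-1}=x$.

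For (ii), the decisive observation is the $G_0$-invariance of $\varphi$ and $\phi$ along the orbits of (\ref{Gaction}) and (\ref{Gaction2}): $\varphi(\eta g)=\eta gg^{-1}\eta^{-1}=\varphi(\eta)$ and $\phi(\eta g,\xi g)=\eta gg^{-1}\xi^{-1}=\phi(\eta,\xi)$, so both factor through the quotients as the maps $[\varphi]$ and $[\phi]$ of (\ref{gaugeisom}). These induced maps are bijective: $\varphi(\eta)=\varphi(\xi)$ forces equal left supports and common right support $p_0$, whence $g:=\eta^{-1}\xi\in G_0$ and $\eta g=\xi$ put $\eta,\xi$ in one orbit, and the analogous argument settles $[\phi]$. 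Since the actions $\kappa,\kappa_2$ are regular in the sense of 5.9.5 of \cite{bou}, the quotients are already known (from the discussion preceding the proposition) to be Banach manifolds isomorphic to $\Ll_{p_0}(\M)$ and $\G_{p_0}(\M)$, and $[\varphi],[\phi]$ realize precisely those isomorphisms; combined with the groupoid morphism property inherited from (i), the pair $([\phi],[\varphi])$ is an isomorphism of Banach-Lie groupoids.

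For (iii), I would equip $P_0\rtimes G_0$ with the $G_0$-action $(\eta,g)\cdot h=(\eta h,h^{-1}gh)$ covering (\ref{Gaction}) on the base; then $\iota$ is equivariant for this action and (\ref{Gaction2}), since $\iota((\eta,g)\cdot h)=(\eta gh,\eta h)=\iota(\eta,g)\cdot h$, so it descends to $[\iota]:\frac{P_0\rtimes G_0}{G_0}\to\frac{P_0\times P_0}{G_0}$. Composing with $[\phi]$ yields the map $[\eta,g]\mapsto\eta g\eta^{-1}$, and $\eta g\eta^{-1}$ satisfies $\Ss(\eta g\eta^{-1})=\Tt(\eta g\eta^{-1})=\eta\eta^{-1}$, so it lands in the inner subgroupoid $\J_{p_0}(\M)$. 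Conversely, given $x\in\J_{p_0}(\M)$ with $\Ss(x)=\Tt(x)=p\sim p_0$, choosing $\eta\in P_0$ with $\eta\eta^{-1}=p$ and putting $g:=\eta^{-1}x\eta\in G_0$ gives $\eta g\eta^{-1}=\eta\eta^{-1}x\eta\eta^{-1}=pxp=x$, which is surjectivity; injectivity is again freeness of the action. Hence $[\phi]\circ[\iota]$ is the required isomorphism of Banach-Lie groupoids.

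I expect the only genuinely delicate point to be the passage, in (ii) and (iii), from bijective groupoid morphisms to isomorphisms of \emph{Banach-Lie} groupoids, that is, smoothness of the inverses in the Banach category and the well-posedness of the conjugation quotient in (iii). This is exactly where the regularity of $\kappa,\kappa_2$ and the explicit chart formulas (\ref{othercharts})--(\ref{1yp}) do the real work, supplying the smooth local sections that make the quotient projections submersions and hence the induced maps diffeomorphisms; everything else reduces to routine manipulation of the identities $xx^{-1}=\Tt(x)$ and $x^{-1}x=\Ss(x)$.
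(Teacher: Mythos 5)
Your proposal is correct and follows essentially the same route as the paper: the paper's entire proof is the one-line observation that all arrows in (\ref{1propduzy}) are $G_0$-equivariant, combined with the facts established just before the proposition (the $G_0$-orbits coincide with the fibres of $\varphi$ and $\phi$, the actions (\ref{Gaction}) and (\ref{Gaction2}) are regular, and the quotients are Banach manifolds isomorphic to $\Ll_{p_0}(\M)$ and $\G_{p_0}(\M)$), and your computations are precisely the ``straightforward'' verifications this observation compresses. The only ingredient you make explicit that the paper leaves implicit is the conjugation action $(\eta,g)\cdot h=(\eta h,h^{-1}gh)$ on $P_0\rtimes G_0$, which is exactly what is needed for the $G_0$-equivariance of $\iota$ in part (iii).
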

\begin{proof} Straightforward  after observation that all arrows in (\ref{1propduzy}) are given by $G_0$-equivariant maps.
\end{proof}

We note that $\varphi=\pi_0:P_0\to \Ll_{p_0}(\M)$ and $\phi:P_0\times P_0\to \G_{p_0}(\M)$ are the projections on bases of the $G_0$-principal bundles.

The map $\phi:P_0\times P_0\ni (\eta,\xi)\mapsto \eta\xi^{-1}=x\in \G_{p_0}(\M)$ written in the coordinates (\ref{psipp})   and coordinates (\ref{119}) assumes the form 
		\be (y_p,z_{pp_0},\tilde y_{\tilde p},\tilde z_{\tilde pp_0})\mapsto (y_p,z_{pp_0}\tilde z_{\tilde pp_0}^{-1},\tilde y_{\tilde p})=(y_p,z_{p\tilde p},\tilde y_{\tilde p}). \ee

   In the subsequent considerations we will identify $\G_{p_0}(\M)\tto \Ll_{p_0}(\M)$  with the gauge groupoid $\frac{P_0\times P_0}{G_0}\tto P_0/G_0$. 
	
	\bigskip

	\section{Atiyah sequence of the principal bundle $P_0\to P_0/G_0$ }\label{sec:tangent}
	
	Following of \cite{OJS} we describe the Atiyah sequence of the principal bundle $P_0\to P_0/G_0$ as well as the principal bundle $P_0\times P_0\to \frac{P_0\times P_0}{G_0}$. 	Isomorphic realizations of the tangent groupoid $T\G_{p_0}\tto T\Ll_{p_0}(\M)$ will be presented in Proposition \ref{thm:22}. The atlases on $TP_0$ and on $T\G_{p_0}(\M)\tto T\Ll_{p_0}(\M)$ consistent with their bundle structures will be also described.

\bigskip

We start from the description of  the tangent group $TG_0$ of $G_0$ and the tangent bundle $TP_0$ of $P_0$. We will  identify  $TG_0$ with the semidirect product $p_0\M p_0\rtimes_{Ad_{G_0}} G_0$ of $G_0$ with its Lie algebra $T_e G_0\cong p_0\M p_0$ by 
\be TG_0\ni X_g\mapsto(TR_{g^{-1}}(g)X_g,g)=:(x,g)\in  p_0\M p_0\rtimes_{Ad_{G_0}}G_0.\ee
 Thus the group product $$ X_g\bullet Y_h=TL_g(h)Y_h+TR_h(g)X_g$$ of $X_g\in T_gG_0$ and  $Y_h\in T_h G_0$  can be written as
\be (x,g)\bullet (y,h)=(x+gyg^{-1},gh),\ee
where $x=TR_{g^{-1}}(g)X_g$ and $y=TR_{h^{-1}}(h)Y_h.$
One has the short exact sequence 
\be\label{short1} \{e\}\to T_eG_0\to TG_0\to G_0\to \{e\}\ee
of groups
which is isomorphic to the following one
\be \{e\}\to p_o\M p_0\to p_0\M p_0\rtimes_{Ad_{G_0}} G_0\to G_0\to \{e\}.\ee
We note that $T_eG_0\cong p_0\M p_0$ and $G_0$ are subgroups of $TG_0$.

The inclusion map $\iota:P_0\hookrightarrow \M p_0$ maps $P_0$ on the open subset of the Banach space $\M p_0$. So, its tangent map $T\iota:TP_0\stackrel{\sim}{\to}\M p_0\times P_0$ defines a chart on $TP_0$ with $(v,\eta)\in \M p_0\times  P_0$ as the coordinates of an element of $TP_0$.

The actions (\ref{Gaction}) and (\ref{Gaction2})  of $G_0$ on $P_0$ and on $P_0\times P_0$ define the corresponding actions of $TG_0$ on $TP_0$ and on $T(P_0\times P_0)$ which are given by 
  \be\label{tangent_action} T\kappa:TP_0\times TG_0 \ni ((\vartheta,\eta),(x,g))\mapsto (\vartheta g+\eta x g, \eta g)\in TP_0,\ee

and by
    \be\label{tangentprod2}  T\kappa_2:(TP_0\times TP_0)\times TG_0\ \ni\ \left((\vartheta,\eta),(\omega,\xi),(x,g)\right)\mapsto\\
		 \left((\vartheta g+\eta x g,\eta g),(\omega g+\xi x g,\xi g)\right)\ \in\ TP_0\times TP_0.\ee
    where $(\vartheta,\eta), \ (\omega,\xi) \in\M p_0\times P_0$ and  	$(x,g)\in 	p_0\M p_0\rtimes_{Ad_{G_0}} G_0$.
		Orbits of the normal subgroup $p_0\M p_0\cong T_eG_0\subset TG_0$ are the affine subspaces $\{(v+\eta x,\eta):x\in p_0\M p_0\}\subset T_\eta P_0$ of the tangent space $T_\eta P_0$ at $\eta\in P_0$. Thus the vertical tangent subspace  $T^VP_0\subset TP_0$ consists of the orbits generated from $(0,\eta)\in TP_0$, i.e. $T_\eta^V P_0=\{(\eta x,\eta):\ x\in p_0\M p_0\}$.
				
		It follows from (\ref{split}) that for any $\eta\in \pi_0^{-1}(\Pi_p)$ one has the Banach splitting 
		\be\label{1split} T_\eta P_0 \cong (q\M p_0\oplus (1-p)\M p_0)\times \{\eta\}\ee
		  of the tangent space $T_\eta P_0\cong \M p_0\times \{\eta\}$, where $q=\eta\eta^{-1}$, and $T_\eta^V P_0 \cong q\M p_0\times\{\eta\}=\eta \M p_0\times \{\eta\}$. Using (\ref{1split}) we decompose $(v,\eta)\in T(\pi_0^{-1}(\Pi_p))$ on $(v^V(q)+v^h(q),\eta)$ where $v^V(q)\in  q\M p_0$ and $v^h(q)\in (1-p)\M p_0$, and obtain in this way the local trivialization 
			\be \tau_p:T(\pi_0^{-1}(\Pi_p))\ni (v,\eta)\mapsto (\sigma_p^{-1}(q)v^V(q)+v^h(q),\eta)\in \left(p\M p_0\oplus (1-p)\M p_0\right)\times \pi_0^{-1}(\Pi_p)\ee
			of $TP_0$ 			which satisfies 
			\be \tau_p(T(\pi_0^{-1}(\Pi_p))\cap T^VP_0)=p\M p_0\times \pi_0^{-1}(\Pi_p).\ee
			Hence according to 7.5.1 in \cite{bou} the vertival bundle $T^VP_0$ is a Banach vector subbundle of $TP_0$. So, from 7.5.2 of \cite{bou} it follows that the quotient bundle $TP_0/T^VP_0\cong TP_0/T_eG_0$ is a Banach vector bundle over $P_0$. The above facts one summarizes as the short exact sequence 
			\be\label{short2} T^VP_0\longrightarrow TP_0\longrightarrow TP_0/T_eG_0\ee
			of the Banach vector bundles over $P_0$.
			
			In the following we will identify $p_0\M p_0\times  P_0$ with $T^VP_0$ by the vector bundle isomorphism
			\be\label{Iizo} I:p_0\M p_0\times  P_0\ni(x,\eta)\mapsto I(x,\eta):=(\eta x, \eta)\in T^VP_0.\ee
			Let us note here that the quotient map $A:TP_0\to TP_0/T_eG_0$ is defined as follows
			\be\label{Amorf}\M p_0\times P_0\ni (v,\eta)\mapsto A(v,\eta):=\{(v+\eta x,\eta):\ x\in p_0\M p_0\}.\ee
			
			The action (\ref{tangent_action}) restricted to the subgroup $G_0\subset TG_0$ is regular and preserves the structure of (\ref{short2}).  So, quotienting (\ref{short2}) by $G_0$ one obtains the short exact sequence of the Banach vector bundles 		
				  \unitlength=5mm \be\label{Atiyah11}\begin{picture}(11,4.6)
    \put(-3,4){\makebox(0,0){$p_0\M p_0\times _{Ad_{G_0}} P_0$}}
		\put(5,4){\makebox(0,0){$TP_0/G_0$}}
    \put(12,4){\makebox(0,0){$T(P_0/G_0)$}}
    \put(12,0){\makebox(0,0){$P_0/G_0$}}
    \put(5,0){\makebox(0,0){$P_0/G_0$}}
    \put(-2,0){\makebox(0,0){$P_0/G_0$}}
    
    \put(5,3){\vector(0,-1){2}}
       \put(-2,3){\vector(0,-1){2}}
       \put(12,3){\vector(0,-1){2}}
       \put(7,4){\vector(1,0){2.7}}
    \put(6.3,0){\vector(1,0){4}}
 \put(-0.8,0){\vector(1,0){4.5}}
 \put(0,4){\vector(1,0){3}}
    \put(8.5,4.4){\makebox(0,0){$a$}}
    \put(8.5,0.5){\makebox(0,0){$\sim$}}
\put(1.5,0.5){\makebox(0,0){$ \sim$}}
\put(1.5,4.4){\makebox(0,0){$\iota$}}
    \end{picture}\ee		
\bigskip\newline
		over $P_0/G_0$, which is the Atiyah sequence of the principal bundle $\pi_0:P_0\to P_0/G_0$.  One can find the definition of Atiyah sequence  of a principal bundle, e.g.  in \cite{Atiyah, mac}. In order to  obtain (\ref{Atiyah11}) we have used the bundles morphisms
		\be\label{1iota} \iota:=[I]:p_0\M p_0\times_{Ad_{G_0}}P_0\to T^VP_0/G_0,\ee
\be\label{1a} a:=[A]: TP_0/G_0\to (TP_0/T_{e}G_0)/G_0\cong TP_0/TG_0\cong T(P_0/G_0)\ee
which follow from (\ref{tangent_action}). 

The same argumentation applied to the action  of $G_0$ on $P_0\times P_0$  defined in (\ref{tangentprod2}) leads to the Atiyah sequence 
   \unitlength=5mm\be\label{Atiyah211}\begin{picture}(11,4.6)
    \put(-4,4){\makebox(0,0){$p_0\M p_0\times _{Ad_{G_0}}( P_0\times P_0)$}}
		\put(5,4){\makebox(0,0){$\frac{T( P_0\times P_0)}{G_0}$}}
    \put(12,4){\makebox(0,0){$T\left(\frac {P_0\times P_0}{G_0}\right)$}}
    \put(12,-1){\makebox(0,0){$\frac{P_0\times P_0}{G_0}$}}
    \put(5,-1){\makebox(0,0){$\frac{P_0\times P_0}{G_0}$}}
    \put(-2,-1){\makebox(0,0){$\frac{P_0\times P_0}{G_0}$}}
    
    \put(5,3){\vector(0,-1){3}}
       \put(-2,3){\vector(0,-1){3}}
       \put(12,3){\vector(0,-1){3}}
       \put(7,4){\vector(1,0){2.7}}
    \put(6.3,-1){\vector(1,0){4}}
 \put(-0.8,-1){\vector(1,0){4.5}}
 \put(0,4){\vector(1,0){3}}
    \put(8.5,4.4){\makebox(0,0){$a_2$}}
    \put(8.5,-0.5){\makebox(0,0){$\sim$}}
\put(1.5,-0.5){\makebox(0,0){$ \sim$}}
\put(1.5,4.4){\makebox(0,0){$\iota_2$}}
    \end{picture}\ee		
\bigskip\newline
of the $G_0$-principal bundle $\pi_{02}:P_0\times P_0\to \frac{P_0\times P_0}{G_0}$, where $\iota_2$ and $a_2$ are defined by the quotienting of 
\be\label{1I2} I_2: p_0\M p_0\times P_0\times P_0\ni(x,\eta,\xi)\mapsto (\eta x,\eta,\xi x,\xi)\in TP_0\times TP_0\ee
and
\be\label{1A2} A_2:TP_0\times TP_0\ni(v,\eta,w,\xi)\mapsto \{(v+\eta x,\eta,w+\xi x,\xi);\ x\in p_0\M p_0\}\in\frac{TP_0\times TP_0}{T_eG_0},\ee
 respectively.

\bigskip

The proposition given below presents the equivalent representations of the tangent groupoid $T\G_{p_0}(\M)\tto T\Ll_{p_0}(\M)$.
\begin{prop}\label{thm:22}
One has  the following groupoid  isomorphisms 
	  \unitlength=5mm\be\label{duzy}\begin{picture}(11,4.6)
    \put(5,4){\makebox(0,0){$T(\frac{P_0\times P_0}{G_0})$}}
    \put(12,4){\makebox(0,0){$T\G_{p_0}(\M)$}}
    \put(12,-1){\makebox(0,0){$T\Ll_{p_0}(\M).$}}
    \put(5,-1){\makebox(0,0){$T(\frac{P_0}{G_0})$}}
    \put(-2,-1){\makebox(0,0){$\frac{TP_0}{TG_0}$}}
    \put(-2,4){\makebox(0,0){$\frac{TP_0\times TP_0}{TG_0}$}}
    \put(5.2,3){\vector(0,-1){3}}
    \put(4.7,3){\vector(0,-1){3}}
    \put(-2.3,3){\vector(0,-1){3}}
    \put(-1.8,3){\vector(0,-1){3}}
    \put(12.2,3){\vector(0,-1){3}}
    \put(11.7,3){\vector(0,-1){3}}
    \put(7,4){\vector(1,0){2.7}}
    \put(6.3,-1){\vector(1,0){4}}
 \put(-0.8,-1){\vector(1,0){4.5}}
 \put(-0.5,4){\vector(1,0){3}}
      \put(8.5,4.4){\makebox(0,0){$T[\phi]$}}
    \put(8.5,-0.5){\makebox(0,0){$T[\varphi]$}}
\put(1.5,-0.5){\makebox(0,0){$ \delta$}}
\put(1.5,4.4){\makebox(0,0){$\Delta$}}
    \end{picture}\ee		
\bigskip\newline
\end{prop}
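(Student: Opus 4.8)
The plan is to obtain the two horizontal isomorphisms in (\ref{duzy}) by two separate mechanisms: the right-hand pair $(T[\phi],T[\varphi])$ by pure functoriality of the tangent functor, and the left-hand pair $(\Delta,\delta)$ by the identification of the tangent bundle of a principal-bundle quotient with the quotient of the tangent bundle by the tangent group, an identification already furnished by the Atiyah sequences (\ref{Atiyah11}) and (\ref{Atiyah211}).

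For the right-hand square I would apply the tangent functor $T$ to the groupoid isomorphism $([\phi],[\varphi])$ of Proposition \ref{prop:14}(ii). Since $T$ preserves products and the structure maps (source, target, unit, inversion, partial multiplication) of the gauge groupoid $\frac{P_0\times P_0}{G_0}\tto P_0/G_0$ are smooth, $T$ carries this Banach-Lie groupoid to its tangent groupoid and carries $([\phi],[\varphi])$ to $(T[\phi],T[\varphi])$; as $[\phi]$ and $[\varphi]$ are diffeomorphisms, so are their differentials, and the pair is automatically a groupoid isomorphism onto $T\G_{p_0}(\M)\tto T\Ll_{p_0}(\M)$ under the identification fixed at the end of Section \ref{sec:grou}. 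This step is essentially formal.

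For the left-hand square I would use that $\pi_0\colon P_0\to P_0/G_0$ is a $G_0$-principal bundle (the action (\ref{Gaction}) being regular), whose tangent prolongation $T\pi_0\colon TP_0\to T(P_0/G_0)$ is a $TG_0$-principal bundle; passing to quotients yields the diffeomorphism $\delta\colon TP_0/TG_0\stackrel{\sim}{\to}T(P_0/G_0)$, which is precisely the isomorphism $TP_0/TG_0\cong T(P_0/G_0)$ already recorded in (\ref{1a}). The analogous construction for $\pi_{02}\colon P_0\times P_0\to\frac{P_0\times P_0}{G_0}$, using the canonical identification $T(P_0\times P_0)\cong TP_0\times TP_0$ together with the map $A_2$ of (\ref{1A2}), produces $\Delta\colon \frac{TP_0\times TP_0}{TG_0}\stackrel{\sim}{\to}T\!\left(\frac{P_0\times P_0}{G_0}\right)$ occurring in (\ref{Atiyah211}). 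It then remains to check that the pair $(\Delta,\delta)$ intertwines the two groupoid structures: the source and target of $\frac{TP_0\times TP_0}{TG_0}\tto\frac{TP_0}{TG_0}$ are those induced from the tangent pair groupoid $TP_0\times TP_0\tto TP_0$, while the source and target of $T\!\left(\frac{P_0\times P_0}{G_0}\right)\tto T(P_0/G_0)$ are the differentials of the gauge-groupoid source and target; these match because the quotient projections are $TG_0$-equivariant and the tangent functor commutes with the realization of the gauge groupoid as the quotient of the pair groupoid.

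The genuinely delicate point, and the main obstacle, lies in the left-hand square: one must know that the $TG_0$-actions on $TP_0$ and on $TP_0\times TP_0$ defined in (\ref{tangent_action}) and (\ref{tangentprod2}) are regular in the Banach sense, so that $TP_0/TG_0$ and $\frac{TP_0\times TP_0}{TG_0}$ are honest Banach manifolds and $\delta,\Delta$ are diffeomorphisms rather than mere bijections. This regularity, together with the exactness of (\ref{short2}) as a sequence of Banach vector bundles, is already secured by the argument preceding (\ref{Atiyah11}), which invokes 7.5.1 and 7.5.2 of \cite{bou}. Granting it, the equivariance of $I_2$ and $A_2$ of (\ref{1I2}) and (\ref{1A2}) reduces the compatibility with the groupoid operations to a routine diagram chase requiring no further analytic input.
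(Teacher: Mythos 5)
Your proposal is correct and follows essentially the same route as the paper's own (much terser) proof: the right-hand square is obtained by applying the tangent functor to the gauge-groupoid isomorphism (\ref{gaugeisom}) of Proposition \ref{prop:14}(ii), and the left-hand square comes from the isomorphisms $\delta$ and $\Delta$ of (\ref{TGaction})--(\ref{Delta}) induced by the actions (\ref{tangent_action}) and (\ref{tangentprod2}), together with the canonical identification of $T(P_0\times P_0)\tto TP_0$ with the pair groupoid $TP_0\times TP_0\tto TP_0$. The only difference is that you make explicit the regularity and equivariance checks that the paper leaves implicit, correctly tracing them back to the argument preceding (\ref{Atiyah11}).
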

\begin{proof}
One has the canonically defined isomorphism between the tangent groupoid $T(P_0\times P_0)\tto TP_0$ of the pair groupoid $P_0\times P_0\tto P_0$ and the pair groupoid $TP_0\times TP_0\tto TP_0$. Therefore using vector bundles isomorphisms 
  \be\label{TGaction} \delta:TP_0/TG_0\stackrel{\sim}{\longrightarrow} T\left(P_0/G_0\right),\ee
	\be\label{Delta} \Delta:\frac{TP_0\times TP_0}{TG_0}\stackrel{\sim}{\longrightarrow} T\left(\frac{P_0\times P_0}{G_0}\right)\ee
	defined by the actions (\ref{tangent_action}) and (\ref{tangentprod2}) and applying the tangent functor to  the groupoid isomorphism given in (\ref{gaugeisom}) we obtain the isomorphisms mentioned in (\ref{duzy}). \end{proof}

		Now we  define the atlas on $TP_0$ consistent with the principal bundle structure of $P_0$. To this end we will use the charts  (\ref{othercharts}) defined by (\ref{eta}). In order to find the explicit formula for the chart $T\psi_p=T(\pi_0^{-1}(\Pi_p))\to (1-p)\M p\times p\M p_0\times (1-p)\M p\times \G^p_{p_0}(\M)$ tangent to $\psi_p$ we consider a smooth curve 
		\be\label{curve} ]-\varepsilon,\varepsilon[\ \ni\ t\mapsto \eta(t)=(p+y_p(t))z_{pp_0}(t)\ \in\  \pi_0^{-1}(\Pi_p),\ee
		such that $\eta(0)=\eta\in P_0$. Differentiating (\ref{curve}) with respect the parameter $t$ at $t=0$ and defining 
		
			\be\label{ap1}   a_p:=\frac{d}{dt} y_p(t)|_{t=0}\in (1-p)\M p\ee
			 \be\label{bp1} b_p:=\frac{d}{dt} z_{pp_0}(t)|_{t=0} z_{pp_0}^{-1}\in p\M p\ee
			we obtain 
			\be\label{v} v=(a_p+(p+y_p)b_p)z_{pp_0}.\ee
			Let us note here that equalities $\eta(0)=\eta$ and 
				\be\label{v2} v=\frac{d}{dt}\eta(t)|_{t=0}\in \M p_0\ee 
				define the bundle isomorphism $TP_0\cong \M p_0\times P_0$.
				
				By simple calculations we can invert formulas (\ref{eta}) and (\ref{v}) obtaining 
		\be\label{transa} a_p=(v-\eta(p\eta)^{-1}v)(p\eta)^{-1},\ee
\be\label{transb}  b_p=pv(p\eta)^{-1}.\qquad\qquad\qquad\ee
Therefore  the formulas (\ref{transa}) and (\ref{transb}) taken togather with (\ref{1yp}) define the chart
\be\label{ch} T(\pi_0^{-1}(\Pi_p))\ni(v,\eta)\mapsto T\psi_p(v,\eta)=(a_p,b_p,y_p,z_{pp_0})\ee
tangent to the chart $( \pi_0^{-1}(\Pi_p),\psi_p)$ defined in (\ref{1yp}). 

Note here that in (\ref{v}) we used $b_p$  instead of $\frac{d}{dt} z_{pp_0}(t)|_{t=0}$.		
Note also that the product $p\eta$ of $p$ and $\eta$ is not the product in sense of the groupoid $\G(\M)\tto\Ll(\M)$ multiplication. However, $p\eta\in \G(\M)$ and thus  one can take  its groupoid inverse $(p\eta)^{-1}$.

In order to find the transition map $(T\psi_{p'}\circ T\psi_p^{-1}):(a_p,b_p,y_p,z_{pp_0})\mapsto(a_{p'},b_{p'},y_{p'},z_{p'p_0})$ we will use for $(v,\eta)\in (T(\pi_0^{-1}(\Pi_p\cap \Pi_{p'})$ the equalities:
\be\label{tran1} \eta=(p+y_p)z_{pp_0}=(p'+y_{p'})z_{{p'}p_0}\qquad\qquad\qquad\ee
\be\label{tran2} v=[a_p+(p+y_p)b_p]z_{pp_0}=[a_{p'}+(p'+y_{p'})b_{p'}]z_{p'p_0}\ee
which follow from (\ref{eta}) and (\ref{v}), respectively. Solving equations (\ref{tran1}) and (\ref{tran2}) with respect to $(a_{p'},b_{p'},y_{p'},z_{p'p_0})$ we obtain:
\be\label{2transa} a_{p'}=(d-(b+dy_p)(a+cy_p)^{-1})a_p(a+cy_p)^{-1}\ee
\be\label{2transb} b_{p'}=(ca_p+(a+cy_p)b_p)(a+cy_p)^{-1}\qquad\qquad\ee
\be\label{2transy}  y_{p'}=(b+dy_p)(a+cy_p)^{-1}\qquad\qquad\qquad\qquad\ee
\be z_{p'p_0}=(a+cy_p)z_{pp_0}.\qquad\qquad\qquad\qquad\quad\qquad\ee

The chart (\ref{ch}) is equivariant with respect to the action of $G_0$ on $TP_0$ defined as the restriction of  (\ref{tangent_action})  to the subgroup $G_0\subset TG_0$ and the action on $(1-p)\M p\times p\M p\times (1-p)\M p\times p\M p_0$ is defined for $g\in G_0$ by $(a_p,b_p,y_p,z_{pp_0})\mapsto (a_p,b_p,y_p,z_{pp_0}g)$. So, after quotienting by $G_0$ one defines the chart 
\be T(\pi_0^{-1}(\Pi_p))/_{G_0}\ni\langle v,\eta\rangle\mapsto [T\psi_p](\langle v,\eta\rangle)=(a_p,b_p,y_p).\ee
The transition map $[T\psi_{p'}]\circ [T\psi_p]^{-1}:(a_p,b_p,y_p)\mapsto (a_{p'},b_{p'},y_{p'})$ between these charts is given by (\ref{2transa}-\ref{2transy}).

We end this section describing the atlas on $T\G_{p_0}(\M)\tto T\Ll_{p_0}(\M)$ consistent with the one defined by (\ref{psippmap}) and (\ref{psipp}). So, additionaly to (\ref{curve}) we define 
\be \tilde a_{\tilde p}:=\frac{d}{dt}\tilde y_{\tilde p}(t)|_{t=0}\in (1-\tilde p)\M \tilde p,\ee
\be\label{bptp}  b_{p\tilde p}:=\frac{d}{dt} z_{p\tilde p}(t)|_{t=0}\in p\M \tilde p,\ee
where $]-\varepsilon,\varepsilon[\ni t\mapsto x(t)=(p+y_p(t))z_{p\tilde p(t)}(\tilde p+\tilde y_{\tilde p}(t))^{-1}\in \Omega _{p\tilde p}$. Hence we obtain  the atlas on $T\G_{p_0}(\M)$ given by the coordinates $(a_p,  b_{p\tilde p}, \tilde a_{\tilde p}, y_p, z_{p\tilde p}, \tilde y_{\tilde p}, )$   tangent to $(y_p, z_{p\tilde p}, \tilde y_{\tilde p})$, enumerated by $(p,\tilde p)\in \Ll_{p_0}(\M)\times \Ll_{p_0}(\M)$. Using the tangent map of the transition map presented in  (\ref{atlas2})  one obtains the map
\be\begin{array}{l}  a_{p'}=da_p(a+cy_p)^{-1}-(b+dy_p)(a+cy_p)^{-1}ca_p(a+cy_p)^{-1}\\
 b_{p'\tilde p'}=ca_pz_{p\tilde p}(\tilde a+\tilde c\tilde y_{\tilde p})^{-1}+(a+cy_p)b_{p\tilde p}(\tilde a+\tilde c\tilde y_{\tilde p})^{-1}-(a+cy_p)z_{p\tilde p}(\tilde a+\tilde c\tilde y_{\tilde p})^{-1}\tilde c\tilde a_{\tilde p}(\tilde a+\tilde c\tilde y_{\tilde p})^{-1}\\
 \tilde  a_{\tilde p'}=\tilde d\tilde a_{\tilde p}(\tilde a+\tilde c\tilde y_{\tilde p})^{-1}-(\tilde b+\tilde d\tilde y_{\tilde p})(\tilde a+\tilde c\tilde y_{\tilde p})^{-1}\tilde c\tilde a_{\tilde p}(\tilde a+\tilde c\tilde y_{\tilde p})^{-1}\end{array}\ee 
which  together with (\ref{atlas2}) gives the  transition map from the coordinates $(a_p,  b_{p\tilde p}, \tilde a_{\tilde p}, y_{p}, z_{p\tilde p}, \tilde y_{\tilde p})$ to the coordinates $( a_{p'},  b_{p'\tilde p'}, \tilde a_{\tilde p'}, y_{p'}, z_{p'\tilde p'}, \tilde y_{\tilde p'})$. The above formulas will be used in the sequel for the coordinate expressions of the various structures and dependences between of them.


\section{Atiyah sequence of the groupoid $\G(\M)\tto\Ll(\M)$ }\label{sec:Atiyah}
In this section we discuss  some questions which arise in a natural way when one considers the algebroid $\A\G(\M)$ of the groupoid $\G(\M)\tto\Ll(\M)$ of partially invertible elements of a $W^*$-algebra $\M$.

We start by defining the following Banach vectors bundles over the lattice $\mathcal L(\M)$ of the orthogonal projectors of $\M$:
\be \A(\M):=\{(x,q)\in \M\times\mathcal L(\M):\quad x\in q\M q\}\ee		
\be\mathcal M^L(\M):=\{(x,q)\in \M\times\mathcal L(\M):\quad x\in\M q\}\ee
\be \T(\M):=\{(x,q)\in \M\times\mathcal L(\M):\quad x\in (1-q)\M q\}.\ee

From the Banach splitting
\be\label{split2} \M q=q\M q\oplus (1-q)\M q\ee
of the left ideal $\M q$ of $\M$ taken for any $q\in \Ll(\M)$ we find that these bundles form the following short exact sequence 
 \unitlength=5mm \be\label{Atiyah1}\begin{picture}(11,4.6)
    \put(-2,4){\makebox(0,0){$\A(\M)$}}
    \put(5,4){\makebox(0,0){$\mathcal M^L(\M)$}}
    \put(12,4){\makebox(0,0){$\T(\M)$}}
    \put(12,0){\makebox(0,0){$\mathcal L(\M),$}}
    \put(5,0){\makebox(0,0){$\mathcal L(\M)$}}
    \put(-2,0){\makebox(0,0){$\mathcal L(\M)$}}

    \put(-2,3){\vector(0,-1){2}}
    \put(5,3){\vector(0,-1){2}}
    \put(12,3){\vector(0,-1){2}}
    \put(7,4){\vector(1,0){2.7}}
    \put(6.3,0){\vector(1,0){4}}
 \put(-0.8,0){\vector(1,0){4.5}}
 \put(-0.5,4){\vector(1,0){3}}
    \put(1.5,4.4){\makebox(0,0){$\iota$}}
     \put(8.5,4.4){\makebox(0,0){$a$}}
    \put(8.5,0.5){\makebox(0,0){$\sim$}}
\put(1.5,0.5){\makebox(0,0){$ \sim$}}
  \put(-1,2){\makebox(0,0){$\pi_\mathcal{A}$}}
    \put(6,2){\makebox(0,0){$\pi_\mathcal{M}$}}
\put(13,2){\makebox(0,0){$ \pi_\mathcal{T}$}}
    \end{picture}\ee
where the bundle monomorphism $\iota:\A(\M)\to\mathcal M^L(\M)$ and the bundle epimorphism $a:\mathcal M^L(\M)\to \T(\M)$ are   defined by the inclusions $q\M q\hookrightarrow\M q$ and the projections $\M q\to (1-q)\M q$  of fibres given by the splitting (\ref{split2}). All  projections on the base in (\ref{Atiyah1}) are defined as the projections of the product $\M\times \mathcal L(\M)$ on its second component.

One has the short exact sequence of Banach-Lie groupoids
 \unitlength=5mm \be\label{Atiyah2}\begin{picture}(11,4.6)
    \put(-2,4){\makebox(0,0){$\J(\M)$}}
    \put(5,4){\makebox(0,0){$\G(\M)$}}
    \put(12,4){\makebox(0,0){$\Ll(\M)\times_{\mathcal{R}} \Ll(\M)$}}
    \put(12,0){\makebox(0,0){$\mathcal L(\M),$}}
    \put(5,0){\makebox(0,0){$\mathcal L(\M)$}}
    \put(-2,0){\makebox(0,0){$\mathcal L(\M)$}}

    \put(-2.1,3){\vector(0,-1){2}}
    \put(5.1,3){\vector(0,-1){2}}
    \put(11.9,3){\vector(0,-1){2}}
    \put(-1.9,3){\vector(0,-1){2}}
    \put(4.9,3){\vector(0,-1){2}}
    \put(12.1,3){\vector(0,-1){2}}
    \put(7,4){\vector(1,0){2}}
    \put(6.3,0){\vector(1,0){4}}
 \put(-0.8,0){\vector(1,0){4.5}}
 \put(-0.5,4){\vector(1,0){4}}
    \put(1.5,4.4){\makebox(0,0){$\hookrightarrow$}}
     \put(8.3,4.4){\makebox(0,0){$(\Tt,\Ss)$}}
    \put(8.5,0.5){\makebox(0,0){$\sim$}}
\put(1.5,0.5){\makebox(0,0){$ \sim$}}
    \end{picture}\ee
		 where  $\J(\M)\tto \Ll(\M)$ is the inner subgroupoid of the groupoid $\G(\M)\tto\Ll(\M)$ defined  as usually by
    \be\label{innergr} \J(\M):=\bigcup_{q\in \mathcal L(\M)}\left(\Ss^{-1}(q)\cap\Tt^{-1}(q)\right).\ee
		The groupoid 
		$\Ll(\M)\times_{\mathcal{R}} \Ll(\M):=\{ (q,p)\in \Ll(\M)\times\Ll(\M):\ \ q\sim p\}$ is a subgroupoid of the pair groupoid $\mathcal L(\M)\times \mathcal L(\M)\tto \mathcal L(\M)$.  Recall that the equivalence relation $q\sim p$ is the Murray-von Neumann equivalence of projections.
		Recall also that the inner groupoid $\J(\M)\tto \Ll(\M)$  is totally intransitive. All morphisms between the objects of the diagram are smooth maps with respect to their Banach manifold structures. So, one can consider (\ref{Atiyah2}) as a short exact sequence of Banach-Lie groupoids.
		
		\bigskip

    Since the construction of the algebroid of a Banach-Lie groupoid has functorial property one obtains from (\ref{Atiyah2}) the short exact sequence  of corresponding Banach-Lie algebroids
     \unitlength=5mm \be\label{Atiyahalgebr}\begin{picture}(11,4.6)
    \put(-2,4){\makebox(0,0){$\A\J(\M)$}}
    \put(5,4){\makebox(0,0){$\A\G(\M)$}}
    \put(12,4){\makebox(0,0){$T\Ll(\M)$}}
    \put(12,0){\makebox(0,0){$\mathcal L(\M).$}}
    \put(5,0){\makebox(0,0){$\mathcal L(\M)$}}
    \put(-2,0){\makebox(0,0){$\mathcal L(\M)$}}

    \put(-2,3){\vector(0,-1){2}}
    \put(5,3){\vector(0,-1){2}}
    \put(12,3){\vector(0,-1){2}}
    \put(6.3,0){\vector(1,0){4.5}}
 \put(-0.8,0){\vector(1,0){4.5}}
 \put(-0.5,4){\vector(1,0){3.4}}
  \put(6.3,4){\vector(1,0){4.4}}
    \put(1.5,4.4){\makebox(0,0){$\iota$}}
     \put(8.3,4.6){\makebox(0,0){$a$}}
    \put(8.5,0.5){\makebox(0,0){$\sim$}}
\put(1.5,0.5){\makebox(0,0){$ \sim$}}
    \end{picture}\ee

    Note here that the tangent bundle $T\Ll(\M)\to \Ll(\M)$ is the algebroid of the pair groupoid $\Ll(\M)\times \Ll(\M)\tto \Ll(\M)$. 
		\begin{prop}\label{rem:31} The short exact sequences (\ref{Atiyah1}) and (\ref{Atiyahalgebr}) of the Banach vector bundles are isomorphic in a canonical way.
		\end{prop}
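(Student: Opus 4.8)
The plan is to construct an explicit isomorphism between the two short exact sequences
\begin{equation}
\A(\M)\xrightarrow{\ \iota\ }\mathcal M^L(\M)\xrightarrow{\ a\ }\T(\M)\notag
\end{equation}
and
\begin{equation}
\A\J(\M)\xrightarrow{\ \iota\ }\A\G(\M)\xrightarrow{\ a\ }T\Ll(\M)\notag
\end{equation}
over the common base $\Ll(\M)$, and then to verify that the resulting vertical arrows intertwine the horizontal morphisms of the two sequences, i.e.\ that the two ladder squares commute and the vertical maps are vector bundle isomorphisms. First I would identify each algebroid with a concrete bundle. The right-hand identification $\A(\Ll(\M)\times\Ll(\M))\cong T\Ll(\M)$ is already noted in the excerpt (the tangent bundle is the algebroid of the pair groupoid), and the Murray--von Neumann subgroupoid $\Ll(\M)\times_{\mathcal R}\Ll(\M)$ being open in the pair groupoid shares its algebroid, so the identification $T\Ll(\M)\cong\T(\M)$ is exactly the chart-level statement that in the coordinates $\varphi_p$ the tangent fibre $T_p\Pi_p$ is $(1-p)\M p$. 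This is precisely the vector space appearing as the fibre of $\T(\M)$ over $p$, so this square is essentially the differential of the chart $\varphi_p$ at $p$.

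Next I would treat the middle and left terms. The algebroid $\A\G(\M)$ is computed as the restriction to the identity section of the vertical (source-kernel) tangent bundle of $\G(\M)\tto\Ll(\M)$; using the coordinates $\psi_{p\tilde p}$ from (\ref{psipp}) and the tangent coordinates $(a_p,b_{p\tilde p},\tilde a_{\tilde p},\dots)$ introduced in Section \ref{sec:tangent}, I would differentiate the source-fibre at a point of the unit section $q\in\Ll(\M)$ and read off that the algebroid fibre $\A\G(\M)_q$ is naturally $\M q$, matching the fibre $\M q$ of $\mathcal M^L(\M)$. Likewise $\A\J(\M)_q$, coming from the inner (totally intransitive) subgroupoid whose source fibres sit inside $q\M q$, is canonically $q\M q$, which is the fibre of $\A(\M)$. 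Thus the three vertical identifications are the fibrewise equalities $q\M q=q\M q$, $\M q=\M q$, $(1-q)\M q=(1-q)\M q$ arising from the same splitting (\ref{split2}) that defines the bottom sequence (\ref{Atiyah1}).

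With the fibre identifications in place, commutativity of the two squares is then a direct matter: the left square commutes because the algebroid monomorphism $\iota$ induced by the functor from the inclusion $\J(\M)\hookrightarrow\G(\M)$ is, in these coordinates, exactly the inclusion $q\M q\hookrightarrow\M q$, and the right square commutes because the algebroid anchor $a$ (the differential of $(\Tt,\Ss)$, equivalently the projection onto the off-diagonal block coming from (\ref{split2})) is exactly the fibrewise projection $\M q\to(1-q)\M q$ defining the bottom $a$. Finally I would record smoothness: the identifications are given in each chart by linear isomorphisms depending holomorphically on the chart data, so they assemble into vector bundle isomorphisms and the diagram is an isomorphism of short exact sequences of Banach vector bundles.

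The main obstacle I expect is making the identification of the algebroids $\A\G(\M)$ and $\A\J(\M)$ with the concrete bundles $\mathcal M^L(\M)$ and $\A(\M)$ genuinely \emph{canonical}, i.e.\ chart-independent. The definitions via source-fibre derivatives produce the correct fibres in each chart, but one must check that the tangent transition maps computed in Section \ref{sec:tangent} (the formulas for $a_{p'},b_{p'\tilde p'},\tilde a_{\tilde p'}$) agree, after restriction to the unit section and to the inner part, with the transition maps of $\mathcal M^L(\M)$ and $\A(\M)$ induced by the splitting (\ref{split2}); this is where the bookkeeping of the $q\M q$ versus $(1-q)\M q$ blocks under the change-of-chart conjugations must be carried out carefully. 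Once that compatibility is confirmed the naturality, and hence the canonicity of the isomorphism, follows.
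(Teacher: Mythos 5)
Your proposal is correct, and its overall strategy is the same as the paper's: identify each of the three algebroids fibrewise with the corresponding concrete bundle via tangents to source fibres at the units, and then observe that under these identifications $\iota$ and $a$ become the inclusion $q\M q\hookrightarrow \M q$ and the projection $\M q\to (1-q)\M q$ coming from the splitting (\ref{split2}). The genuine difference lies in how canonicity is obtained. You compute the algebroid fibres in the charts $\psi_{p\tilde p}$ of (\ref{psipp}), and therefore you must, as you yourself flag, check that the tangent transition maps restricted to the unit section reproduce the transition maps of $\mathcal M^L(\M)$ and $\A(\M)$; this check is genuinely needed in your set-up, since in a chart with $p,\tilde p\neq q$ the fibre at the unit $q$ appears as $(1-p)\M p\oplus p\M\tilde p$ rather than as $\M q$, and only the chart data relate the two. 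The paper avoids this bookkeeping altogether: because $\G(\M)$ sits inside $\M$, the map $x\mapsto (x,\Ss(x))$ of diagram (\ref{inclusion}) exhibits each source fibre $\Ss^{-1}(q)$ as an open subset of the Banach space $\M q$, so $T_q(\Ss^{-1}(q))\cong \M q$ canonically and chart-free; likewise $\Ss^{-1}(q)\cap\Tt^{-1}(q)=G(q\M q)$ is open in $q\M q$, giving $\A\J(\M)\cong\A(\M)$ via diagram (\ref{inclusion2}). Only the identification $T\Ll(\M)\cong\T(\M)$ uses the charts $\varphi_q$, and there canonicity comes from the atlas itself being canonical. So your ``main obstacle'' dissolves if you replace the chart computation by this linear-inclusion argument. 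What your route buys in exchange is an explicit verification that the two ladder squares commute (the paper leaves this implicit), and your observation that $\Ll(\M)\times_{\mathcal R}\Ll(\M)$ is open in the pair groupoid and hence shares its algebroid $T\Ll(\M)$ is a clean justification that the paper does not spell out.
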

		\begin{proof} One has a canonical inclusion of the bundles 
 \unitlength=5mm \be\label{inclusion}\begin{picture}(11,4.6)
    \put(1,4){\makebox(0,0){$\G(\M)$}}
    \put(8,4){\makebox(0,0){$\mathcal{M}^L(\M)$}}
    \put(1,0){\makebox(0,0){$\Ll(\M)$}}
    \put(8,0){\makebox(0,0){$\Ll(\M)$}}
     \put(0.7,3){\vector(0,-1){2}}
    \put(8.2,3){\vector(0,-1){2}}
       \put(2.5,4){\vector(1,0){3.7}}
    \put(2.7,0){\vector(1,0){3.7}}
    \put(0.2,2){\makebox(0,0){$\Ss$}}
      \put(9,2){\makebox(0,0){$\pi_\mathcal{M}$}}
     \put(4.5,4.3){\makebox(0,0){$\iota$}}
    \put(4.5,0.5){\makebox(0,0){$ id $}}
    \end{picture}\ee
    \newline
    defined by $\iota(x):=(x,\Ss(x))$, where the source map fibre $\Ss^{-1}(q)$ of 	$q\in \mathcal L(\M)$	 is an open subset of  the fibre $\pi_\mathcal{M}^{-1}(q)=\M q$. Thus one obtains the isomorphisms $T_{q}(\Ss^{-1}(q))\cong\M q$ of the space $T_{q}(\Ss^{-1}(q))$ tangent to $\Ss^{-1}(q)$ at $q$ with the fibre $\pi_\mathcal{M}^{-1}(q)$, for details see Proposition 5.2 in \cite{OJS}. From the above  we conclude that $\mathcal M^L(\M)){\to}\mathcal L(\M)$ is isomorphic  with the algebroid $\A\G(\M)\to\Ll(\M)$ of the groupoid $\G(\M)\tto\Ll(\M)$. 
    
		

   The  inner subgroupoid $\J(\M)\tto\Ll(\M)$ can be considered as a bundle $\Ss:\J(\M)\to\Ll(\M)$ of groups $\Ss^{-1}(q)\cap\Tt^{-1}(q)=G(q\M q)$. Similarly to (\ref{inclusion}) one has
    \unitlength=5mm \be\label{inclusion2}\begin{picture}(11,4.6)
    \put(1,4){\makebox(0,0){$\J(\M)$}}
    \put(8,4){\makebox(0,0){$\A(\M)$}}
    \put(1,0){\makebox(0,0){$\Ll(\M)$}}
    \put(8,0){\makebox(0,0){$\Ll(\M)$}}
     \put(0.7,3){\vector(0,-1){2}}
    \put(8.2,3){\vector(0,-1){2}}
       \put(2.5,4){\vector(1,0){3.7}}
    \put(2.7,0){\vector(1,0){3.7}}
    \put(0.2,2){\makebox(0,0){$\Ss$}}
      \put(9,2){\makebox(0,0){$\pi_\A$}}
     \put(4.5,4.3){\makebox(0,0){$\iota$}}
    \put(4.5,0.5){\makebox(0,0){$id $}}
    \end{picture}\ee
    \newline
    where $\Ss^{-1}(q)=G(q\M q)$ is an open subset of $q\M q=\pi_\A^{-1}(q)$. So, using the same arguments as for (\ref{inclusion}), we conclude that $\A(\M)\to\Ll(\M)$ is isomorphic with the algebroid $\A\J(\M)\tto \Ll(\M)$ of the inner subgroupoid.

     Let $]-\varepsilon, \varepsilon[\ni t\mapsto q(t)\in \Pi_q$ be a smooth curve through the point $q=q(0)$. Because of 
		$ a_q:=\frac{d}{dt}\varphi_q(q(t))|_{t=0}\in (1-q)\M q$ one obtains the isomorphism $T_q\Ll(\M)\cong (1-q)\M q$ for any $q\in \Ll(\M)$. Taking into account that the atlas  $(\Pi_q,\varphi_q:\Pi\to (1-q)\M q)$, $q\in \Ll(M)$,  is defined in a canonical way one has the canonical isomorphism between  $\T(\M)\to \Ll(\M)$ and $T\Ll(\M)\to \Ll(\M)$.
		\end{proof}
		Let us also mention that  two of the Banach vector bundles included in (3.5) are equipped with some additional structures:
		
	(i) the bundle $\pi_{\mathcal M}:\mathcal M^L(\M))\to\Ll(\M)$  is a bundle of the left $\M$-modules;
	
	(ii) the bundle $\pi_{\mathcal A}:\mathcal A(\M))\to\Ll(\M)$  is a bundle of $W^*$-algebras.
	
	All above statements are valid in the case if one takes the subgroupoid $\G_{p_0}(\M)\tto \Ll_{p_0}(\M)$ instead of $\G(\M)\tto \Ll(\M)$.
	
	\begin{prop}\label{prop:32}  The Atiyah sequence (\ref{Atiyah11}) is isomorphic with the short exact sequence of the Banach-Lie algebroids
	 \unitlength=5mm \be\label{prop:Atiyah32}\begin{picture}(11,4.6)
    \put(-2.8,4){\makebox(0,0){$\A\J_{p_0}(\M)$}}
    \put(5,4){\makebox(0,0){$\A\G_{p_0}(\M)$}}
    \put(12,4){\makebox(0,0){$\T\Ll_{p_0}(\M)$}}
    \put(12,0){\makebox(0,0){$\Ll_{p_0}(\M),$}}
    \put(5,0){\makebox(0,0){$\Ll_{p_0}(\M)$}}
    \put(-2,0){\makebox(0,0){$\Ll_{p_0}(\M)$}}

    \put(-2,3){\vector(0,-1){2}}
    \put(5,3){\vector(0,-1){2}}
    \put(12,3){\vector(0,-1){2}}
    \put(6.6,0){\vector(1,0){4}}
 \put(-0.8,0){\vector(1,0){4.5}}
 \put(-0.6,4){\vector(1,0){4}}
  \put(6.6,4){\vector(1,0){3.6}}
    \put(1.5,4.4){\makebox(0,0){$\ $}}
     \put(8.3,4.6){\makebox(0,0){$  \ $}}
    \put(8.5,0.5){\makebox(0,0){$\sim$}}
\put(1.5,0.5){\makebox(0,0){$ \sim$}}
    \end{picture}\ee
	\end{prop}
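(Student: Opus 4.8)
The plan is to deduce the isomorphism of the two short exact sequences from the identification of $\G_{p_0}(\M)\tto\Ll_{p_0}(\M)$ with the gauge groupoid $\frac{P_0\times P_0}{G_0}\tto P_0/G_0$ established in Proposition \ref{prop:14}, together with the functoriality of the Banach-Lie algebroid construction already invoked to pass from (\ref{Atiyah2}) to (\ref{Atiyahalgebr}). First I would restrict the groupoid short exact sequence (\ref{Atiyah2}) to the transitive component over $\Ll_{p_0}(\M)$. Here the base groupoid $\Ll_{p_0}(\M)\times_{\mathcal{R}}\Ll_{p_0}(\M)$ is in fact the full pair groupoid of $\Ll_{p_0}(\M)$: since every $q\in\Ll_{p_0}(\M)$ satisfies $q\sim p_0$, any two such projections are Murray--von Neumann equivalent, so the constraint $q\sim p$ is vacuous. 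Using $\J_{p_0}(\M)\cong\frac{P_0\rtimes G_0}{G_0}$, $\G_{p_0}(\M)\cong\frac{P_0\times P_0}{G_0}$ and $\Ll_{p_0}(\M)\cong P_0/G_0$ from Proposition \ref{prop:14}, the restricted groupoid sequence becomes the canonical sequence of Banach-Lie groupoids
\[\frac{P_0\rtimes G_0}{G_0}\ \longrightarrow\ \frac{P_0\times P_0}{G_0}\ \longrightarrow\ (P_0/G_0)\times (P_0/G_0),\]
consisting of the gauge group bundle, the gauge groupoid and the pair groupoid of $P_0/G_0$.

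Next I would apply the Lie algebroid functor to this sequence. By the same argument that produced (\ref{Atiyahalgebr}) from (\ref{Atiyah2}) the functor yields a short exact sequence of Banach-Lie algebroids, and by functoriality this sequence is isomorphic to (\ref{prop:Atiyah32}). It then remains to identify the three resulting algebroids with the three bundles of (\ref{Atiyah11}) and to match the arrows. For the middle term this is the statement that the Lie algebroid of the gauge groupoid $\frac{P_0\times P_0}{G_0}$ is the Atiyah algebroid $TP_0/G_0$; for the right term, that the algebroid of the pair groupoid $(P_0/G_0)\times(P_0/G_0)$ is $T(P_0/G_0)\cong T\Ll_{p_0}(\M)$; and for the left term, that the algebroid of the bundle of groups $\frac{P_0\rtimes G_0}{G_0}$ is the associated bundle of Lie algebras $p_0\M p_0\times_{Ad_{G_0}}P_0$. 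Since (\ref{Atiyah11}) was itself built in Section \ref{sec:tangent} as the $G_0$-quotient of (\ref{short2}), it is by construction the Atiyah algebroid sequence of the principal bundle $\pi_0:P_0\to P_0/G_0$, which is exactly the algebroid of the gauge groupoid, see \cite{Atiyah, mac}; the quotient bundle isomorphisms $\delta$ and $\Delta$ of Proposition \ref{thm:22} and the morphisms $\iota=[I]$, $a=[A]$ of (\ref{1iota})--(\ref{1a}) supply the needed bundle identifications.

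Finally I would verify that the arrows agree. Concretely, the differential at the unit section of the inclusion $\J_{p_0}(\M)\hookrightarrow\G_{p_0}(\M)$ should be shown to coincide with $\iota=[I]$ of (\ref{1iota}), and the differential of $(\Tt,\Ss)$, equivalently of $\phi$, with the anchor $a=[A]$ of (\ref{1a}); this reduces to differentiating the $G_0$-equivariant maps $\phi$ and $\varphi$ of (\ref{1gauge})--(\ref{2gauge}) along the source fibres and comparing with (\ref{Iizo}) and (\ref{Amorf}). I expect the main obstacle to be precisely this last identification in the Banach setting: confirming that the Lie algebroid of the gauge groupoid really is $TP_0/G_0$ and that the Lie functor carries the groupoid inclusion and projection to the bundle maps $\iota$ and $a$, with the anchor and the Lie bracket transported from $\A\G_{p_0}(\M)$ matching those of the Atiyah algebroid. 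Because the relevant quotient bundles, their atlases and the tangent-prolongation isomorphisms were already constructed in Section \ref{sec:tangent}, this amounts to a compatibility check of the defining formulas rather than any new analytic input.
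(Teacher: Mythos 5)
Your proposal is correct, but it is organized differently from the paper's own proof. The paper proves Proposition \ref{prop:32} in one step, by writing down explicit bundle isomorphisms in the noncommutative coordinates: using the realizations $\A\J_{p_0}(\M)\cong\A_{p_0}(\M)$, $\A\G_{p_0}(\M)\cong\mathcal{M}^L_{p_0}(\M)$ and $T\Ll_{p_0}(\M)\cong\T_{p_0}(\M)$ from Proposition \ref{rem:31}, it defines $I_\A\langle x,\eta\rangle:=(\eta x\eta^{-1},\eta\eta^{-1})$, $I_{\mathcal M}\langle\vartheta,\eta\rangle:=(\vartheta\eta^{-1},\eta\eta^{-1})$ and $I_\T\langle\langle\vartheta,\eta\rangle\rangle:=((1-\eta\eta^{-1})\vartheta\eta^{-1},\eta\eta^{-1})$, and checks that these commute with the horizontal arrows of (\ref{Atiyah11}) and (\ref{prop:Atiyah32}). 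You instead argue structurally: restrict (\ref{Atiyah2}) to $\Ll_{p_0}(\M)$, transport the restricted sequence through the groupoid isomorphisms of Proposition \ref{prop:14}, apply the functorial algebroid construction, and reduce everything to the identification of the algebroid of the gauge groupoid with the Atiyah algebroid $TP_0/G_0$. Both routes rest on the same preparatory material (Propositions \ref{prop:14}, \ref{thm:22}, \ref{rem:31}), and they converge at the end: carrying out your final ``compatibility check'' --- differentiating $\phi(\eta,\xi)=\eta\xi^{-1}$ and $\varphi(\eta)=\eta\eta^{-1}$ along the source fibres at the units, e.g. $\frac{d}{dt}(\eta+t\vartheta)\eta^{-1}|_{t=0}=\vartheta\eta^{-1}$ --- produces precisely the formulas $I_{\mathcal M}$, $I_\A$, $I_\T$ above. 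What your version buys is conceptual economy: exactness and the matching of arrows follow from functoriality plus one standard fact, rather than being verified by hand. What the paper's version buys is the explicit coordinate form of the isomorphisms, which is not incidental to the rest of the paper: the same formulas reappear in predualized form ($I_{*\A}$, $I_{*\mathcal M}$, $I_{*\T}$) in Proposition \ref{prop41}, where no functorial shortcut exists because the predual bundles are not algebroids of any groupoid. The only ingredient of your plan that is not literally available off the shelf in the Banach setting --- that the algebroid of $\frac{P_0\times P_0}{G_0}$ is $TP_0/G_0$ with $\iota=[I]$ and $a=[A]$ as the induced arrows --- is exactly what this differentiation supplies, so your argument is complete once that computation is written out.
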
	
    \prf{ Let us denote the $G_0$-orbits of $(x,\eta)\in p_0\M p_0\times P_0$ and $(\vartheta,\eta)\in\M p_0\times P_0$  by $\langle x,\eta\rangle \in p_0\M p_0\times_{Ad_{G_0}} P_0$ and $\langle \vartheta,\eta\rangle \in TP_0/G_0$, respectively, and the $TG_0$-orbit of $(\vartheta,\eta)\in TP_0$  by $\langle\langle\vartheta,\eta\rangle\rangle\in TP_0/TG_0$. The maps
 \be I_\A: p_0\M p_0\times_{Ad_{G_0}} P_0\ni\langle x,\eta\rangle\mapsto(\eta x \eta^{-1},\eta\eta^{-1})\in \A_{p_0}(\M)\cong \A\J_{p_0}(\M)\ee
 \be I_{\mathcal{M}}:\frac{\M p_0\times P_0}{G_0}\ni\langle \vartheta,\eta\rangle\mapsto(\vartheta\eta^{-1},\eta\eta^{-1})\in \mathcal{M}^L_{p_0}(\M)\cong \A\G_{p_0}(\M)\ee
\be I_\T: \frac{\M p_0\times P_0}{TG_0}\ni\langle\langle \vartheta,\eta\rangle\rangle\mapsto((1-\eta\eta^{-1})\vartheta\eta^{-1},\eta\eta^{-1})\in \T_{p_0}(\M)\cong T\Ll_{p_0}(\M)\ee
define isomorphisms  between  the corresponding Banach vector bundles appearing in the diagrams (\ref{Atiyah11}) and (\ref{prop:Atiyah32})
and they  commute with the horizontal arrows of these diagrams.  We recall that $\Ll_{p_0}(\M)\cong P_0/G_0$.}
    
		Taking into account Proposition \ref{rem:31} and Proposition \ref{prop:32}  we will call (\ref{Atiyahalgebr}) (as well as (\ref{Atiyah1})) the \textbf{Atiyah sequence} of the groupoid $\G(\M)\tto \Ll(\M)$ of partially invertible elements of $W^*$-algebra $\M$.

    Now following of \cite{OJS} we will present the formula for Lie bracket $[\X_1,\X_2]$ of sections $\X_1,\X_2\in \Gamma^\infty(\AG(\M))\cong \Gamma^\infty(\mathcal M^L(\M))$ of the bundle $\mathcal M^L(\M)\to \Ll(\M)$.
		
		To this end  let us recall that the one-parameter group $L_t\circ L_s=L_{t+s}$ of the left translations $L_t:\G(\M)\to \G(\M)$ of the groupoid $\G(\M)\tto \Ll(\M)$  by the definition has the following properties
	\be\label{lift}\begin{array}{rl}	
	(i)& L_t(xy)=L_{t}(x)y\\	
	(ii) &\Ss\circ L_t=\Ss\\	
	(iii)& \Tt\circ L_t=\lambda_t\circ\Tt,
	\end{array}\ee	
	where $\Ss(x)=\Tt(y)$ and the one-parameter group  $\lambda_t:\Ll(\M)\to \Ll(\M)$ is defined in a unique way by $L_t$. 
	
		From (\ref{lift}) and from $\Tt\circ \sigma_p=id_{\Pi_p}$ we obtain
	\be\label{tL} \Tt(L_t(\sigma_p( q))=(\lambda_t\circ\Tt\circ\sigma_p)( q)=\lambda_t( q)=\Tt(\sigma_p(\lambda_t( q)),\ee
		\be\label{Ss} \Ss(L_t(\sigma_p( q)))=\Ss(\sigma_p(\lambda_t( q))=p.\ee
	It follows from (\ref{tL}) and (\ref{Ss}) that $L_t(\sigma_p(q))$ and $\sigma_p(\lambda_t(q))$ belong to $\Ss^{-1}(p)\cap\Tt^{-1}(\lambda_t(q))$. Thus there exists uniquely defined  $c_p( q,t)\in G(p\M p)$ such that 
		\be\label{colift} L_t(\sigma_p( q))=\sigma_p(\lambda_t( q))c_p(t, q).\ee
	From $L_t\circ L_s=L_{t+s}$ it follows that the cocycle  property
			\be\label{cocycle} c_p( q,t+s)=c_p(\lambda_t( q),s)c_p( q,t)\ee
			is valid for $c_p:]-\varepsilon,\varepsilon[\times \Pi_p\to G(p\M p)$.
			
			\begin{prop}\label{prop:33} One has the following equalities:
			\be\label{propc1} c_p(q,t)=p\ L_t(\sigma_p(q))\ee
			\be\label{propc2} c_p(q,t)=z_{pp_0}(t)z_{pp_0}^{-1}\ee
			\be\label{propc3} z_{p\tilde p}(t)=z_{pp_0}(t)\tilde z_{\tilde pp_0}^{-1}\ee
			\be\label{propc4} b_p=\frac{d}{dt}c_p(q,t)|_{t=0}\ee
			\be\label{propc5} b_{p\tilde p}=b_pz_{p\tilde p}\ee
			where $z_{pp_0}(t)$ and $z_{p\tilde p}(t)$ are defined by 
			\be\label{Lt}L_t(\eta)=(p+y_p(t))z_{pp_0}(t)\ee
			and by
			\be\label{Ltx} L_t(x)=(p+y_p(t))z_{p\tilde p}(t)(\tilde p+\tilde y_{\tilde p}(t))^{-1}.\ee
			For the definition of $b_p$ and $b_{p\tilde p}$ see (\ref{bp1}) and (\ref{bptp}), respectively.\end{prop}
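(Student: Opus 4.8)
The plan is to reduce all five identities to two ingredients: the fact that the chart decomposition (\ref{eta}) factors through the section $\sigma_p$, and the defining properties (\ref{lift}) of the left translations together with the cocycle relation (\ref{colift}). First I would record the elementary but decisive observation that, by (\ref{varphi}) and (\ref{sigma}), $\sigma_p(q)=(pq)^{-1}=p+\varphi_p(q)=p+y_p$. Hence the reconstruction formula (\ref{eta}) can be read as the \emph{groupoid} product
\[ \eta=\sigma_p(q)\,z_{pp_0},\qquad q=\Tt(\eta)=\eta\eta^{-1}, \]
legitimate since $\Ss(\sigma_p(q))=p=\Tt(z_{pp_0})$, and likewise $\xi=\sigma_{\tilde p}(\tilde q)\,\tilde z_{\tilde p p_0}$ with $\tilde q=\xi\xi^{-1}$. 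Because on composable pairs the groupoid product coincides with the algebra product, everything will follow by applying $L_t$ and using $L_t(ab)=L_t(a)b$ from (\ref{lift}).

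For (\ref{propc1}) I would solve (\ref{colift}) for $c_p(q,t)$ by multiplying on the left with the groupoid inverse of $\sigma_p(\lambda_t(q))$. Since inversion in $\G(\M)$ is involutive and $\sigma_p(r)=(pr)^{-1}$, that inverse is exactly $p\lambda_t(q)$, so $c_p(q,t)=(p\lambda_t(q))\,L_t(\sigma_p(q))=p\,(\lambda_t(q)L_t(\sigma_p(q)))=p\,L_t(\sigma_p(q))$, the last step because $\lambda_t(q)=\Tt(L_t(\sigma_p(q)))$ is the left support and hence a left unit of $L_t(\sigma_p(q))$ in $\M$. For (\ref{propc2}) I would apply $L_t$ to the factorization $\eta=\sigma_p(q)z_{pp_0}$, use property (i) of (\ref{lift}) and then (\ref{colift}) to obtain $L_t(\eta)=\sigma_p(\lambda_t(q))\,c_p(q,t)\,z_{pp_0}$. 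Comparing with (\ref{Lt}), where $p+y_p(t)=\sigma_p(\lambda_t(q))$ because $\Tt(L_t(\eta))=\lambda_t(q)$ by (iii) and $y_p(t)=\varphi_p(\lambda_t(q))$, and then cancelling the invertible factor $\sigma_p(\lambda_t(q))$ on the left, yields $c_p(q,t)z_{pp_0}=z_{pp_0}(t)$, i.e. (\ref{propc2}).

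Identity (\ref{propc3}) proceeds the same way for $x=\eta\xi^{-1}$. Since $\Ss(L_t(x))=\Ss(x)$ is constant, the source coordinate $\tilde y_{\tilde p}(t)\equiv\tilde y_{\tilde p}$ does not move, and $L_t(x)=L_t(\eta)\xi^{-1}$ expands, via $\xi^{-1}=\tilde z_{\tilde p p_0}^{-1}\sigma_{\tilde p}(\tilde q)^{-1}$, to $(p+y_p(t))(z_{pp_0}(t)\tilde z_{\tilde p p_0}^{-1})(\tilde p+\tilde y_{\tilde p})^{-1}$. Matching this against (\ref{Ltx}) and using the uniqueness of the coordinates of the chart (\ref{psipp}) gives $z_{p\tilde p}(t)=z_{pp_0}(t)\tilde z_{\tilde p p_0}^{-1}$. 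Finally (\ref{propc4}) and (\ref{propc5}) are immediate differentiations at $t=0$: as $z_{pp_0}^{-1}$ and $\tilde z_{\tilde p p_0}^{-1}$ are $t$-independent, definition (\ref{bp1}) gives $\frac{d}{dt}c_p(q,t)|_{t=0}=\frac{d}{dt}z_{pp_0}(t)|_{t=0}\,z_{pp_0}^{-1}=b_p$, while (\ref{bptp}) together with $\frac{d}{dt}z_{pp_0}(t)|_{t=0}=b_p z_{pp_0}$ and the $t=0$ case of (\ref{propc3}) yields $b_{p\tilde p}=b_p z_{pp_0}\tilde z_{\tilde p p_0}^{-1}=b_p z_{p\tilde p}$.

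The only delicate part is the bookkeeping of left and right supports: one must keep track of which products are groupoid products (composable pairs, where the multiplication is the restriction of the algebra product) and repeatedly use that the left support $\Tt(w)$ acts as a left identity on $w\in\G(\M)$ inside $\M$. Once the factorization $\eta=\sigma_p(q)z_{pp_0}$ is established and these support identities are applied consistently, the remaining manipulations and the two final differentiations are routine.
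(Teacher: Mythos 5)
Your proof is correct and follows essentially the same route as the paper's: both rest on the factorization $\eta=\sigma_p(q)z_{pp_0}$ with $\sigma_p(q)=p+y_p$, the cocycle relation (\ref{colift}) combined with the properties (\ref{lift}), cancellation of the partially invertible factor $\sigma_p(\lambda_t(q))$ to get (\ref{propc2}), the identity $L_t(x)=L_t(\eta)\xi^{-1}$ for (\ref{propc3}), and differentiation at $t=0$ for (\ref{propc4}) and (\ref{propc5}). The only cosmetic difference is in (\ref{propc1}): the paper left-multiplies (\ref{colift}) by $p$ and uses $p\,\sigma_p(q)=p$, whereas you left-multiply by the groupoid inverse $p\lambda_t(q)$ of $\sigma_p(\lambda_t(q))$ and invoke the left-support identity — an equivalent manipulation.
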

			\begin{proof} Multiplying   of (\ref{colift}) on the left hand side by $p$ and using the equality $p\sigma_p(q)=p$ we obtain (\ref{propc1}). From (\ref{lift}) we have 
			\be\label{prf1} L_t(\eta)=L_t(\sigma_p(q))z_{pp_0}.\ee
			Comparing (\ref{Lt}) with  (\ref{prf1}) and using (\ref{colift}) we obtain 			
						\be\label{prf2} L_t(\eta)=\sigma_p(\lambda_t(q))z_{pp_0}(t)=L_t(\sigma_p(q))z_{pp_0}=\sigma_p(\lambda_t(q))c_p(t,q)z_{pp_0}.\ee
			Canceling $\sigma_p(\lambda_t(q))$  in (\ref{prf2}) we obtain (\ref{propc2}). The equality (\ref{propc3}) follows from $L_t(x)=L_t(\eta)\xi^{-1}$  and from (\ref{Lt}) and (\ref{Ltx}). Equalities (\ref{propc4}) and (\ref{propc5}) are proved by the straightforward checking.
			\end{proof}

\bigskip
	
	From (\ref{lift}) it  follows that the vector field $\tilde\X\in \Gamma^\infty(T\G(\M))$ tangent to the flow $L_t:\G(\M)\to \G(\M)$ satisfies
	\be\label{fild}
	 \tilde \X (xy)=TR_y(x)\tilde\X(x).\ee	
	 Hence one has the one-to-one correspondence
	\be\label{fildL} \tilde \X (x)=TR_x(q)\X(q),\ee
	where $q=\Tt(x)=xx^{-1}$, between $\tilde\X$ and its restriction $\X$ to $ \Ll(\M)\hookrightarrow\G(\M)$ which, because of (\ref{inclusion}), is a section $\X\in \Gamma^\infty(\mathcal M^L(\M))$ of the bundle $\mathcal M^L(\M){\to} \Ll(\M)$. As we see from the definition 
	\be\label{bracket}[\tilde \X_1,\tilde \X_2](x):=\lim_{t\to 0}\left(\tilde\X_2(x)-TL^1_t(L^1_{-t}(x))\tilde\X_2(L^1_t(x))\right)\ee
	the Lie bracket  $[\tilde\X_1,\tilde\X_2]$ of vector fields  $\tilde\X_1$ and $\tilde\X_2$ tangent to $L^1_t$ and $L^2_t$, respectively, satisfies  the conditions (\ref{lift}), and thus the property (\ref{fild}). So, one can define the Lie bracket  $[\X_1,\X_2]$ of  $\X_1,\X_2\in \Gamma^\infty(
\mathcal M^L(\M))$ restricting $[\tilde \X_1,\tilde \X_2]$ to $\Ll(\M)$. 

Let us now express the Lie bracket (\ref{bracket}) in the coordinates $(y_p,z_{p\tilde p}, \tilde y_{\tilde p})$. Using (\ref{Ltx}) we have
\be\label{braketg} \tilde\X(f)=\frac{d}{dt}(f\circ L_t)|_{t=0}=
\left\langle\frac{\partial f}{\partial y_p},\frac{d y_p(t)}{d t}|_{t=0}\right\rangle+
\left\langle\frac{\partial f}{\partial z_{p\tilde p}},\frac{d  z_{p\tilde p}(t)}{d t}|_{t=0}\right\rangle+
\left\langle\frac{\partial f}{\partial \tilde y_{\tilde p}},\frac{d \tilde y_{\tilde p}(t)}{d t}|_{t=0}\right\rangle=\ee
$$=\left\langle\frac{\partial f}{\partial y_p},a_p\right\rangle+
\left\langle\frac{\partial f}{\partial z_{p\tilde p}},b_pz_{p\tilde p}\right\rangle$$
for $f\in C^\infty (\G(\M))$, where in order to obtain the last equality in (\ref{braketg}) we have used (\ref{ap1}), (\ref{bp1}) and (\ref{propc5}) and taken into account the independence of $\tilde y_{\tilde p}(t)=const$ on $t\in ]-\varepsilon, \varepsilon[$. Hence the vector field $\tilde\X$ tangent to the left translation flow $L_t$ written in the coordinates $(y_p,z_{p\tilde p}, \tilde y_{\tilde p})$ assumes the following form 
	\be\label{pole6} \tilde\X=a_p\frac{\partial}{\partial  y_p}+b_pz_{p\tilde p}\frac{\partial}{\partial  z_{p\tilde p}},\ee
	where $a_p(y_p)\in (1-p)\M p$ and $b_p(y_p)\in p\M p$. Restricting $\tilde \X$ to $P_0$ and to $\Ll(\M)$ we obtain 
	\be\label{poleV} \V=a_p\frac{\partial}{\partial  y_p}+b_pz_{p p_0}\frac{\partial}{\partial  z_{p p_0}}\ee
	and 	\be\label{poleL} \X=a_p\frac{\partial}{\partial  y_p}+b_p\frac{\partial}{\partial  z_{p p_0}},\ee
respectively,	where $\V\in \Gamma^\infty _{G_0}TP_0 $ is $G_0$-invariant vector field on $P_0$ and $\X\in \Gamma^\infty\mathcal M^L(\M)$ is a section of $\mathcal M^L(\M)$ defined by (\ref{fildL}). Sections of the vector bundles $T\G(\M)$, $TP_0$ and $\mathcal M^L(\M)$ presented above give the equivalent coordinate representations of sections of the algebroid $\AG(\M)\to \Ll(\M)$. Note here that  the transition map between $(a_p,b_p)$ and $(a_{p'},b_{p'})$ is given in  (\ref{2transa}) and (\ref{2transb}).
	\begin{prop}\label{prop:34}\begin{enumerate}[(i)]
	\item The anchor map $a:\A\Ll(\M)\to T\Ll(\M)$ acts on (\ref{poleL}) as follows:
\be a(\X)=a_p\frac{\partial}{\partial y_p};\ee
\item The vertical part of  (\ref{poleL}) is given by $b_p\frac{\partial}{\partial z_{pp_0}};$
\item The Lie bracket of $\X_1,\X_2\in \Gamma^\infty \mathcal M^L(\M)$assumes the form
\be\label{bra} [\X_1,\X_2]=a_p\frac{\partial}{\partial y_p}+b_p\frac{\partial}{\partial z_{pp}},\ee 
where 
\be a_p=\left\langle\frac{\partial a_{2p}}{\partial y_p},a_{1p}\right\rangle-
\left\langle\frac{\partial a_{1p}}{\partial y_p},a_{2p}\right\rangle\ee
and 
\be b_p=\left\langle\frac{\partial b_{2p}}{\partial y_p},a_{1p}\right\rangle-
\left\langle\frac{\partial b_{1p}}{\partial y_p},a_{2p}\right\rangle+[b_{2p},b_{1p}].\ee

\end{enumerate}
\end{prop}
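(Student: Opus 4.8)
The plan is to read off all three statements from the coordinate presentation (\ref{pole6}) of the right-invariant vector field $\tilde\X$ together with the correspondence (\ref{fildL}) identifying $\tilde\X$ with the algebroid section $\X\in\Gamma^\infty(\mathcal{M}^L(\M))$. The structural observation I would rely on throughout is that in the chart $(y_p,z_{p\tilde p},\tilde y_{\tilde p})$ the coefficients $a_p=a_p(y_p)\in(1-p)\M p$ and $b_p=b_p(y_p)\in p\M p$ depend only on the target coordinate $y_p$, whereas the remaining coordinate dependence enters solely through the explicit linear factor $z_{p\tilde p}$ in the vertical term $b_p z_{p\tilde p}\,\partial/\partial z_{p\tilde p}$, and no $\partial/\partial\tilde y_{\tilde p}$ term is present.

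For part (i) I would invoke property (iii) of (\ref{lift}), $\Tt\circ L_t=\lambda_t\circ\Tt$, which exhibits the projected flow $\lambda_t$ on $\Ll(\M)$ whose generator is by definition the anchor image of $\X$. Since $\Tt$ reads in these coordinates as $(y_p,z_{p\tilde p},\tilde y_{\tilde p})\mapsto y_p$, differentiating $y_p(t)=\varphi_p(\lambda_t(q))$ at $t=0$ and using (\ref{ap1}) gives the generator $a_p\,\partial/\partial y_p$; equivalently, under the isomorphism of Proposition \ref{rem:31} the anchor is the fibrewise projection $\M q\to(1-q)\M q$ of (\ref{split2}), which retains exactly the $(1-p)\M p$-component $a_p$. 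Part (ii) is the complementary assertion: the kernel of the anchor is the isotropy direction $\partial/\partial z_{pp_0}$, and since the one-parameter family $c_p(q,t)\in G(p\M p)$ of (\ref{colift}) drives motion along that fibre with $b_p=\frac{d}{dt}c_p(q,t)|_{t=0}$ by (\ref{propc4}), the vertical part of (\ref{poleL}) is $b_p\,\partial/\partial z_{pp_0}$.

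The heart of the proposition is part (iii), which I would obtain by computing $[\tilde\X_1,\tilde\X_2]$ straight from (\ref{pole6}) using the coordinate formula $[V,W]=(V(W^\mu)-W(V^\mu))\,\partial_\mu$ for the Lie bracket of vector fields, reading the partial derivatives as Fréchet derivatives on the relevant Banach spaces and recording them through the pairing $\langle\partial/\partial y_p,\cdot\rangle$ of (\ref{braketg}). Because $a_{ip}$ and $b_{ip}$ are functions of $y_p$ only, the $\partial/\partial y_p$-component collects only the $y_p$-derivatives and yields $\langle\frac{\partial a_{2p}}{\partial y_p},a_{1p}\rangle-\langle\frac{\partial a_{1p}}{\partial y_p},a_{2p}\rangle$. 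For the vertical component one differentiates $b_{2p}z_{p\tilde p}$ along $\tilde\X_1$: the horizontal piece $a_{1p}\,\partial/\partial y_p$ contributes $\langle\frac{\partial b_{2p}}{\partial y_p},a_{1p}\rangle z_{p\tilde p}$, while the vertical piece $b_{1p}z_{p\tilde p}\,\partial/\partial z_{p\tilde p}$, acting on the $z_{p\tilde p}$-linear expression $b_{2p}z_{p\tilde p}$, returns $b_{2p}b_{1p}z_{p\tilde p}$. Antisymmetrizing in $1\leftrightarrow2$ turns the two product terms into the commutator $[b_{2p},b_{1p}]$, so that $[\tilde\X_1,\tilde\X_2]$ is again of the form (\ref{pole6}); restricting to $\Ll(\M)$ by specializing $z_{p\tilde p}$ to the identity then produces precisely the coefficients of (\ref{bra}).

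The step I expect to demand the most care is the bookkeeping of the vertical term in part (iii): one must treat $\partial/\partial z_{p\tilde p}$ as the derivative of a map that is linear in $z_{p\tilde p}$, so that the two vertical contributions $b_{2p}b_{1p}z_{p\tilde p}$ and $b_{1p}b_{2p}z_{p\tilde p}$ do not cancel but, after antisymmetrization, assemble into the genuine Lie-algebra commutator $[b_{2p},b_{1p}]$ in $p\M p$. Once this is handled, the remainder is a routine application of bilinearity and the Leibniz rule for the Banach-space directional derivatives, together with the already-noted absence of any $\tilde y_{\tilde p}$-dependence, which guarantees that the bracket closes on fields of the type (\ref{pole6}) and hence descends to the algebroid $\AG(\M)\to\Ll(\M)$.
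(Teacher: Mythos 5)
Your proposal is correct and follows exactly the route the paper intends: the paper's own proof of Proposition \ref{prop:34} is just the remark that it "can be done by the straightforward verification," and your computation is precisely that verification, carried out in the coordinate presentation (\ref{pole6})--(\ref{poleL}) that the paper sets up beforehand (anchor as the $y_p$-component via $\Tt\circ L_t=\lambda_t\circ\Tt$, vertical part as the $z$-direction, and the bracket computed coefficientwise with the linear-in-$z_{p\tilde p}$ vertical term producing the commutator $[b_{2p},b_{1p}]$ after antisymmetrization). No gaps; your treatment of the vertical bookkeeping is exactly the point that makes the verification work.
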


\begin{proof} The proof can be done by the straightforward verification.
\end{proof}

Below we will also write $\V\in \Gamma^\infty_{G_0}TP_0$ using the global coordinate $\eta\in P_0$:
\be\V=v\frac{\partial}{\partial\eta},\ee
where $v:P_0\to \M p_0$ satisfies $v(\eta g)=v(\eta)g$ for $g\in G_0$. In this representation the Lie bracket of $\V_1,\V_2\in \Gamma^\infty_{G_0}TP_0$ is given by 
\be\label{braketVV} [\V_1,\V_2]=\left(\left\langle\frac{\partial v_2}{\partial\eta},v_1\right\rangle-\left\langle\frac{\partial v_1}{\partial\eta},v_2\right\rangle\right)\frac{\partial}{\partial\eta}.\ee
Note here that $\frac{\partial v}{\partial\eta}(\eta):\M p_0\to \M p_0$.


\section{ Predual Atiyah sequence of the groupoid $\G(\M)\tto\Ll(\M)$ }\label{sec:preAtiyah}

We recall that, by definition, $W^*$-algebra $\M$ is a $C^*$-algebra possessing a predual Banach space $\M_*$  (i.e. $(\M_*)^*=\M$) which is defined in the unique way by the structure of $\M$, e.g. see \cite{sakai}.  Hence, to the Banach vector bundles (algebroids) $\A(\M)\to \Ll(\M)$, $\A\G(\M)\to \Ll(\M)$ and $T\Ll(\M)\to \Ll(\M)$ appearing  in (\ref{Atiyahalgebr})  canonically correspond their predual counterparts $\A_*(\M)\to \Ll(\M)$, $\A_*\G(\M)\to \Ll(\M)$ and $T_*\Ll(\M)\to \Ll(\M)$. These bundles are Banach quasi subbundles 
\be\label{subbundles} \A_*(\M)\subset \A^*(\M), \quad \A_*\G(\M)\subset \A^*\G(\M) \quad {\rm and} \quad T_*\Ll(\M)\subset T^*\Ll(\M)\ee
 of the corresponding dual bundles, i.e. their fibres are Banach subspaces but without Banach complements.

 The bundle morphisms $a^*$ and $\iota^*$  dual to the ones from (\ref{Atiyah1}) preserve the predual subbundles (\ref{subbundles}). Thus their restrictions $a_*$ and $\iota_*$ define the short exact sequence 
  \unitlength=5mm \be\label{Atiyahalgebrdual}\begin{picture}(11,4.6)
    \put(-2,4){\makebox(0,0){$T_*\Ll(\M)$}}
    \put(5,4){\makebox(0,0){$\A_*\G(\M)$}}
    \put(12.5,4){\makebox(0,0){$\A_*\J(\M)$}}
    \put(12,0){\makebox(0,0){$\mathcal L(\M)$}}
    \put(5,0){\makebox(0,0){$\mathcal L(\M)$}}
    \put(-2,0){\makebox(0,0){$\mathcal L(\M)$}}

    \put(-2,3){\vector(0,-1){2}}
    \put(5,3){\vector(0,-1){2}}
    \put(12,3){\vector(0,-1){2}}
    \put(6.3,0){\vector(1,0){4.5}}
 \put(-0.8,0){\vector(1,0){4.5}}
 \put(-0.5,4){\vector(1,0){3.4}}
  \put(6.3,4){\vector(1,0){4.4}}
    \put(1.5,4.4){\makebox(0,0){$a_*$}}
     \put(8.3,4.6){\makebox(0,0){$\iota_*$}}
    \put(8.5,0.5){\makebox(0,0){$\sim$}}
\put(1.5,0.5){\makebox(0,0){$ \sim$}}
    \end{picture}\ee
	of Banach bundles 	which will be the main object of our considerations in this section. Dualizing (\ref{Atiyahalgebrdual}) we return to (\ref{Atiyahalgebr}). So, we will call (\ref{Atiyahalgebrdual}) the\textbf{ predual Atiyah sequence} of $\G(\M)\tto \Ll(\M)$.

				In order to make the above statements precise we note that the left action $L_a x:=ax$ (the right action $R_a x:=xa$) of $\M$ on  itself generates the right action (left action) of $\M$ on the dual $\M^*$
 \be\begin{array}{c}
 \langle R^*_a\varphi,\ x\rangle:= \langle \varphi,\ ax\rangle\\
\langle L^*_a\varphi,\ x\rangle:= \langle \varphi,\ xa\rangle\end{array}, \ee
where $a,\ x\in \M$ and $\varphi\in\M^*$. In a sequel we will write $\varphi a$ and $a\varphi$ instead of  $R^*_a\varphi$ and $L^*_a\varphi$. Using this notation we mention the following isomorphisms
\be\label{M}\begin{array}{c}
 (\M p)^*\cong p\M^*,\\
 (q\M)^*\cong \M^* q,\\
 (q\M p)^*\cong p\M^* q,\end{array} \ee
 where $p,\ q\in \Ll(\M)$ and $(\M p)^*$, $(q\M)^*$ and $(q\M p)^*$ are duals of corresponding Banach subspaces of $\M$. Since the predual Banach space $\M_*$ is a Banach subspace $\M_*\subset\M^*$ of $\M^*$ invariant with respect to $R^*_a$ and $L^*_a$ one has the respective predual maps $R_{*a}:\M_*\to \M_*$ and $L_{*a}:\M_*\to \M_*$ defined as restrictions of $R^*_a$ and $L^*_a$ to $\M_*$. Hence, similarly to (\ref{M})  one has the  isomorphisms
\be\label{Mpredual}\begin{array}{c}
 (\M p)_*\cong p\M_*,\\
 (q\M)_*\cong \M_* q,\\
 (q\M p)_*\cong p\M_* q.\end{array} \ee

The actions of $G_0$ on $T_*P_0\cong p_0\M_*\times P_0$ and on $p_0\M_* p_0\times P_0$, predual to the action of $G_0$  on $TP_0\cong \M p_0\times P_0$ and on $p_0\M p_0\times P_0$ are defined as follows 
	\be\label{groupdual}p_0\M_*\times P_0\ni(\varphi,\eta)\mapsto  \Sigma_{*g}(\varphi,\eta):=(g^{-1}\varphi,\eta g),\ee
\be \label{algdual}p_0\M_* p_0\times P_0\ni(\mathcal X,\eta)\mapsto (g^{-1}\mathcal X g,\eta g).\ee

Let us define a group structure on the precotangent bundle $T_*G_0$ identifying it with the semidirect product $p_0\M_* p_0\rtimes_{Ad^*_{G_0}} G_0$ of groups $G_0$ and $p_0\M_* p_0$, i.e.  the group product on $T_*G_0$ is given by
\be (\mathcal X,g)\star(\mathcal Y, h):=(\mathcal X+g\mathcal Y g^{-1}, gh).\ee
The precotangent group $T_*G_0$ acts on $T_*P_0\cong \M_* p_0\times P_0$ in the following way
\be (\varphi, \eta)\mapsto (g^{-1}(\varphi+\mathcal X\eta^{-1}), \eta g).\ee

\bigskip

The next proposition is the predual version of Proposition \ref{prop:32}.
\begin{prop}\label{prop41}
The predual Atiyah sequence of the principal bundle  $P_0\to P_0/G_0$
 \unitlength=5mm \be\label{Atiyahrestridual}\begin{picture}(11,4.6)
    \put(-2.8,4){\makebox(0,0){$T_*(P_0/G_0)$}}
    \put(5,4){\makebox(0,0){$T_*P_0/G_0$}}
    \put(13,4){\makebox(0,0){$p_0\M_* p_0\times _{Ad_{G^*_0}}P_0$}}
    \put(12,0){\makebox(0,0){$P_0/G_0,$}}
    \put(5,0){\makebox(0,0){$P_0/G_0$}}
    \put(-2,0){\makebox(0,0){$P_0/G_0$}}

    \put(-2,3){\vector(0,-1){2}}
    \put(5,3){\vector(0,-1){2}}
    \put(12,3){\vector(0,-1){2}}
    \put(6.6,0){\vector(1,0){4}}
 \put(-0.8,0){\vector(1,0){4.5}}
 \put(0,4){\vector(1,0){3}}
  \put(6.4,4){\vector(1,0){3.6}}
    \put(1.5,4.4){\makebox(0,0){$a_*$}}
     \put(8.3,4.6){\makebox(0,0){$ \iota_*$}}
    \put(8.5,0.5){\makebox(0,0){$\sim$}}
\put(1.5,0.5){\makebox(0,0){$ \sim$}}
    \end{picture}\ee
		is isomorphic with the short exact sequence 
		 \unitlength=5mm \be\label{dualbundles}\begin{picture}(11,4.6)
    \put(-2.1,4){\makebox(0,0){$\mathcal {T}_*\Ll_{p_0}(\M)$}}
    \put(5,4){\makebox(0,0){$\mathcal {A}_*\G_{p_0}(\M)$}}
    \put(12,4){\makebox(0,0){$\mathcal {A}_*\J_{p_0}(\M)$}}
    \put(12,0){\makebox(0,0){$\Ll_{p_0}(\M),$}}
    \put(5,0){\makebox(0,0){$\Ll_{p_0}(\M)$}}
    \put(-2,0){\makebox(0,0){$\Ll_{p_0}(\M)$}}

    \put(-2,3){\vector(0,-1){2}}
    \put(5,3){\vector(0,-1){2}}
    \put(12,3){\vector(0,-1){2}}
    \put(6.6,0){\vector(1,0){4}}
 \put(-0.8,0){\vector(1,0){4.5}}
 \put(0,4){\vector(1,0){3}}
  \put(6.4,4){\vector(1,0){3.6}}
    \put(1.5,4.4){\makebox(0,0){$a_*$}}
     \put(8.3,4.6){\makebox(0,0){$ \iota_*$}}
    \put(8.5,0.5){\makebox(0,0){$\sim$}}
\put(1.5,0.5){\makebox(0,0){$ \sim$}}
    \end{picture}\ee
i.e. the predual Atiyah sequence of the groupoid $\G_{p_0}(\M)\tto\Ll_{p_0}(\M)$.
\end{prop}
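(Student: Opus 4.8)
The plan is to predualize the proof of Proposition~\ref{prop:32}. There the three bundle isomorphisms $I_\A$, $I_{\mathcal M}$ and $I_\T$ were all given fibre-wise, over $q=\eta\eta^{-1}$, by the bounded linear operations $x\mapsto \eta x\eta^{-1}$, $\vartheta\mapsto\vartheta\eta^{-1}$ and $\vartheta\mapsto(1-\eta\eta^{-1})\vartheta\eta^{-1}$ on $\M$, determined by $\eta\in P_0$. Since left and right multiplication by a fixed element of $\M$, as well as multiplication by the projection $1-\eta\eta^{-1}$, are normal ($\sigma$-weakly continuous) maps, each of them leaves the predual $\M_*\subset\M^*$ invariant and therefore admits a predual which restricts to the predual subbundles (\ref{subbundles}). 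Taking these preduals produces the vertical isomorphisms relating (\ref{Atiyahrestridual}) and (\ref{dualbundles}).

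Concretely, I would first define the fibre-wise maps
\be
I_{*\A}:\A_*\J_{p_0}(\M)\to p_0\M_* p_0\times_{Ad_{G^*_0}}P_0,\quad
I_{*\mathcal M}:\A_*\G_{p_0}(\M)\to T_*P_0/G_0,\quad
I_{*\T}:\T_*\Ll_{p_0}(\M)\to T_*(P_0/G_0)
\ee
as the preduals of $I_\A$, $I_{\mathcal M}$ and $I_\T$, using the identifications (\ref{Mpredual}) to write them in terms of the actions $\varphi\mapsto\eta^{-1}\varphi$ and $\varphi\mapsto\varphi\eta$ dual to multiplication by $\eta^{\pm1}$ (so that $I_{*\A}$ is the predual conjugation $\psi\mapsto\eta^{-1}\psi\eta$, and $I_{*\T}$ carries the extra dual projection). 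Because the predual of an invertible bounded operator is again invertible, each $I_{*\bullet}$ is a fibre-wise isomorphism, and smoothness in $\eta$ follows from these explicit formulae exactly as in Proposition~\ref{prop:32}. Next I would check that the $I_{*\bullet}$ descend to the indicated quotients: the $G_0$- and $TG_0$-actions on the predual bundles are precisely those of (\ref{groupdual}) and (\ref{algdual}), i.e. the preduals of the restrictions of (\ref{tangent_action}) and (\ref{tangentprod2}); hence the $G_0$-equivariance of $I_\A$, $I_{\mathcal M}$, $I_\T$ dualizes to equivariance of the predual maps, and the regularity of these actions inherited from the forward case guarantees that the quotients are Banach bundles over $P_0/G_0\cong\Ll_{p_0}(\M)$ on which the $I_{*\bullet}$ induce bundle isomorphisms.

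Finally, commutativity of the two squares in (\ref{Atiyahrestridual})/(\ref{dualbundles}) is obtained by dualizing the commuting squares of Proposition~\ref{prop:32}: by the construction preceding (\ref{Atiyahalgebrdual}), the horizontal arrows $a_*$ and $\iota_*$ are the restrictions to the predual subbundles of the transposes of the projection $a$ and inclusion $\iota$ coming from the splitting (\ref{split2}). Since transposition reverses arrows and preserves commutativity, the predual diagram commutes. The step I expect to be the main obstacle is precisely the control of the quasi-subbundle structure: because the fibres in (\ref{subbundles}) are Banach subspaces \emph{without} Banach complement, one cannot argue via a splitting and must instead verify directly that each predual operation maps $\M_*$ into $\M_*$ and that exactness of (\ref{dualbundles}) survives predualization. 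This reduces to the normality of multiplication and of the projection $1-\eta\eta^{-1}$ on $\M$, together with the predual splitting of (\ref{split2}) furnished by (\ref{Mpredual}).
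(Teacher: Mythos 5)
Your proposal is correct and follows essentially the same route as the paper: the paper's proof simply exhibits the three bundle isomorphisms $I_{*\T}$, $I_{*\mathcal{M}}$, $I_{*\A}$ by explicit formulas ($\langle\langle\varphi,\eta\rangle\rangle\mapsto(\eta\varphi(1-\eta\eta^{-1}),\eta\eta^{-1})$, $\langle\varphi,\eta\rangle\mapsto(\eta\varphi,\eta\eta^{-1})$, $\langle\mathcal X,\eta\rangle\mapsto(\eta\mathcal X\eta^{-1},\eta\eta^{-1})$), which are exactly the predualizations of $I_\T$, $I_{\mathcal M}$, $I_\A$ from Proposition \ref{prop:32} that you describe, relying on the invariance of $\M_*$ under $L^*_a$, $R^*_a$ and the identifications (\ref{Mpredual}). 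The only cosmetic difference is orientation: your maps are the transposes of the Proposition \ref{prop:32} isomorphisms (e.g.\ $\psi\mapsto\eta^{-1}\psi\eta$), whereas the paper writes their inverses, going from the quotient bundles of (\ref{Atiyahrestridual}) to the predual algebroid bundles of (\ref{dualbundles}).
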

\begin{proof}  The isomorphism between  sequences (\ref{Atiyahrestridual}) and (\ref{dualbundles}) is given by the following isomorphisms 
 \be I_{*\T}: \frac{\M_* p_0\times P_0}{T_*G_0}\ni\langle\langle \varphi,\eta\rangle\rangle\mapsto(\eta\varphi(1-\eta\eta^{-1}),\eta\eta^{-1})\in \T_{*p_0}(\M)\cong T_*\Ll_{p_0}(\M)\ee
\be I_{*\mathcal{M}}:\frac{\M_* p_0\times P_0}{G_0}\ni\langle \varphi,\eta\rangle\mapsto(\eta\varphi,\eta\eta^{-1})\in \mathcal{M}^L_{*p_0}(\M)\cong \A_*\G_{p_0}(\M)\ee
 \be I_{*\A}: p_0\M_* p_0\times_{Ad^*_{G_0}} P_0\ni\langle \mathcal X,\eta\rangle\mapsto( \eta\mathcal X \eta^{-1},\eta\eta^{-1})\in \A_{*p_0}(\M)\cong \A_*\J_{p_0}(\M)\ee
of the corresponding Banach vector bundles, where  by $\langle\langle \varphi,\eta\rangle\rangle$, $\langle \varphi,\eta\rangle$, $\langle \mathcal X,\eta\rangle$ we denote equivalence classes defined by the corresponding group actions. We recall that $\Ll_{p_0}(\M)\cong P_0/G_0$.
\end{proof}

\bigskip


The action tangent to the action (\ref{groupdual}), after taking into account the bundle isomorphism $T(T_*P_0)\cong p_0\M_*\times \M p_0\times (p_0\M_*\times P_0)$, is the following
\be\label{grouptangdual}T\Sigma_{*g} (\varphi,\eta)\xi_{ (\varphi,\eta)}=(g^{-1}\theta,vg, g^{-1}\varphi,\eta g),\ee
where $\xi_{ (\varphi,\eta)}=(\theta,v,\varphi,\eta)\in T_{(\varphi,\eta)}(p_0\M_*\times P_0)\cong (p_0\M_*\times \M p_0)\times\{(\varphi,\eta)\}$.

Let $\pi_*:=pr_2:T_*P_0\cong p_0\M_*\times P_0\to P_0$ be the projection on the bundle  base. Since $T\pi_*\circ T\Sigma_{*g}=\Sigma_g\circ T\pi_*$ one easily sees  that the canonical 1-form
\be\label{form} \langle \gamma_{(\varphi,\eta)}, \xi_{(\varphi,\eta)}\rangle:=\langle {(\varphi,\eta)}, T\pi^*{(\varphi,\eta)}\xi_{(\varphi,\eta)}\rangle=\langle \varphi,v\rangle\ee
and, thus the 2-form $\omega:=d\gamma$
are invariant with respect to (\ref{grouptangdual}). For $\xi^1_{(\varphi,\eta)},\xi^2_{(\varphi,\eta)}\in T_{(\varphi,\eta)} (p_0\M_*\times P_0)$ one has 
\be\label{omega} \omega_{(\varphi,\eta)}(\xi^1_{(\varphi,\eta)},\xi^2_{(\varphi,\eta)})=\langle \theta_1,v_2\rangle - \langle \theta_2,v_1\rangle.\ee
The image of the bundle monomorphism 
\be\label{fibremono} \omega_{(\varphi,\eta)}(\xi_{(\varphi,\eta)},\cdot):T_{(\varphi,\eta)}(p_0\M_*\times P_0)\to T^*_{(\varphi,\eta)}(p_0\M_*\times P_0)\ee
is  a   Banach subspace $\M p_0\times p_0\M_*\times \{(\varphi,\eta)\}\subsetneq \M p_0\times (\M p_0) ^*\times \{(\varphi,\eta)\}\cong T^*_{(\varphi,\eta)}(p_0\M_*\times P_0)$ of the cotangent Banach space at $(\varphi,\eta)$. 
Hence, the 2-form $\omega$ is only a weak symplectic form on $T_*(P_0)$ in sense of the definition  presented for example in \cite{OR}. So, the fibre monomorphisms (\ref{fibremono})  define the bundle quasi immersion  $\flat:T(T_*P_0)\hookrightarrow T^*(T_*P_0) $, i.e.  $T^{\flat}(T_*P_0):=\flat(T(T_*P_0))$ is  a subbundle of the cotangent bundle $T^*(T_*P_0)$ but without the split rang in general, i.e. it is a quasi Banach subbundle.

Now, for $x\in p_0\M p_0$ we define $\xi^x\in \Gamma^\infty  T(T_*P_0)$ by
\be\label{pole2}
\xi^x(f)(\varphi,\eta):=\frac{d}{dt} f\left(\Sigma_{*exp(tx)}(\varphi,\eta)\right) |_{t=0}
\ee
where $f\in C^\infty(T_*P_0)$. One has 
\be\label{413} \omega(\xi^x,\cdot)=-d\langle\gamma,\xi^x\rangle=-d\langle J_0,x\rangle.\ee
The last term in (\ref{413}) contains  the momentum map $J_0:T_*P_0\to p_0\M_* p_0 $  defined as follows
\be\label{momentum} J_0(\varphi,\eta):=\varphi\eta,\ee
i.e. by definition, for any $x\in p_0\M p_0$, one has $\langle J_0(\varphi,\eta),x\rangle:=\langle\varphi,\eta x\rangle$. We note that the equivariance property
\be J_0\circ \Sigma_{*g}=Ad^*_{g^{-1}}\circ J_0\ee
with respect to  the group  $ G_0$ is valid for (\ref{momentum}).

For any $f\in C^\infty(T_*P_0)$, one has $\frac{\partial f}{\partial\eta}(\varphi,\eta)\in (\M p_0)^*$ and $\frac{\partial f}{\partial\varphi}(\varphi,\eta)\in (p_0\M_*)^*\cong \M p_0$ . Therefore, we can define the bracket 
\be\label{Pbracket} \{f,g\}:=\langle\frac{\partial g}{\partial\eta},\frac{\partial f}{\partial\varphi}\rangle-
\langle\frac{\partial f}{\partial\eta},\frac{\partial g}{\partial\varphi}\rangle\ee
of  $f,g \in C^\infty(T_*P_0)$, which is bilinear, anti-symmetric and satisfies the Leibniz property. However, for arbitrary smooth functions on $T_*P_0$ the Jacobi identity for (\ref{Pbracket}) is not fulfilled. For this reason we define the function space
\be\label{calP} \mathcal{P}^\infty(T_*P_0):=\left\{f\in C^\infty(T_*P_0):\frac{\partial f}{\partial \eta}(\varphi,\eta)\in (\M p_0)_*\subset (\M p_0)^*\right\}.\ee
\begin{prop}\label{prop:Palg}
The function  space $(\mathcal{P}^\infty(T_*P_0),\{\cdot,\cdot\})$ is a Poisson algebra with respect to the bracket (\ref{Pbracket}). The derivation $\{f,\cdot\}$ defined by $f\in \mathcal{P}^\infty(T_*P_0)$ is a vector field $\xi_f\in  \Gamma^\infty T(T_*P_0)$ satisfying 
\be\label{omega3} \omega(\xi_f,\cdot)=-df,\ee
i.e. it is a  Hamiltonian with respect to the weak symplectic form (\ref{omega}).\end{prop}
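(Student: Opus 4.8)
The plan is to let the weak symplectic structure do the work, by first exhibiting the Hamiltonian vector field of an $f\in\mathcal{P}^\infty(T_*P_0)$ explicitly and then reading off that the defining condition (\ref{calP}) of $\mathcal{P}^\infty$ is exactly the obstruction to its existence. Working in the global trivialization $T(T_*P_0)\cong p_0\M_*\times\M p_0\times(p_0\M_*\times P_0)$ of (\ref{grouptangdual}), I would write the candidate field $\xi_f=(-\frac{\partial f}{\partial\eta},\frac{\partial f}{\partial\varphi})$ in the $(\theta,v)$-components. Since $\frac{\partial f}{\partial\varphi}\in(p_0\M_*)^*\cong\M p_0$ always, the only issue is the first component: $\xi_f$ is a genuine section of $T(T_*P_0)$ if and only if $\frac{\partial f}{\partial\eta}\in(\M p_0)_*\cong p_0\M_*$, which is precisely (\ref{calP}). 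Plugging $\xi_f$ into (\ref{omega}) and comparing with the coordinate form of $df$, in which the $\varphi$- and $\eta$-slots pair separately, gives $\omega(\xi_f,\cdot)=-df$ by matching the two slots, so the second sentence of the Proposition holds; in passing one gets $\{f,g\}=\omega(\xi_f,\xi_g)=\xi_f(g)$, recovering (\ref{Pbracket}).

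For the algebra structure, bilinearity, antisymmetry and the Leibniz rule are already recorded just before the statement, and closure under pointwise multiplication is immediate from $\frac{\partial(fg)}{\partial\eta}=f\frac{\partial g}{\partial\eta}+g\frac{\partial f}{\partial\eta}$ together with linearity of $(\M p_0)_*\subset(\M p_0)^*$. Everything therefore reduces to the Jacobi identity together with closure of $\mathcal{P}^\infty$ under $\{\cdot,\cdot\}$. Using $\{f,g\}=\xi_f(g)$ and antisymmetry, the cyclic sum collapses to $\big([\xi_f,\xi_g]-\xi_{\{f,g\}}\big)(h)$, so it suffices to prove the bracket relation $[\xi_f,\xi_g]=\xi_{\{f,g\}}$; the same relation shows that $\xi_{\{f,g\}}=[\xi_f,\xi_g]$ is a genuine vector field, hence $\{f,g\}\in\mathcal{P}^\infty$, giving closure simultaneously.

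Finally I would establish $[\xi_f,\xi_g]=\xi_{\{f,g\}}$ by a direct computation in the global linear coordinates $(\varphi,\eta)$ on $T_*P_0\cong p_0\M_*\times P_0$, with $P_0$ open in the Banach space $\M p_0$. Writing the commutator componentwise and expanding the directional derivatives of the components of $\xi_f,\xi_g$, the first-order terms reorganize, using the symmetry of the mixed second derivatives $\frac{\partial^2 f}{\partial\eta\partial\varphi}=\frac{\partial^2 f}{\partial\varphi\partial\eta}$ and the symmetry of the $\M_*$--$\M$ pairing, into $(-\frac{\partial}{\partial\eta}\{f,g\},\frac{\partial}{\partial\varphi}\{f,g\})$, which is exactly $\xi_{\{f,g\}}$. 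Equivalently, one may invoke the exactness $\omega=d\gamma$ and the standard identity for $d\omega(\xi_f,\xi_g,\xi_h)$ on a weak symplectic Banach manifold (cf. \cite{OR}): since $\omega$ is closed and the three Hamiltonian fields exist, Jacobi follows. I expect the main obstacle to be the infinite-dimensional bookkeeping in this last step, namely checking that the second-derivative terms produced by $[\xi_f,\xi_g]$ genuinely pair into the predual $(\M p_0)_*$ rather than merely into $(\M p_0)^*$; this is where the restriction to $\mathcal{P}^\infty$, and not all of $C^\infty(T_*P_0)$, is essential, and it is precisely the reason the full function algebra fails to be Poisson.
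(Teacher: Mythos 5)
Your proposal is correct; it differs from the paper's proof mainly in how the computation is packaged, not in its substance. The paper argues in two independent steps: closure of $\mathcal{P}^\infty(T_*P_0)$ under (\ref{Pbracket}) is proved by differentiating the bracket in $\eta$ (its formula (\ref{pochodne3})) and observing that, because (\ref{calP}) forces $\frac{\partial^2 f}{\partial\eta^2}(\varphi,\eta)\in\Ll(\M p_0,p_0\M_*)$ and $\frac{\partial^2 f}{\partial\varphi\partial\eta}(\varphi,\eta)\in\Ll(p_0\M_*,\M p_0)$, every term pairs into $p_0\M_*$; the Jacobi identity is then proved separately by expanding $\{\{f,g\},h\}$ completely into second-derivative terms (its formula (\ref{Jac})) and cancelling the cyclic sum via the symmetries (\ref{pochodne1})--(\ref{pochodne2}). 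You instead take the single identity $[\xi_f,\xi_g]=\xi_{\{f,g\}}$ as the pivot and extract both conclusions from it: the cyclic sum collapses to $\bigl(\xi_{\{f,g\}}-[\xi_f,\xi_g]\bigr)(h)$, giving Jacobi, while the fact that a commutator of genuine vector fields is a genuine vector field gives $\{f,g\}\in\mathcal{P}^\infty(T_*P_0)$ through your characterization of $\mathcal{P}^\infty$ (which is essentially the paper's Remark \ref{rem:42}(ii)--(iii), recorded only after the proposition). The computational core --- the same expansions and the same use of symmetry of mixed second derivatives --- is identical, so nothing is saved at that level; what your route buys is that it actually proves the second assertion of the proposition (the explicit form of $\xi_f$ and $\omega(\xi_f,\cdot)=-df$), which the paper's written proof leaves implicit, and the Cartan-calculus variant you mention ($L_{\xi_f}\omega=0$ since $\omega$ is closed and $i_{\xi_f}\omega=-df$ is exact, hence $i_{[\xi_f,\xi_g]}\omega=-d\{f,g\}$) upgrades this to a coordinate-free argument in which closure follows from Remark \ref{rem:42}(iii), because $d\{f,g\}$ is then the image of $[\xi_f,\xi_g]$ under $\flat$ and so a section of $T^\flat(T_*P_0)$. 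Two points you should make explicit when writing this up: the ``only if'' half of your first-paragraph equivalence (the derivation being a genuine vector field forces $f\in\mathcal{P}^\infty$) uses that a section of $T^{**}(T_*P_0)$ is determined by its values on differentials of linear functions, together with the identification $(p_0\M_*)^{**}\cong(\M p_0)^*$; and in the componentwise commutator computation the insertion of $\frac{\partial g}{\partial\eta}$ as an argument of $\frac{\partial^2 f}{\partial\eta^2}$ or $\frac{\partial^2 f}{\partial\varphi\partial\eta}$ is legitimate only because $f,g\in\mathcal{P}^\infty$ places these first derivatives in the predual --- precisely the subtlety you flag in your final sentence, so your outline is sound on this point.
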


\begin{proof} At first we observe that $\mathcal{P}^\infty(T_*P_0)$ is a subalgebra of the associative algebra  of all smooth functions $C^\infty(T_*P_0)$. In order to prove that $\mathcal{P}^\infty(T_*P_0)$ is closed with respect to the bracket (4.16) we note that 
\be\label{pochodne1} \langle\frac{\partial ^2 f}{\partial\eta\partial \varphi};\dot \eta,\dot \varphi\rangle=\langle\frac{\partial ^2 f}{\partial \varphi\partial\eta};\dot \varphi,\dot \eta\rangle\ee
and
\be\label{pochodne2} \langle\frac{\partial ^2 f}{\partial\eta^2};\dot \eta_1,\dot \eta_2\rangle=\langle\frac{\partial ^2 f}{\partial\eta^2};\dot \eta_2,\dot \eta_1\rangle\ee
for any $\dot \eta,\  \dot \eta_1,\ \dot \eta_2\in \M p_0$ and $\dot  \varphi\in p_0\M_*$. From (\ref{Pbracket}) we find that for any $\dot \eta\in \M p_0$ one has
$$ \langle\frac{\partial\{f,g\}}{\partial\eta},\dot \eta\rangle =
\langle\frac{\partial ^2 g}{\partial \eta^2};\frac{\partial f}{\partial \varphi},\dot \eta\rangle
-\langle\frac{\partial ^2 f}{\partial \eta^2};\frac{\partial g}{\partial \varphi},\dot \eta\rangle
+\langle\frac{\partial ^2 f}{\partial \eta\partial\varphi};\dot \eta,\frac{\partial g}{\partial \eta},\rangle-
\langle\frac{\partial ^2 g}{\partial \eta\partial\varphi};\dot \eta,\frac{\partial f}{\partial \eta},\rangle=$$
\be\label{pochodne3} =\langle\frac{\partial ^2 g}{\partial \eta^2};\frac{\partial f}{\partial \varphi},\dot \eta\rangle-\langle\frac{\partial ^2 f}{\partial \eta^2};\frac{\partial g}{\partial \varphi},\dot \eta\rangle+
\langle\frac{\partial ^2 f}{\partial\varphi\partial \eta};\frac{\partial g}{\partial \eta},\dot \eta\rangle-
\langle\frac{\partial ^2 g}{\partial\varphi\partial \eta};\frac{\partial f}{\partial \eta},\dot \eta\rangle.
\ee
Since  $\frac{\partial ^2 f}{\partial \eta^2}(\varphi,\eta)\in\Ll(\M p_0,p_0\M_*)$ $\frac{\partial ^2 f}{\partial\varphi\partial \eta}(\varphi,\eta)\in\Ll(p_0\M_*,\M p_0)$ $\frac{\partial f}{\partial \varphi}(\varphi,\eta),\ \frac{\partial g}{\partial \varphi}(\varphi,\eta)\in (p_0\M_*)^*=\M p_0$ and $\frac{\partial f}{\partial \eta}(\varphi,\eta),\ \frac{\partial g}{\partial \eta}(\varphi,\eta)\in p_0\M_*$
we find that $\langle\frac{\partial ^2 g}{\partial \eta^2};\frac{\partial f}{\partial \varphi},\cdot\rangle
-\langle\frac{\partial ^2 f}{\partial \eta^2};\frac{\partial g}{\partial \varphi},\cdot\rangle+
\langle\frac{\partial ^2 f}{\partial\varphi\partial \eta};\frac{\partial g}{\partial \eta},\cdot\rangle-
\langle\frac{\partial ^2 g}{\partial\varphi\partial \eta};\frac{\partial f}{\partial \eta},\cdot\rangle\in p_0\M_*.$
Thus  and from (\ref{pochodne3}) we see that $\frac{\partial\{f,g\}}{\partial\eta}(\varphi,\eta)\in p_0\M_*$. So, we have proved that $\{f,g\}\in \mathcal{P}^\infty(T_*P_0)$.

For proving the Jacobi identity for the bracket (\ref{Pbracket}) we take 
\be\label{Jac} \{\{f,g\},h\}=\langle\frac{\partial h}{\partial\eta},\frac{\partial\{f,g\}}{\partial \varphi}\rangle-
\langle\frac{\partial \{f,g\}}{\partial\eta},\frac{\partial h}{\partial \varphi}\rangle=\ee

$$=\langle\frac{\partial^2g}{\partial \varphi\partial\eta};\frac{\partial h}{\partial\eta},\frac{\partial f}{\partial\varphi}\rangle+
\langle\frac{\partial^2f}{\partial \varphi^2};\frac{\partial h}{\partial\eta},\frac{\partial g}{\partial\eta}\rangle-
\langle\frac{\partial^2f}{\partial \varphi\partial\eta};\frac{\partial h}{\partial\eta},\frac{\partial g}{\partial\varphi}\rangle-
\langle\frac{\partial^2g}{\partial \varphi^2};\frac{\partial h}{\partial\eta},\frac{\partial f}{\partial\eta}\rangle+$$
$$+\langle\frac{\partial^2g}{\partial\eta^2};\frac{\partial h}{\partial\varphi},\frac{\partial f}{\partial\varphi}\rangle+
\langle\frac{\partial^2f}{\partial \eta\partial\varphi};\frac{\partial h}{\partial\varphi},\frac{\partial g}{\partial\eta}\rangle-
\langle\frac{\partial^2f}{\partial\eta^2};\frac{\partial h}{\partial\varphi},\frac{\partial g}{\partial\varphi}\rangle-
\langle\frac{\partial ^2g}{\partial \eta\partial\varphi};\frac{\partial h}{\partial\varphi},\frac{\partial f}{\partial\eta}\rangle.$$
Adding cyclic permutations of (\ref{Jac}) and taking into account (\ref{pochodne1}) and (\ref{pochodne2}) we find that Jacobi identify for $f,g,h\in \mathcal{P}^\infty(T_*P_0)$ is satisfied.
\end{proof}

\begin{rem}\label{rem:42} \begin{enumerate}[(i)]
\item \textsl{The bracket (\ref{Pbracket}) after restriction to $\mathcal{P}^\infty(T_*P_0)$ is  the Poisson bracket defined by the weak symplectic form (\ref{omega}).}

\item \textsl{ If $f\in \mathcal{P}^\infty(T_*P_0)$ then the equality (\ref{omega3}) defines a vector field $\xi_f\in \Gamma^\infty T(T^*P_0)$. But if $f\notin \mathcal{P}^\infty(T_*P_0)$ then $\{f,\cdot\}$ is only a section of the bundle  $T^{**}(T_*P_0)$  which contains $T(T_*P_0)$ as a  quasi Banach subbundle, i.e. the bundle inclusion $T(T_*P_0)\hookrightarrow T^{**}(T_*P_0)$ has closed range but without the Banach split.}

\item \textsl{$f\in \mathcal{P}^\infty(T_*P_0)$ if  and only if $df\in \Gamma^\infty T^\flat(T_*P_0)$, i.e. the Banach subbundle $ T^\flat(T_*P_0)$ is defined by  $\mathcal{P}^\infty(T_*P_0)$.}
\end{enumerate}\end{rem}

Since in a general case $T^\flat(T_*P_0)\subsetneq T^*(T_*P_0)$  the Banach bundle morphism $\#:T^\flat(T_*P_0)\to T(T_*P_0)$ inverse to $\flat:T(T_*P_0)\hookrightarrow T^*(T_*P_0)$ is not defined  on the whole of  $T^*(T_*P_0)$. So, following of \cite{pell},   it will be called a \textbf{sub Poisson anchor}. Note here that the bracket (\ref{Pbracket}) and $\#$ define Banach algebroid structure on $T^\flat(T_*P_0)$.

Using the fibre bundle isomorphisms:
\be\begin{array}{l}
T(T_*P_0)\cong p_0\M_*\times \M p_0\times p_0\M_*\times P_0\\
T^*(T_*P_0)\cong \M p_0\times (\M p_0)^*\times p_0\M_*\times P_0\\
T^\flat(T_*P_0)\cong \M p_0\times p_0\M_*\times p_0\M_*\times P_0\end{array}\ee
we introduce the following notations:
\be\label{coo1} (\dot\varphi,\dot\eta,\varphi,\eta)\in p_0\M_*\times \M p_0\times p_0\M_*\times P_0\ee
\be\label{coo2} (\stackrel{\circ}{\varphi},\stackrel{\circ}{\eta},\varphi,\eta)\in \M p_0\times p_0\M_*\times p_0\M_*\times P_0\ee
for the coordinates of the elements of $T(T_*P_0)$ and $T^\flat(T_*P_0)$, respectively. The sub Poisson anchor $\#_1:T^\flat(T_*P_0)\to T(T_*P_0)$ in the coordinates (\ref{coo1}) and (\ref{coo2}) assumes the form 
\be\label{anch4}\#_1(\stackrel{\circ}{\varphi},\stackrel{\circ}{\eta},\varphi,\eta)=(-\stackrel{\circ}{\eta},\stackrel{\circ}{\varphi},\varphi,\eta)\ee

Let us denote by  $\mathcal{P}^\infty_{G_0}(T_*P_0)\subset \mathcal{P}^\infty(T_*P_0)$  the subalgebra of $G_0$-invariant functions. From $G_0$-invariance of the bracket (\ref{Pbracket}) it follows that $\mathcal{P}^\infty_{G_0}(T_*P_0)$ is a Poisson subalgebra of $\mathcal{P}^\infty(T_*P_0)$. We will identify $f\in \mathcal{P}^\infty_{G_0}(T_*P_0)$ with a function on the quotient space $T_*P_0/G_0$ which can be considered as a Banach vector bundle $(\M p_0)_*\times _{G_0} P_0$ associated with the $G_0$-principal bundle $P_0\to P_0/G_0$. Thus the quotient projection  $Q_0:T_*P_0\to T_*P_0/G_0$ is a submersion, see 6.5.1 in \cite{bou}. Therefore one can consider $\mathcal{P}^\infty_{G_0}(T_*P_0)$ as a subalgebra $\mathcal{P}^\infty(T_*P_0/G_0)$ of the algebra $C^\infty(T_*P_0/G_0)$. Hence  the Poisson bracket $\{F,G\}_{G_0}$ of $F,G\in \mathcal{P}^\infty(T_*P_0/G_0)$ one defines as follows
\be \{F,G\}_{G_0}:=\left\{F\circ Q_0,G\circ Q_0\right\}.\ee

In the case of $T^*P_0/G_0$  we can define $T^\flat(T_*P_0/G_0)$ as the bundle of germs of  1-forms $df$, where $f\in \mathcal{P}^\infty(T_*P_0/G_0)$. We note that one has the following isomorphisms:
\be\label{tang}  T_{[(\varphi,\eta)]}(T_*P_0/G_0)\cong (\M p_0)_*\times (1-p_0)\M p_0\times \{[(\varphi,\eta)]\}\ee
\be\label{gw}  T^*_{[(\varphi,\eta)]}(T_*P_0/G_0)\cong \M p_0\times p_0 \M^* (1-p_0)\times \{[(\varphi,\eta)]\}\ee
\be\label{bem}  T^\flat_{[(\varphi,\eta)]}(T_*P_0/G_0)\cong \M p_0\times p_0\M_*(1-p_0) \times \{[(\varphi,\eta)]\}\ee
where $[(\varphi,\eta)]\in T_*P_0/G_0$. From (\ref{gw}) and (\ref{bem}) we see that $ T^\flat(T_*P_0/G_0)$ is a proper quasi Banach subbundle of $ T^*(T_*P_0/G_0)$.

Similarly as for  $TP_0$ and $TP_0/G_0$ let us define the atlases on $T_*P_0$ and $T_*P_0/G_0$ consistent with their vector  bundle structures. We begin from the $(\varphi,\eta)$ coordinates on $T_*P_0$ predual to the coordinates $(v,\eta)$ on $TP_0$. Using (\ref{v}) we find 
\be \langle\varphi,v\rangle= \langle\varphi,[a_p+(p+y_p)b_p ]z_{pp_0}\rangle=\ee
$$=  \langle z_{pp_0}\varphi(1-p),a_p\rangle+\langle z_{pp_0}\varphi(p+y_p),b_p\rangle=\langle \alpha_p,a_p\rangle+\langle\beta_p,b_p\rangle,$$
where $\alpha_p\in p\M_*(1-p)$ and $  \beta_p\in p\M_* p$ are defined by
\be \alpha_p:=z_{pp_0}\varphi(1-p)\ee
\be \beta_p:= z_{pp_0}\varphi(p+y_p).\ee
Thus, similarly to (\ref{transa}-\ref{transb}) we obtain the equalities
\be\label{transalpha} \alpha_p=(p\eta)\varphi(1-p)\qquad\qquad\ee
\be\label{transbbeta}  \beta_p=p\eta\varphi\eta(p\eta)^{-1}\qquad\qquad\qquad\ee
 which togather with  (\ref{1yp}) define the chart 
\be\label{chartTdual} T_*(\pi_0^{-1}(\Pi_p))\ni(\varphi,\eta)\mapsto T_*\psi_p(\varphi,\eta)=(\alpha_p,\beta_p,y_p,z_{pp_0})\ee
on $T_*P_0$, where $p\in \Ll_{p_0}(\M)$.
The dependence inverse to (\ref{transalpha}-\ref{transbbeta}) is given by
\be \varphi=z_{pp_0}^{-1}(\alpha_p+\beta_p-\alpha_p y_p)\ee
\be \eta=(p+y_p)z_{pp_0}.\ee
The transition map from the coordinates $(\alpha_p,\beta_p, y_p,z_{pp_0})$ to the coordinates $(\alpha_{p'},\beta_{p'}, y_{p'},z_{p'p_0})$ is the following
\be\label{5talp} \alpha_{p'}=(a+cy_p)(\alpha_p+\beta_p-\alpha_p y_p)(1-p')\ee
\be\label{5tbeta} \beta_{p'}=(a+cy_p)\beta_p(a+cy_p)^{-1}\ee
\be\label{5ty}  y_{p'}=(b+dy_p)(a+cy_p)^{-1}\qquad\ee
\be z_{p'p_0}=(a+cy_p)z_{pp_0}.\qquad\ee

Quotienting the chart (\ref{chartTdual}) by $G_0$ we obtain  on $T_*P_0/G_0$ the chart 
\be T_*(\pi_0^{-1}(\Pi_p))/_{G_0}\ni\langle \varphi,\eta\rangle\mapsto [T_*\psi_p](\langle \varphi,\eta\rangle)=(\alpha_p,\beta_p,y_p).\ee
The transition map $[T_*\psi_{p'}]\circ [T_*\psi_p]^{-1}:(\alpha_p,\beta_p,y_p)\mapsto (\alpha_{p'},\beta_{p'},y_{p'})$ between these charts is given by (\ref{5talp}-\ref{5ty}).

\bigskip

Finally let us note that the momentum map $J_1:T_*P_0\to p_0\M_* p_0$ in the coordinates $(\alpha_p,\beta_p, y_p,z_{pp_0})$ assumes the form
\be\label{Jcoor} J_1(\alpha_p,\beta_p, y_p,z_{pp_0})=z_{pp_0}^{-1}\beta_p z_{pp_0}\ee

\bigskip

Since $p_0\M_* p_0$ is the predual Banach space of the $W^*$-algebra $p_0\M p_0$, see (\ref{Mpredual}), the structure of Banach Lie-Poisson space is defined on it in a canonical way. Namely, according  to \cite{OR},  the bracket
\be\label{LPbr} \{F,G\}_{LP}(\beta):=\left\langle\beta,\left[\frac{\partial F}{\partial \beta}(\beta),\frac{\partial G}{\partial \beta}(\beta)\right]\right\rangle\ee
is a Lie-Poisson bracket of $F,G\in C^\infty(p_0\M_* p_0)$. The followig theorem is valid.

\begin{thm}\label{prop:43}
 \begin{enumerate}[(i)]
\item One has the surjective Poisson submersions:

  \unitlength=5mm
 \be\label{diagsymplpair}
 \begin{picture}(11,4.6)
    \put(4.5,4){\makebox(0,0){$T_*P_0$}}
    \put(0,-1){\makebox(0,0){$T_*P_0/G_0$}}
    \put(9,-1){\makebox(0,0){$ p_0\M _* p_0$}}
    \put(4,3.7){\vector(-1,-1){4}}
     \put(5,3.7){\vector(1,-1){4}}
       \put(1,2){\makebox(0,0){$Q_0$}}
    \put(8,2){\makebox(0,0){$ J_1$}}
     \end{picture}
\ee
    \bigskip\newline
of the weak symplectic manifold $(T_*P_0,\omega)$ on the sub Poisson manifold $(T_*P_0/G_0,\{\cdot,\cdot\}_{G_0})$ and on the Banach Lie-Poisson space  $(p_0\M p_0,\{\cdot,\cdot\}_{LP})$.
\item  The Poisson subalgebras $J_0^*(C^\infty(p_0\M_* p_0)$ and $(Q_0)^*(\mathcal{P}^\infty(T_*P_0/G_0))=\mathcal{P}^\infty_{G_0}(T_*P_0) $ of the Poisson algebra $\mathcal{P}^\infty(T_*P_0)$ are polar one to another with respect to the weak symplectic form $\omega$.
\end{enumerate}\end{thm}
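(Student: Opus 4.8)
The plan is to handle the two maps in (i) separately and then deduce (ii) from the momentum-map geometry. For $Q_0$ there is nothing new to do: it is a submersion by the argument already recorded (6.5.1 of \cite{bou}, $T_*P_0/G_0$ being the associated bundle $(\M p_0)_*\times_{G_0}P_0$), and it is Poisson by the very definition of $\{\cdot,\cdot\}_{G_0}$. For $J_1=J_0$ I would first get surjectivity by evaluating at the unit $\eta=p_0\in P_0$, where $J_0(\varphi,p_0)=\varphi p_0$ sweeps out all of $p_0\M_* p_0$ as $\varphi$ ranges over $p_0\M_*$. The submersion property is cleanest in the coordinates (\ref{Jcoor}): the map $\beta_p\mapsto z_{pp_0}^{-1}\beta_p z_{pp_0}$ is a linear isomorphism of $p\M_* p$ onto $p_0\M_* p_0$ (conjugation by the partially invertible $z_{pp_0}$), so the $\beta_p$-slice provides a splitting of $\ker TJ_0$ and the differential is onto.

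The core of (i) is the identity $\{F\circ J_0,G\circ J_0\}=\{F,G\}_{LP}\circ J_0$. First one must check $J_0^*C^\infty(p_0\M_* p_0)\subset\mathcal{P}^\infty(T_*P_0)$: writing $F'=\frac{\partial F}{\partial\mu}(\varphi\eta)\in p_0\M p_0$ and differentiating $F(\varphi\eta)$ gives $\frac{\partial(F\circ J_0)}{\partial\varphi}=\eta F'\in\M p_0$ and $\frac{\partial(F\circ J_0)}{\partial\eta}=F'\varphi$, the latter lying in $(\M p_0)_*=\M_* p_0$ precisely because $\varphi\in p_0\M_*$ and $F'\in p_0\M p_0$, which is the defining condition (\ref{calP}). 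Substituting into (\ref{Pbracket}) and repeatedly using the module rules $\langle a\varphi,b\rangle=\langle\varphi,ba\rangle$ and $\langle\varphi a,b\rangle=\langle\varphi,ab\rangle$, the four terms collapse to $\langle\varphi,\eta(F'G'-G'F')\rangle=\langle\varphi\eta,[F',G']\rangle$, which is exactly $\{F,G\}_{LP}(J_0(\varphi,\eta))$ by (\ref{LPbr}). This simultaneously shows $A:=J_0^*C^\infty$ is a Poisson subalgebra and that $J_0$ is Poisson.

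For part (ii) I would read polarity as mutual Poisson centralization, equivalently as the $\omega$-orthogonality of the characteristic distributions, and verify it in the coordinate form (\ref{omega}) of $\omega$. The distribution of $B:=\mathcal{P}^\infty_{G_0}=Q_0^*\mathcal{P}^\infty(T_*P_0/G_0)$ is $\ker TQ_0$, the tangent to the $G_0$-orbits, spanned by the generators $\xi^x=(-x\varphi,\eta x)$, $x\in p_0\M p_0$; by (\ref{413}) these are exactly the Hamiltonian fields $\xi_{\langle J_0,x\rangle}$. Hence $\{J_0^*F,g\}=\xi_{J_0^*F}(g)$ vanishes for every $G_0$-invariant $g$, and conversely $\{g,J_0^*F\}=0$ for all $F$ forces $\xi^x(g)=0$, i.e. the centralizer of $A$ is precisely $B$. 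The symplectic side is a one-line computation: feeding $\xi^x$ into (\ref{omega}) gives $\omega\big((\theta,v),(-x\varphi,\eta x)\big)=\langle\theta\eta+\varphi v,x\rangle$, so $(\ker TQ_0)^\perp=\{(\theta,v):\theta\eta+\varphi v=0\}=\ker TJ_0$ by nondegeneracy of the duality between $p_0\M_* p_0=(p_0\M p_0)_*$ and $p_0\M p_0$.

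The delicate step, which I expect to be the main obstacle, is the opposite inclusion $(\ker TJ_0)^\perp\subseteq\ker TQ_0$ (equivalently $B^{\mathrm{pol}}\subseteq A$). Since $\omega$ is only weakly nondegenerate and every bundle here is a non-split quasi subbundle, the double polar need not automatically collapse back onto the orbit directions, and one cannot fall back on a finite-dimensional reduction-lemma dimension count. I would resolve this by carrying out the orthogonality computation strictly in coordinates, using the Banach splitting $\M p_0=q\M p_0\oplus(1-q)\M p_0$ with $q=\eta\eta^{-1}$, the invertibility of $z_{pp_0}$, and the exact predual--dual pairing $p_0\M_* p_0=(p_0\M p_0)_*$, so that the constraint $\theta\eta+\varphi v=0$ can be solved explicitly and the polar is seen to close up exactly on $\{(-x\varphi,\eta x):x\in p_0\M p_0\}$; this yields the mutual polarity of $A$ and $B$.
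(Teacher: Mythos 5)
Your part (i) is essentially the paper's own proof: the same evaluation at $\eta=p_0$ for surjectivity of $J_1$, the same use of the coordinate expression (\ref{Jcoor}) to exhibit the $\dot\beta_p$-slice $\{0\}\oplus p\M_*p\oplus\{0\}\oplus\{0\}$ as a Banach complement of $\ker TJ_1$, and the same collapse of the bracket (\ref{Pbracket}) to $\{F,G\}_{LP}\circ J_1$, including the observation that $\frac{\partial (F\circ J_1)}{\partial\eta}(\varphi,\eta)=\frac{\partial F}{\partial\beta}(\beta)\varphi$ lies in the predual, so that $J_1^*C^\infty(p_0\M_*p_0)\subset\mathcal{P}^\infty(T_*P_0)$. (One cosmetic slip: by (\ref{Mpredual}) the space $(\M p_0)_*$ is $p_0\M_*$, not $\M_* p_0$; your computation uses the correct space, only the label is off.)

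Where you genuinely diverge is part (ii), and there you prove strictly more than the paper does. The paper's entire proof of (ii) is the single observation that $\xi^x(f)=0$ for $G_0$-invariant $f$, with $\xi^x$ the Hamiltonian field of $\langle J_0,x\rangle$ via (\ref{pole2}) and (\ref{413}); i.e.\ it only establishes that the two subalgebras Poisson-commute, so each is \emph{contained in} the polar of the other. You read polarity in the stronger mutual-centralizer (dual-pair) sense, and in addition to the paper's commutation step you prove the converse inclusion $A'\subseteq B$ (this silently uses connectedness of $G_0$, which does hold, since the invertible group of a $W^*$-algebra is connected by polar decomposition and functional calculus), and you correctly compute $(\ker TQ_0)^{\perp_\omega}=\ker TJ_1$ from (\ref{omega}). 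What your reading buys is an honest formulation of what ``polar'' should mean and an identification of the genuinely delicate point in the weak-symplectic, non-reflexive setting --- the inclusion $B^{\mathrm{pol}}\subseteq A$ --- which the paper never addresses at all. What you leave open: that step is only sketched, and your asserted equivalence between the distribution inclusion $(\ker TJ_1)^{\perp_\omega}\subseteq\ker TQ_0$ and the algebra inclusion $B^{\mathrm{pol}}\subseteq A$ additionally requires connectedness of the fibres of $J_1$ and a smooth factorization of $J_1$-fibrewise-constant functions through the submersion $J_1$; these should be stated, and the coordinate computation you postpone does need the $\sigma$-weak closedness of $q\M p_0$ (a bipolar argument in the duality $(\M p_0,p_0\M_*)$) to close up on the orbit directions. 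Still, everything the paper's one-sentence proof of (ii) establishes is contained in your first step, so the proposal is correct relative to the paper's own standard and is more informative where the two differ.
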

\begin{proof} (i) In order to see that $Q_0:T_*P_0\to T_*P_0/G_0$ is a surjective submersion we note that $T_*P_0/G_0\to P_0/G_0$ is a Banach vector bundle associate with the principal bundle $\pi_0:P_0\to P_0/G_0$.

 Substituting  $\varphi=\beta$ and $\eta=p_0$ into (\ref{momentum}) we find that $J_1(\beta,p_0)=\beta$. This shows the surjectivity of  $J_1$. 

For $\dot \varphi\in p_0\M_*$ and $\dot \eta\in \M p_0$ one has 
\be\label{TJ1} T J_1(\varphi,\eta)(\dot  \varphi,\dot \eta)=\frac{\partial J_1}{\partial \varphi}(\varphi,\eta)\dot  \varphi+\frac{\partial J_1}{\partial \eta}(\varphi,\eta)\dot  \eta=\dot  \varphi\eta+\varphi\dot  \eta.\ee
Substituting  $\eta=p_0$ and $\dot \eta=0$ into (\ref{TJ1}) we obtain that any $x\in p_0\M _* p_0$ can be written as $x=TJ_1(\varphi,p_0)(\dot \varphi,0)=\dot  \varphi p_0$. Thus, $TJ_1(\varphi,\eta):T_{(\varphi,\eta)}(T_*P_0)\to T_{J_1(\varphi,\eta)}p_0\M_* p_0$ is a surjection.

Now let us  show that $ker TJ_1(\varphi,\eta)$ has a Banach complement. For this reason we will use the coordinate expression (\ref{Jcoor}) for the momentum map $J_1$. Differentiating (\ref{Jcoor}) we obtain 
\be\label{TJ} TJ_1(\alpha_p,\beta_p,y_p, z_{pp_0})(\dot \alpha_p,\dot \beta_p,\dot  y_p, \dot  z_{pp_0})=z_{pp_0}^{-1}(\dot \beta_p+\beta_p\dot  z_{pp_0} z_{pp_0}^{-1}-\dot  z_{pp_0} z_{pp_0}^{-1}\beta_p)z_{pp_0},\ee
where by
\be\label{4coor}(\dot \alpha_p,\dot \beta_p,\dot  y_p, \dot  z_{pp_0})\in p\M_*(1-p)\oplus p\M_* p\oplus (1-p)\M p\oplus p\M p_0\ee
we denote coordinates of the tangent vectors at $(\alpha_p, \beta_p,  y_p,   z_{pp_0})$.
 From (\ref{TJ}) we see that $(\dot \alpha_p,\dot \beta_p,\dot  y_p, \dot  z_{pp_0})\in ker TJ_1(\alpha_p,\beta_p,y_p, z_{pp_0})$ iff
\be\label{4Delta} \dot \beta_p=\dot  z_{pp_0} z_{pp_0}^{-1}\beta_p-\beta_p\dot  z_{pp_0} z_{pp_0}^{-1}.\ee
It follows from (\ref{4coor}) and (\ref{4Delta}) that $ker TJ_1(\alpha_p,\beta_p,y_p, z_{pp_0})$ is complemented by the Banach subspace $\{0\}\oplus p\M_* p\oplus\{0\}\oplus \{0\}$. Thus we conclude that the momentum map $J_1:T_*P_0\to p_0\M_* p_0$ is a surjective submersion.

For $F,G\in C^\infty(p_0\M _* p_0) $ and $\beta=J_1(\varphi,\eta)$ we have
$$\{ F\circ  J_1, G\circ J_1\}(\varphi,\eta)=\left\langle \frac{\partial G}{\partial \beta}\frac{\partial J_1}{\partial \eta}(\varphi,\eta),\frac{\partial F}{\partial \beta}\frac{\partial J_1}{\partial \varphi}(\varphi,\eta)\right\rangle-
\left\langle \frac{\partial F}{\partial \beta}\frac{\partial J_1}{\partial \eta}(\varphi,\eta),\frac{\partial G}{\partial \beta}\frac{\partial J_1}{\partial \varphi}(\varphi,\eta)\right\rangle=$$
\be\label{Poiss}=\left\langle \frac{\partial F}{\partial \beta}(\beta)(\cdot)\eta,\frac{\partial G}{\partial \beta}(\beta)\varphi\right\rangle-
\left\langle \frac{\partial G}{\partial \beta}(\beta)(\cdot)\eta,\frac{\partial F}{\partial \beta}(\beta)\varphi\right\rangle=\ee
$$=\left\langle \left[ \frac{\partial F}{\partial \beta}(\beta),  \frac{\partial G}{\partial \beta}(\beta)\right],\varphi\eta\right\rangle=\left(\{F,G\}_{LP}\circ J_1\right)(\varphi,\eta).$$
We note here that $\frac{\partial F\circ J_1}{\partial \eta}(\varphi,\eta)=\frac{\partial F}{\partial \beta}(\beta)\varphi$ and $\ \frac{\partial G\circ J_1}{\partial \eta}(\varphi,\eta)=\frac{\partial G}{\partial \beta}(\beta)\varphi$ belong to $ p_0\M_*$. From (\ref{Poiss}) we conclude that $J_1$ is a Poisson map.

(ii) The polarity of Poisson subalgebras $J_1^*(C^\infty(p_0\M_* p_0)$ and $Q_0^*(\mathcal{P}^\infty(T_*P_0/G_0))$ follows from the fact that one has $\xi^x(f)=0$  for the vector field $\xi^x $  defined in (\ref{pole2}) and $f\in C^\infty_{G_0}(T_*P_0/G_0)$.
\end{proof}

\begin{example} As an example let us consider the case when $\M$ is a finite $W^*$-algebra and the projection $p_0$ is the unit element of $\M$. Then $P_0$ is equal to the group $G(\M)$ of the invertible elements of $\M$. In this case $Q_0$ and $J_1$ are the left $J_L:T_*G(\M)\to \M_*$ and right $J_R:T_*G(\M)\to \M_*$ momentum maps, respectively, of the weak symplectic manifold $T_*G_0$. From Theorem \ref{prop:43} it follows that 
  \unitlength=5mm
 \be\label{ex:diagsymplpair}
 \begin{picture}(11,4.6)
    \put(4.5,4){\makebox(0,0){$T_*G(\M)$}}
    \put(4.5,0){\makebox(0,0){$\M_*$}}
    \put(4,3.7){\vector(0,-1){3}}
     \put(5,3.7){\vector(0,-1){3}}
       \put(3.5,2){\makebox(0,0){$J_R$}}
    \put(5.5,2){\makebox(0,0){$ J_L$}}
     \end{picture}
\ee
is the precotangent weak symplectic groupoid of the Banach Lie Poisson space $(\M_*,\{\cdot,\cdot\}_{LP})$ with Lie-Poisson bracket $\{\cdot,\cdot\}_{LP}$ defined in (\ref{LPbr}).

\end{example}

In order to define the sub Poisson structure on $p_0\M_* p_0\times _{Ad^*_{G_0}}P_0$ let us firstly introduce such kind of structure on $p_0\M_* p_0\times P_0$. For this reason we take the subalgebra  of smooth functions 
\be\label{PG0} \mathcal{P}^\infty_{G_0}(p_0\M_* p_0\times P_0):= \left\{F\in C^\infty(p_0\M_* p_0\times P_0):\ \frac{\partial F}{\partial \eta}(\beta,\eta)\in p_0\M_*\ {\rm and}\ F(Ad^*_g\beta,\eta g)=F(\beta,\eta)\right\}
\ee
and the Poisson bracket of $F,G\in  \mathcal{P}^\infty_{G_0}(p_0\M_* p_0\times P_0)$  we define by
\be\label{sPbra}
\{F,G\}_{sP}(\beta,\eta):=\left\langle\beta,\left[\frac{\partial F}{\partial \beta}(\beta,\eta),\frac{\partial G}{\partial \beta}(\beta,\eta)\right]\right\rangle.\ee
One can easly check that $\{F,G\}_{sP}\in \mathcal{P}^\infty_{G_0}(p_0\M_* p_0\times P_0)$. Considering $\mathcal{P}^\infty_{G_0}(p_0\M_* p_0\times P_0)$ as the subalgebra $\mathcal{P}^\infty(p_0\M_* p_0\times_{Ad_{G_0}} P_0)$ of $C^\infty(p_0\M_* p_0\times_{Ad_{G_0}} P_0)$  and taking into account that the bracket (\ref{sPbra}) is $G_0$-invariant we find that it defines a sub Poisson structure on $p_0\M_* p_0\times_{Ad_{G_0}} P_0$. Note here that $\mathcal{P}^\infty(p_0\M_* p_0\times_{Ad_{G_0}} P_0)\subsetneq C^\infty(p_0\M_* p_0\times_{Ad_{G_0}} P_0)$ in general.

Let us mention that the bracket (\ref{sPbra}) is also well define if $F,G\in C^\infty_{G_0}(p_0\M_* p_0\times P_0)$. However, we have assumed in (\ref{PG0}) the condition $\frac{\partial F}{\partial \eta}(\beta,\eta)\in p_0\M_*$for the consistency with the sub Poisson structure on $T_*P_0$ defined by (\ref{Pbracket}) and (\ref{calP}).

The next proposition describe the sub Poisson structure of the predual Atiyah sequence (\ref{Atiyahrestridual})
\begin{thm}\label{prop:44} The predual Atiyah sequence (\ref{Atiyahrestridual}) is a short exact sequence of the fibre-wise linear sub Poisson complex Banach vector  bundles, i.e.
\begin{enumerate}[(i)]
\item The Banach vector bundle map $\iota_*:T_*P_0/G_0\to p_0\M_* p_0\times_{Ad^*_{G_0}} P_0$ is a sub  Poisson submersion.
\item One has $ker\ \iota_*=J_1^{-1}(0)/G_0$, where $J_1^{-1}(0)/G_0$ is the weak symplectic leaf in $T_*P_0/G_0$ obtained by the Marsden-Weinstein symplectic reduction procedure, \cite{MW}. The predual anchor map $a_*:T_*(P_0/G_0)\hookrightarrow T_*P_0/G_0$ is an immersion which defines the isomorphism $T_*(P_0/G_0)\cong J_1^{-1}(0)/G_0$ of weak symplectic manifolds, if the precotangent bundle $T_*(P_0/G_0)$ is endowed with the canonical weak symplectic structure. 
\end{enumerate}
\end{thm}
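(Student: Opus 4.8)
The plan is to use the explicit model of the predual Atiyah sequence (\ref{Atiyahrestridual}) supplied by Proposition~\ref{prop41}. Under the isomorphisms $I_{*\mathcal M}$ and $I_{*\A}$, the map $\iota_*$ is the descent to the $G_0$-quotients of the $G_0$-equivariant map $\Phi:T_*P_0\to p_0\M_* p_0\times P_0$, $\Phi(\varphi,\eta):=(J_1(\varphi,\eta),\eta)=(\varphi\eta,\eta)$, with $J_1$ the momentum map (\ref{momentum}); a short computation with $I_{*\mathcal M},I_{*\A}$ gives $\iota_*\langle\varphi,\eta\rangle=\langle\varphi\eta,\eta\rangle$. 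Fibre-wise over $\Ll_{p_0}(\M)\cong P_0/G_0$ this is the canonical projection $q\M_*\to q\M_* q$ dual to the inclusion $q\M q\hookrightarrow\M q$ of (\ref{Atiyah1}), where $q=\eta\eta^{-1}$, whose kernel $q\M_*(1-q)$ is complemented; hence $\iota_*$ is a surjective submersion of Banach bundles, which settles the submersion half of (i).

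For the Poisson half of (i) I would lift everything to $T_*P_0$. Since $Q_0$ and the quotient $q_{G_0}:p_0\M_* p_0\times P_0\to p_0\M_* p_0\times_{Ad^*_{G_0}}P_0$ are the defining submersions for the two sub Poisson structures and $\iota_*\circ Q_0=q_{G_0}\circ\Phi$, it suffices to prove $\{\hat F\circ\Phi,\hat G\circ\Phi\}=\{\hat F,\hat G\}_{sP}\circ\Phi$ for $G_0$-invariant $\hat F,\hat G\in\mathcal{P}^\infty_{G_0}(p_0\M_* p_0\times P_0)$, where $\{\cdot,\cdot\}$ is (\ref{Pbracket}). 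Writing $\beta=\varphi\eta$ and $X=\frac{\partial\hat F}{\partial\beta},\,Y=\frac{\partial\hat G}{\partial\beta}\in p_0\M p_0$, the chain rule gives $\frac{\partial(\hat F\circ\Phi)}{\partial\varphi}=\eta X$ and $\frac{\partial(\hat F\circ\Phi)}{\partial\eta}=X\varphi+\frac{\partial\hat F}{\partial\eta}$, and similarly for $\hat G$. Substituting into (\ref{Pbracket}), the terms in $X,Y$ alone reproduce $\langle\beta,[X,Y]\rangle$ exactly as in the computation (\ref{Poiss}) showing that $J_1$ is Poisson, while the remaining cross terms $\langle\frac{\partial\hat G}{\partial\eta},\eta X\rangle-\langle\frac{\partial\hat F}{\partial\eta},\eta Y\rangle$ are evaluated with the infinitesimal invariance identity $\langle\beta,[x,\frac{\partial\hat F}{\partial\beta}]\rangle+\langle\frac{\partial\hat F}{\partial\eta},\eta x\rangle=0$, obtained by differentiating $\hat F(g^{-1}\beta g,\eta g)=\hat F(\beta,\eta)$ at $g=e$. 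Collecting all terms and invoking this identity twice yields $\{\hat F\circ\Phi,\hat G\circ\Phi\}=\{\hat F,\hat G\}_{sP}\circ\Phi$ (up to the overall sign fixed by the orientation conventions for the fibre-wise Lie--Poisson bracket), which is precisely the statement that $\iota_*$ intertwines the two sub Poisson brackets.

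For (ii), the formula $\iota_*\langle\varphi,\eta\rangle=\langle\varphi\eta,\eta\rangle$ immediately gives $\ker\iota_*=\{\langle\varphi,\eta\rangle:J_1(\varphi,\eta)=0\}=J_1^{-1}(0)/G_0$, the quotient making sense because $J_1^{-1}(0)$ is $G_0$-invariant by the equivariance $J_1\circ\Sigma_{*g}=Ad^*_{g^{-1}}\circ J_1$. As $J_1$ is a surjective submersion (Theorem~\ref{prop:43}), $0$ is a regular value, $J_1^{-1}(0)$ is a Banach submanifold of $(T_*P_0,\omega)$, and the Marsden--Weinstein procedure \cite{MW} furnishes a reduced weak symplectic form on $J_1^{-1}(0)/G_0$. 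Dually, $a_*$ is fibre-wise the inclusion $q\M_*(1-q)\hookrightarrow q\M_*$ of the complemented off-diagonal block, hence an immersion whose image is exactly $q\M_*(1-q)=\ker\iota_*$; this simultaneously gives the exactness $\operatorname{im}a_*=\ker\iota_*$ and the linear isomorphism $T_*(P_0/G_0)\cong J_1^{-1}(0)/G_0$.

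To promote this to a symplectomorphism I would identify $J_1^{-1}(0)$ with the annihilator of the vertical bundle: $J_1(\varphi,\eta)=0$ means $\langle\varphi,\eta x\rangle=0$ for all $x\in p_0\M p_0$, i.e. $\varphi$ annihilates $T^V_\eta P_0=\eta\,p_0\M p_0$, so $\varphi$ is the pullback of a unique covector on $P_0/G_0$. Passing to the $G_0$-quotient realizes $J_1^{-1}(0)/G_0$ as $T_*(P_0/G_0)$, and the restriction of the canonical $1$-form $\gamma$ of (\ref{form}) to $J_1^{-1}(0)$ is basic and descends to the canonical $1$-form of $T_*(P_0/G_0)$; applying $d$ matches the reduced form with the canonical weak symplectic form, so $a_*$ is a symplectomorphism onto $J_1^{-1}(0)/G_0$. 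This is the weak symplectic, Banach version of cotangent-bundle reduction at zero. The hard part will be the sign and cross-term bookkeeping in the Poisson identity of (i), and, in (ii), verifying that both the Marsden--Weinstein reduction and the annihilator identification survive in the weak symplectic Banach category — i.e. the complementedness of the relevant kernels and the closed (non-split) range of $\flat$ — which is where the genuinely infinite-dimensional subtleties are concentrated.
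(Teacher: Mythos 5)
Your proposal is correct, and for part (i) it is essentially the paper's own argument: the paper also lifts everything to $T_*P_0$ through the map $I_*(\varphi,\eta)=(J_1(\varphi,\eta),\eta)$ (your $\Phi$), uses the factorization $\iota_*\circ Q_0=\pi_{Ad^*_{G_0}}\circ I_*$, and evaluates the bracket of $\hat F\circ I_*,\hat G\circ I_*$ by the chain rule plus the infinitesimal invariance identity $\langle\frac{\partial \hat F}{\partial\eta},\eta x\rangle=-\langle\beta,[x,\frac{\partial \hat F}{\partial\beta}]\rangle$ — the only cosmetic difference being that you get the submersion property from the fibre-wise split kernel $q\M_*(1-q)$, while the paper invokes the composition of surjective submersions (Bourbaki 5.9.2/5.9.3). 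Your parenthetical about the overall sign is in fact warranted: with the paper's conventions the two cross terms do not cancel but contribute $-2\langle\beta,[X,Y]\rangle$, so the intertwining relation holds for the fibre-wise Lie--Poisson bracket taken with the opposite sign (the convention the paper itself adopts later in (\ref{sPbra2}), (\ref{533}) and (\ref{560})); your explicit bookkeeping is the honest version of the paper's computation (\ref{Pbra4}). Where you genuinely diverge is the symplectomorphism claim in part (ii). The paper proceeds locally: it chooses principal-bundle sections $\sigma_{pp_0}(q)=\sigma_p(q)\eta_{pp_0}$, builds local trivializations $a_{*p}$ of the predual anchor, and checks by direct computation that $(a_{*p})^*\gamma$ is the canonical $1$-form $\tilde\gamma$ of $T_*(P_0/G_0)$ independently of the trivialization. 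You instead argue intrinsically: $J_1(\varphi,\eta)=0$ means $\varphi$ annihilates $T^V_\eta P_0=\eta\, p_0\M p_0$, so $\varphi$ descends to a covector on $P_0/G_0$, and $\gamma|_{J_1^{-1}(0)}$ is basic (it is $G_0$-invariant and kills vertical vectors precisely because $\langle\gamma,\xi\rangle=\langle\varphi,\eta x\rangle=\langle J_1(\varphi,\eta),x\rangle=0$ on the zero level), hence descends to the canonical form downstairs — cotangent reduction at zero. Your route is cleaner and coordinate-free; what the paper's local trivializations buy is an explicit verification of the genuinely Banach-theoretic points you defer to the end, namely that the covector produced on $P_0/G_0$ really lands in the predual bundle $T_*(P_0/G_0)$ (i.e.\ the identification respects $(\M p_0)_*\subset(\M p_0)^*$, cf.\ (\ref{tang})) and that the identification is smooth as a bundle map, not merely a fibre-wise linear bijection. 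To close your argument in the non-reflexive setting you would need to add exactly this: for the surjection $T\pi_0(\eta)$ with split kernel, the annihilator of the kernel is the image of the dual map, and this image intersected with $p_0\M_*$ is precisely the pullback of the precotangent fibre — which is what the paper's explicit formula $a_{*p}(\rho)=(\rho\circ T\pi_0)(\sigma_{pp_0}(q))$ exhibits in one line.
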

\begin{proof} 
\begin{enumerate}[(i)] 
\item In order to describe $\iota_*:T_*P_0/G_0\to p_0\M_* p_0 \times_{Ad^*_{G_0}}P_0$, see (\ref{Atiyahrestridual}), in detail, we consider  the map 
\be I_*:T_*P_0\cong p_0\M_*\times P_0\ \ni\ (\varphi,\eta)\mapsto (J_1(\varphi,\eta), \eta)\ \in\ p_0\M_* p_0\times  P_0.\ee

Note that $I_*=J_1\times pr_2$, where $pr_2:p_0\M_*\times P_0$ is the projection on the second component of the Cartesian product. Since $J_1$ and $pr_2$ are surjective submersion we conclude that $I_*$ has the same property, see 5.9.3 in \cite{bou}.

The equivariance property $I_*(g^{-1}\varphi,\eta g)=(Ad^*_{g^{-1}}(J_1(\varphi,\eta),\eta g)$, $g\in G_0$ allows us to define $\iota_*$ by 
\be \label{iotapre} \iota_*([\varphi,\eta)]):=[(J_1(\varphi,\eta),\eta]=[(\varphi \eta,\eta)],\ee
where $[(\varphi,\eta)]$  and $[(\beta,\eta)]$ are the $G_0$-orbits of $(\varphi,\eta)\in  p_0\M_*\times P_0$ and $(\beta,\eta)\in p_0\M_* p_0\times P_0$, respectively.

In order to show that the bundle epimorphism $\iota_*:T_*P_0/P_0\to p_0\M_* p_0\times _{Ad^*_{G_0}} P_0$ is a submersion we note that 
$$\iota_*\circ Q_0=\pi_{Ad^* _{G_0}}\circ  I_*,$$
where $\pi_{Ad^* _{G_0}}:p_0\M p_0\times P_0\to p_0\M_* p_0\times _{Ad^*_{G_0}} P_0$ is the quotient map. Since the maps $Q_0$, $\pi_{Ad^* _{G_0}}$ and $I_*$ are surjective submersions we conclude that $\iota_*$ is a submersion too, see 5.9.2 in \cite{bou}.
  
	For $F,G\in \mathcal{P}^\infty_{G_0}(p_0\M_* p_0\times P_0)\cong \mathcal{P}^\infty(p_0\M_* p_0\times_{Ad^* _{G_0}}P_0) $ we have
\be\label{proof1}\{ F\circ I_*,G\circ I_*\}(\varphi,\eta)=\ee
$$=\left\langle\frac{\partial (G\circ I_*)}{\partial\eta}(\varphi,\eta),\frac{\partial (F\circ I_*)}{\partial\varphi}(\varphi,\eta)\right\rangle-
\left\langle\frac{\partial (F\circ I_*)}{\partial\eta}(\varphi,\eta),\frac{\partial (G\circ I_*)}{\partial\varphi}(\varphi,\eta)\right\rangle=$$
$$=\left\langle\frac{\partial G}{\partial\beta}(J_1(\varphi,\eta),\eta)\frac{\partial J_1}{\partial\eta}(\varphi,\eta)+\frac{\partial G}{\partial\eta}(J_1(\varphi,\eta),\eta),\frac{\partial F}{\partial\beta}(J_1(\varphi,\eta),\eta)(\cdot)\eta\right\rangle+$$
$$-\left\langle\frac{\partial F}{\partial\beta}(J_1(\varphi,\eta),\eta)\frac{\partial J_1}{\partial\eta}(\varphi,\eta)+\frac{\partial F}{\partial\eta}(J_1(\varphi,\eta),\eta),\frac{\partial G}{\partial\beta}(J_1(\varphi,\eta),\eta)(\cdot)\eta\right\rangle=$$
$$=\left\langle\frac{\partial F}{\partial\beta}(J_1(\varphi,\eta),\eta), \frac{\partial G}{\partial\beta}(J_1(\varphi,\eta),\eta) \varphi\eta+\frac{\partial G}{\partial\eta}(J_1(\varphi,\eta),\eta)\eta\right\rangle+$$
$$-\left\langle\frac{\partial G}{\partial\beta}(J_1(\varphi,\eta),\eta), \frac{\partial F}{\partial\beta}(J_1(\varphi,\eta),\eta) \varphi\eta+\frac{\partial F}{\partial\eta}(J_1(\varphi,\eta),\eta)\eta\right\rangle=$$
$$=\left\langle J_1(\varphi,\eta),[\frac{\partial F}{\partial\beta}(J_1(\varphi,\eta),\eta), \frac{\partial G}{\partial\beta}(J_1(\varphi,\eta),\eta)]\right\rangle+$$
$$+\left\langle\frac{\partial G}{\partial\eta}(J_1(\varphi,\eta),\eta),\eta\frac{\partial F}{\partial\beta}(J_1(\varphi,\eta),\eta) \right\rangle-
\left\langle\frac{\partial F}{\partial\eta}(J_1(\varphi,\eta),\eta),\eta\frac{\partial G}{\partial\beta}(J_1(\varphi,\eta),\eta) \right\rangle$$
Since the functions $F$ and $G$ are $G_0$-invariant then
\be\label{proof2}\left\langle\frac{\partial G}{\partial\eta}(\beta,\eta),\eta x\right\rangle=
-\left\langle\frac{\partial G}{\partial\beta}(\beta,\eta),ad_x^*\beta \right\rangle=
-\left\langle \left[x,\frac{\partial G}{\partial\beta}(\beta,\eta)\right],\beta \right\rangle\ee
for any $x\in p_0\M_* p_0$.

Thus,  taking $x=\frac{\partial F}{\partial\beta}(\beta,\eta)$ and $x=\frac{\partial G}{\partial\beta}(\beta,\eta)$, respectively, we obtain 
$$\{ F\circ I_*,G\circ I_*\}(\varphi,\eta)=\left\langle J_1(\varphi,\eta),\left[\frac{\partial F}{\partial\beta}
(J_1(\varphi,\eta),\eta),\frac{\partial G}{\partial\beta}(J_1(\varphi,\eta),\eta)\right]\right\rangle=$$
\be\label{Pbra4}=(\{F,G\}_{LP}\circ J_1)(\varphi,\eta)=(\{F,G\}_{sP}\circ I_*)(\varphi,\eta).\ee
Since both Poisson brackets in (\ref{Pbra4}) and functions $F,\ G$ are $G_0$-invariant one can take the quotient of (\ref{Pbra4}) by $G_0$. Hence we obtain 
\be\label{Pbra5}\{ F\circ \iota_*,G\circ \iota_*\}=(\{F,G\}_{sP}\circ \iota_*),\ee
where  $\{F,G\}_{sP}$  is the Poisson bracket on $\mathcal{P}^\infty(p_0\M_* p_0\times_{Ad^* _{G_0}}P_0)$ defined by the Poisson bracket (\ref{sPbra}).
\item Since the momentum map $J_1:T_*P_0\to p_0\M_* p_0$ is a submersion the fibre $J_1^{-1}(0)$ of $0\in p_0\M_* p_0$ is a submanifold of $T_*P_0$. So, $J_1^{-1}(0)/G_0$ is a submanifold of $T_*P_0/G_0$.

The equality $ker\ \iota_*=J_1^{-1}(0)/G_0$ follows directly from (\ref{iotapre}). In order to show that the Marsden-Weinstein symplectic reduction applied to $J_1^{-1}(0)$ leads to the weak symplectic manifold structure on $J_1^{-1}(0)/G_0$ we define the local trivialization $a_{*p}:\nu_*^{-1}(\Pi_p)\to (\pi_*\circ\pi_0)^{-1}(\Pi_p)$ of the predual anchor map $a_*$, where $\nu_*:T_*(P_0/G_0)\to P_0/G_0$ is the bundle projection of the precotangent bundle $T_*(P_0/G_0)$.

For any $p\in \Ll_{p_0}(\M)\cong P_0/G_0$ we choose $\eta_{pp_0}\in P_0$  such that $\Tt(\eta_{pp_0})=p$ and define the principal bundle section $\sigma_{pp_0}:\Pi_p\to \pi_0^{-1}(\Pi_p)\subset P_0$ by
\be\label{prisec}\sigma_{pp_0}(q):=\sigma_{p}(q)\eta_{pp_0},\quad q\in \Pi_p.\ee
Using $\sigma_{pp_0}$ we define $a_{*p}$ as follows
\be\label{prisec2} a_{*p}(\rho):=(T\pi_0(\sigma_{pp_0}(q))^*(\rho)=(\rho\circ T\pi_0)(\sigma_{pp_0}(q)),\ee
where $\rho\in\nu^{-1}_*(q)$.

For $\xi_\rho\in T_\rho(T_*(P_0/G_0))$  and the pullback $(a_{*p})^*\gamma$ of the canonical 1-form (\ref{form}) we have 
\be\label{form3} \langle((a_{*p})^*\gamma)_\rho,\xi_\rho\rangle= \langle\gamma_{a_{*p}(\rho)},Ta_{*p}(\rho)\xi_\rho\rangle=\ee
$$=\langle{a_{*p}(\rho)},T\pi_*(a_{*p}(\rho))Ta_{*p}(\rho)\xi_\rho\rangle=\langle{a_{*p}(\rho)},T(\pi_*\circ a_{*p})(\rho)\xi_\rho\rangle=
\langle{\rho},T\pi_0(\sigma_{pp_0(q)})\circ T(\pi_*\circ a_{*p})(\rho)\xi_\rho\rangle=$$
$$=\langle{\rho},T(\pi_0\circ \pi_*\circ a_{*p})(\rho)\xi_\rho\rangle=\langle{\rho},T\nu_*(\rho)\xi_\rho\rangle=:\langle\tilde\gamma_\rho,\xi_\rho\rangle$$
The last equality in (\ref{form3}) follows from $\pi_0\circ \pi_*\circ {a_{*p}}=\nu_*$. From (\ref{form3}) we conclude that the pullback  $(a_{*p})^*\gamma$ does not depend on the trivialization and is equal to the weak canonical form $\tilde\gamma$ of $T_*(P_0/G_0)$.
It also follows from (\ref{form3}) that $a_*:T_*(P_0/G_0)\stackrel{\sim}{\to}J_1^{-1}(0)/G_0$ is an isomorphism  of weak symplectic manifolds.

\end{enumerate}
\end{proof}

\begin{cor} All statement of Theorem \ref{prop:44} are valid for (\ref{dualbundles}). Thus, since of Proposition \ref{prop:11} they are also valid for (\ref{Atiyahalgebrdual}).

\end{cor}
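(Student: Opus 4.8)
The plan is to deduce the Corollary in two steps, matching its two sentences. For the first claim I would transport Theorem \ref{prop:44} across the isomorphism of short exact sequences furnished by Proposition \ref{prop41}. That proposition produces Banach vector bundle isomorphisms $I_{*\T}$, $I_{*\mathcal M}$ and $I_{*\A}$ identifying the three terms of (\ref{Atiyahrestridual}) with the three terms $\T_*\Ll_{p_0}(\M)$, $\A_*\G_{p_0}(\M)$, $\A_*\J_{p_0}(\M)$ of (\ref{dualbundles}), and these identifications commute with the horizontal arrows $a_*$ and $\iota_*$. Since every notion occurring in Theorem \ref{prop:44} --- the sub Poisson brackets, the weak symplectic form $\omega$ of (\ref{omega}), the momentum map $J_1$ (see (\ref{momentum}) and (\ref{Jcoor})), the Marsden-Weinstein reduced leaf $J_1^{-1}(0)/G_0$, and the properties of being a submersion or an immersion --- is defined by constructions that are natural under isomorphisms of Banach vector bundles carrying the fibre-wise linear sub Poisson data, each statement of Theorem \ref{prop:44} passes to the isomorphic sequence (\ref{dualbundles}).

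First I would verify the one thing that is not purely formal: that the isomorphisms $I_{*\mathcal M}$ and $I_{*\A}$ are compatible with the sub Poisson structures, i.e. that the brackets $\{\cdot,\cdot\}_{G_0}$ on $T_*P_0/G_0$ and $\{\cdot,\cdot\}_{sP}$ on $p_0\M_* p_0\times_{Ad^*_{G_0}}P_0$ correspond to the canonical fibre-wise linear sub Poisson brackets on $\A_*\G_{p_0}(\M)$ and $\A_*\J_{p_0}(\M)$ dual to the algebroid brackets of Section \ref{sec:Atiyah}. This is exactly the content already extracted in the proof of Theorem \ref{prop:43}(i): the momentum map $J_1$ is a Poisson submersion onto the Banach Lie-Poisson space $(p_0\M_* p_0,\{\cdot,\cdot\}_{LP})$, and the bracket (\ref{sPbra}) is literally the Lie-Poisson bracket (\ref{LPbr}) read on the associated bundle. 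Once this identification is in hand, parts (i) and (ii) of Theorem \ref{prop:44} are simply re-expressed through $I_{*\T}$, $I_{*\mathcal M}$ and $I_{*\A}$, which gives the first assertion of the Corollary.

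For the second claim I would use the clopen decomposition of Proposition \ref{prop:11}. Since $\G(\M)\tto\Ll(\M)$ is the disjoint union of the closed-open Banach-Lie subgroupoids $\G_{p_0}(\M)\tto\Ll_{p_0}(\M)$ indexed by the Murray-von Neumann classes $p_0\in\Ll(\M)$, the base $\Ll(\M)$ is the disjoint (clopen) union of the submanifolds $\Ll_{p_0}(\M)$, every bundle in the predual Atiyah sequence (\ref{Atiyahalgebrdual}) restricts over $\Ll_{p_0}(\M)$ to the corresponding bundle of (\ref{dualbundles}), and $a_*$, $\iota_*$ restrict to the arrows of (\ref{dualbundles}) on each piece. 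Thus (\ref{Atiyahalgebrdual}) is the disjoint union over $p_0$ of the sequences (\ref{dualbundles}). Because all the properties in Theorem \ref{prop:44} (being a sub Poisson submersion, the description of $\ker\iota_*$ as a reduced weak symplectic leaf, and $a_*$ being a weak symplectic immersion) are local over the base and may therefore be checked on each clopen component $\Ll_{p_0}(\M)$ separately, their validity on every piece, already established in the first step, yields their validity for the whole sequence (\ref{Atiyahalgebrdual}).

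The main obstacle is the compatibility check flagged above: Proposition \ref{prop41} only asserts that $I_{*\T}$, $I_{*\mathcal M}$, $I_{*\A}$ are isomorphisms of Banach vector bundles, whereas Theorem \ref{prop:44} concerns sub Poisson and weak symplectic data, so the genuine work is to confirm that these identifications intertwine $\omega$, $J_1$ and the two brackets with the intrinsic fibre-wise linear structures on $\A_*\G_{p_0}(\M)$ and $\A_*\J_{p_0}(\M)$. Everything else --- functoriality of the Marsden-Weinstein reduction, locality over the clopen pieces, and the disjoint-union assembly --- is formal.
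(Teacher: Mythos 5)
Your proposal is correct and follows essentially the same route as the paper: the paper's entire proof is the citation of Proposition \ref{prop41}, whose isomorphisms $I_{*\T}$, $I_{*\mathcal M}$, $I_{*\A}$ transport Theorem \ref{prop:44} from (\ref{Atiyahrestridual}) to (\ref{dualbundles}), and the clopen disjoint-union decomposition of Proposition \ref{prop:11} then globalizes the statements to (\ref{Atiyahalgebrdual}), exactly as in your two steps. The "compatibility check" you flag is in fact essentially definitional here, since the sub Poisson data on the predual algebroid bundles in (\ref{dualbundles}) are introduced precisely through these identifications, so the paper treats it as immediate.
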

\begin{proof} It follows from Proposition \ref{prop41}. 
\end{proof}

\bigskip

 The bracket (\ref{Pbracket}) written in the local coordinates (\ref{chartTdual}) assumes the following form 
\be\label{nawias3} \{f,g\}=\left\langle \frac{\partial g}{\partial y_p},\frac{\partial f}{\partial \alpha_p}\right\rangle-\left\langle \frac{\partial f}{\partial y_p},\frac{\partial g}{\partial \alpha_p}\right\rangle+\ee
$$+\left\langle\beta_p,[ \frac{\partial g}{\partial \beta_p},\frac{\partial f}{\partial \beta_p}]\right\rangle+
\left\langle z_{pp_0}\frac{\partial g}{\partial z_{pp_0}},\frac{\partial f}{\partial \beta_p}\right\rangle-\left\langle z_{pp_0}\frac{\partial f}{\partial z_{pp_0}},\frac{\partial g}{\partial \beta_p}\right\rangle.$$
We note here that for $f\in \mathcal{P}^\infty(T_*P_0)$, i.e. $\frac{\partial f}{\partial \eta}(\eta)\in p_0\M_*$, the partial derivative $\frac{\partial f}{\partial  y_p}(\alpha_p,\beta_p, y_p,z_{pp_0})$ belongs to $p\M_*(1-p)$. The sub Poisson anchor $\#_1:T^\flat(T_*P_0)\to T(T_*P_0)$ in these coordinates is written as
\be\label{anchbemol}\#_1(\stackrel{\circ}{\alpha_p},\stackrel{\circ}{\beta_p},\stackrel{\circ}{y_p},\stackrel{\circ}{z}_{pp_0},\alpha_p,\beta_p,y_p,z_{pp_0})=(-\stackrel{\circ}{y_p},-ad^*_{\stackrel{\circ}{\beta_p}}(\beta_p)-{z}_{pp_0}\stackrel{\circ}{z}_{pp_0},\stackrel{\circ}{\alpha_p},\stackrel{\circ}{\beta_p}{z}_{pp_0},\alpha_p,\beta_p,y_p,{z}_{pp_0})
\ee

If $f,g\in \mathcal{P}^\infty_{G_0}(T_*P_0)\cong \mathcal{P}^\infty(T_*P_0/G_0)$ then $z_{pp_0}\frac{\partial g}{\partial z_{pp_0}}=0$ and $z_{pp_0}\frac{\partial f}{\partial z_{pp_0}}=0$. Hence the two last terms in (\ref{nawias3}) disappear and one obtains the local coordinate formula 
\be\label{nawias3a} \{f,g\}_{G_0}=\left\langle \frac{\partial g}{\partial y_p},\frac{\partial f}{\partial \alpha_p}\right\rangle-\left\langle \frac{\partial f}{\partial y_p},\frac{\partial g}{\partial \alpha_p}\right\rangle+\left\langle\beta_p,[ \frac{\partial g}{\partial \beta_p},\frac{\partial f}{\partial \beta_p}]\right\rangle\ee
for the Poisson bracket $\{\cdot,\cdot\}_{G_0}$ on  $T_*P_0/G_0$. The sub Poisson anchor $\#_{1G_0}:T^\flat(T^*P_0/G_0)\to T(T^*P_0/G_0)$ according to (\ref{nawias3a}) is as follows
\be \#_{1G_0}(\stackrel{\circ}{\alpha_p},\stackrel{\circ}{\beta_p},\stackrel{\circ}{y_p},\alpha_p,\beta_p,y_p)=(-\stackrel{\circ}{y_p},-ad^*_{\stackrel{\circ}{\beta_p}}(\beta_p),\stackrel{\circ}{\alpha_p},\alpha_p,\beta_p,y_p),\ee
where $(\stackrel{\circ}{\alpha_p},\stackrel{\circ}{\beta_p},\stackrel{\circ}{y_p})\in (1-p)\M p\times p\M p\times p\M_*(1-p)$ are the coordinates along fibres of $T^\flat(T^*P_0/G_0)\to T^*P_0/G_0$.

 Therefore, the coordinates $(y_p,\alpha_p,\beta_p)$ are the canonical coordinates in sense of Weinstein local splitting theorem presented in  \cite{wei2}, see also formulas (1.41), (1.42) and (1.43) in Chapter 1 of \cite{zung}. Hence, one can consider the predual Atiyah sequence (\ref{Atiyahrestridual}) as a global version of the local splitting theorem for the sub Poisson of the complex Banach manifold $T_*P_0/G_0$.

Using the $G_0$-invariance of $(\alpha_p,\beta_p, y_p)$ one easily concludes that $(\alpha_p,y_p)$ are local coordinates on $T_*(P_0/G_0)$ and $(\beta_p,y_p)$ on $p_0\M_* p_0\times_{Ad^*_{G_0}}P_0$. The canonical Poisson bracket $\{\cdot,\cdot\}$ on  $T_*(P_0/G_0)$ and  the Poisson  bracket $\{\cdot,\cdot\}_{sP}$ on $p_0\M_* p_0\times_{Ad^*_{G_0}}P_0$ written in the above coordinates  are given by appropriate parts of (\ref{nawias3a}) if we substitute to it the functions $f$ and $g$ dependent only on $(\alpha_p,y_p)$ and $(\beta_p,y_p)$, respectively. 

 The coordinate formula for the sub Poisson anchor $\tilde{\#}_1:T^\flat(T_*(P_0/G_0))\to T(T_*(P_0/G_0))$ of the weak symplectic manifold $T^*(T_*(P_0/G_0))$ is as follows
		\be\label{sPois1} \tilde{\#}_1(\stackrel{\circ}{\alpha_{p}},\stackrel{\circ}{y_{p}},\alpha_{p},y_p)=(-\stackrel{\circ}{y_{p}},\stackrel{\circ}{\alpha_{p}},\alpha_{p},y_p).\ee

In these coordinates the predual anchor map $a_*$ is given by $(\alpha_p,y_p)\mapsto (\alpha_p,0,y_p)$ and the map $\iota_*$ is given by $(\alpha_p,\beta_p, y_p)\mapsto(\beta_p,y_p)$. These observations taken together show again that the predual Atiyah sequence (\ref{Atiyahrestridual}) is a short exact sequence of sub Poisson Banach  bundles. 

As we see from Theorem \ref{prop:43} the weak symplectic realization (\ref{diagsymplpair}) of sub Poisson manifold $T_*P_0/G_0$ and the Banach-Lie Poisson space $p_0\M_* p_0$ gives the correspondence between their symplectic leaves. Namely,  a coadjoint orbit $\mathcal{O}\subset p_0\M_*p_0$, which is a weak symplectic leave in the Banach-Lie Poisson space $p_0\M_*p_0$, corresponds to the syplectic leave 
\be\pi_{*G_0}(J_1^{-1}(0))=\iota_*^{-1}(\mathcal{O}\times _{Ad^*_{G_0}}P_0)\ee
in $T_*P_0/G_0$.

Let us mention that the symplectic leaves of Banach-Lie Poisson spaces  were investigated in \cite{bel, bona, OR}. For example, in the case  of $L^1(\H)$, which is predual of $L^\infty(\H)$, the coadjoin orbit $\mathcal{O}_\rho$ of the finite rank trace class operator $\rho\in L^1(\H)$ is a submanifold of $L^1(\H)$ and its symplectic structure is given by the strong symplectic form. However, the investigation of the symplectic leaves of $\M_*$, and thus $T_*P_0/G_0$, needs the advanced functional analytical methods and is not easy even in a concrete case.


\section{Predual short exact sequence of $\mathcal{VB}$-groupoids  with $\G_{p_0}(\M)\tto \Ll_{p_0}(\M)$ as the side groupoid}\label{sec:Poisson}
Through this section we will study the sub Poisson structure of some Banach-Lie $\mathcal{VB}$-groupoids which have the gauge groupoid $\frac{P_0\times P_0}{G_0}\tto P_0/G_0$ as the side groupoid, see diagram (\ref{C}). An introduction to the theory of 
$\mathcal{VB}$-groupoids can be found in \cite{mac}. Indispensable ingredients of this theory are also presented in \cite{MOS} and in the appendix of this paper.

Applying the tangent functor to a Banach Lie groupoid $G\tto M$ one obtains its tangent $\mathcal{VB}$-groupoid $TG\tto TM$. In particular case, one obtains  the tangent group $TG\tto \{0\}$  of a Banach Lie group $G\tto \{e\}$. The tangent groupoid $TG\tto TM$ as well as its dual $T^*G\tto A^*G$, where $AG$ is the algebroid of the groupoid $G$, yield important examples of $\mathcal{VB}$-groupoids.

We modify the definition of the finite dimensional Poisson groupoid presented in Chapter 11.4 of \cite{mac}  to the sub Poisson Banach case considered here. According to this modification the Banach-Lie groupoid $G\tto M$ is a \textbf{sub Poisson groupoid} with a sub Poisson anchor $\#:T^\flat G\to TG$  if there exists a Banach subgroupoid $T^\flat G\tto A^\flat G$ of the  Banach groupoid $T^*G\tto A^*G$  dual to $TG\tto TM$ and Banach bundles morphism $a_*:A^\flat G\to TM$ such that
			\be\label{prop:Pmorph}\begin{picture}(11,4.6)
    \put(1,4){\makebox(0,0){$T^\flat G$}}
    \put(7.7,4){\makebox(0,0){$TG$}}
    \put(1,-1){\makebox(0,0){$A^\flat G$}}
    \put(8,-1){\makebox(0,0){$TM$}}
    \put(1.2,3){\vector(0,-1){3}}
    \put(0.7,3){\vector(0,-1){3}}
    \put(8.2,3){\vector(0,-1){3}}
    \put(7.7,3){\vector(0,-1){3}}
    \put(3,4){\vector(1,0){2.8}}
    \put(2.4,-1){\vector(1,0){3.7}}
    \put(0.1,1.4){\makebox(0,0){$ \  $}}
    \put(2.2,1.4){\makebox(0,0){$ \ $}}
    \put(8.8,1.4){\makebox(0,0){$\   $}}
    \put(7.1,1.4){\makebox(0,0){$\   $}}
    \put(4.5,4.5){\makebox(0,0){$\#  $}}
    \put(4.5,-0.5){\makebox(0,0){$a_*  $}}
    \end{picture}\ee
    \bigskip\newline 
		is a morphism of $\mathcal{VB}$-groupoids, where by $AG$ we have denoted the algebroid of $G\tto P$.


\bigskip

Our considerations we begin observing that Atiyah sequences (\ref{Atiyah11}) and (\ref{Atiyah211}) could be incorporated in the short exact sequence of $\mathcal{VB}$-groupoids

		
	 \be\label{duzyVtrojkaG}\begin{picture}(11,4.6)
    \put(-7,4){\makebox(0,0){{$p_0\M p_0\times_{Ad_{G_0}}(P_0\times P_0)$}}}
    \put(-7,0){\makebox(0,0){{$p_0\M p_0\times_{Ad_{G_0}}P_0$}}}
    \put(-1,4){\makebox(0,0){$\frac{P_0\times  P_0}{G_0}$}}
    \put(-1,0){\makebox(0,0){$ \frac{P_0}{G_0}$}}
     \put(-5.1,3.5){\vector(0,-1){2.7}}
    \put(-4.9,3.5){\vector(0,-1){2.7}}
      \put(-1.1,3.5){\vector(0,-1){2.7}}
    \put(-0.9,3.5){\vector(0,-1){2.7}}
    \put(-3,4){\vector(1,0){0.8}}
    \put(-4,0){\vector(1,0){2.2}}
    \put(-4.5,3.5){\vector(4,-1){4.4}}
    \put(-4.2,-0.3){\vector(4,-1){5.2}}
      \put(-0,3.9){\line(4,-1){5}}
       \put(-0.1,3.8){\line(4,-1){5}}
         \put(-0.4,-0.1){\line(4,-1){5.5}}
       \put(-0.5,-0.2){\line(4,-1){5.5}}
			
			  \put(-3,2.7){\makebox(0,0){$\iota_2$}}
     \put(-3,-1){\makebox(0,0){$\iota$}}

    \put(2,2){\makebox(0,0){{$\frac{TP_0\times  TP_0}{G_0}$}}}
     \put(1.6,-2){\makebox(0,0){{$TP_0/G_0$}}}
      \put(6,2){\makebox(0,0){$\frac{P_0\times  P_0}{G_0}$}}
     \put(6,-2){\makebox(0,0){$ \frac{P_0}{G_0}$}}
       \put(1.9,1.5){\vector(0,-1){2.7}}
    \put(2.1,1.5){\vector(0,-1){2.7}}
      \put(5.9,1.5){\vector(0,-1){2.7}}
    \put(6.1,1.5){\vector(0,-1){2.7}}
      \put(4,2){\vector(1,0){0.8}}
    \put(3,-2){\vector(1,0){2.2}}
     \put(2.5,1.5){\vector(4,-1){4.8}}
    \put(2.6,-2.3){\vector(4,-1){4.7}}
     \put(6.4,-2){\line(4,-1){5.9}}
       \put(6.3,-2.1){\line(4,-1){5.9}}
       \put(7,2){\line(4,-1){5.5}}
       \put(6.9,1.9){\line(4,-1){5.5}}
			
			  \put(4,0.7){\makebox(0,0){$a_2$}}
     \put(4,-3){\makebox(0,0){$a$}}

    \put(9,0){\makebox(0,0){$T(\frac{P_0\times   P_0}{G_0})$}}
   \put(9,-4){\makebox(0,0){{$T(P_0/G_0)$}}}
     \put(13,0){\makebox(0,0){$\frac{P_0\times  P_0}{G_0}$}}
   \put(13,-4){\makebox(0,0){$ \frac{P_0}{G_0}$}}
        \put(8.9,-0.5){\vector(0,-1){2.7}}
    \put(9.1,-0.5){\vector(0,-1){2.7}}
      \put(12.9,-0.5){\vector(0,-1){2.7}}
    \put(13.1,-0.5){\vector(0,-1){2.7}}
        \put(10.5,0){\vector(1,0){1.3}}
    \put(10.5,-4){\vector(1,0){1.7}}
\end{picture}\ee
\vspace{2.5 cm}
\newline
which have the gauge groupoid $\frac{P_0\times P_0}{G_0}\tto P_0/G_0$ as their common side groupoid, where the vertical arrows in (\ref{duzyVtrojkaG}) are the respective source and target maps.

Let us shortly describe the $\mathcal{VB}$-groupoids included in the short exact sequence (\ref{duzyVtrojkaG}).
\begin{enumerate}[(i)]

\item  The structural maps of the left hand side $\mathcal{VB}$-groupoid in (\ref{duzyVtrojkaG}) are defined as follows
			\begin{gather*}
\tilde\Ss(\langle x,\eta,\xi\rangle):=\langle x,\xi\rangle,\\
\tilde\Tt(\langle x,\eta,\xi\rangle):=\langle x,\eta\rangle,\\
\tilde{\mathbf 1}(\langle x,\eta\rangle)=\langle x,\eta,\eta\rangle,\\
\tilde 0(\langle \eta,\xi\rangle)=\langle 0, \eta,\xi\rangle,
\end{gather*}
The groupoid product of the elements $\langle x,\eta,\xi\rangle$, $\langle y,\zeta,\delta\rangle\in p_0\M p_0\times _{Ad_{G_0}}(P_0\times P_0)$ such that $\tilde\Ss(\langle x,\eta,\xi\rangle)= \tilde \Tt(\langle y,\zeta,\delta\rangle)$, what means that $\langle x,\xi\rangle=\langle y,\zeta\rangle$, is defined by 
\be \langle x,\eta,\xi\rangle\langle y,\zeta,\delta\rangle=\langle x,\eta,\delta g^{-1}\rangle,\ee
where $g\in G_0$ satisfies $\zeta=\xi g$ and $y=Ad_g x$. 
The groupoid inverse map is $$\tilde\iota(\langle x,\eta,\xi\rangle):=\langle x,\xi,\eta\rangle.$$

\item The central $\mathcal{VB}$-groupoid in (\ref{duzyVtrojkaG}) is the quotient by $G_0$ of the tangent groupoid $TP_0\times TP_0\tto TP_0$ of the pair groupoid $P_0\times P_0\tto P_0$. 

\item The right-hand side $\mathcal{VB}$-groupoid in (\ref{duzyVtrojkaG}) is the tangent groupoid of $\frac{P_0\times P_0}{G_0}\tto P_0/G_0$.

\end{enumerate}


We see that the short exact sequence  (\ref{duzyVtrojkaG})  involves  various fundamental structures, i.e. the vector bundle, the principal bundle, the groupoid and the algebroid structures, which are consistently related one with another.

\bigskip

Let us now apply the dualization procedure, disscussed in the subsections 7.2 and 7.3 of the Appendix, to (\ref{duzyVtrojkaG}). For this reason we observe that as in the case of Atiyah sequences (\ref{Atiyah11}) and (\ref{Atiyah211}) one can define (\ref{duzyVtrojkaG}) as the quotient of 
 \be\label{duzyVtrojkaG3}\begin{picture}(11,4.6)
    \put(-7,4){\makebox(0,0){{$p_0\M p_0\times(P_0\times P_0)$}}}
    \put(-6.6,0){\makebox(0,0){{$p_0\M p_0\times P_0$}}}
    \put(-1,4){\makebox(0,0){${P_0\times  P_0}$}}
    \put(-1,0){\makebox(0,0){$ {P_0}$}}
     \put(-5.1,3.5){\vector(0,-1){2.7}}
    \put(-4.9,3.5){\vector(0,-1){2.7}}
      \put(-1.1,3.5){\vector(0,-1){2.7}}
    \put(-0.9,3.5){\vector(0,-1){2.7}}
    \put(-3,4){\vector(1,0){0.8}}
    \put(-4,0){\vector(1,0){2.2}}
    \put(-4.5,3.5){\vector(4,-1){4.4}}
    \put(-4.2,-0.3){\vector(4,-1){5.2}}
      \put(-0,3.9){\line(4,-1){5}}
       \put(-0.1,3.8){\line(4,-1){5}}
         \put(-0.4,-0.1){\line(4,-1){5.5}}
       \put(-0.5,-0.2){\line(4,-1){5.5}}
			
			  \put(-3,2.7){\makebox(0,0){$I_2$}}
     \put(-3,-1){\makebox(0,0){$I$}}

    \put(2,2){\makebox(0,0){{${TP_0\times  TP_0}$}}}
     \put(1.6,-2){\makebox(0,0){{$TP_0$}}}
      \put(6,2){\makebox(0,0){${P_0\times  P_0}$}}
     \put(6,-2){\makebox(0,0){$ {P_0}$}}
       \put(1.9,1.5){\vector(0,-1){2.7}}
    \put(2.1,1.5){\vector(0,-1){2.7}}
      \put(5.9,1.5){\vector(0,-1){2.7}}
    \put(6.1,1.5){\vector(0,-1){2.7}}
      \put(4,2){\vector(1,0){0.8}}
    \put(3,-2){\vector(1,0){2.2}}
     \put(2.5,1.5){\vector(4,-1){4.8}}
    \put(2.6,-2.3){\vector(4,-1){4.7}}
     \put(6.4,-2){\line(4,-1){5.9}}
       \put(6.3,-2.1){\line(4,-1){5.9}}
       \put(7,2){\line(4,-1){5.5}}
       \put(6.9,1.9){\line(4,-1){5.5}}
			
			  \put(4,0.7){\makebox(0,0){$A_2$}}
     \put(4,-3){\makebox(0,0){$A$}}

    \put(9,0){\makebox(0,0){$\frac{T(P_0\times   P_0)}{p_0\M p_0}$}}
   \put(8.7,-4){\makebox(0,0){{$TP_0/p_0\M p_0$}}}
     \put(13,0){\makebox(0,0){${P_0\times  P_0}$}}
   \put(13,-4){\makebox(0,0){$ {P_0}$}}
        \put(8.9,-0.5){\vector(0,-1){2.7}}
    \put(9.1,-0.5){\vector(0,-1){2.7}}
      \put(12.9,-0.5){\vector(0,-1){2.7}}
    \put(13.1,-0.5){\vector(0,-1){2.7}}
        \put(10.5,0){\vector(1,0){1.3}}
    \put(10.5,-4){\vector(1,0){1.7}}
\end{picture}\ee
\vspace{1.5 cm}
\newline  by $G_0$.

\begin{prop}\label{prop:cores} The cores of Banach  $\mathcal{VB}$-groupoids included in the short exact sequence (\ref{duzyVtrojkaG3})
are:
\be\label{core1}core(p_0\M p_0\times P_0\times  P_0\tto p_0\M p_0\times P_0)\cong P_0\times \{0\},\ee
\be\label{core2} core(TP_0\times TP_0\tto TP_0)\cong TP_0\cong \M p_0\times P_0,\ee
\be\label{core3} core\left(\frac{T(P_0\times P_0)}{p_0\M p_0}\tto TP_0/p_0\M p_0\right)\cong TP_0\cong \M p_0\times P_0.\ee
\end{prop}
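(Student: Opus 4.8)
The plan is to read each core off directly from the definition of the core of a $\mathcal{VB}$-groupoid recalled in the Appendix: for a $\mathcal{VB}$-groupoid $(\Omega\tto E;\,G\tto M)$ with vector bundle projection $\Omega\to G$, the core is the Banach vector bundle over $M$ whose fibre over $x\in M$ consists of those elements of $\Omega$ lying over the unit $\mathbf 1_x$ of the side groupoid $G\tto M$ and having vanishing source projection into $E$. In all three rows of (\ref{duzyVtrojkaG3}) the side groupoid is the pair groupoid $P_0\times P_0\tto P_0$, whose units form the diagonal $\{(\eta,\eta):\eta\in P_0\}\cong P_0$. Hence each core is a Banach vector bundle over $P_0$, and I would compute it fibre by fibre over $\eta\in P_0$.

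For (\ref{core1}) the source map is $\tilde\Ss(x,\eta,\xi)=(x,\xi)$, covering the source $(\eta,\xi)\mapsto\xi$ of the pair groupoid. Over a unit $(\eta,\eta)$ the condition $\tilde\Ss(x,\eta,\eta)=(x,\eta)=(0,\eta)$ forces $x=0$, so the core reduces to $\{(0,\eta,\eta):\eta\in P_0\}\cong P_0\times\{0\}$, the zero bundle over $P_0$. For (\ref{core2}) the groupoid $TP_0\times TP_0\tto TP_0$ is the pair groupoid of $TP_0$, with source $\tilde\Ss(v,w)=w$; over a unit $(\eta,\eta)$ the fibre of $\Omega\to P_0\times P_0$ is $T_\eta P_0\times T_\eta P_0$, and the source-vanishing condition $w=0_\eta$ leaves $\{(v,0_\eta):v\in T_\eta P_0\}\cong T_\eta P_0$. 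Taking the union over $\eta$ and using the trivialisation $TP_0\cong\M p_0\times P_0$ coming from the inclusion $P_0\hookrightarrow\M p_0$ gives the claim.

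The only case demanding genuine care is (\ref{core3}), where I would carry the computation of (\ref{core2}) through the quotient by $p_0\M p_0\cong T_eG_0$ acting via (\ref{tangentprod2}) restricted to $T_eG_0$, i.e. $x\cdot((\vartheta,\eta),(\omega,\xi))=((\vartheta+\eta x,\eta),(\omega+\xi x,\xi))$. Over a unit $(\eta,\eta)$ the source projection of $\langle(\vartheta,\eta),(\omega,\eta)\rangle$ into $E=TP_0/p_0\M p_0\cong TP_0/T^VP_0$ is the class of $(\omega,\eta)$, which vanishes exactly when $\omega\in T^V_\eta P_0=\{(\eta x,\eta):x\in p_0\M p_0\}$, say $\omega=\eta x_0$. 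Acting by $-x_0$ normalises such an element to the representative $\langle(\vartheta-\eta x_0,\eta),(0,\eta)\rangle$, so the core is the image of $\{((\vartheta,\eta),(0,\eta)):\vartheta\in T_\eta P_0,\ \eta\in P_0\}$ in the quotient.

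The main obstacle, and the step I would verify most carefully, is the injectivity of this normalisation: I must show that the residual stabiliser of a representative $((\vartheta,\eta),(0,\eta))$ under the $p_0\M p_0$-action is trivial, so that the core is genuinely $TP_0$ and not a proper quotient of it. An element $x\in p_0\M p_0$ fixes the second component iff $\eta x=0$; multiplying on the left by the groupoid inverse $\eta^{-1}$ and using $\eta^{-1}\eta=\Ss(\eta)=p_0$ together with $p_0x=x$ gives $x=\eta^{-1}(\eta x)=0$, so the action on the first component is free. Consequently $\langle(\vartheta,\eta),(0,\eta)\rangle\mapsto(\vartheta,\eta)$ is a well-defined Banach vector bundle isomorphism onto $TP_0\cong\M p_0\times P_0$, establishing (\ref{core3}) and completing the proof.
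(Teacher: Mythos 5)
Your proof is correct and takes essentially the same approach as the paper's: in each case you apply the core definition directly, forcing $\xi=\eta$ together with $x=0$ (resp.\ $w=0$), and in the quotient case (\ref{core3}) you normalise the representative by the $p_0\M p_0$-action exactly as the paper does by choosing $x=-y$. Your extra check that $\eta x=0$ forces $x=0$ (via $\eta^{-1}\eta=p_0$) is a harmless strengthening of the injectivity point that the paper leaves implicit.
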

\begin{proof} From the definition (\ref{core}) in the appendix we see that $(\eta,x,\xi)\in core (p_0\M p_0\times P_0\times P_0\tto p_0\M p_0\times P_0)$ if and only if $\eta=\xi$ and $x=0$. Thus one has (\ref{core1}).

The element  $(v,\eta,w,\xi)\in core(TP_0\times TP_0\tto TP_0)$ if and only if $\eta=\xi$ and $(w,\xi)=(0,\xi)$. So, we have
$core( TP_0\times TP_0\tto TP_0)=\{(v,\eta,0,\eta)\in TP_0\times TP_0\}\cong TP_0.$

 Any element $\langle v,\eta,w,\xi\rangle\in \frac{TP_0\times T P_0}{p_0\M p_0}$ is defined by
\be\label{Xquot}
\langle v,\eta,w,\xi\rangle:=\{(v+\eta x,\eta, w+\xi x,\xi);\quad x\in p_0\M p_0\}.
\ee
Then $\langle v,\eta,w,\xi\rangle\in core\left(\frac{T(P_0\times P_0)}{p_0\M p_0}\tto TP_0/p_0\M p_0\right)$ if and only
if $\eta=\xi$ and $\langle w,\xi\rangle=\langle 0,\xi\rangle$. Thus $w=\xi y$ for some $y\in p_0\M p_0$. If $x=-y$ then we find that
$$
core\left(\frac{TP_0\times T P_0}{p_0\M p_0}\tto TP_0/p_0\M p_0\right)=\{\langle v,\eta,0,\eta\rangle;\quad (v,\eta)\in TP_0\}\cong \ TP_0.
$$

\end{proof}

Using isomorphisms from Proposition \ref{prop:cores} and applying the dualization procedure described in the subsection \ref{Ap:shortexact} of the Appendix, to  (\ref{duzyVtrojkaG3}) we obtain the short exact sequence of Banach $\mathcal{VB}$-groupoids
\be\label{duzyVtrojkadual}\begin{picture}(11,4.6)
    \put(-5,4){\makebox(0,0){$T^0(P_0\times P_0)$}}
    \put(-5,0){\makebox(0,0){$T^*P_0$}}
    \put(-1,4){\makebox(0,0){${P_0\times  P_0}$}}
    \put(-1,0){\makebox(0,0){$ {P_0}$}}
     \put(-5.1,3.5){\vector(0,-1){2.7}}
    \put(-4.9,3.5){\vector(0,-1){2.7}}
      \put(-1.1,3.5){\vector(0,-1){2.7}}
    \put(-0.9,3.5){\vector(0,-1){2.7}}
    \put(-3,4){\vector(1,0){0.8}}
    \put(-4,0){\vector(1,0){2.2}}
    \put(-4.5,3.5){\vector(4,-1){4.4}}
    \put(-4.2,-0.3){\vector(4,-1){5.2}}
      \put(-0,3.9){\line(4,-1){5}}
       \put(-0.1,3.8){\line(4,-1){5}}
         \put(-0.4,-0.1){\line(4,-1){5.5}}
       \put(-0.5,-0.2){\line(4,-1){5.5}}
        \put(-3,2.5){\makebox(0,0){$A_2^*  $}}

    \put(2,2){\makebox(0,0){${T^*P_0\times  T^*P_0}$}}
     \put(2,-2){\makebox(0,0){$T^*P_0$}}
      \put(6,2){\makebox(0,0){${P_0\times  P_0}$}}
     \put(6,-2){\makebox(0,0){$ {P_0}$}}
       \put(1.9,1.5){\vector(0,-1){2.7}}
    \put(2.1,1.5){\vector(0,-1){2.7}}
      \put(5.9,1.5){\vector(0,-1){2.7}}
    \put(6.1,1.5){\vector(0,-1){2.7}}
      \put(4,2){\vector(1,0){0.8}}
    \put(3,-2){\vector(1,0){2.2}}
     \put(2.5,1.5){\vector(4,-1){4.5}}
    \put(2.6,-2.3){\vector(4,-1){4.7}}
     \put(6.4,-2){\line(4,-1){5.9}}
       \put(6.3,-2.1){\line(4,-1){5.9}}
       \put(7,2){\line(4,-1){5.5}}
       \put(6.9,1.9){\line(4,-1){5.5}}
       \put(4.7,0.4){\makebox(0,0){$I_2^*  $}}

    \put(8.4,0){\makebox(0,0){${(p_0\M p_0)^*\!\!\times\!\! P_0\!\!\times\!\!P_0}$}}
   \put(9,-4){\makebox(0,0){$\{0\}\times P_0$}}
     \put(13,0){\makebox(0,0){${P_0\times  P_0}$}}
   \put(13,-4){\makebox(0,0){$ {P_0}$}}
        \put(8.9,-0.5){\vector(0,-1){2.7}}
    \put(9.1,-0.5){\vector(0,-1){2.7}}
      \put(12.9,-0.5){\vector(0,-1){2.7}}
    \put(13.1,-0.5){\vector(0,-1){2.7}}
        \put(10.5,0){\vector(1,0){1.3}}
    \put(10.5,-4){\vector(1,0){1.7}}
    \end{picture}\ee
   \vspace{2.5 cm}
   \newline
 dual to (\ref{duzyVtrojkaG3}).	The Banach vector subbundle $T^0(P_0\times P_0)\to P_0\times P_0$ of the Banach vector bundle $T^*(P_0\times P_0)\to P_0\times P_0$ consists of such covectors which annihilate $I_2(p_0\M p_0\times P_0\times P_0)$, i.e. by the definition one has
	\be T^0(P_0\times P_0):=\{(\varphi,\eta, \psi,\xi)\in T_*P_0\times T_*P_0:\quad \varphi\eta+\psi\xi=0\}=J_2^{-1}(0),\ee
	where 
	\be\label{J2} J_2(\varphi,\eta, \psi,\xi)= \varphi\eta+\psi\xi\ee
	is the momentum map for weak symplectic manifold $T_*(P_0\times  P_0)\cong T_*P_0\times  T_*P_0$.
	The bundle monomorphism $A_2^*$ dual to $A_2$ is an inclusion map and the bundle epimorphism $I_2^*$ dual to $I_2$ is given by
	\be I_2^*(\varphi,\eta, \psi,\xi):=(\varphi\eta+\psi\xi,\eta,\xi)\ee
	
	The structural maps of a $\mathcal{VB}$-groupoid, see the subsection \ref{Ap:VB} of the Appendix, in the case of 
	  \be\label{VBdualpair}\begin{picture}(11,4.6)
    \put(1,4){\makebox(0,0){${T^*P_0\times T^*P_0}$}}
    \put(8,4){\makebox(0,0){${P_0\times P_0}$}}
    \put(1,-1){\makebox(0,0){$T^*P_0$}}
    \put(8,-1){\makebox(0,0){$P_0$}}
    \put(1.2,3){\vector(0,-1){3}}
    \put(0.7,3){\vector(0,-1){3}}
    \put(8.2,3){\vector(0,-1){3}}
    \put(7.7,3){\vector(0,-1){3}}
    \put(3,4){\vector(1,0){3}}
    \put(2.4,-1){\vector(1,0){3.7}}
    \put(0.1,1.4){\makebox(0,0){$ \  $}}
    \put(2.2,1.4){\makebox(0,0){$ \  $}}
    \put(9.1,1.4){\makebox(0,0){$ \  $}}
    \put(6.8,1.4){\makebox(0,0){$\   $}}
    \put(4.5,4.5){\makebox(0,0){$\  $}}
    \put(4.5,-0.5){\makebox(0,0){$\  $}}
    \end{picture},\ee
    \bigskip\newline
	
	are the following:
	\be\label{TPproddual} \begin{array}{l}
 \tilde\Ss_*(\varphi,\eta,\psi,\xi)=(-\psi,\xi),\\
 \tilde\Tt_*(\varphi,\eta,\psi,\xi)=(\varphi,\eta),\\
 \tilde{\mathbf{1}}(\varphi,\eta)=(\varphi,\eta,-\varphi,\eta),\\
\tilde\lambda_*(\varphi,\eta,\psi,\xi)=(\eta,\xi),\\
\tilde 0_*(\eta,\xi)=(0,\eta,0,\xi),\\
\lambda_*(\varphi,\eta)=\eta,\\
 0_*(\eta)=(0,\eta).\end{array}\ee

Its inverse map and the groupoid product  are given by
\be\label{TPproddualcd} 
 \iota_*(\varphi,\eta,\psi,\xi)=(-\psi,\xi, -\varphi,\eta),\ee
and
$$(\varphi,\eta,\psi,\xi)(-\psi,\xi,\lambda,\zeta)=(\varphi,\eta,\lambda,\zeta),$$
respectively.

The left hand side Banach $\mathcal{VB}$-groupoid $T^0(P_0\times P_0)\tto T^*P_0$ in (\ref{duzyVtrojkadual}) is a Banach subgroupoid of the intermediate $\mathcal{VB}$-groupoid in (\ref{duzyVtrojkadual}). So, its structure is defined by (\ref{TPproddual}) and (\ref{TPproddualcd}). 

The structure of the right hand side of (\ref{duzyVtrojkadual})  Banach $\mathcal{VB}$-groupoid 
 \be\label{prawy}\begin{picture}(11,4.6)
    \put(1,4){\makebox(0,0){${p_0\M_* p_0\times P_0\times P_0}$}}
    \put(8,4){\makebox(0,0){${P_0\times P_0}$}}
    \put(1,0){\makebox(0,0){$\{0\}^*\times P_0$}}
    \put(8,0){\makebox(0,0){$P_0$}}
    \put(1.2,3){\vector(0,-1){2}}
    \put(0.7,3){\vector(0,-1){2}}
    \put(8.2,3){\vector(0,-1){2}}
    \put(7.7,3){\vector(0,-1){2}}
    \put(4,4){\vector(1,0){2}}
    \put(3,0){\vector(1,0){3}}
    \put(0.1,1.4){\makebox(0,0){$ \  $}}
    \put(2.2,1.4){\makebox(0,0){$ \  $}}
    \put(9.1,1.4){\makebox(0,0){$ \  $}}
    \put(6.8,1.4){\makebox(0,0){$\   $}}
    \put(4.5,4.5){\makebox(0,0){$\  $}}
    \put(4.5,0.5){\makebox(0,0){$\  $}}
    \end{picture}\ee
    \bigskip\newline 
		is given by:
\be\label{517}\begin{array}{l}
\tilde\Ss_*(\mathcal{X},\eta,\xi):=(0,\xi),\\
\tilde\Tt_*(\mathcal{X},\eta,\xi):=(0,\eta),\\
\tilde{\mathbf{1}}_*(0,\eta):=(0,\eta,\eta),\\
 \tilde\lambda_*(\mathcal{X},\eta,\xi)=(\eta,\xi),\\
 \tilde 0_*(\eta,\xi)=(0,\eta,\xi),\\
 \lambda_*(0,\eta)=\eta,\\
  0_*(\eta)=(0,\eta)\end{array}\ee
and by
\be\label{518}\begin{array}{l}
\iota_*(\mathcal{X},\eta,\xi):=(-\mathcal{X},\eta,\xi),\\
(\mathcal{X},\eta,\xi) (\mathcal{Y},\xi,\zeta):=(\mathcal{X}+\mathcal{Y},\eta,\zeta),
\end{array}\ee
where in (\ref{517}) the structural maps and in (\ref{518}) the product and inverse map are defined .
We pay attention here to the fact that  the map $(\varphi,\eta,\psi,\xi)\mapsto(\psi,\xi, -\varphi,\eta)$ defines an isomorphism of the Banach $\mathcal{VB}$-groupoid (\ref{VBdualpair}) with the pair $\mathcal{VB}$-groupoid $T^*P_0\times T^*P_0\tto T^*P_0$. However, the distinction between these $\mathcal{VB}$-groupoids is crucial for further investigations.

\begin{rem}\label{rem52}  \textsl{Replacing in (\ref{duzyVtrojkadual}) $T^*P_0$ by $T_*P_0$ and $(p_0\M p_0)^*$ by $p_0\M_* p_0\cong (p_0\M p_0)_*$ one obtains the short exact sequence of Banach $\mathcal{VB}$-groupoids }

\be\label{duzyVtrojkapredual}\unitlength=5mm\begin{picture}(11,4.6)
    \put(-5,4){\makebox(0,0){$T^0(P_0\times P_0)$}}
    \put(-5,0){\makebox(0,0){$T_*P_0$}}
    \put(-1,4){\makebox(0,0){${P_0\times  P_0}$}}
    \put(-1,0){\makebox(0,0){$ {P_0}$}}
     \put(-5.1,3.5){\vector(0,-1){2.7}}
    \put(-4.9,3.5){\vector(0,-1){2.7}}
      \put(-1.1,3.5){\vector(0,-1){2.7}}
    \put(-0.9,3.5){\vector(0,-1){2.7}}
    \put(-3,4){\vector(1,0){0.8}}
    \put(-4,0){\vector(1,0){2.2}}
    \put(-4.5,3.5){\vector(4,-1){4.4}}
    \put(-4.2,-0.3){\vector(4,-1){5.2}}
      \put(-0,3.9){\line(4,-1){5}}
       \put(-0.1,3.8){\line(4,-1){5}}
         \put(-0.4,-0.1){\line(4,-1){5.5}}
       \put(-0.5,-0.2){\line(4,-1){5.5}}
        \put(-3,2.5){\makebox(0,0){$A_2^*  $}}

    \put(2,2){\makebox(0,0){${T_*P_0\times  T_*P_0}$}}
     \put(2,-2){\makebox(0,0){$T_*P_0$}}
      \put(6,2){\makebox(0,0){${P_0\times  P_0}$}}
     \put(6,-2){\makebox(0,0){$ {P_0}$}}
       \put(1.9,1.5){\vector(0,-1){2.7}}
    \put(2.1,1.5){\vector(0,-1){2.7}}
      \put(5.9,1.5){\vector(0,-1){2.7}}
    \put(6.1,1.5){\vector(0,-1){2.7}}
      \put(4,2){\vector(1,0){0.8}}
    \put(3,-2){\vector(1,0){2.2}}
     \put(2.5,1.5){\vector(4,-1){4.5}}
    \put(2.6,-2.3){\vector(4,-1){4.9}}
     \put(6.4,-2){\line(4,-1){5.9}}
       \put(6.3,-2.1){\line(4,-1){5.9}}
       \put(7,2){\line(4,-1){5.5}}
       \put(6.9,1.9){\line(4,-1){5.5}}
       \put(4.7,0.4){\makebox(0,0){$I_2^*  $}}

    \put(8.7,0){\makebox(0,0){${(p_0\M p_0)_*\!\!\times\!\! P_0\!\!\times\!\!P_0}$}}
   \put(9,-4){\makebox(0,0){$\{0\}\times P_0$}}
     \put(13.3,0){\makebox(0,0){${P_0\times  P_0}$}}
   \put(13,-4){\makebox(0,0){$ {P_0}.$}}
        \put(8.9,-0.5){\vector(0,-1){2.7}}
    \put(9.1,-0.5){\vector(0,-1){2.7}}
      \put(12.9,-0.5){\vector(0,-1){2.7}}
    \put(13.1,-0.5){\vector(0,-1){2.7}}
        \put(11.3,0){\vector(1,0){0.8}}
    \put(10.5,-4){\vector(1,0){1.7}}
    \end{picture}\ee
   \vspace{2.5 cm}
   \newline
 \textsl{ We recall here that $T^*P_0\cong (p_0\M p_0)^*\times P_0$ and $T_*P_0\cong (p_0\M p_0)_*\times P_0$ and  all morphisms,  structural maps and groupoid operations in (\ref{duzyVtrojkadual}) respect canonical inclusions $(p_0\M p_0)_*\subset (p_0\M p_0)^*$ and $(\M p_0)_*\subset (\M p_0)^*$ of the Banach spaces.
}
\end{rem}
\begin{rem}\label{rem53}  \textsl{The Banach bundles of (\ref{duzyVtrojkapredual}) are only the quasi Banach subbundles of their counterparts in (\ref{duzyVtrojkadual}). Because the predual Banach spaces $p_0\M_*p_0$ and $p_0\M_*$ do not have respective Banach complements in $(p_0\M p_0)^*$ and  $(\M p_0)^*$. }
\end{rem}

Let us note that the epimorphism $I_2^*$, as well as the inclusion $A^*_2$, have the $G_0$-equivariance property, i.e.
$$ I_2^*(g^{-1}\varphi,\eta g, g^{-1}\psi,\xi g)=(Ad^*_{g^{-1}}(\varphi\eta+\psi\xi),\eta g,\xi g),$$
where $g\in G_0$. Hence we have

\begin{rem} \label{rem54} \textsl{All arrows in (\ref{duzyVtrojkapredual}) are equivariant with respect to $G_0$. The actions of $G_0$ on (\ref{duzyVtrojkadual}) and (\ref{duzyVtrojkapredual}) are free and quotient maps defined by them are surjective submersions.}
\end{rem} 

\bigskip
Taking into account Remark \ref{rem52} and Remark \ref{rem54}, and then quotienting (\ref{duzyVtrojkapredual}) by $G_0$ we obtain the following short exact sequence of Banach $\mathcal{VB}$-groupoids:
  \be\label{C}\begin{picture}(11,4.6)
    \put(-5,4){\makebox(0,0){$T_*\left(\frac{P_0\times P_0}{G_0}\right)$}}
    \put(-5.3,0){\makebox(0,0){$T_*P_0/G_0$}}
    \put(-1,4){\makebox(0,0){$\frac{P_0\times  P_0}{G_0}$}}
    \put(-1,0){\makebox(0,0){$ {P_0}/{G_0}$}}
     \put(-5.1,3.5){\vector(0,-1){2.7}}
    \put(-4.9,3.5){\vector(0,-1){2.7}}
      \put(-1.1,3.5){\vector(0,-1){2.7}}
    \put(-0.9,3.5){\vector(0,-1){2.7}}
    \put(-3,4){\vector(1,0){0.8}}
    \put(-4,0){\vector(1,0){2.2}}
    \put(-4.5,3.5){\vector(4,-1){4.4}}
    \put(-4.2,-0.3){\vector(4,-1){5.2}}
      \put(-0,3.9){\line(4,-1){5}}
       \put(-0.1,3.8){\line(4,-1){5}}
         \put(-0.4,-0.1){\line(4,-1){5.5}}
       \put(-0.5,-0.2){\line(4,-1){5.5}}
        \put(-3,2.5){\makebox(0,0){$a_2^*  $}}
         \put(-2,-1.4){\makebox(0,0){$ id$}}

    \put(2,2){\makebox(0,0){$\frac{T_*P_0\times  T_*P_0}{G_0}$}}
     \put(1.8,-2){\makebox(0,0){$T_*P_0/G_0$}}
      \put(6,2){\makebox(0,0){$\frac{P_0\times  P_0}{G_0}$}}
     \put(6,-2){\makebox(0,0){$ {P_0}/{G_0}$}}
       \put(1.9,1.5){\vector(0,-1){2.7}}
    \put(2.1,1.5){\vector(0,-1){2.7}}
      \put(5.9,1.5){\vector(0,-1){2.7}}
    \put(6.1,1.5){\vector(0,-1){2.7}}
      \put(4,2){\vector(1,0){0.8}}
    \put(3,-2){\vector(1,0){2.2}}
     \put(2.5,1.5){\vector(4,-1){4.3}}
    \put(2.6,-2.3){\vector(4,-1){4.7}}
     \put(6.4,-2){\line(4,-1){5.9}}
       \put(6.3,-2.1){\line(4,-1){5.9}}
       \put(7,2){\line(4,-1){5.5}}
       \put(6.9,1.9){\line(4,-1){5.5}}
       \put(4.7,0.4){\makebox(0,0){$\iota_2^*  $}}
         \put(5,-3.4){\makebox(0,0){$[\pi^*]$}}

    \put(9,0){\makebox(0,0){$\fontsize{9pt}{1pt}{\frac{p_0\M_*p_0\times  P_0\times P_0}{G_0}}$}}
   \put(8.7,-4){\makebox(0,0){$\{0\}\times P_0/G_0$}}
     \put(13,0){\makebox(0,0){$\frac{P_0\times  P_0}{G_0}$}}
   \put(13,-4){\makebox(0,0){$ {P_0}/{G_0},$}}
        \put(8.9,-0.5){\vector(0,-1){2.7}}
    \put(9.1,-0.5){\vector(0,-1){2.7}}
      \put(12.9,-0.5){\vector(0,-1){2.7}}
    \put(13.1,-0.5){\vector(0,-1){2.7}}
        \put(10.5,0){\vector(1,0){1.3}}
    \put(10.5,-4){\vector(1,0){1.7}}
    \end{picture}\ee
   \vspace{2.5 cm}
   \newline
	which have the gauge groupoid $\frac{P_0\times P_0}{G_0}\tto P_0/G_0$ as their side groupoid. As it was mentioned at the beginning of this section these $\mathcal{VB}$-groupoids will be the main object of investigations in this section.
	
	\bigskip
	
	Since the dualization procedure commute with the quotienting by $G_0$ we easily show that:
		
		\begin{rem} \textsl{After application of the dualization procedure to  (\ref{C}) we come back to the short exact sequence (\ref{duzyVtrojkaG})}.
		\end{rem}
				
		\begin{rem} \textsl{The  Banach spaces considered here are not reflexive in general. So, the short exact sequence of $\mathcal{VB}$-groupoids dual to (\ref{duzyVtrojkaG}) can  not be equal to (\ref{C}).}
		\end{rem}
		
	
		

The upper horizontal part of (\ref{C}) 
\unitlength=5mm \be\label{Atiyapredual}\begin{picture}(11,4.6)
    \put(-2.8,4){\makebox(0,0){$T_*(\frac{P_0\times P_0}{G_0})$}}
    \put(5,4){\makebox(0,0){$\frac{T_*P_0\times T_* P_0}{G_0}$}}
    \put(13,4){\makebox(0,0){$\frac{p_0\M_*p_0\times P_0\times P_0}{G_0}$}}
    \put(12,0){\makebox(0,0){$\frac{P_0\times P_0}{G_0}$}}
    \put(5,0){\makebox(0,0){$\frac{P_0\times P_0}{G_0}$}}
    \put(-2,0){\makebox(0,0){$\frac{P_0\times P_0}{G_0}$}}

    \put(-2,3){\vector(0,-1){2}}
    \put(5,3){\vector(0,-1){2}}
    \put(12,3){\vector(0,-1){2}}
    \put(6.6,0){\vector(1,0){4}}
 \put(-0.8,0){\vector(1,0){4.5}}
 \put(-1,4){\vector(1,0){4}}
  \put(6.8,4){\vector(1,0){3.6}}
    \put(1.5,4.4){\makebox(0,0){$a^*_2$}}
     \put(8.3,4.6){\makebox(0,0){$ \iota^*_2$}}
    \put(8.5,0.5){\makebox(0,0){$\sim$}}
\put(1.5,0.5){\makebox(0,0){$ \sim$}}
    \end{picture}\ee
		is the predual Atiyah sequence for the $G_0$-principal bundle $\pi_2:P_0\times P_0\to \frac{P_0\times P_0}{G_0}$. Thus,  if one defines the sub Poisson structure on $T_*P_0\times T_*P_0$ by
		\be\label{bra515} \{f,g\}:=\langle\frac{\partial g}{\partial \eta},\frac{\partial f}{\partial \varphi}\rangle+
		\langle\frac{\partial g}{\partial \xi},\frac{\partial f}{\partial \psi}\rangle
		-\langle\frac{\partial f}{\partial \eta},\frac{\partial g}{\partial \varphi}\rangle-
		\langle\frac{\partial f}{\partial \xi},\frac{\partial g}{\partial \psi}\rangle,\ee
		where $f,g\in \mathcal{P}^\infty(T_*P_0\times T_*P_0)$, and on 
		$p_0\M_* p_0\times P_0\times P_0$  
		by 
		\be\label{sPbra2}
\{F,G\}_{sP}(\beta,\eta,\xi):=-\left\langle\beta,\left[\frac{\partial F}{\partial \beta}(\beta,\eta,\xi),\frac{\partial G}{\partial \beta}(\beta,\eta,\xi)\right]\right\rangle,\ee
where $f,g\in C^\infty(p_0	\M_* p_0\times P_0\times P_0)$, then Proposition \ref{prop:Palg}, Remark \ref{rem:42}, Theorem \ref{prop:43} and Theorem \ref{prop:44}  
are also valid for all ingredients of (\ref{Atiyapredual}).
	The Poisson algebra $\P^\infty(T_*P_0\times T_*P_0)$ in this case is defined by 
		\be \mathcal{P}^\infty(T_*P_0\times T_*P_0):=\{f\in C^\infty(T_*P_0\times T_*P_0): \frac{\partial f}{\partial\eta}(\varphi,\eta,\psi,\xi),\ \frac{\partial f}{\partial\xi}(\varphi,\eta,\psi,\xi)\in p_0\M_*\}.\ee	
		
		Now, similarly to the previous sections we discuss the coordinate description of the sub Poisson structures pictured by the diagrams (\ref{duzyVtrojkapredual}) and  (\ref{Atiyapredual}). For this reason we  present the list of charts consistent with the fibre bundle structures of the manifolds included in these diagrams and express the respective Poisson brackets using the suitable coordinates.
		\begin{enumerate} [(i)]
			\item On $\frac{P_0\times P_0}{G_0}\cong \G_{p_0}(\M)$  (see (\ref{gaugeisom}) for  this isomorphism) one has the coordinates $(y_p,z_{p\tilde p},\tilde y_{\tilde p})\in (1-p)\M p\times p\M\tilde p\times (1-\tilde p)\M\tilde p$ defined in (\ref{psipp}). Note that 
				\be\label{dzet} z_{p\tilde p}:=z_{pp_0}\tilde z_{\tilde p p_0}^{-1},\ee
				where $(y_p,z_{pp_0})$ and $(\tilde y_{\tilde p},\tilde z_{\tilde pp_0})$ are coordinates on $P_0\cap\Omega_{pp_0}$ and $P_0\cap\Omega_{\tilde pp_0}$, respectively.
		\item The coordinates 
		\be\label{coo1}(\alpha_p,\beta_{\tilde p p},\tilde \alpha_{\tilde p},y_p,z_{p\tilde p},\tilde y_{\tilde p})\in p\M_*(1-p)\times \tilde p\M_* p\times \tilde p\M_*(1-\tilde p) \times		(1-p)\M p\times p\M\tilde p\times (1-\tilde p)\M\tilde p \ee
		are the canonical coordinates on $T_*\left(\frac{P_0\times P_0}{G_0}\right)$, i.e. the variables $(\alpha_p,\beta_{\tilde p p},\tilde \alpha_{\tilde p})$ are the predual to  the variables $(a_p,b_{p\tilde p},\tilde a_{\tilde p})\in (1-p)\M p\times p\M\tilde p\times (1-\tilde p)\M p$.
		The Poisson bracket of $f,g\in \mathcal{P}^\infty(T_*\left(\frac{P_0\times P_0}{G_0}\right))$ defined by the canonical weak symplectic structure of $T_*\left(\frac{P_0\times P_0}{G_0}\right)$ written in the coordinates (\ref{coo1}) assumes the form
		\be\label{552} \{f,g\}=\left\langle\frac{\partial g}{\partial y_p},\frac{\partial f}{\partial \alpha_p}\right\rangle-
		\left\langle\frac{\partial f}{\partial y_p},\frac{\partial g}{\partial \alpha_p}\right\rangle+\ee
		$$+\left\langle\frac{\partial g}{\partial \tilde y_{\tilde p}},\frac{\partial f}{\partial  \tilde\alpha_{\tilde p}}\right\rangle-
		\left\langle\frac{\partial f}{\partial  \tilde y_{\tilde p}},\frac{\partial g}{\partial  \tilde\alpha_{\tilde p}}\right\rangle+
		\left\langle\frac{\partial g}{\partial \tilde z_{p\tilde p}},\frac{\partial f}{\partial  \tilde\beta_{\tilde p p}}\right\rangle-
		\left\langle\frac{\partial f}{\partial  \tilde z_{p\tilde p}},\frac{\partial g}{\partial  \tilde\beta_{\tilde p p}}\right\rangle.$$
		The coordinate formula of the sub Poisson anchor $\tilde{\#}_2:T^\flat(T_*(\frac{P_0\times P_0}{G_0}))\to T(T_*(\frac{P_0\times P_0}{G_0}))$ defined by (\ref{552}) is the following
		\be\label{anch3}\tilde{\#}_2(\stackrel{\circ}{\alpha_p},\stackrel{\circ}{\beta_{p\tilde p}},\stackrel{\circ}{\tilde \alpha_{\tilde p}}, \stackrel{\circ}{y_p},\stackrel{\circ}{z}_{p\tilde p},\stackrel{\circ}{\tilde y_{\tilde p}},
		{\alpha_p}, {\beta_{p\tilde p}}, {\tilde \alpha_{\tilde p}},y_p, {z}_{p\tilde p}, {\tilde y_{\tilde p}})=
		(	-\stackrel{\circ}{y_p},-\stackrel{\circ}{z}_{p\tilde p},-\stackrel{\circ}{\tilde y_{\tilde p}}, \stackrel{\circ}{\alpha_p},\stackrel{\circ}{\beta_{p\tilde p}},\stackrel{\circ}{\tilde \alpha_{\tilde p}},  {\alpha_p}, {\beta_{p\tilde p}}, {\tilde \alpha_{\tilde p}},	y_p, {z}_{p\tilde p}, {\tilde y_{\tilde p}}),\ee
		where 
		$(		\stackrel{\circ}{\alpha_p},\stackrel{\circ}{\beta_{p\tilde p}},\stackrel{\circ}{\tilde \alpha_{\tilde p}},
\stackrel{\circ}{y_p},\stackrel{\circ}{z}_{p\tilde p},\stackrel{\circ}{\tilde y_{\tilde p}},
		{\alpha_p}, {\beta_{p\tilde p}}, {\tilde \alpha_{\tilde p}},	y_p, {z}_{p\tilde p}, {\tilde y_{\tilde p}})\in 
				(1-p)\M p\times p\M \tilde{p}\times (1-\tilde{p})\M  \tilde{p}\times
p\M _*(1-p)\times \tilde{p}\M_*p\times  \tilde{p}\M _*(1-\tilde{p})\times
		p\M_*(1-p)\times \tilde{p}\M_*p\times \tilde{p}\M_*(1-\tilde{p})\times		(1-p)\M p\times p\M \tilde{p}\times  (1-\tilde{p})\M \tilde{p}
$ are coordinates consistent with the bundle structure of $T^\flat(T_*(\frac{P_0\times P_0}{G_0}))$.
		\item On $T_*P_0\times T_*P_0$ one can take the coordinates $(\alpha_p,\beta_{p},y_p,z_{pp_0},
		\tilde \alpha_{\tilde p}, \tilde\beta_{\tilde p},\tilde y_{\tilde p},\tilde z_{\tilde pp_0})$
		which are the product of the coordinates
		$(\alpha_p,\beta_{p},y_p,z_{pp_0})\in p\M_*(1-p)\times p\M_* p\times (1-p)\M p\times p\M p_0$ and $(\tilde \alpha_{\tilde p}, \tilde\beta_{\tilde p},\tilde y_{\tilde p},\tilde z_{\tilde pp_0})\in \tilde p\M_*(1-\tilde p)\times \tilde p\M_* \tilde p\times (1-\tilde p)\M \tilde p\times \tilde p\M p_0$
		on $T_*P_0$ defined in (\ref{chartTdual}). All components of (\ref{coo2}) except of $z_{pp_0}$ and $\tilde z_{\tilde pp_0}$ are $G_0$-invariant. Hence one can consider 
		\be\label{coo3}(\alpha_p,\beta_{p},\tilde \alpha_{\tilde p}, \tilde\beta_{\tilde p},y_p,z_{p\tilde p},\tilde y_{\tilde p})\in 
	p\M_*(1-p)\times p\M_* p\times 	\tilde p\M_*(1-\tilde p)\times \tilde p\M_* \tilde p\times (1-p)\M p\times p\M \tilde p\times (1-\tilde p)\M \tilde p ,\ee
		where $z_{p\tilde p}$ is defined in (\ref{dzet}), as a coordinates on $\frac{T_*P_0\times T_*P_0}{G_0}$. The Poisson bracket of $f,g\in \mathcal{P}^\infty\left(\frac{T_*P_0\times T_*P_0}{G_0}\right)$ defined by the sub Poisson structure of $\frac{T_*P_0\times T_*P_0}{G_0}$ written in the coordinates (\ref{coo3})  assumes  the following form
		\be\label{555} \{f,g\}=\left\langle\frac{\partial g}{\partial y_p},\frac{\partial f}{\partial \alpha_p}\right\rangle-
		\left\langle\frac{\partial f}{\partial y_p},\frac{\partial g}{\partial \alpha_p}\right\rangle+\ee
		$$+\left\langle\frac{\partial g}{\partial \tilde y_{\tilde p}},\frac{\partial f}{\partial  \tilde\alpha_{\tilde p}}\right\rangle-
		\left\langle\frac{\partial f}{\partial  \tilde y_{\tilde p}},\frac{\partial g}{\partial  \tilde\alpha_{\tilde p}}\right\rangle+		
		\left\langle\beta_p,\left[\frac{\partial g}{\partial \beta_{ p}},\frac{\partial f}{\partial  \beta_{ p}}\right]\right\rangle+
		\left\langle \tilde\beta_{\tilde p},\left[\frac{\partial g}{\partial  \tilde \beta_{\tilde p}},\frac{\partial f}{\partial  \tilde\beta_{\tilde p}}\right]\right\rangle+$$
		$$+\left\langle \frac{\partial g}{\partial z_{p\tilde p}},\frac{\partial f}{\partial  \beta_{ p}}z_{p\tilde p}-z_{p\tilde p}\frac{\partial f}{\partial  \tilde\beta_{\tilde p}}\right\rangle-
		\left\langle\frac{\partial f}{\partial  z_{p\tilde p}},\frac{\partial g}{\partial  \beta_{p}}z_{p\tilde p}-z_{p\tilde p}\frac{\partial g}{\partial  \tilde\beta_{\tilde p}}\right\rangle.	$$
		
		Similarly as in (\ref{tang}-\ref{bem})  we have isomorphisms
		\be\label{tang2}  T_{[(\varphi,\eta),(\psi,\xi)]}\left(\frac{T_*P_0\times T_*P_0}{G_0}\right)\cong 
		(\M p_0)_*\times (1-p_0)\M p_0\times p\M{\tilde p}\times (\M p_0)_*\times (1-p_0)\M p_0\times\{[(\varphi,\eta),(\psi,\xi)]\}\ee
\be\label{gw2}  T^*_{[(\varphi,\eta),(\psi,\xi)]}\left(\frac{T_*P_0\times T_*P_0}{G_0}\right)\cong 
\M p_0\times p_0 \M^* (1-p_0)\times {\tilde p}\M^* p\times \M p_0\times p_0 \M^* (1-p_0)\times\{[(\varphi,\eta),(\psi,\xi)]\}\ee
\be\label{bem2}  T^\flat_{[(\varphi,\eta),(\psi,\xi)]}\left(\frac{T_*P_0\times T_*P_0}{G_0}\right)\cong 
\M p_0\times p_0\M_*(1-p_0) \times {\tilde p}\M_* p\times\M p_0\times p_0\M_*(1-p_0) \{[(\varphi,\eta),(\psi,\xi)]\}\ee
where $[(\varphi,\eta),(\psi,\xi)]\in \frac{T_*P_0\times T_*P_0}{G_0}$.

		The coordinate expression for corresponding sub Poisson anchor $[\#_2]:T^\flat(\frac{T_*P_0\times T_*P_0}{G_0})\to T(\frac{T_*P_0\times T_*P_0}{G_0})$
		is 
		\be\label{anch4} [\#_2](\stackrel{\circ}{\alpha_p},\stackrel{\circ}{\beta_p},		\stackrel{\circ}{\tilde \alpha_{\tilde p}},\stackrel{\circ}{\tilde\beta_{\tilde p}},
		\stackrel{\circ}{y_p},\stackrel{\circ}{z}_{p\tilde p},
		\stackrel{\circ}{\tilde y_{\tilde p}}, 
		{\alpha_p}, \beta_p, 		{\tilde \alpha_{\tilde p}},{\tilde \beta_{\tilde p}},
		y_p,z_{p\tilde p}, 
		{\tilde y_{\tilde p}})=\qquad\qquad\ee
		$=\!(\!-\!\stackrel{\circ}{y_p},\! -ad^*_{\stackrel{\circ}{\beta_p}}(\beta_p)-z_{p\tilde p}\stackrel{\circ}{z_{p\tilde p}},		-\stackrel{\circ}{\tilde{y}_{\tilde p}}, -ad^*_{\stackrel{\circ}{\tilde{\beta}_{\tilde p}}}(\tilde{\beta}_{\tilde p})+\stackrel{\circ}{z_{p\tilde p}}{z_{p\tilde p}},
		\stackrel{\circ}{\alpha_p},\stackrel{\circ}{\beta_p}{z}_{p\tilde p}-{z}_{p\tilde p}\stackrel{\circ}{\tilde{\beta}_{\tilde p}},
		\stackrel{\circ}{\tilde{\alpha}_{\tilde p}},
		\alpha_p,\beta_p,		\tilde{\alpha}_{\tilde p},\tilde {\beta}_{\tilde p},
		y_p,{z}_{p\tilde p},
		\tilde {y}_{\tilde p}),$

where 
\be\label
{coo51} ({\alpha_p}, \beta_p,  {\tilde \alpha_{\tilde p}},{\tilde \beta_{\tilde p}},y_p,z_{p\tilde p},{\tilde y_{\tilde p}})\in p\M_*(1-p)\!\times\! p\M_* p\!\times\! 	\tilde p\M_*(1-\tilde p)\!\times\! \tilde p\M_* \tilde p\!\times\! (1-p)\M p\!\times\! p\M \tilde p\!\times\! (1-\tilde p)\M \tilde p\ee
and \be\label
{coo52} (\stackrel{\circ}{\alpha_p},\stackrel{\circ}{\beta_p},\stackrel{\circ}{\tilde \alpha_{\tilde p}},\stackrel{\circ}{\tilde\beta_{\tilde p}},
\stackrel{\circ}{y_p},\stackrel{\circ}{z}_{p\tilde p},\stackrel{\circ}{\tilde y_{\tilde p}})\in \!(1-p)\M p\!\times\!  p\M p\!\times\! 	(1-\tilde p)\M\tilde  p\!\times\! \tilde{\tilde p}\M  p\!\times\! p\M_*(1- p)\times\! \tilde p\M _* p\times\! \tilde p\M_* (1-\tilde p)\ee are the coordinates along the fibres of $T^\flat(\frac{T_*P_0\times T_*P_0}{G_0})$.
		\item  On $p_0\M_*p_0\times P_0\times P_0$ we take the coordinates
		\be\label{coo4} (\chi, y_p,z_{pp_0},\tilde y_{\tilde p}, \tilde z_{\tilde pp_0})\in 
		p_0\M_*p_0\times (1-p)\M p\times p\M p_0\times (1-\tilde p)\M \tilde p\times \tilde p\M p_0.\ee
		
		The Poisson bracket of $F,G\in \mathcal{P}^\infty(p_0\M_*p_0\times P_0\times P_0)$ in these coordinates has the form
		\be\label{533}
	\{F,G\}=-\left\langle\chi,\left[\frac{\partial F}{\partial\chi},
		\frac{\partial G}{\partial\chi}\right]\right\rangle.\ee

		The coordinate expression of  sub Poisson anchor in this case  is
		\be\label{anch5} \#(\stackrel{\circ}{\mathcal X},\stackrel{\circ}{{y}_{p}},\stackrel{\circ}{z}_{pp_0},\stackrel{\circ}{\tilde y_{\tilde p}},\stackrel{\circ}{z_{\tilde p p_0}},
			\mathcal X,y_p,z_{p p_0},{\tilde y_{\tilde p}},{z_{\tilde p p_0}})=\qquad\qquad\qquad\qquad\ee
		$$\qquad\qquad\qquad\qquad=(-ad^*_{\stackrel{\circ}{\mathcal X}}(\mathcal X),0,0,0,0,\mathcal X,y_p,z_{p p_0}, {\tilde y_{\tilde p}},{z_{\tilde p p_0}}),$$
		where $(\mathcal X,y_p,z_{p p_0},{\tilde y_{\tilde p}},{z_{\tilde p p_0}})\in p_0\M_*p_0\times (1-p)\M p\times p\M p_0\times (1-\tilde p)\M \tilde p\times \tilde p\M p_0$ and\\
		$(\stackrel{\circ}{\mathcal X},\stackrel{\circ}{{y}_{p}},\stackrel{\circ}{z}_{p p_0},\stackrel{\circ}{\tilde y_{\tilde p}},\stackrel{\circ}{z_{\tilde p p_0}})\in p_0\M p_0\times p\M_*(1- p)\times p_0\M_* p\times p\M_* \tilde(1-\tilde p) \times \tilde p_0\M_* p$.
		\item As coordinates on $\frac{p_0\M_*p_0\times P_0\times P_0}{G_0}$ one can take 
		\be\label{coo5} (\chi_p,y_p,\tilde y_{\tilde p}, z_{p\tilde p})\in p_0\M_*p_0 \times (1-p)\M p\times (1-\tilde p)\M \tilde p\times p\M\tilde p\ee
		where $z_{p\tilde p}$ is defined in (\ref{dzet}) and
		\be\label{chip}\chi_p:=z_{pp_0}\chi z_{pp_0}^{-1}.\ee
		Hence for $F,G\in C^\infty(\frac{p_0\M_*p_0\times P_0\times P_0}{G_0})$  the Poisson bracket (\ref{533}) in the coordinates (\ref{coo5}) has the form
		\be\label{560} \{F,G\}(\chi_p,y_p,\tilde y_{\tilde p}, z_{p\tilde p})=-\left\langle\chi_p,\left[\frac{\partial F}{\partial\chi_p}(\chi_p,y_p,\tilde y_{\tilde p}, z_{p\tilde p}),\frac{\partial G}{\partial\chi_p}(\chi_p,y_p,\tilde y_{\tilde p}, z_{p\tilde p})\right]\right\rangle.\ee		
		We have isomorphisms
$$T_{[(x,\xi,\eta)]}\left(\frac{p_0\M_* p_0\times P_0\times P_0}{G_0}\right)\cong p_0\M_* p_0\times (1-p)\M p\times (1-\tilde p)\M\tilde p\times p\M \tilde p\times [(x,\xi,\eta)],$$
$$T^*_{[(x,\xi,\eta)]}\left(\frac{p_0\M_* p_0\times P_0\times P_0}{G_0}\right)\cong p_0\M p_0\times p\M^*(1-p) \times \tilde p\M^*(1-\tilde p)\times \tilde p\M^*  p\times [(x,\xi,\eta)],$$
\be\label{flat1}T^\flat_{[(x,\xi,\eta)]}\left(\frac{p_0\M_* p_0\times P_0\times P_0}{G_0}\right)\cong p_0\M p_0\times p\M_*(1-p) \times \tilde p\M_*(1-\tilde p)\times \tilde p\M_*  p\times [(x,\xi,\eta)].\ee

		In this case the sub Poisson anchor  is given by 
		\be\label{anch6} \#(\stackrel{\circ}{\mathcal X}_p,\stackrel{\circ}{y}_p,\stackrel{\circ}{\tilde z_{p\tilde p}},\stackrel{\circ}{\tilde y}_{\tilde p},
		{\mathcal X}_p, y_p,\tilde z_{p\tilde p},{\tilde y}_{\tilde p})=
		(-ad^*_{\stackrel{\circ}{\mathcal X}_p}({\mathcal X}_p),0,0,0,{\mathcal X}_p, y_p,\tilde z_{p\tilde p},{\tilde y}_{\tilde p})
		\ee
		where
		$(\stackrel{\circ}{\mathcal X}_p,\stackrel{\circ}{y}_p,\stackrel{\circ}{\tilde z_{p\tilde p}},\stackrel{\circ}{\tilde y}_{\tilde p},
		{\mathcal X}_p, y_p,\tilde z_{p\tilde p},{\tilde y}_{\tilde p})\in p_0\M p_0\times p\M^* (1-p)\times \tilde p\M^* p\times \tilde p \M^*(1-p)\times p_0\M_*p_0\times (1-p)\M p\times p\M\tilde p\times (1-\tilde p)\M \tilde p$.
		
		\bigskip
		\end{enumerate}
		
		Let us mention here that the variables which are marked above by $\circ $ concern those parts of coordinate systems which are taken along the fibres of the considered $T^\flat$- bundles.
		
		\bigskip 
		
		The morphisms of $\mathcal{VB}$-groupoids, i.e. the horizontal arrows of (\ref{C}) as well as their structural maps written in the coordinates listed above assume exceptionally simple forms. Namely  for $a^*_2$ and $\iota^*_2$ we have
		\be\label{562} (\alpha_p,\beta_p,y_p,z_{p\tilde p},\tilde \alpha_{\tilde p},\tilde \beta_{\tilde p},\tilde  y_{\tilde p})=a^*_2(\alpha_p, \beta_{\tilde pp}, \tilde\alpha_{\tilde p},y_p,z_{p\tilde p}, \tilde y _{\tilde p})=\ee
		$$=(\alpha_p,z_{p\tilde p}\beta_{\tilde pp},y_p,z_{p\tilde p},\tilde \alpha_{\tilde p},-\tilde \beta_{\tilde pp}z_{p\tilde p},\tilde  y_{\tilde p})$$
		and
		\be\label{563} (\chi_p,y_p,z_{p\tilde p},\tilde  y_{\tilde p})=\iota^*_2(\alpha_p,\beta_p,y_p,z_{p\tilde p},\tilde \alpha_{\tilde p},\tilde \beta_{\tilde p},\tilde  y_{\tilde p})=(\beta_p +z_{p\tilde p}\tilde\beta_{\tilde p}z_{p\tilde p}^{-1},y_p,z_{p\tilde p},\tilde  y_{\tilde p}),\ee
		respectively. 
		
		The structural maps of the $\mathcal{VB}$-groupoid $\frac{T_*P_0\times T_*P_0}{G_0}\tto T_*P_0/G_0$ which are obtained  by the quotienting of the structural maps (\ref{TPproddual}), written in the coordinates $(\alpha_p,\beta_p,y_p,z_{p\tilde p},\tilde \alpha_{\tilde p},\tilde \beta_{\tilde p},\tilde  y_{\tilde p})$, assume the following form
		\be \tilde\Ss_*(\alpha_p,\beta_p,y_p,z_{p\tilde p},\tilde \alpha_{\tilde p},\tilde \beta_{\tilde p},\tilde  y_{\tilde p})=(-\tilde \alpha_{\tilde p},-\tilde \beta_{\tilde p},\tilde  y_{\tilde p})\ee
		\be \tilde\Tt_*(\alpha_p,\beta_p,y_p,z_{p\tilde p},\tilde \alpha_{\tilde p},\tilde \beta_{\tilde p},\tilde  y_{\tilde p})=(\alpha_p,\beta_p,y_p)\ee
		\be\tilde{\mathbf 1}_*(\alpha_p,\beta_p,y_p)=(\alpha_p,\beta_p,y_p,p,-\alpha_p,-\beta_p,y_p)\ee
		\be \lambda_*(\alpha_p,\beta_p,y_p)=y_p\ee
				\be 0_*(y_p)=(0,0,y_p)\ee
				\be \tilde\lambda_*(\alpha_p,\beta_p,y_p,z_{p\tilde p},\tilde \alpha_{\tilde p},\tilde \beta_{\tilde p},\tilde  y_{\tilde p})=(y_p,z_{p\tilde p},\tilde  y_{\tilde p})\ee
				\be \tilde 0_*(y_p,z_{p\tilde p},\tilde  y_{\tilde p})=(0,0,y_p,z_{p\tilde p},0,0,\tilde  y_{\tilde p})\ee
				and the groupoid product is given by 
		$$(\alpha_p,\beta_p,y_p,z_{p\tilde p},-\tilde\alpha_{\tilde p},-\tilde\beta_{\tilde p},\tilde y_{\tilde p})\cdot
		({\tilde\alpha}_{\tilde p},{\tilde\beta}_{\tilde p},{\tilde y}_{\tilde p},{z}_{\tilde p\tilde{\tilde p}},\tilde{\tilde\alpha}_{\tilde{\tilde p}},\tilde{\tilde\beta}_{\tilde{\tilde p}},\tilde{\tilde y}_{\tilde{\tilde p}})=$$
		$$=(\alpha_p,\beta_p,y_p,z_{p\tilde p}z_{\tilde p\tilde {\tilde p}},\tilde{\tilde\alpha}_{\tilde{\tilde p}},\tilde{\tilde\beta}_{\tilde{\tilde p}},\tilde{\tilde y}_{\tilde{\tilde p}}).$$
				
		\bigskip


 Now let us consider the $\mathcal{VB}$-groupoid 
		\be\label{prop:prol}\begin{picture}(11,4.6)
    \put(0,4){\makebox(0,0){$T(T_*P_0\times T_*P_0)$}}
    \put(8,4){\makebox(0,0){$T_*P_0\times T_*P_0$}}
    \put(1,-1){\makebox(0,0){$T(T_*P_0)$}}
    \put(8,-1){\makebox(0,0){$T_*P_0$}}
    \put(1.2,3){\vector(0,-1){3}}
    \put(0.7,3){\vector(0,-1){3}}
    \put(8.2,3){\vector(0,-1){3}}
    \put(7.7,3){\vector(0,-1){3}}
    \put(2.8,4){\vector(1,0){3}}
    \put(2.6,-1){\vector(1,0){3.9}}
    \put(0.1,1.4){\makebox(0,0){$ T\tilde\Tt_*  $}}
    \put(2.2,1.4){\makebox(0,0){$ T\tilde\Ss_* $}}
    \put(8.8,1.4){\makebox(0,0){$ \tilde\Ss_*  $}}
    \put(7.1,1.4){\makebox(0,0){$\tilde\Tt_*    $}}
    \put(4.5,4.5){\makebox(0,0){$\  $}}
    \put(4.5,-0.5){\makebox(0,0){$\  $}}
    \end{picture},\ee
    \bigskip\newline
		which is the tangent prolongation of the
		groupoid $T_*P_0\times T_*P_0\tto T_*P_0$, which  is a subgroupoid of the groupoid $T^*P_0\times T^*P_0\tto T^*P_0$. So, its structure   is defined in (\ref{TPproddual}) and (\ref{TPproddualcd}). Taking into account isomorphisms
		\be\label{prop:1izo2} T(T_*P_0)\cong  p_0\M_*\times\M p_0\times p_0\M _*\times P_0\ee
		and
			\be\label{prop:izo2} T(T_*P_0\times T_*P_0)\cong  (\M p_0)_*\times\M p_0\times (\M p_0)_*\times\M p_0\times(\M p_0)_*\times P_0\times (\M p_0)_*\times P_0\ee
	we find that the structural maps of (\ref{prop:prol}) are:
	\be\begin{array}{l} T\tilde\Ss_*(\dot{\varphi},\dot{\eta},\dot{\psi},\dot{\xi},\varphi,\eta,\psi,\xi)=(-\dot{\psi},\dot{\xi},-\psi,\xi)\\
	 T\tilde\Tt_*(\dot{\varphi},\dot{\eta},\dot{\psi},\dot{\xi},\varphi,\eta,\psi,\xi)=(\dot\varphi,\dot\eta,\varphi,\eta)\\
	 T\tilde\iota_*(\dot{\varphi},\dot{\eta},\dot{\psi},\dot{\xi},\varphi,\eta,\psi,\xi)=(-\dot\psi,\dot\xi,-\dot\varphi,\dot\eta,-\psi,\xi,-\varphi,\eta)\\
	 T\tilde{\mathbf{1}}_*(\dot{\varphi},\dot{\eta},\varphi,\eta)=(\dot{\varphi},\dot{\eta},-\dot{\varphi},\dot{\eta},\varphi,\eta,-\varphi,\eta)\\
	 \tilde\lambda(\dot{\varphi},\dot{\eta},\dot{\psi},\dot{\xi},\varphi,\eta,\psi,\xi)=(\varphi,\eta,\psi,\xi)\\
	\lambda(\dot{\varphi},\dot{\eta},\varphi,\eta)=(\varphi,\eta)\\
	\tilde 0(\varphi,\eta,\psi,\xi)=(0,0,0,0,\varphi,\eta,\psi,\xi)\\
	0(\varphi,\eta)=(0,0,\varphi,\eta)\end{array}\ee
	and its groupoid product is given by
		\be (\dot{\varphi},\dot{\eta},\dot{\psi},\dot{\xi},\varphi,\eta,\psi,\xi)(-\dot\psi,\dot\xi,\dot\sigma,\dot\lambda,-\psi,\xi,\sigma,\lambda)=(\dot\varphi,\dot\eta,\dot\sigma,\dot\lambda,\varphi,\eta,\sigma,\lambda).\ee
One easy sees that the core of (\ref{prop:prol}) is isomorphic with $T(T_*P_0)$. So,  the dual $\mathcal{VB}$-groupoid of (\ref{prop:prol}) is
		\be\label{prop:proldual}\begin{picture}(11,4.6)
    \put(0,4){\makebox(0,0){$T^*(T_*P_0\times T_*P_0)$}}
    \put(8,4){\makebox(0,0){$T_*P_0\times T_*P_0$}}
    \put(1,-1){\makebox(0,0){$T^*(T_*P_0)$}}
    \put(8,-1){\makebox(0,0){$T_*P_0$}}
    \put(1.2,3){\vector(0,-1){3}}
    \put(0.7,3){\vector(0,-1){3}}
    \put(8.2,3){\vector(0,-1){3}}
    \put(7.7,3){\vector(0,-1){3}}
    \put(3,4){\vector(1,0){3}}
    \put(2.7,-1){\vector(1,0){3.7}}
    \put(0.1,1.4){\makebox(0,0){$ T^*\tilde\Tt_*  $}}
    \put(2.2,1.4){\makebox(0,0){$ T^*\tilde\Ss_* $}}
    \put(8.8,1.4){\makebox(0,0){$ \tilde\Ss_*  $}}
    \put(7.1,1.4){\makebox(0,0){$\tilde\Tt_*    $}}
    \put(4.5,4.5){\makebox(0,0){$\  $}}
    \put(4.5,-0.5){\makebox(0,0){$\  $}}
    \end{picture}\ee
    \bigskip\newline 
		Using isomorphisms:
			\be T^*(T_*P_0)\cong  \M p_0\times(\M p_0)^*\times(\M p_0)_*\times P_0\ee
		and
			\be\label{prop:iso4} T^*(T_*P_0\times T_*P_0)\cong  (\M p_0)\times(\M p_0)^*\times \M p_0\times(\M p_0)^*\times(\M p_0)_*\times P_0\times (\M p_0)_*\times P_0\ee we write the structural maps  of (\ref{prop:proldual}) as follows:
		\be\begin{array}{l} 
		T^*\tilde\Ss_*(\stackrel{\circ}{\varphi},\stackrel{\circ}{\eta},\stackrel{\circ}{\psi},\stackrel{\circ}{\xi},\varphi,\eta,\psi,\xi)=
		(\stackrel{\circ}{\psi},-\stackrel{\circ}{\xi},-\psi,\xi)\\
		T^*\tilde\Tt_*(\stackrel{\circ}{\varphi},\stackrel{\circ}{\eta},\stackrel{\circ}{\psi},\stackrel{\circ}{\xi},\varphi,\eta,\psi,\xi)=(\stackrel{\circ}{\varphi},\stackrel{\circ}{\eta},\varphi,\eta)\\
	 T^*\tilde\iota_* (\stackrel{\circ}{\varphi},\stackrel{\circ}{\eta},\stackrel{\circ}{\psi},\stackrel{\circ}{\xi},\varphi,\eta,\psi,\xi)= (\stackrel{\circ}{\psi},-\stackrel{\circ}{\xi},\stackrel{\circ}{\varphi},-\stackrel{\circ}{\eta},-\psi,\xi,-\varphi,\eta)\\
  T^*\tilde\tilde{\mathbf{1}}_*(\stackrel{\circ}{\varphi},\stackrel{\circ}{\eta},\varphi,\eta)=(\stackrel{\circ}{\varphi},\stackrel{\circ}{\eta},\stackrel{\circ}{\varphi},-\stackrel{\circ}{\eta},\varphi,\eta,-\varphi,\eta)\\
	 \tilde\lambda(\stackrel{\circ}{\varphi},\stackrel{\circ}{\eta},\stackrel{\circ}{\psi},\stackrel{\circ}{\xi},\varphi,\eta,\psi,\xi)=(\varphi,\eta,\psi,\xi)\\
	\lambda(\stackrel{\circ}{\varphi},\stackrel{\circ}{\eta},\varphi,\eta)=(\varphi,\eta)\\
	\tilde 0(\varphi,\eta,\psi,\xi)=(0,0,0,0,\varphi,\eta,\psi,\xi)\\
	0(\varphi,\eta)=(0,0,\varphi,\eta)\end{array}\ee
	and the groupoid product	 is given by 
	\be(\stackrel{\circ}{\varphi},\stackrel{\circ}{\eta},\stackrel{\circ}{\psi},\stackrel{\circ}{\xi},\varphi,\eta,\psi,\xi)
	(\stackrel{\circ}{\psi},-\stackrel{\circ}{\xi},\stackrel{\circ}{\sigma},\stackrel{\circ}{\lambda},-\psi,\xi,\sigma,\lambda)= 
	(\stackrel{\circ}{\varphi},\stackrel{\circ}{\eta},\stackrel{\circ}{\sigma},\stackrel{\circ}{\lambda},\varphi,\eta,\sigma,\lambda).\ee

		The quasi Banach subbundles $T^\flat(T_*P_0)\subset T^*(T_*P_0)$ and $T^\flat(T_*P_0\times T_*P_0)\subset T^*(T_*P_0\times T_*P_0)$  are isomorphic to
		\be\label{prop:1izo3} T^\flat(T_*P_0)\cong \M p_0 \times p_0\M_* \times  p_0\M_*\times P_0\ee
		and 
		\be\label{prop:izo3} T^\flat(T_*P_0\times T_*P_0)\cong (\M p_0)\times(\M p_0)_*\times \M p_0\times(\M p_0)_* \times(\M p_0)_*\times P_0\times (\M p_0)_*\times P_0 ,\ee
		respectively.
		 Using the isomorphisms (\ref{prop:1izo2}), (\ref{prop:izo2}), (\ref{prop:1izo3}) and (\ref{prop:izo3}) we write the sub Poisson  maps $ \#_1: T^\flat(T_*P_0)\to T(T_*P_0)$ and  $ \#_2:T^\flat(T_*P_0\times T_*P_0)\to T(T_*P_0\times T_*P_0)$, which are defined by the brackets (\ref{nawias3}) and (\ref{bra515}), as follows
		\be\label{poi1} \#_1(\stackrel{\circ}{\varphi},\stackrel{\circ}{\eta},\varphi,\eta)=(-\stackrel{\circ}{\eta},\stackrel{\circ}{\varphi},\varphi,\eta).\ee 
		and 
		\be\label{poi2}  \#_2(\stackrel{\circ}{\varphi},\stackrel{\circ}{\eta},\stackrel{\circ}{\psi},\stackrel{\circ}{\xi},\varphi,\eta,\psi,\xi)=
		(-\stackrel{\circ}{\eta},\stackrel{\circ}{\varphi},-\stackrel{\circ}{\xi},\stackrel{\circ}{\psi},\varphi,\eta,\psi,\xi).\ee 
		
		We note here that the $\mathcal{VB}$-subgroupoid 
		\be\label{bemol}\begin{picture}(11,4.6)
    \put(0,4){\makebox(0,0){$T^\flat(T_*P_0\times T_*P_0)$}}
    \put(8,4){\makebox(0,0){$T^*(T_*P_0\times T_*P_0)$}}
    \put(1,-1){\makebox(0,0){$T^\flat(T_*P_0)$}}
    \put(8,-1){\makebox(0,0){$T^*(T_*P_0)$}}
    \put(1.2,3){\vector(0,-1){3}}
    \put(0.7,3){\vector(0,-1){3}}
    \put(8.2,3){\vector(0,-1){3}}
    \put(7.7,3){\vector(0,-1){3}}
    \put(3,4){\vector(1,0){2.3}}
    \put(2.4,-1){\vector(1,0){3.7}}
    \put(0.1,1.4){\makebox(0,0){$ \   $}}
    \put(2.2,1.4){\makebox(0,0){$ \  $}}
    \put(8.8,1.4){\makebox(0,0){$ \  $}}
    \put(7.1,1.4){\makebox(0,0){$\   $}}
    \put(4.5,4.5){\makebox(0,0){$\   $}}
    \put(4.5,-0.5){\makebox(0,0){$\   $}}
    \end{picture}\ee
    \bigskip\newline
		of the $\mathcal{VB}$-groupoid  (\ref{prop:proldual}) will be crucial for the following considerations. Also the momentum maps $J_{1\flat}:T^\flat(T_*P_0)\to p_0\M_*p_0$ and $J_{2\flat}:T^\flat(T_*P_0\times T_*P_0)\to p_0\M_*p_0$ which are given by
		\be J_{1\flat}(\stackrel{\circ}{\varphi},\stackrel{\circ}{\eta},\varphi,\eta)=\stackrel{\circ}{\eta}\eta-\varphi\stackrel{\circ}{\varphi}\ee
		and 
		\be J_{2\flat}(\stackrel{\circ}{\varphi},\stackrel{\circ}{\eta},\stackrel{\circ}{\psi},\stackrel{\circ}{\xi},\varphi,\eta,\psi,\xi)=\stackrel{\circ}{\eta}\eta-\varphi\stackrel{\circ}{\varphi}+\stackrel{\circ}{\xi}\xi-\psi\stackrel{\circ}{\psi}\ee
		will be important in subsequel. 
		
		One has a sequence of quasi Banach vector subbundles
		\be\label{bundleduzy}\begin{picture}(11,4.6)
\put(-6,4){\makebox(0,0){$(T^*\tilde{\Tt}_*)^{-1}(J_{1\flat}^{-1}(0))\cap(T^*\tilde{\Ss}_*)^{-1}(J_{1\flat}^{-1}(0))$}}
    \put(4.3,4){\makebox(0,0){$J_{2\flat}^{-1}(0)$}}
    \put(14,4){\makebox(0,0){$T^\flat\left({T_*P_0\times T_*P_0}\right)$}}
    
    \put(-6,0){\makebox(0,0){$T_*P_0\times T_*P_0$}}
    \put(4,0){\makebox(0,0){$T_*P_0\times T_*P_0$}}
		\put(14,0){\makebox(0,0){$T_*P_0\times T_*P_0$}}
   
      \put(-6,3){\vector(0,-1){2}}
    \put(4,3){\vector(0,-1){2}}
		  \put(14,3){\vector(0,-1){2}}
       \put(0,4){\vector(1,0){3}}
		\put(6,4){\vector(1,0){4.3}}
   
\put(-3.5,0){\vector(1,0){5}}
		\put(6.5,0){\vector(1,0){5}}
   
    \put(0.1,1.4){\makebox(0,0){$ \ $}}
    \put(2.2,1.4){\makebox(0,0){$  \ $}}
    \put(8.8,1.4){\makebox(0,0){$  \  $}}
    \put(7.1,1.4){\makebox(0,0){$ \    $}}
    \end{picture}\ee
		of the vector bundle  $T^\flat\left({T_*P_0\times T_*P_0}\right)\to T_*P_0\times T_*P_0$. 
		
		Let us define the bundle $\mathfrak{J}\to J_2^{-1}(0)$ as the restriction of the first subbundle in (\ref{bundleduzy}) to the submanifold $J_2^{-1}(0){\hookrightarrow} T_*P_0\times T_*P_0$.
		
		\begin{lem} One has the following sequence of $\mathcal{VB}$-groupoids morphisms
				\be\label{lem:duzy}\begin{picture}(11,4.6)
\put(-8,4){\makebox(0,0){$\mathfrak{J}$}}
    \put(-2.2,4){\makebox(0,0){$J_{2\flat}^{-1}(0)$}}
    \put(4,4){\makebox(0,0){$T^\flat\left({T_*P_0\!\times\! T_*P_0}\right)$}}
    \put(10,4){\makebox(0,0){$T\left({T_*P_0\!\times\! T_*P_0}\right)$}}
		 \put(16,4){\makebox(0,0){$\frac{T\left({T_*P_0\times T_*P_0}\right)}{T_eG_0}$}}
    \put(-8,-1){\makebox(0,0){$J_{1\flat}^{-1}(0)$}}
    \put(-2,-1){\makebox(0,0){${T^\flat({T_*P_0})}$}}
		\put(4,-1){\makebox(0,0){${T^\flat({T_*P_0})}$}}
    \put(10,-1){\makebox(0,0){${T({T_*P_0})}$}}
		\put(16,-1){\makebox(0,0){$\frac{T({T_*P_0})}{T_eG_0},$}}
      \put(-8.2,3){\vector(0,-1){3}}
    \put(-7.8,3){\vector(0,-1){3}}
		  \put(-2.2,3){\vector(0,-1){3}}
    \put(-1.8,3){\vector(0,-1){3}}
		\put(4.2,3){\vector(0,-1){3}}
    \put(3.8,3){\vector(0,-1){3}}
    \put(10.2,3){\vector(0,-1){3}}
    \put(9.8,3){\vector(0,-1){3}}
		  \put(16.2,3){\vector(0,-1){3}}
    \put(15.8,3){\vector(0,-1){3}}
    \put(-7,4){\vector(1,0){3.4}}
		\put(-1,4){\vector(1,0){2.3}}
    \put(6.5,4){\vector(1,0){1}}
    \put(12.5,4){\vector(1,0){1.3}}

\put(-6.5,-1){\vector(1,0){3}}
		\put(-0.5,-1){\vector(1,0){3}}
    \put(5.5,-1){\vector(1,0){3}}
    \put(11.5,-1){\vector(1,0){3}}
		\put(-5.5,4.4){\makebox(0,0){$ \iota_2 $}}
    \put(0.3,4.4){\makebox(0,0){$  \tilde{\iota}_2 $}}
    \put(7,4.4){\makebox(0,0){$  \#_2  $}}
    \put(13,4.4){\makebox(0,0){$ Q_2    $}}
    \put(-5,-0.6){\makebox(0,0){$ \iota_1 $}}
    \put(1,-0.6){\makebox(0,0){$  id $}}
    \put(7,-0.6){\makebox(0,0){$  \#_1  $}}
    \put(13,-0.6){\makebox(0,0){$ Q_1    $}}
    \end{picture}\ee
    \bigskip\newline
				where $\#_1$ and $\#_2$ are as in (\ref{poi1}) and (\ref{poi2}). The maps $Q_1$ and $Q_2$ are the respective quotient maps, see (\ref{Q1}) and (\ref{Q2}). The $\mathcal{VB}$-groupoid $\mathfrak{J}\tto J_{1\flat}^{-1}(0)$ has $J_{2\flat}^{-1}(0)\tto T_*P_0$ as its side groupoid. The side groupoid of others $\mathcal{VB}$-groupoids in (\ref{lem:duzy}) is $T_*P_0\times T_*P_0\tto T_*P_0$.

		\end{lem}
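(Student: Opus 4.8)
The plan is to show that each of the five columns of (\ref{lem:duzy}) carries a Banach $\mathcal{VB}$-groupoid structure and that every horizontal arrow is a morphism of such, reducing the whole statement to the already-established groupoids (\ref{bemol}) and (\ref{prop:prol}) together with a few bookkeeping facts about the momentum maps. First I would record the data that is already given: columns three and four are exactly the $\mathcal{VB}$-groupoids (\ref{bemol}) and (\ref{prop:prol}), both having the pair groupoid $T_*P_0\times T_*P_0\tto T_*P_0$ as side groupoid, while column five is the quotient of column four by the tangent-lifted action of the normal subgroup $T_eG_0\cong p_0\M p_0$ of $TG_0$, exactly as in the Atiyah construction of Section \ref{sec:tangent}. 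Since this action is by groupoid automorphisms, the quotient is again a $\mathcal{VB}$-groupoid and $Q_1,Q_2$ are $\mathcal{VB}$-groupoid morphisms by construction.

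The crucial observation is a cocycle identity for the momentum maps. Writing $j:=J_1$, $j(\varphi,\eta)=\varphi\eta$, a direct computation with the structural maps (\ref{TPproddual}) and the definition (\ref{J2}) gives $J_2=j\circ\tilde\Tt_*-j\circ\tilde\Ss_*$ once the sign twist carried by $\tilde\Ss_*$ is taken into account, and the analogous computation on the $T^\flat$-layer using the dual structural maps of (\ref{prop:proldual}) yields
\be
J_{2\flat}=J_{1\flat}\circ T^*\tilde\Tt_*-J_{1\flat}\circ T^*\tilde\Ss_*.
\ee
Because $J_2$ and $J_{2\flat}$ are thus differences of the source- and target-pullbacks of $j$, respectively $J_{1\flat}$, both are additive along composable pairs; hence $J_2^{-1}(0)=T^0(P_0\times P_0)$ is a subgroupoid of the (sign-twisted) pair groupoid and $J_{2\flat}^{-1}(0)$ is a $\mathcal{VB}$-subgroupoid of (\ref{bemol}). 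The same identity shows that any element lying over $J_2^{-1}(0)$ whose source and target both satisfy $J_{1\flat}=0$ automatically has $J_{2\flat}=0$, so $\mathfrak{J}\subset J_{2\flat}^{-1}(0)$ and $\tilde\iota_2,\iota_2,\iota_1$ are inclusions of $\mathcal{VB}$-subgroupoids whose source and target, by construction, land in $J_{1\flat}^{-1}(0)$; this fixes the side groupoid of the first column as the restriction of the pair groupoid to $J_2^{-1}(0)\tto T_*P_0$, and that of all remaining columns as $T_*P_0\times T_*P_0\tto T_*P_0$.

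Next I would verify directly from the coordinate formulas that $\#_1$ and $\#_2$ are $\mathcal{VB}$-groupoid morphisms. Substituting (\ref{poi1}) and (\ref{poi2}) into the structural maps of (\ref{prop:prol}) and (\ref{prop:proldual}) one checks the intertwining relations $T\tilde\Tt_*\circ\#_2=\#_1\circ T^*\tilde\Tt_*$ and $T\tilde\Ss_*\circ\#_2=\#_1\circ T^*\tilde\Ss_*$, and that $\#_2$ carries the groupoid product of (\ref{prop:proldual}) to that of (\ref{prop:prol}); fibre-linearity over the common side groupoid is immediate from the formulas. Postcomposing with the quotient projections produces the commuting squares involving $Q_1,Q_2$, and restricting $\#_1,\#_2$ to the momentum level sets produces those involving the inclusions, so that the entire diagram (\ref{lem:duzy}) consists of commuting squares of $\mathcal{VB}$-groupoid morphisms.

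I expect the main obstacle to be organizational rather than conceptual: one must assemble the three floors of the construction simultaneously (the sub-Poisson layer $T^\flat$, the tangent layer $T$, and the $T_eG_0$-quotient layer) and check each square at the level of total spaces, side groupoids and cores at once, all the while keeping consistent track of the sign twist in $\tilde\Ss_*$ that makes the momentum cocycle identity hold. Once this identity and the two intertwining relations for $\#_1,\#_2$ are in place, every square collapses to a short coordinate verification, and the identification of the side groupoids follows simply by reading off the base maps of the vertical vector bundles.
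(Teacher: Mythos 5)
Your proposal is correct, and its computational core coincides with the paper's own proof: both establish the middle square by substituting the coordinate formulas (\ref{poi1}) and (\ref{poi2}) into the structural maps of (\ref{prop:prol}) and (\ref{prop:proldual}) and checking the intertwining relations $T\tilde\Tt_*\circ\#_2=\#_1\circ T^*\tilde\Tt_*$ and $T\tilde\Ss_*\circ\#_2=\#_1\circ T^*\tilde\Ss_*$, together with compatibility with the inverse map and the product. Where you genuinely depart from the paper is in the remaining squares. The paper handles the inclusions by writing out $J_{1\flat}^{-1}(0)$, $J_{2\flat}^{-1}(0)$ and $\mathfrak{J}$ in coordinates, formulas (\ref{J1bemol})--(\ref{Jbezbemol}), and then asserting that these conditions are invariant under the groupoid operations; and it handles $(Q_1,Q_2)$ by writing the explicit formulas (\ref{Q1})--(\ref{Q2}) and invoking a direct calculation. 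You instead isolate the cocycle identities $J_2=J_1\circ\tilde\Tt_*-J_1\circ\tilde\Ss_*$ and $J_{2\flat}=J_{1\flat}\circ T^*\tilde\Tt_*-J_{1\flat}\circ T^*\tilde\Ss_*$ (both of which do hold; the sign twist in $\tilde\Ss_*$ is exactly what makes them work), from which additivity along composable pairs, the subgroupoid property of the zero level sets, the inclusion $\mathfrak{J}\subset J_{2\flat}^{-1}(0)$, and the invariance assertions all follow at once; and you obtain the $(Q_1,Q_2)$ square from the general principle that quotienting by an action of $T_eG_0$ by groupoid automorphisms yields a groupoid morphism. Your route buys a cleaner, less computation-heavy justification of precisely the steps that the paper leaves as one-line assertions, at the cost of having to verify that the $T_eG_0$-action is indeed by automorphisms, a check of essentially the same size as the paper's direct calculation. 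One further point in your favour: you identify the side groupoid of the first column as $J_2^{-1}(0)\tto T_*P_0$, which is what the definition of $\mathfrak{J}$ as a bundle over $J_2^{-1}(0)$ forces; the occurrence of ``$J_{2\flat}^{-1}(0)\tto T_*P_0$'' in the statement of the lemma is evidently a misprint for this.
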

		\begin{proof}
		
		The equalities 
			\be\begin{split} (T\tilde\Tt_*\circ  \#_2)&(\stackrel{\circ}{\varphi},\stackrel{\circ}{\eta},\stackrel{\circ}{\psi},\stackrel{\circ}{\xi},\varphi,\eta,\psi,\xi)=T\tilde\Tt_*(-\stackrel{\circ}{\eta},\stackrel{\circ}{\varphi},-\stackrel{\circ}{\xi},\stackrel{\circ}{\psi},\varphi,\eta,\psi,\xi)=\\
			&=(-\stackrel{\circ}{\eta},\stackrel{\circ}{\varphi},\varphi,\eta)=\#_1(\stackrel{\circ}{\varphi},\stackrel{\circ}{\eta},\varphi,\eta)=(\#_1\circ T^*\tilde{\Tt}_*)(\stackrel{\circ}{\varphi},\stackrel{\circ}{\eta},\stackrel{\circ}{\psi},\stackrel{\circ}{\xi},\varphi,\eta,\psi,\xi)\end{split}\ee
			\be \begin{split}(T\tilde\Ss_*\circ  \#_2)&(\stackrel{\circ}{\varphi},\stackrel{\circ}{\eta},\stackrel{\circ}{\psi},\stackrel{\circ}{\xi},\varphi,\eta,\psi,\xi)=T\tilde\Ss_*(-\stackrel{\circ}{\eta},\stackrel{\circ}{\varphi},-\stackrel{\circ}{\xi},\stackrel{\circ}{\psi},\varphi,\eta,\psi,\xi)=\\
			&=(\stackrel{\circ}{\xi},\stackrel{\circ}{\psi},-\psi,\xi)=\#_1(\stackrel{\circ}{\psi},-\stackrel{\circ}{\xi},-\psi,\xi)=(\#_1\circ T^*\tilde{\Ss}_*)(\stackrel{\circ}{\varphi},\stackrel{\circ}{\eta},\stackrel{\circ}{\psi},\stackrel{\circ}{\xi},\varphi,\eta,\psi,\xi)\end{split}\ee
	\be\begin{split} (\#_2\circ T^*\tilde{\iota}_*)&(\stackrel{\circ}{\varphi},\stackrel{\circ}{\eta},\stackrel{\circ}{\psi},\stackrel{\circ}{\xi},\varphi,\eta,\psi,\xi)=\#_2(\stackrel{\circ}{\psi},-\stackrel{\circ}{\xi},\stackrel{\circ}{\varphi},-\stackrel{\circ}{\eta},-\psi,\xi,-\varphi,\eta)=
	(\stackrel{\circ}{\xi},\stackrel{\circ}{\psi},\stackrel{\circ}{\eta},\stackrel{\circ}{\varphi},-\phi,\xi,-\varphi,\eta)=\\
	&=T\tilde{\iota}_*(-\stackrel{\circ}{\eta},\stackrel{\circ}{\varphi},-\stackrel{\circ}{\xi},\stackrel{\circ}{\psi},\varphi,\eta,\psi,\xi)=(T\tilde{\iota}_*\circ \#_2)(\stackrel{\circ}{\varphi},\stackrel{\circ}{\eta},\stackrel{\circ}{\psi},\stackrel{\circ}{\xi},\varphi,\eta,\psi,\xi)\end{split}\ee
			\be\begin{split}( \#_2(\stackrel{\circ}{\varphi},\stackrel{\circ}{\eta},\stackrel{\circ}{\psi},\stackrel{\circ}{\xi},&\varphi,\eta,\psi,\xi))\cdot (\#_2(\stackrel{\circ}{\psi},-\stackrel{\circ}{\xi},\stackrel{\circ}{\sigma},\stackrel{\circ}{\lambda},-\psi,\xi,\sigma,\lambda))=\\
			&=(-\stackrel{\circ}{\eta},\stackrel{\circ}{\varphi},-\stackrel{\circ}{\xi},\stackrel{\circ}{\psi},\varphi,\eta,\psi,\xi)(\stackrel{\circ}{\xi},\stackrel{\circ}{\psi},-\stackrel{\circ}{\lambda},\stackrel{\circ}{\sigma},-\psi,\xi,\sigma,\lambda)=\\
			&\qquad=	(-\stackrel{\circ}{\eta},\stackrel{\circ}{\varphi},-\stackrel{\circ}{\lambda},\stackrel{\circ}{\sigma},\varphi,\eta,\sigma,\lambda)
			=\#_2(\stackrel{\circ}{\varphi},\stackrel{\circ}{\eta},\stackrel{\circ}{\sigma},\stackrel{\circ}{\lambda},\varphi,\eta,\sigma,\lambda)= \\
	&\qquad\qquad=\#_2\left((\stackrel{\circ}{\varphi},\stackrel{\circ}{\eta},\stackrel{\circ}{\psi},\stackrel{\circ}{\xi},\varphi,\eta,\psi,\xi)
	(\stackrel{\circ}{\psi},-\stackrel{\circ}{\xi},\stackrel{\circ}{\sigma},\stackrel{\circ}{\lambda},-\psi,\xi,\sigma,\lambda)\right)\end{split}\ee
		show that $\#_1$ and $\#_2$ define the  groupoids morphism
			\be\label{prop:morph}\begin{picture}(11,4.6)
    \put(0,4){\makebox(0,0){$T^\flat(T_*P_0\times T_*P_0)$}}
    \put(8.3,4){\makebox(0,0){$T(T_*P_0\times T_*P_0)$}}
    \put(1,-1){\makebox(0,0){$T^\flat(T_*P_0)$}}
    \put(8,-1){\makebox(0,0){$T(T_*P_0)$}}
    \put(1.2,3){\vector(0,-1){3}}
    \put(0.7,3){\vector(0,-1){3}}
    \put(8.2,3){\vector(0,-1){3}}
    \put(7.7,3){\vector(0,-1){3}}
    \put(3,4){\vector(1,0){2.8}}
    \put(2.4,-1){\vector(1,0){3.7}}
    \put(0.1,1.4){\makebox(0,0){$ \Tt^*  $}}
    \put(2.2,1.4){\makebox(0,0){$ \Ss^* $}}
    \put(8.8,1.4){\makebox(0,0){$ T\tilde\Ss_*  $}}
    \put(7.1,1.4){\makebox(0,0){$T\tilde\Tt_*    $}}
    \put(4.5,4.5){\makebox(0,0){$\#_2  $}}
    \put(4.5,-0.5){\makebox(0,0){$\#_1  $}}
    \end{picture}\ee
    \bigskip\newline 
			In order to see that $(\iota_1,\iota_2) $ and $(id,\tilde{\iota}_2)$ define the groupoids morphisms we note that the coordinate description of the considered manifolds is the following
			\be\label{J1bemol} J_{1\flat}^{-1}(0)=\{(\stackrel{\circ}{\varphi},\stackrel{\circ}{\eta},\varphi,\eta):\quad \stackrel{\circ}{\eta}\eta-\varphi\stackrel{\circ}{\varphi}=0\}.\ee
		
		\be\label{J2bemol} J_{2\flat}^{-1}(0)=\{(\stackrel{\circ}{\varphi},\stackrel{\circ}{\eta},\stackrel{\circ}{\psi},\stackrel{\circ}{\xi},\varphi,\eta,\psi,\xi):\quad \stackrel{\circ}{\eta}\eta-\varphi\stackrel{\circ}{\varphi}+\stackrel{\circ}{\xi}\xi-\psi\stackrel{\circ}{\psi}=0\}\ee
		and 
			\be\label{Jbezbemol} \mathfrak{J}=\{(\stackrel{\circ}{\varphi},\stackrel{\circ}{\eta},\stackrel{\circ}{\psi},\stackrel{\circ}{\xi},\varphi,\eta,\psi,\xi):\quad \stackrel{\circ}{\eta}\eta-\varphi\stackrel{\circ}{\varphi}=0,\quad \stackrel{\circ}{\xi}\xi-\psi\stackrel{\circ}{\psi}=0,\ \ \varphi\eta+\psi\xi=0 \}.\ee
			Next we note that the conditions mentioned in (\ref{J1bemol}-\ref{Jbezbemol}) are invariant with respect to the groupoids operations. The quotient maps $Q_1$ and $Q_2$ are defined by 
			\be\label{Q1} Q_1(\dot\varphi,\dot\eta,\varphi,\eta):=[(\dot\varphi,\dot\eta,\varphi,\eta)]=
			\{(\dot\varphi-x\varphi,\dot\eta+\eta x,\varphi,\eta):\ x\in p_0\M p_0\}\ee
			\be\label{Q2} Q_1(\dot\varphi,\dot\eta,\dot\psi,\dot\xi,\varphi,\eta,\psi,\xi):=[(\dot\varphi,\dot\eta,\dot\psi,\dot\xi,\varphi,\eta,\psi,\xi)]=\{(\dot\varphi-x\varphi,\dot\eta+\eta x,\dot\psi-x\psi,\dot\xi+\xi x,\varphi,\eta,\psi,\xi):\ x\in p_0\M p_0\}.\ee
			Using (\ref{Q1}) and (\ref{Q2}) we can show by the directly calculation that $(Q_1,Q_2)$ also defines a groupoid morphism.
		
\end{proof}

\bigskip

	Now we consider the $\mathcal{VB}$-groupoid 
		 \be\label{prawytang}\begin{picture}(11,4.6)
    \put(0.6,4){\makebox(0,0){${T(p_0\M_* p_0\!\times\! P_0\!\times\! P_0})$}}
    \put(8.5,4){\makebox(0,0){${p_0\M_* p_0\times P_0\times P_0}$}}
    \put(1,0){\makebox(0,0){$T(\{0\}^*\times P_0)$}}
    \put(8,0){\makebox(0,0){$\{0\}^*\times P_0$}}
    \put(1.2,3){\vector(0,-1){2}}
    \put(0.7,3){\vector(0,-1){2}}
    \put(8.2,3){\vector(0,-1){2}}
    \put(7.7,3){\vector(0,-1){2}}
    \put(4,4){\vector(1,0){1.3}}
    \put(3.3,0){\vector(1,0){3}}
    \put(0.1,1.4){\makebox(0,0){$ \  $}}
    \put(2.2,1.4){\makebox(0,0){$ \  $}}
    \put(9.1,1.4){\makebox(0,0){$ \  $}}
    \put(6.8,1.4){\makebox(0,0){$\   $}}
    \put(4.5,4.5){\makebox(0,0){$\  $}}
    \put(4.5,0.5){\makebox(0,0){$\  $}}
    \end{picture}\ee
    \bigskip\newline
	tangent  	to the $\mathcal{VB}$- groupoid (\ref{prawy}). The structural maps  for  (\ref{prawytang}) are the following:
		\be\label{517}\begin{array}{l}
 T\tilde{\Ss}_*(\dot\chi,\chi,v,\eta,w,\xi):=(0,w,\xi),\\
 T\tilde{\Tt}_*(\dot\chi,\chi,v,\eta,w,\xi):=(0,v,\eta),\\
 T\tilde{\mathbf{1}}_*(\chi,v,\eta):=(0,0,v,\eta,v,\eta),\\
 T\tilde {\lambda}_*(\dot\chi,\chi,v,\eta,w,\xi)=(\chi,\eta,\xi),\\
 \tilde 0_*(\chi,\eta,\xi)=(0,\chi,0,\eta,0,\xi)\\
 \lambda_*(\chi,v,\eta)=(0,\eta),\\
  0_*(0,\eta)=(0,0,\eta)\end{array}\ee
and the inverse map and the groupoid product are
\be\label{518}\begin{array}{l}
T\tilde{\iota}_*(\dot\chi,\chi,v,\eta,w,\xi):=(-\dot\chi, -\chi,w,\xi,v,\eta,),\\
(\dot\chi,\chi,v,\eta,w,\xi) (\dot{\mathcal{Y}},\mathcal{Y},w,\xi,z,\zeta):=(\dot\chi+\dot{\mathcal{Y}},\chi+\mathcal{Y},v,\eta,z,\zeta).
\end{array}\ee

For the $\mathcal{VB}$-groupoid 
 \be\label{prawytangdual}\begin{picture}(11,4.6)
    \put(0,4){\makebox(0,0){${T^*(p_0\M_* p_0\times P_0\times P_0})$}}
    \put(9,4){\makebox(0,0){${p_0\M_* p_0\times P_0\times P_0}$}}
    \put(0.8,0){\makebox(0,0){$p_0\M p_0\times T^*P_0$}}
    \put(8,0){\makebox(0,0){$\{0\}\times P_0$}}
    \put(1.2,3){\vector(0,-1){2}}
    \put(0.7,3){\vector(0,-1){2}}
    \put(8.2,3){\vector(0,-1){2}}
    \put(7.7,3){\vector(0,-1){2}}
    \put(4,4){\vector(1,0){2}}
    \put(3.2,0){\vector(1,0){3}}
    \put(0.1,1.4){\makebox(0,0){$ \  $}}
    \put(2.2,1.4){\makebox(0,0){$ \  $}}
    \put(9.1,1.4){\makebox(0,0){$ \  $}}
    \put(6.8,1.4){\makebox(0,0){$\   $}}
    \put(4.5,4.5){\makebox(0,0){$\  $}}
    \put(4.5,0.5){\makebox(0,0){$\  $}}
    \end{picture}\ee
    \bigskip\newline
		being the dualization of  (\ref{prawytang}) the structural maps and the groupoid product  are the following:
		\be\label{517}\begin{array}{l}
 T^*\tilde{\Ss}_*(\stackrel{\circ}{\chi},\chi,\varphi,\eta,\psi,\xi):=(\stackrel{\circ}{\chi},-\psi,\xi),\\
 T\tilde{\Tt}_*(\stackrel{\circ}{\chi},\chi,\varphi,\eta,\psi,\xi):=(\stackrel{\circ}{\chi},\varphi,\eta),\\
 T\tilde{\mathbf{1}}_*(\stackrel{\circ}{\chi},\chi,\varphi,\eta):=(\stackrel{\circ}{\chi},0,-\varphi,\eta,\varphi,\eta),\\
 \tilde \lambda_*(\stackrel{\circ}{\chi},\chi,\varphi,\eta,\psi,\xi)=(\chi,\eta,\xi),\\
 \tilde 0_*(\chi,\eta,\xi)=(0,\chi,0,\eta,0,\xi)\\
 \lambda_*(\chi,\varphi,\eta)=(0,\eta),\\
  0_*(0,\eta)=(0,0,\eta).\end{array}\ee
The inverse map and groupoid product for (\ref{prawytangdual}) are given by
\be\label{518}\begin{array}{l}
T\tilde{\iota}_*(\stackrel{\circ}{\chi},\chi,\varphi,\eta,\psi,\xi):=(\stackrel{\circ}{\chi},-\chi,-\psi,\xi,\varphi,\eta),\\
(\stackrel{\circ}{\chi},\chi,\varphi,\eta,\psi,\xi) (\stackrel{\circ}{\chi},\mathcal{Y},-\psi,\xi,\theta,\zeta):=(\stackrel{\circ}{\chi},\chi+\mathcal{Y}, \varphi,\eta,\theta,\zeta).
\end{array}\ee

\begin{lem} One has the following sequence of $\mathcal{VB}$-groupoids morphisms
			\be\label{lem:duzyprawy}\begin{picture}(11,4.6)
\put(-8,4){\makebox(0,0){$J_\flat^{-1}(0)$}}
    \put(-0.3,4){\makebox(0,0){$T^\flat(p_0\M_*p_0\!\times\! P_0\!\times\! P_0)$}}
    \put(8,4){\makebox(0,0){$T(p_0\M_*p_0\!\times\! P_0\!\times\! P_0)$}}
    		 \put(16.3,4){\makebox(0,0){$\frac{T(p_0\M_*p_0\times P_0\times P_0)}{T_eG_0}$}}
    \put(-8,-1){\makebox(0,0){$p_0\M p_0\times T^\flat P_0$}}
    \put(0,-1){\makebox(0,0){${p_0\M p_0\times T^\flat P_0}$}}
		\put(8,-1){\makebox(0,0){$\{0\}\times TP_0$}}
  		\put(16,-1){\makebox(0,0){$\{0\}\times\frac{TP_0}{T_eG_0},$}}
      \put(-8.2,3){\vector(0,-1){3}}
    \put(-7.8,3){\vector(0,-1){3}}
		  \put(-0.2,3){\vector(0,-1){3}}
    \put(0.2,3){\vector(0,-1){3}}
		\put(8.2,3){\vector(0,-1){3}}
    \put(7.8,3){\vector(0,-1){3}}
    \put(16.2,3){\vector(0,-1){3}}
    \put(15.8,3){\vector(0,-1){3}}
		      \put(-6.5,4){\vector(1,0){2.5}}
		\put(3.3,4){\vector(1,0){1.3}}
    \put(11.4,4){\vector(1,0){1.8}}
   
\put(-5.7,-1){\vector(1,0){3}}
		\put(2.4,-1){\vector(1,0){3}}
    \put(10.5,-1){\vector(1,0){3}}
    
		\put(-4.8,4.4){\makebox(0,0){$ \iota_2 $}}
    \put(4,4.4){\makebox(0,0){$  \tilde\#_2 $}}
    \put(12.1,4.4){\makebox(0,0){$  \tilde {Q}_2  $}}
    \put(-4,-0.6){\makebox(0,0){$ id $}}
    \put(4,-0.6){\makebox(0,0){$   \tilde{a}_* $}}
    \put(12,-0.6){\makebox(0,0){$ \tilde {Q}_1   $}}
    \end{picture}\ee
    \bigskip\newline
		where $J_\flat:T^\flat (p_0\M_*p_0\times P_0\times P_0)\to p_0\M_*p_0$ is defined by 
		\be J_\flat(\stackrel{\circ}{\chi},\chi,\varphi,\eta,\psi, \xi):=ad^*_{\stackrel{\circ}{\chi}}(\chi)+\varphi\eta+\psi \xi,\ee
		for $T^\flat (p_0\M_*p_0\times P_0\times P_0)\cong T^*(p_0\M _* p_0)\times T_*(P_0\times P_0)$.
		The anchor maps $\tilde{\#}_2$ and $\tilde{a}_*$ are defined by 
		\be \tilde{\#}_2(\stackrel{\circ}{\chi},\chi,\varphi,\eta,\psi, \xi):=(-ad^*_{\stackrel{\circ}{\chi}}(\chi),\chi,0,\eta,0, \xi)\ee
		\be \tilde{a}_*(\stackrel{\circ}{\chi},\varphi,\eta):=(0,\eta),\ee
		respectively.
		The bundle morphisms $\tilde {Q}_2$ and $\tilde {Q}_1$ in (\ref{lem:duzyprawy}) are the projections on the respective quotient bundles given by
		\be \tilde{Q}_1(0,v,\eta)=[(0,v,\eta)]=\{(0,v+\eta x,\eta);\quad x\in p_0\M p_0\}\ee
		\be \tilde{Q}_2(\dot{\chi},\chi,v,\eta,w,\xi)=[(\dot{\chi},\chi,v,\eta,w,\xi)]=\{(\dot\chi+ad^*_x(\chi),\chi,v+\eta x,\eta, w+\xi x,\xi);\quad x\in p_0\M p_0\}.\ee
\end{lem}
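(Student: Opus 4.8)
The plan is to establish (\ref{lem:duzyprawy}) by the same direct computation used to prove the sequence (\ref{lem:duzy}): for each horizontal arrow I verify that it intertwines the source, target, unit and inversion maps and the groupoid products of the $\mathcal{VB}$-groupoids in the two adjacent columns. First I would record that the $\mathcal{VB}$-groupoids appearing in the top and bottom rows carry exactly the structures obtained by applying the tangent functor and the dualization procedure to (\ref{prawy}), so that the relevant structural maps and products are the ones displayed above for the tangent groupoid (\ref{prawytang}) and its dual (\ref{prawytangdual}); in particular $T^\flat(p_0\M_*p_0\times P_0\times P_0)$ and $J_\flat^{-1}(0)$ inherit their $\mathcal{VB}$-groupoid structure as Banach subgroupoids of $T^*(p_0\M_*p_0\times P_0\times P_0)$.

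Next I would check that the pair $(\tilde{\#}_2,\tilde{a}_*)$ is a morphism of $\mathcal{VB}$-groupoids. Using the coordinate formulas $\tilde{\#}_2(\stackrel{\circ}{\chi},\chi,\varphi,\eta,\psi,\xi)=(-ad^*_{\stackrel{\circ}{\chi}}(\chi),\chi,0,\eta,0,\xi)$ and $\tilde{a}_*(\stackrel{\circ}{\chi},\varphi,\eta)=(0,\eta)$ together with the structural maps of (\ref{prawytang}) and (\ref{prawytangdual}), one computes directly that $T\tilde{\Ss}_*\circ\tilde{\#}_2$ and $T\tilde{\Tt}_*\circ\tilde{\#}_2$ agree with $\tilde{a}_*$ composed with $T^*\tilde{\Ss}_*$ and $T^*\tilde{\Tt}_*$, that $\tilde{\#}_2$ sends the unit section and the inversion of the dual groupoid to those of the tangent groupoid, and that it is multiplicative on composable pairs. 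This is the exact analogue of the verification (\ref{prop:morph}) already carried out for $\#_1,\#_2$ in the proof of the previous lemma, and the computations are of the same routine nature.

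Then I would treat the outer arrows. For $(\iota_2,id)$ it suffices to show that $J_\flat^{-1}(0)$ is a Banach subgroupoid of $T^\flat(p_0\M_*p_0\times P_0\times P_0)$, i.e. that the defining equation $ad^*_{\stackrel{\circ}{\chi}}(\chi)+\varphi\eta+\psi\xi=0$ is preserved by the inversion and the partial product; granting this, the inclusion $\iota_2$ is automatically a morphism over the identity. For the quotient projections $(\tilde{Q}_2,\tilde{Q}_1)$ I would verify that the assignments sending an element to its $T_eG_0\cong p_0\M p_0$ orbit, as in the formulas given for $\tilde{Q}_1$ and $\tilde{Q}_2$, are well defined on orbits and compatible with the groupoid operations, which follows from the $G_0$-equivariance already exploited in passing from (\ref{duzyVtrojkapredual}) to (\ref{C}).

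The hard part will be the bookkeeping in the second and third steps: confirming multiplicativity of $\tilde{\#}_2$ and, above all, the invariance of the constraint cutting out $J_\flat^{-1}(0)$ under the groupoid product, since the product mixes the $p_0\M_*p_0$-component via $ad^*_{\stackrel{\circ}{\chi}}(\chi)$ with the cotangent components $\varphi\eta$ and $\psi\xi$. Once this constraint is shown to be stable, the remaining identities reduce, as in the previous lemma, to a termwise comparison of the coordinate expressions, and the fact that (\ref{lem:duzyprawy}) is a genuine sequence of $\mathcal{VB}$-groupoid morphisms follows from the identification of the cores as in Proposition \ref{prop:cores} together with the explicit definitions of $\tilde{\#}_2$, $\tilde{a}_*$, $\tilde{Q}_1$ and $\tilde{Q}_2$.
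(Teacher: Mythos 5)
Your proposal is correct and takes essentially the same route as the paper, whose entire proof reads ``By the direct verification'': each of the four horizontal arrows is checked in coordinates against the structural maps of (\ref{prawytang}) and (\ref{prawytangdual}), exactly as you outline. The one step you single out as hard — invariance of the constraint $ad^*_{\stackrel{\circ}{\chi}}(\chi)+\varphi\eta+\psi\xi=0$ under the groupoid product — in fact reduces to linearity of $ad^*_{\stackrel{\circ}{\chi}}$ together with the cancellation of the $\psi\xi$ terms coming from the composability condition, so it is as routine as the remaining verifications.
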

\begin{proof} By the direct verification.
\end{proof}

The following theorem summarizes important facts concerning the fibre-wise linear sub Poisson structures related to the gauge groupoid $\frac{P_0\times P_0}{G_0}\tto P_0/G_0$. Recall that this gauge groupoid is isomorphic to Banach-Lie groupoid $\G_{p_0}(\M)\tto \Ll_{p_0}(\M)$.

			\begin{thm}\label{thm:54} All Banach-Lie groupoids in the front of the spatial diagram (\ref{C}) are sub Poisson groupoids and the corresponding horizontal arrows of (\ref{C}) define sub Poisson morphisms between of them.
		\end{thm}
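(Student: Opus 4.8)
The plan is to check, for each of the three Banach--Lie groupoids sitting in the front face of (\ref{C}), the data required by the definition attached to diagram (\ref{prop:Pmorph}): a sub Poisson anchor $\#$, a Banach subgroupoid $T^\flat\tto A^\flat$ of the dual groupoid $T^*\tto A^*$, and a side map $a_*\colon A^\flat\to TM$, such that $(\#,a_*)$ is a morphism of $\mathcal{VB}$-groupoids. I would establish this first for the unquotiented sequence (\ref{duzyVtrojkapredual}) and then descend along the free $G_0$-action of Remark \ref{rem54}.

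For the central groupoid the whole of the required diagram is exactly the content of the first Lemma: diagram (\ref{prop:morph}) asserts that the pair $(\#_2,\#_1)$, with $\#_2$ as in (\ref{poi2}) and $\#_1$ as in (\ref{poi1}), is a morphism of $\mathcal{VB}$-groupoids from the sub-$\mathcal{VB}$-groupoid (\ref{bemol}) of the dual groupoid (\ref{prop:proldual}) into the tangent groupoid $T(T_*P_0\times T_*P_0)\tto T(T_*P_0)$. Hence $T_*P_0\times T_*P_0\tto T_*P_0$ is a sub Poisson groupoid with anchor $\#_2$ and side map $a_*=\#_1$. The second Lemma plays the same role for the right-hand groupoid $p_0\M_*p_0\times P_0\times P_0\tto\{0\}\times P_0$, the anchors there being $\tilde\#_2$ and $\tilde a_*$. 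For the left-hand groupoid $T^0(P_0\times P_0)=J_2^{-1}(0)$ I would use that, after reduction, it becomes the precotangent bundle of the gauge groupoid: by the $P_0\times P_0$ version of Theorem \ref{prop:44}(ii) one has $T_*(\tfrac{P_0\times P_0}{G_0})\cong J_2^{-1}(0)/G_0$ with its canonical weak symplectic form, so the anchor (\ref{anch3}) makes it a weak symplectic, hence sub Poisson, groupoid; compatibility with the structural maps is read off directly in the canonical coordinates (\ref{coo1}).

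To pass from (\ref{duzyVtrojkapredual}) to (\ref{C}) I would invoke Remark \ref{rem54}: the anchors, the bundles $T^\flat$, the momentum maps $J_{1\flat},J_{2\flat},J_\flat$ and all groupoid structural maps are $G_0$-equivariant, the action is free, and the quotient maps are surjective submersions; since dualization commutes with quotienting by $G_0$ (the remark preceding (\ref{Atiyapredual})), every $\mathcal{VB}$-groupoid morphism descends, the descended anchors being precisely (\ref{anch3}), (\ref{anch4}), (\ref{anch6}). This yields the sub Poisson groupoid structures. For the second assertion the arrows $a^*_2$ and $\iota^*_2$ of (\ref{C}) are, by construction, $\mathcal{VB}$-groupoid morphisms (they are the $G_0$-quotients $[A^*_2]$ and $[I^*_2]$ of the short exact sequence (\ref{duzyVtrojkapredual})); that they are Poisson maps is the $P_0\times P_0$ analogue of Theorem \ref{prop:44}(i), exactly as in (\ref{Pbra5}): $I^*_2$ intertwines the brackets (\ref{bra515}) and (\ref{sPbra2}) and $a^*_2$ is the $G_0$-equivariant inclusion of the reduced weak symplectic leaf, so after quotienting they preserve the brackets (\ref{552}), (\ref{555}), (\ref{560}). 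In the coordinates (\ref{562}), (\ref{563}) this is a direct check that the bracket of pullbacks is the pullback of the bracket. Being simultaneously $\mathcal{VB}$-groupoid morphisms and Poisson maps intertwining the sub Poisson anchors, $a^*_2$ and $\iota^*_2$ are sub Poisson morphisms.

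The main obstacle is analytic rather than combinatorial: throughout, the pertinent $T^\flat$-bundles are only \emph{quasi} Banach subbundles of the cotangent bundles (Remark \ref{rem53}) and the preduals $p_0\M_*p_0\subset(p_0\M p_0)^*$ admit no Banach complement (Remark \ref{rem52}), so each $\#$ is merely a fibrewise quasi immersion and the forms are only weak symplectic. One must therefore verify that the Marsden--Weinstein reduction producing the left-hand groupoid truly reproduces the canonical weak symplectic structure of $T_*(\tfrac{P_0\times P_0}{G_0})$ — this is where the computation (\ref{form3}) in the proof of Theorem \ref{prop:44}(ii) is indispensable — and that the descent by $G_0$ preserves the non-split fibre inclusions; checking that all structural maps and anchors respect $(p_0\M p_0)_*\subset(p_0\M p_0)^*$ and $(\M p_0)_*\subset(\M p_0)^*$, as already flagged in Remark \ref{rem52}, is the delicate point.
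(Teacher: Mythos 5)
Your proposal is correct and follows essentially the same route as the paper: both rest on the two lemmas producing the morphism sequences (\ref{lem:duzy}) and (\ref{lem:duzyprawy}), descend them through the free $G_0$-action using the equivariance noted in Remark \ref{rem54} to equip the three front groupoids of (\ref{C}) with their sub Poisson (respectively weak symplectic and Lie--Poisson type) anchors, and then invoke Theorem \ref{prop:44} applied to the principal bundle $P_0\times P_0\to \frac{P_0\times P_0}{G_0}$ to see that the horizontal arrows $a_2^*$, $\iota_2^*$ are Poisson as well as $\mathcal{VB}$-groupoid morphisms. The only difference is presentational: you verify the defining diagram (\ref{prop:Pmorph}) upstairs and then quotient, whereas the paper quotients the lemma sequences first and identifies the resulting bundles, which amounts to the same computation.
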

		\begin{proof} All horizontal maps in (\ref{lem:duzy}) are $G_0$-aquivariant groupoid morphisms. So, quotienting (\ref{lem:duzy}) by $G_0$ we obtain a sequence of  morphisms of the quotient groupoids. Let us describe these groupoids.
		
		 We have the following bundle isomorphisms:
		\be J_{2\flat}^{-1}(0)/G_0\cong T^\flat\left(\frac{T_*P_0\times T_*P_0}{G_0}\right),\ee
		\be \frac{T(T_*P_0\times T_*P_0)}{T_eG_0}/G_0\cong T\left(\frac{T_*P_0\times T_*P_0}{G_0}\right),\ee
		\be \frac{T(T_*P_0)}{T_eG_0}/G_0\cong T\left(\frac{T_*P_0}{G_0}\right).\ee

		Thus, the quotienting of $J_{2\flat}^{-1}(0)\tto T^\flat (T_*P_0)$ and $\frac{T(T_*P_0\times T_*P_0)}{T_eG_0}\tto  \frac{T(T_*P_0)}{T_eG_0}$ by $G_0$ leads to the groupoid morphism 
		
		\be\label{thm:1}\begin{picture}(11,4.6)
    \put(0.3,4){\makebox(0,0){$T^\flat\left(\frac{T_*P_0\times T_*P_0}{G_0}\right)$}}
    \put(8.5,4){\makebox(0,0){$T\left(\frac{T_*P_0\times T_*P_0}{G_0}\right)$}}
    \put(0.7,-1){\makebox(0,0){$\frac{T^\flat({T_*P_0})}{G_0}$}}
    \put(8.3,-1){\makebox(0,0){$T(\frac{T_*P_0}{G_0}).$}}
    \put(1.2,3){\vector(0,-1){3}}
    \put(0.7,3){\vector(0,-1){3}}
    \put(8.2,3){\vector(0,-1){3}}
    \put(7.7,3){\vector(0,-1){3}}
    \put(3,4){\vector(1,0){2.8}}
    \put(2.7,-1){\vector(1,0){3.2}}
    \put(0.1,1.4){\makebox(0,0){$ \ $}}
    \put(2.2,1.4){\makebox(0,0){$  \ $}}
    \put(8.8,1.4){\makebox(0,0){$  \  $}}
    \put(7.1,1.4){\makebox(0,0){$ \    $}}
    \put(4.5,4.3){\makebox(0,0){$ [\#_2]  $}}
    \put(4.5,-0.7){\makebox(0,0){$[\#_1]   $}}
    \end{picture}\ee
    \bigskip\newline
		where the sub Poisson anchors $[\#_1]$ and $[\#_2]$ in (\ref{thm:1}) are defined as quotients of morphisms $Q_1\circ \#_1\circ id$ and $Q_2\circ \#_2\circ \tilde{\iota}_2$ by $G_0$, respectively. From (\ref{thm:1}) we conclude that $\frac{T_*P_0\times T_*P_0}{G_0}\tto T_*P_0/G_0$ is a sub Poisson $\mathcal{VB}$-groupoid. Let us note here that the bundle $J_{2\flat}^{-1}(0)\to T_*P_0\times T_*P_0$ can be consideredas the bundle dual to $T\left(\frac{T_*P_0\times T_*P_0}{G_0}\right)\to T_*P_0\times T_*P_0$.
		
		 The quotient groupoid of the first $\mathcal{VB}$-groupoid in (\ref{lem:duzy}) is isomorphic to the $\mathcal{VB}$-groupoid
			
		\be\label{thm:2}\begin{picture}(11,4.6)
    \put(0.3,4){\makebox(0,0){$T^\flat\left(T_*(\frac{P_0\times P_0}{G_0})\right)$}}
    \put(8.5,4){\makebox(0,0){$T_*\left(\frac{P_0\times P_0}{G_0}\right)$}}
    \put(0.7,-1){\makebox(0,0){$T^\flat(\frac{T_*P_0}{G_0})$}}
    \put(8.3,-1){\makebox(0,0){$\frac{T_*P_0}{G_0}.$}}
    \put(1.2,3){\vector(0,-1){3}}
    \put(0.7,3){\vector(0,-1){3}}
    \put(8.2,3){\vector(0,-1){3}}
    \put(7.7,3){\vector(0,-1){3}}
    \put(3,4){\vector(1,0){2.8}}
    \put(2.7,-1){\vector(1,0){3.2}}
    \put(0.1,1.4){\makebox(0,0){$ \ $}}
    \put(2.2,1.4){\makebox(0,0){$  \ $}}
    \put(8.8,1.4){\makebox(0,0){$  \  $}}
    \put(7.1,1.4){\makebox(0,0){$ \    $}}
    \put(4.5,4.3){\makebox(0,0){$ \   $}}
    \put(4.5,-0.7){\makebox(0,0){$\    $}}
    \end{picture}\ee
    \bigskip\newline
		which has $T_*(\frac{P_0\times P_0}{G_0})\tto T_*P_0/G_0$ as its side groupoid.
		Thus after quotienting  (\ref{lem:duzy}) by $G_0$ we obtain the $\mathcal{VB}$-groupoid morphism
		\be\label{thm:3}\begin{picture}(11,4.6)
    \put(0.3,4){\makebox(0,0){$T^\flat\left(T_*(\frac{P_0\times P_0}{G_0})\right)$}}
    \put(8.5,4){\makebox(0,0){$T\left(T_*(\frac{P_0\times P_0}{G_0})\right)$}}
    \put(0.7,-1){\makebox(0,0){$T^\flat(\frac{T_*P_0}{G_0})$}}
    \put(8.3,-1){\makebox(0,0){$T(\frac{T_*P_0}{G_0}).$}}
    \put(1.2,3){\vector(0,-1){3}}
    \put(0.7,3){\vector(0,-1){3}}
    \put(8.2,3){\vector(0,-1){3}}
    \put(7.7,3){\vector(0,-1){3}}
    \put(3,4){\vector(1,0){2.8}}
    \put(2.7,-1){\vector(1,0){3.2}}
    \put(0.1,1.4){\makebox(0,0){$ \ $}}
    \put(2.2,1.4){\makebox(0,0){$  \ $}}
    \put(8.8,1.4){\makebox(0,0){$  \  $}}
    \put(7.1,1.4){\makebox(0,0){$ \    $}}
      \put(4.5,4.4){\makebox(0,0){$ \tilde{\#}_2  $}}
    \put(4.5,-0.6){\makebox(0,0){$\tilde{\#}_1   $}}
    \end{picture}\ee
    \bigskip\newline where $\tilde{\#}_1 $ and $\tilde{\#}_2$ are defined as the quotients of  $Q_1\circ \#_1\circ id\circ \iota_1$ and $Q_2\circ \#_2\circ \tilde{\iota}_2\circ \iota_2$ by $G_0$, respectively. Note here that $\tilde{\#}_1 $ and $\tilde{\#}_2$ expressed in $G_0$-invariant coordinates assume the form presented in (\ref{sPois1}) and in (\ref{anch3}). We also recall that $\#_2:T^\flat\left(\frac{T_*P_0\times T_* P_0}{G_0})\right)\to T\left(\frac{T_*P_0\times T_* P_0}{G_0})\right)$ maps elements of $T^\flat\left(\frac{T_*P_0\times T_* P_0}{G_0})\right)$ onto vectors tangent to the symplectic leaves of $\frac{T_*P_0\times T_* P_0}{G_0}$, so,  in the particular case onto vectors tangent to $T_*(\frac{P_0\times P_0}{G_0})\cong J_{2\flat}^{-1}(0)/G_0\subset \frac{T_*P_0\times T_*P_0}{G_0}$.		Therefore, it follows from (\ref{thm:3})  that $T_*(\frac{P_0\times P_0}{G_0})\tto T_*P_0/G_0$ is a weak symplectic groupoid.

		Taking the quotient of (\ref{lem:duzyprawy}) by $G_0$ we obtain the $\mathcal{VB}$-groupoids morphisms
		 \be\label{prawyprawy}\begin{picture}(11,4.6)
    \put(1,4){\makebox(0,0){$T^\flat(\frac{p_0\M_* p_0\times P_0\times P_0}{G_0})$}}
    \put(9,4){\makebox(0,0){$T(\frac{p_0\M_* p_0\times P_0\times P_0}{G_0})$}}
    \put(1,0){\makebox(0,0){$\frac{p_0\M p_0\times T^\flat P_0}{G_0}$}}
    \put(8,0){\makebox(0,0){$T(P_0/G_0)$}}
    \put(1.2,3){\vector(0,-1){2}}
    \put(0.7,3){\vector(0,-1){2}}
    \put(8.2,3){\vector(0,-1){2}}
    \put(7.7,3){\vector(0,-1){2}}
    \put(4,4){\vector(1,0){2}}
    \put(3,0){\vector(1,0){3}}
    \put(0.1,1.4){\makebox(0,0){$ \  $}}
    \put(2.2,1.4){\makebox(0,0){$ \  $}}
    \put(9.1,1.4){\makebox(0,0){$ \  $}}
    \put(6.8,1.4){\makebox(0,0){$\   $}}
    \put(4.5,4.5){\makebox(0,0){$[\#]  $}}
    \put(4.5,0.5){\makebox(0,0){$[\tilde{a}_*]  $}}
    \end{picture}\ee
    \bigskip\newline
		where $[\#]:=[\tilde{Q}_2\circ \#\circ\iota]$ and $[\tilde{a}_*]:=[\tilde{Q}_1\circ \tilde{a}_*\circ id]$ are the projectivizations of the respective maps from diagram (\ref{lem:duzyprawy}). Note that in order to obtain (\ref{prawyprawy}) we have used the bundle isomorphism
		\be J_\flat^{-1}(0)/G_0\cong T^\flat\left(\frac{p_0\M_* p_0\times P_0\times P_0}{G_0}\right).\ee
		From (\ref{prawyprawy}) we find that $\frac{p_0\M_* p_0\times P_0\times P_0}{G_0}\tto \{0\}\times P_0/G_0$ is a Poisson groupoid. Ending we note that $T^\flat$-subbundles of $T^*$-bundles were defined in (\ref{flat1}).
		
		 One can check by the straithforward verification that the horizontal arrows in (\ref{C}) define the $\mathcal{VB}$-groupoids morphism. Applying Theorem \ref{prop:44} to the case of the principal bundle $P_0\times P_0\to \frac{P_0\times P_0}{G_0}$ we find that they are also the Poisson morphisms.

			\end{proof}

		\section{Concluding remarks}
		
		In this section we will present cursory review of those questions which were not touched  on in the paper but are crucial for the theory  investigated here. We begin describing interrelation between the Banach Lie algebroid  structure of $\A\G(\M)\to \Ll(\M)$ and the linear sub Poisson structure on $\A_*\G(\M)\to \Ll(\M)$ which is not so obvious in a  not-reflexive Banach case.
		\subsection{Algebroid structure of $TP_0/G_0$ and the linear sub Poisson structure on $T_*P_0/G_0$.}
		
		In the previous sections we investigated the Atiyah sequence (\ref{Atiyahalgebr}), the predual Atiyah sequence (\ref{Atiyahalgebrdual}) as well as the short exact sequences of $\mathcal{VB}$-groupoids (\ref{duzyVtrojkaG}) and (\ref{C}). All of these sequences are defined in a canonical way by the structure of $W^*$-algebra $\M$.  The predual Atiyah sequence (\ref{Atiyahalgebr}) is an ingredient of (\ref{C}) which after the dualization gives the short exact sequence of $\mathcal{VB}$-groupoids (\ref{duzyVtrojkaG}). However, except of the case when $\M$ has the finite dimention , the dualizations of (\ref{duzyVtrojkaG}) and (\ref{Atiyahalgebr}) does not give back (\ref{C}) and (\ref{Atiyahalgebrdual}), i.e. the dualization precedure is not reversive in general. Let us discuss this question closely comparing the algebroid structure of $TP_0/G_0$ with the sub Poisson structure on $T_*P_0/G_0$.
		
		The section $\V\in  \Gamma ^\infty_{G_0}TP_0\cong \Gamma^\infty(TP_0/G_0)$ and the $G_0$-invariant function $\rho\in C^\infty_{G_0}(P_0)\cong C^\infty(P_0/G_0)$ define   a function $f_{\V,\rho}\in C^\infty(T_*P_0/G_0)$ on  $T_*P_0/G_0$ by
		\be\label{61} f_{\V,\rho}(\varphi,\eta):=\langle\varphi,\V(\eta)\rangle+\rho(\eta).\ee
 The bracket (\ref{Pbracket}) taken on  $f_{\V_1,\rho_1}$, $f_{\V_2,\rho_2}\in  C^\infty(T_*P_0/G_0)$ fulfilled the equality
	\be\label{62}\{f_{\V_1,\rho_1},f_{\V_2,\rho_2}\}=f_{[\V_1,\V_2], \V_1(\rho_1)- \V_2(\rho_2)}\ee
	where $[\V_1,\V_2]$ is the Lie algebroid bracket given by (\ref{braketVV}). From (\ref{62}) we conclude that the space $\Ll^\infty_{G_0}(T_*P_0)$ of functions $f_{\V,\rho}$ is a Lie algebra.
	
	Now let us observe that the derivation $\{f_{\V,\rho},\cdot\}$ is a section of $T^{**}(T_*P_0)$ in general. It will be a vector field $\{f_{\V,\rho},\cdot\}\in \Gamma ^\infty T(T_*P_0)\subsetneq \Gamma ^\infty T^{**}(T_*P_0)$ if and only if  $f_{\V,\rho}\in \mathcal{P}^\infty_{G_0}(T_*P_0)\cong \mathcal{P}^\infty(T_*P_0/G_0)$.
	
	\begin{rem} \begin{enumerate}[(i)]
	\item \textsl{The function $f_{\V,\rho}$ belongs to $\mathcal{P}^\infty_{G_0}(T_*P_0)$ iff 
	\be\label{63}\varphi\circ \frac{\partial \V}{\partial\eta}(\eta)\in p_0\M_*\quad {\rm and}\quad\frac{\partial \rho}{\partial\eta}(\eta)\in p_0\M_*\ee
	for any $\varphi\in p_0\M_*$ and $\eta\in P_0$}.
	\item\textsl{ The Lie subalgebra $\A^\infty_{G_0}(T_*P_0)\subset\Ll^\infty_{G_0}(T_*P_0)$, consisting functions $f_\V:=f_{\V,\rho}$ such that $\rho=0$, is isomorphic to the Lie algebra $\Gamma^\infty_{G_0}TP_0$ of the algebroid of the groupoid $\G_{p_0}(\M)\tto \Ll_{p_0}(\M)$.}
	\end{enumerate}
	\end{rem}
	The first condition in (\ref{63}) means that the Banach subspace $p_0\M_*\subset(\M p_0)^*$ is invariant with respect to the bounded operator 
		$\frac{\partial \V}{\partial\eta}(\eta)^*\in L^\infty((\M p_0)^*)$ dual to $\frac{\partial \V}{\partial\eta}(\eta)\in L^\infty(\M p_0)$ what means that the operator $\frac{\partial \rho}{\partial\eta}(\eta)$ is continuous with respect to $\sigma(\M p_0, p_0\M_*)$-topology of $\M p_0$.
		
		The proposition presented below summaries some important properties of the Lie subalgebra $\Ll^\infty_{G_0}(T_*P_0)\cap \mathcal{P}^\infty_{G_0}(T_*P_0)$.
		
		\begin{prop} \begin{enumerate}[(i)]
	\item The Poisson algebra   $\mathcal{P}^\infty_{G_0}(T_*P_0)$is generated by functions from the Lie subalgebra $\Ll^\infty_{G_0}(T_*P_0)\cap \mathcal{P}^\infty_{G_0}(T_*P_0)$.
		\item If $f\in \Ll^\infty_{G_0}(T_*P_0)\cap \mathcal{P}^\infty_{G_0}(T_*P_0)$ then the cotangent lift $L_t^*:T^*P_0\to T_*P_0$ of the left translation flow $L_t:P_0\to P_0$ tangent to $\V\in \Gamma^\infty_{G_0}(TP_0)$ preserves the precotangent bundle $T_*P_0\subset T^*P_0$. The vector field $\{f_{\V,\rho},\cdot\}\in \Gamma^\infty T(T^*P_0)$ is tangent to $L^*_t$.	
	\end{enumerate}
	\end{prop}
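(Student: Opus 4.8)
The plan is to reduce both parts to the explicit form of the Hamiltonian vector field attached to the bracket (\ref{Pbracket}), read off in the splitting $T_*P_0\cong p_0\M_*\times P_0$, together with the characterisation (\ref{63}) of membership in $\mathcal{P}^\infty_{G_0}(T_*P_0)$. First I would record that for $f=f_{\V,\rho}$ as in (\ref{61}) one has $\frac{\partial f}{\partial\varphi}(\varphi,\eta)=\V(\eta)\in\M p_0$ and $\frac{\partial f}{\partial\eta}(\varphi,\eta)=\varphi\circ\frac{\partial\V}{\partial\eta}(\eta)+\frac{\partial\rho}{\partial\eta}(\eta)$, so that the derivation $\{f,\cdot\}$ is, by (\ref{Pbracket}), the vector field whose integral curves obey $\dot\eta=\V(\eta)$ and $\dot\varphi=-\varphi\circ\frac{\partial\V}{\partial\eta}(\eta)-\frac{\partial\rho}{\partial\eta}(\eta)$; this is exactly the anchor $\#_1$ of (\ref{poi1}).

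For part (ii) the $\eta$-component of this flow is by construction the left-translation flow $L_t$ tangent to $\V$, so the lifted flow on $T^*P_0$ lies over $L_t$ and is its cotangent lift (twisted, when $\rho\ne0$, by the fibre translation coming from $-\frac{\partial\rho}{\partial\eta}$). The only thing to check is that the covector $\varphi$ stays in the predual $p_0\M_*\subset(\M p_0)^*$. Along any base curve $\eta(t)=L_t(\eta)$ the evolution of $\varphi$ is a bounded-coefficient linear inhomogeneous ODE, and hypothesis (\ref{63}) says precisely that the operator $\frac{\partial\V}{\partial\eta}(\eta)^*$ leaves the weak-$*$ closed subspace $p_0\M_*$ invariant while the forcing term $\frac{\partial\rho}{\partial\eta}(\eta)$ already lies in $p_0\M_*$. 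Hence an initial value $\varphi(0)\in p_0\M_*$ keeps $\varphi(t)\in p_0\M_*$ for all $t$, which is the assertion that $L_t^*$ preserves $T_*P_0$ and that $\{f_{\V,\rho},\cdot\}$ is tangent to it; this is the integrated form of the remark preceding the proposition.

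For part (i) I would pass to the charts (\ref{chartTdual}) and their $G_0$-quotients, giving the coordinates $(\alpha_p,\beta_p,y_p)$ on $T_*P_0/G_0$. Each coordinate functional is fibre-wise affine of the required type: $\langle\alpha_p,\cdot\rangle$ and $\langle\beta_p,\cdot\rangle$ are linear in $\varphi$ with base-dependent coefficients, hence of the form $f_\V$ with $\V$ a section written as in (\ref{poleL}), whereas $\langle\mu,y_p\rangle$ for $\mu\in p\M_*(1-p)$ is $\varphi$-independent, hence of the form $f_{0,\rho}$; moreover the very shape of these functions makes them satisfy (\ref{63}), so they lie in $\Ll^\infty_{G_0}(T_*P_0)\cap\mathcal{P}^\infty_{G_0}(T_*P_0)$. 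Since any $f\in\mathcal{P}^\infty_{G_0}(T_*P_0)$ is, in each chart, a smooth function of these coordinates, and since the defining condition of $\mathcal{P}^\infty_{G_0}$ is stable under pointwise multiplication --- this being exactly the closure verified in the proof of Proposition \ref{prop:Palg} --- the associative algebra generated by the affine functions of $\mathcal{P}^\infty_{G_0}$-type exhausts $\mathcal{P}^\infty_{G_0}(T_*P_0)$, and the Lie structure (\ref{62}) is inherited.

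The delicate point is part (i): because the Banach spaces are not reflexive, one must check that the affine generators genuinely realise the derivative constraint (\ref{63}) rather than merely the linearity, and one must fix the meaning of ``generated'' as closure of the algebra spanned by these affine functions under the smooth functional calculus with the $\mathcal{P}^\infty$ condition propagated through products. Part (ii) is comparatively routine once (\ref{63}) is interpreted as invariance of a closed, non-complemented subspace under a bounded linear evolution; the only care needed there is the existence of the flow and the fact that invariance of $p_0\M_*$ under the time-dependent generator forces invariance of $p_0\M_*$ under the resulting flow.
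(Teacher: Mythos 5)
A preliminary remark: the paper states this proposition in the concluding remarks \emph{without any proof}, so there is no official argument to compare yours against; it has to be judged on its own merits. With that said, your part (ii) is correct and is surely the intended argument: from (\ref{Pbracket}) and (\ref{61}) the derivation $\{f_{\V,\rho},\cdot\}$ is the vector field $\dot\eta=\V(\eta)$, $\dot\varphi=-\varphi\circ\frac{\partial\V}{\partial\eta}(\eta)-\frac{\partial\rho}{\partial\eta}(\eta)$, and condition (\ref{63}) says that the generator of the $\varphi$-equation maps the closed subspace $p_0\M_*\subset(\M p_0)^*$ boundedly into itself while the inhomogeneous term lies in $p_0\M_*$; solving the linear ODE inside the Banach space $p_0\M_*$ and invoking uniqueness of solutions in $(\M p_0)^*$ gives the invariance of $T_*P_0$ under the lifted flow. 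One slip: $p_0\M_*$ is \emph{norm} closed but not weak-$*$ closed in $(\M p_0)^*$ (it is weak-$*$ dense when $\M$ is infinite dimensional); your argument only uses norm closedness, so nothing breaks, but the phrase ``weak-$*$ closed subspace'' should be deleted.

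Part (i), however, has a genuine gap. Your proposed generators are the chart functionals built from $(\alpha_p,\beta_p,y_p)$ of (\ref{chartTdual}), but these are defined only on $T_*(\pi_0^{-1}(\Pi_p))$, whereas membership in $\Ll^\infty_{G_0}(T_*P_0)\cap \mathcal{P}^\infty_{G_0}(T_*P_0)$ requires a \emph{globally} defined $f_{\V,\rho}$ as in (\ref{61}), i.e.\ a global $\V\in\Gamma^\infty_{G_0}TP_0$ and a global $\rho\in C^\infty_{G_0}(P_0)$; indeed $\beta_p$ and $y_p$ involve $(p\eta)^{-1}$ and make no sense off the chart. In finite dimensions one would globalize by cutting off with bump functions, but here the fibre $p_0\M_*$ is an $L^1$-type (non-Asplund) Banach space which admits no Fr\'echet-smooth bump functions, so chart-wise statements cannot be patched together — this is precisely the non-reflexive pathology this section of the paper is concerned with. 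Some of your generators do extend: for $u\in(1-p)\M p$ one has $\langle\alpha_p,u\rangle=\langle\varphi,u\eta\rangle=f_{\V_u}$ with $\V_u(\eta):=u\eta$, and more generally every $\V_a(\eta):=a\eta$, $a\in\M$, is $G_0$-invariant and satisfies (\ref{63}) since $\varphi\circ\frac{\partial\V_a}{\partial\eta}(\eta)=\varphi a\in p_0\M_*$; these $f_a$ are legitimate global elements of the Lie subalgebra. But the $\beta_p$-functionals are not of this form, and recovering them from the $f_a$'s and base functions requires infinitely many generators and limits (a bilinear expression such as $\langle\alpha_p,y_p v\rangle$ is not a finite sum of products of scalar functions in infinite dimensions). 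Relatedly, even within one chart, ``$f$ is a smooth function of the coordinates'' does not mean ``$f$ lies in the algebra generated by the scalar coordinate functionals'': that implication is false on an infinite-dimensional Banach space. So your final step — that the algebra generated by the affine functions exhausts $\mathcal{P}^\infty_{G_0}(T_*P_0)$ — is unjustified until one fixes a precise (closure- or sheaf-type) meaning of ``generated'' and works with genuinely global generators such as the $f_a$ above; you flagged this delicacy yourself, but flagging it does not close it.
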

	Taking $T^*P_0/G_0$ instead of $T_*P_0/G_0$ and applying the bracket (\ref{Pbracket}) to the functions (\ref{61}) now defined on $T^*P_0$ we find that the space of such functions $ \Ll^\infty_{G_0}(T^*P_0)$ is a Lie algebra. In this case the derivation 
	\be\label{64}\{f_{\V},\cdot\}=\V(\eta)\frac{\partial}{\partial\eta}-\left\langle\varphi,\frac{\partial\V}{\partial\eta}(\eta)\right\rangle\frac{\partial}{\partial\rho}\ee
	is a linear vector field on $T^*P_0$ which, however, after restriction to $T_*P_0\subset T^*P_0$ will be a section of $T^{**}(T_*P_0)$ in general. The vector field $\{f_{\V},\cdot\}\in \Gamma^\infty T(T^*P_0)$ is tangent to $L^*_t$.
	
			Summing up we conclude that in the case considered here the correspondence between the sections of the algebroid $TP_0/G_0$, the linear vector field on $T_*P_0/G_0$ or  $T^*P_0/G_0$ and their tangent flows is not so univocal as it has place in the finite dimensional case described by Proposition 3.4.2 of \cite{mac}.
			
			\subsection{Realification and the subalgebroid $\U(\M)\tto \Ll(\M)$ of the partial isometries}
			All structures from the previous sections were investigated in the framework of the category of complex (holomorphic) Banach manifolds. Passing to the underlying real Banach manifolds with underlying real structures of the Banach Lie groupoids, Banach Lie algebroids and the Banach sub Poisson manifolds one can reformulate statements of the previous sections to their real versions.
			
			In particular case after realification one can consider $\G(\M)\tto \Ll(\M)$ as a real Banach Lie groupoid. By $\U(\M)$ we denote the set of partial isometries of the $W^*$-algebra $\M$. The inverse map $\iota:\G(\M)\to \G(\M)$ and the conjugation map $*:\G(\M)\to \G(\M)$ define the involution 
			\be\label{65}  J(x):=\iota(x)^*=\iota(x^*)\ee
			which is an authomorphism of the real Banach Lie groupoid $\G(\M)\tto \Ll(\M)$.
			One easily see that $x\in \U(\M)$ iff $J(x)=x$. Since $J:\G(\M)\to \G(\M)$ is an involutive authomorphism of the real Banach Lie groupoid $\G(\M)\tto \Ll(\M)$, see \cite{OS}, we conclude that the groupoid of partial isometries $\U(\M)\tto \Ll(\M)$  is a wide real Banach Lie subgroupoid of $\G(\M)\tto \Ll(\M)$. 
			
			For any $p_0\in \Ll(\M)$ one has the transitive subgroupoid $\U_{p_0}(\M)\tto\Ll_{p_0}(\M)$ of $\U(\M)\tto\Ll(\M)$ and a variant of Proposition \ref{prop:11} is valid for this case. As for  $\G_{p_0}(\M)\tto\Ll_{p_0}(\M)$ one has a groupoid isomorphism 
			 \unitlength=5mm \be\label{gaugeisomU}\begin{picture}(11,4.6)
    \put(1,4){\makebox(0,0){$\frac{P_0^u\times P_0^u}{U_0}$}}
    \put(8,4){\makebox(0,0){$\U_{p_0}(\M)$}}
    \put(1,-1){\makebox(0,0){$P_0^u/U_0$}}
    \put(8,-1){\makebox(0,0){$\Ll_{p_0}(\M)$}}
    \put(1.2,3){\vector(0,-1){3}}
    \put(0.7,3){\vector(0,-1){3}}
    \put(8.2,3){\vector(0,-1){3}}
    \put(7.7,3){\vector(0,-1){3}}
    \put(3,4){\vector(1,0){3}}
    \put(2.7,-1){\vector(1,0){3.7}}
    \put(0.1,1.4){\makebox(0,0){$\ $}}
    \put(2.2,1.4){\makebox(0,0){$\ $}}
    \put(9.1,1.4){\makebox(0,0){$\Ss$}}
    \put(6.8,1.4){\makebox(0,0){$\Tt$}}
    \put(4.5,4.5){\makebox(0,0){$\phi$}}
    \put(4.5,-0.5){\makebox(0,0){$\varphi $}}
    \end{picture},\ee
    \bigskip
		\ \\
		where $P_0^u:=P_0\cap\U(\M)$ and $U_0$ is the group of unitary elements of $W^*$-subalgebra $p_0\M p_0$.
		
			In order to express $J$ in the coordinate (\ref{psipp}) we note that $J:\Omega_{p\tilde p}\to \Omega_{p\tilde p}$ and $\psi_{p\tilde p}(J(x))=(y_p,J(z_{p\tilde p}),\tilde y_{\tilde p})$. So, $x\in \Omega_{p\tilde p}\cap\U(\M)$ iff $z_{p\tilde p}z_{p\tilde p}^*=p$ (or equivalently 
			$z_{p\tilde p}^*z_{p\tilde p}=\tilde p$). Hence, fixing $z_{p\tilde p}^0\in \Tt^{-1}(p)\cap \Ss^{-1}(\tilde p)$  we can parematrize $z_{p\tilde p}=z_{p\tilde p}^0 g$ univocally by $g\in U(\tilde p\M \tilde p)$, where $ U(\tilde p\M \tilde p)$ is the group of unitary elements of $W^*$-subalgebra $\tilde p\M \tilde p$. A local chart on a properly choosen open subset $\Omega_{\tilde p}\subset U(\tilde p\M \tilde p)$ is given by $\log:\Omega_{\tilde p}\to i(\tilde p\M \tilde p)^h$, where $(\tilde p\M \tilde p)^h$ is the hermitian part of $\tilde p\M \tilde p$. Summarizing the above facts we obtain the atlas of charts parametrized by $p,\tilde p\in \Ll(\M)$:
			\be\label{66} \psi_{p\tilde p}^u:\Omega_{p\tilde p}\cap\U(\M)\ni x\mapsto \psi_{p\tilde p}^u(x)=(y_p,-i\log[(z_{p\tilde p}^0)^{-1}z_{p\tilde p}],\tilde y_{\tilde p})\in (1-p)\M p\oplus  (\tilde p\M \tilde p)^h\oplus (1-\tilde p)\M \tilde p,\ee
			where one consider $ (1-p)\M p$ and $(1-\tilde p)\M \tilde p$ as a real Banach spaces. This atlas  defines the structure of a real submanifold on $\U(\M)\tto \Ll(\M)$. Hence, similarly to the complex case one can apply the coordinate description to the groupoid of partial isometries.

	One can investigate  the Atiyah sequence, the predual Atiyah sequence and the short exact sequence of $\mathcal{VB}$-groupoids canonically related to the groupoid $\U(\M)\tto \Ll(\M)$ but we will not investigate this subject here.		

\subsection{Some remarks about the case of $\M=L^\infty(\H)$}
Let $\H$ be a separable complex Hilbert space. By $L^\infty(\H)$ we denote the $W^*$-algebra of bounded operators on $\H$. The predual Banach space $L^\infty(\H)_*$ of $L^\infty(\H)$ is the ideal $L^1(\H)\subset L^\infty(\H)$ of the trace class operators and the pairing between $(\rho,x)\in L^1(\H)\times  L^\infty(\H)$ is given by 
\be\label{pairing} \langle\rho,x\rangle:= Tr(\rho x).\ee

The lattice $\Ll(L^\infty(\H))$ of orthogonal projections is canonically isomorphic with the lattice $\Ll(\H)$ of Hilbert subspaces of $\H$.

From Banach inverse operator theorem it follows that $\G(L^\infty(\H))$ consists of operators with a closed image, e.g. $A\in \G(L^\infty(\H))\subset L^\infty(\H)$ if and only if $Im A=\ol{Im A}$. The set of operators with the closed images let us denote by $\G(\H)$. Therefore we can identify $\G(L^\infty(\H))\tto \Ll(L^\infty(\H))$ with the groupoid $\G(\H)\tto \Ll(\H)$, where $\Ss(A)=(kerA)^\bot$ and $\Tt(A)=ImA$.

			The groupoid $\G(\H)\tto \Ll(\H)$ of the partially invertible operators on $\H$ splits on the groupoid $\G_{fin}(\H)\tto \Ll_{fin}(\H)$ of the finite-rank operators defined by
			\be \Ll_{fin}(\H):=\bigcup_{N=1}^\infty \Ll_N(\H),\ee
					\be \G_{fin}(\H):=\Tt^{-1}(\Ll_{fin}(\H))\cap \Ss^{-1}(\Ll_{fin}(\H))\ee
					and the groupoid  $\G_{\infty}(\H)\tto \Ll_{\infty}(\H)$ of the infinite  dimensional range partially invertible operators, i.e. $A\in \G_{\infty}(\H)$ if and only if $dim_{\mathbb{C}}ImA=\infty$. We mention that $\Ll_N(\H)$ consists of projections of rank $N$.
					
					We define the Fredholm subgroupoid $\G_{Fred}(\H)\tto \Ll_{Fred}(\H)$ of the groupoid $\G_{\infty}(\H)\tto \Ll_{\infty}(\H)$ as follows
					\be \Ll_{Fred}(\H):=\bot(\Ll_{fin}(\H))\ee
					\be \G_{Fred}(\H):=\Tt^{-1}(\Ll_{Fred}(\H))\cap \Ss^{-1}(\Ll_{Fred}(\H))\ee
					where the involution  $\bot:\Ll(\H)\to \Ll(\H)$ is defined by the orthogonal complements 
					\be \bot(p)=p^{\bot}:=1-p\ee
					of $p\in \Ll(\H)$.
					\begin{prop} The involution   $\bot:\Ll(\H)\to \Ll(\H)$ is an authomorphism of the complex analytic Banach manifold  $\Ll(\H)$.
					\end{prop}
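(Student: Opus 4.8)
The plan is to verify directly from the definition of the manifold structure that $\bot$ is a bijective bi-analytic self-map of $\Ll(\H)$, by computing its expression in the charts $(\Pi_p,\varphi_p)$, $p\in\Ll(\H)$, of the maximal atlas. Since $\bot\circ\bot=\mathrm{id}$, it suffices to show that $\bot$ carries chart domains onto chart domains analytically. Throughout I work with $\M=L^\infty(\H)$ and identify each $p\in\Ll(\H)$ with the orthogonal projection onto a closed subspace, so that $(1-p)\M p$ is identified with the Banach space of bounded operators $\mathrm{ran}(p)\to\ker(p)$. Recalling (\ref{varphi}) and (\ref{sigma}), for $q\in\Pi_p$ one has $(pq)^{-1}=p+y_p$ with $y_p=\varphi_p(q)$, and hence $\mathrm{ran}(q)=\mathrm{ran}((pq)^{-1})=\{v+y_pv:\ v\in\mathrm{ran}(p)\}$; that is, $\varphi_p$ sends $q$ to the operator whose graph over $\mathrm{ran}(p)$ is $\mathrm{ran}(q)$.

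First I would establish the chart-matching rule $\bot(\Pi_p)=\Pi_{p^{\bot}}$. For $\M=L^\infty(\H)$ the subspaces $q\M$ and $(1-p)\M$ of $\M$ appearing in the Banach splitting (\ref{split}) consist precisely of the operators with range in $\mathrm{ran}(q)$ and in $\ker(p)$, respectively, so $q\in\Pi_p$ is equivalent to the topological direct sum $\H=\mathrm{ran}(q)\oplus\ker(p)$. I would then invoke the duality of complemented decompositions: the adjoint of the bounded oblique projection onto $\mathrm{ran}(q)$ along $\ker(p)$ is the oblique projection onto $\ker(p)^{\bot}$ along $\mathrm{ran}(q)^{\bot}$, whence $\H=\ker(q)\oplus\mathrm{ran}(p)=\mathrm{ran}(1-q)\oplus\ker(1-p)$. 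This says exactly that $\bot(q)=1-q\in\Pi_{1-p}$, so $\bot$ maps the domain $\Pi_p$ bijectively onto $\Pi_{p^{\bot}}$.

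Next I would compute the local representative $\varphi_{p^{\bot}}\circ\bot\circ\varphi_p^{-1}$. Since $\mathrm{ran}(q)$ is the graph of $y_p$ over $\mathrm{ran}(p)$, a short orthogonality computation (pairing a vector $a+b$ with $a\in\mathrm{ran}(p)$, $b\in\ker(p)$ against $v+y_pv$) shows that $\mathrm{ran}(1-q)=\mathrm{ran}(q)^{\bot}$ is the graph of $-y_p^{*}$ over $\ker(p)=\mathrm{ran}(p^{\bot})$. Because $\varphi_{p^{\bot}}$ assigns to $1-q$ the operator $\ker(p)\to\mathrm{ran}(p)$ whose graph is $\mathrm{ran}(1-q)$, and $-y_p^{*}\in p\M(1-p)$, the local expression of $\bot$ is the rule $y_p\mapsto -y_p^{*}$. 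Composing with the holomorphic transition maps (\ref{trans}) transports this formula to every overlapping pair of charts, so $\bot$ is analytic with analytic inverse, i.e.\ a bi-analytic automorphism of $\Ll(\H)$. I would emphasize that this coordinate rule is conjugate-linear, so $\bot$ is in fact an anti-holomorphic involution (it reverses the complex structure while preserving the real-analytic manifold $\Ll(\H)$), exactly as for the orthogonal-complement map on a Grassmannian.

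I expect the main obstacle to be the second step: carefully passing between the algebraic definition of $\Pi_p$ through the splitting $\M=q\M\oplus(1-p)\M$ and its Hilbert-space reformulation $\H=\mathrm{ran}(q)\oplus\ker(p)$, and then using the adjoint-projection duality to identify the correct target chart $\Pi_{p^{\bot}}$. Once this chart-matching and the graph description of $\varphi_p$ are in place, the coordinate formula $y_p\mapsto -y_p^{*}$ and its (real) analyticity are immediate, the only conceptually notable point being its conjugate-linear character.
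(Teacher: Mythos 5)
You follow the same two-step skeleton as the paper: first the chart-matching $\bot(\Pi_p)=\Pi_{p^\bot}$, then the local representative $\varphi_{p^\bot}\circ\bot\circ\varphi_p^{-1}$. For the first step the paper argues exactly as you do: it reduces the algebra splitting (\ref{split}) to the Hilbert-space splitting $\H=q\H\oplus(1-p)\H$ and then asserts that this splitting exists if and only if $\H=(1-q)\H\oplus p\H$ does; your adjoint-of-the-oblique-projection argument is precisely a proof of that asserted equivalence, so on this point your write-up is, if anything, more complete. The difference is in the second step. The paper leaves the transition map in the implicit form
\[
y_{p^\bot}=\left[(1-p)\left(1-(p+y_p)(p+y_p)^{-1}\right)\right]^{-1}-(1-p),
\]
where $(p+y_p)(p+y_p)^{-1}=l(p+y_p)=q$ is a groupoid-inverse (left-support) expression, and from this form it directly concludes that $\bot$ is complex analytic. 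You instead use the graph description of the charts to simplify this to the closed form $y_{p^\bot}=-y_p^{*}$. The two expressions define the same map: the orthogonal projection onto the graph of $y_p$ is $q=(p+y_p)(p+y_p^{*}y_p)^{-1}(p+y_p^{*})$, and unwinding the paper's formula with this yields exactly $-y_p^{*}$.

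It is at this point that your conclusion and the paper's diverge, and yours is the correct one. The rule $y_p\mapsto-y_p^{*}$ is conjugate-linear, hence anti-holomorphic and not holomorphic, and since the transition maps (\ref{trans}) are biholomorphic this verdict is chart-independent: $\bot$ is an anti-biholomorphic involution, i.e.\ an automorphism of the underlying real-analytic Banach manifold which reverses the complex structure. In finite dimensions this is the familiar fact that orthogonal complementation on a Grassmannian (the antipodal map $z\mapsto-1/\bar{z}$ on $\CP^1$) is anti-holomorphic. The paper's final sentence (``the above shows that $\bot$ is a complex analytic automorphism'') is therefore unjustified: the left support $l(p+y_p)$ hides the adjoint through the polar decomposition, so the paper's formula is not a composition of holomorphic operations, and your computation makes this failure explicit. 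Read literally, then, the proposition cannot be proved --- by your route or any other --- and your argument should be presented as establishing the corrected statement (a real-analytic, anti-holomorphic automorphism). Note that this also touches the corollary the paper draws from the proposition: holomorphy of $\bot$ can no longer be invoked to see that $\Ll_{Fred}(\H)$ is a complex submanifold, although that conclusion survives, since $\Ll_{Fred}(\H)$ is a union of chart domains $\Pi_p$, $p\in\Ll_{Fred}(\H)$, hence open and closed in $\Ll(\H)$.
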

					\begin{proof} At first we show that $\bot(\Pi_p)=\Pi_{p^\bot}$. For this reason we note that the Banach splitting 
					\be L^\infty(\H)=q L^\infty(\H)\oplus (1-p)L^\infty(\H)\ee
					is equivalent to the splitting 
					\be\label{splitH} \H=q\H\oplus(1-p)\H\ee
					of $\H$ on the corresponding Hilbert subspaces, where $q\in \Pi_p$. Since existence of the splitting (\ref{splitH}) is equivalent  to the existence of the splitting
					\be\label{splitHort} \H=(1-q)\H\oplus p\H\ee
					we find that $1-p\in \Pi_{p^\bot}$.  Thus one has $\bot(\Pi_p)=\Pi_{p^\bot}$.
					
					Now using (\ref{rozklad})  we obtain 
					\be \varphi_p^{-1}(y_p)=q=l(p+y_p)=(p+y_p)(p+y_p)^{-1}.\ee
					Thus and from (\ref{varphi}) we find that 
					\be y_{p^\bot}=(\varphi_{p^\bot}\circ \bot \circ \varphi_{p}^{-1})(y_p)=[(1-p)(1-(p+y_p)(p+y_p)^{-1})]^{-1}-(1-p)\ee
					for $y_p\in \varphi_p(\Pi_p)$ and $y_{p^\bot}\in \varphi_{p^\bot}(\Pi_{p^\bot})$. The above shows that $\bot$ is a complex analytic authomorphism of $\Ll(\H)$.
					\end{proof}
					
					From the above proposition we conclude
					\begin{cor} The Fredholm groupoid $\G_{Fred}(\H)\tto \Ll_{Fred}(\H)$ is a complex Banach Lie subgroupoid of the Banach Lie groupoid $\G(\H)\tto \Ll(\H)$. 
					\end{cor}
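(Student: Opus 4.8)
The plan is to derive everything from two facts already established: the decomposition of $\Ll(\H)$ into the closed-open Murray--von Neumann classes (Proposition \ref{prop:11}) and the fact, proved in the preceding Proposition, that the orthocomplementation $\bot\colon\Ll(\H)\to\Ll(\H)$ is a complex analytic automorphism. First I would show that $\Ll_{fin}(\H)$ is a closed-open complex Banach submanifold of $\Ll(\H)$. Since for $\M=L^\infty(\H)$ the Murray--von Neumann equivalence of projections amounts to equality of the dimensions of the ranges, the classes are exactly $\Ll_0(\H),\Ll_1(\H),\dots$ together with the single infinite class $\Ll_\infty(\H)=\{p\in\Ll(\H):\dim_{\C}p\H=\infty\}$, and by Proposition \ref{prop:11} each of them is closed-open in $\Ll(\H)$. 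These countably many classes partition $\Ll(\H)$, so $\Ll_{fin}(\H)=\bigcup_{N\ge 1}\Ll_N(\H)$ is open, while its complement $\{0\}\cup\Ll_\infty(\H)$ is a union of classes of the same type, hence also open; therefore $\Ll_{fin}(\H)$ is closed-open, and being an open subset of a complex Banach manifold it inherits the complex submanifold structure.

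Next I would transport this along $\bot$. Because $\bot$ is a biholomorphic automorphism of $\Ll(\H)$ and $\Ll_{Fred}(\H):=\bot(\Ll_{fin}(\H))$, the set $\Ll_{Fred}(\H)$ is again a closed-open complex Banach submanifold of $\Ll(\H)$; concretely it consists of the cofinite-rank projections $p$ (those with $\dim_{\C}(1-p)\H<\infty$), sitting inside the single class $\Ll_\infty(\H)$ as a closed-open subset. Passing to the total space, I would use the continuity of the structure maps $\Tt,\Ss\colon\G(\H)\to\Ll(\H)$: since $\Ll_{Fred}(\H)$ is closed-open, the preimages $\Tt^{-1}(\Ll_{Fred}(\H))$ and $\Ss^{-1}(\Ll_{Fred}(\H))$ are closed-open in $\G(\H)$, and hence so is their intersection $\G_{Fred}(\H)$. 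Thus $\G_{Fred}(\H)$ is a closed-open complex Banach submanifold of $\G(\H)$, exactly as in the situation of Proposition \ref{prop:11}.

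It remains to observe that the groupoid operations restrict. For any subset $B\subset\Ll(\H)$ of units the restriction $\Tt^{-1}(B)\cap\Ss^{-1}(B)$ is automatically a subgroupoid: the unit over $p\in B$ lies in it, inversion interchanges $\Tt$ and $\Ss$ and so preserves it, and for a composable pair one has $\Tt(xy)=\Tt(x)\in B$ and $\Ss(xy)=\Ss(y)\in B$. Applying this with $B=\Ll_{Fred}(\H)$ shows that $\G_{Fred}(\H)\tto\Ll_{Fred}(\H)$ is a subgroupoid, and since both total space and base are closed-open submanifolds the source and target maps restrict to surjective submersions and the remaining structure maps restrict holomorphically, so it is a complex Banach-Lie subgroupoid. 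The only genuinely nontrivial point in the argument is the first one---that $\Ll_{fin}(\H)$, equivalently $\Ll_\infty(\H)$, is open---which is precisely where the closed-openness of the Murray--von Neumann classes from Proposition \ref{prop:11} is indispensable; everything afterwards is transport by the automorphism $\bot$ together with the formal restriction of a groupoid to a subset of its units.
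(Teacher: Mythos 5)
Your proposal is correct and takes essentially the same route as the paper: the paper's own proof of the corollary is a one-line appeal to the preceding proposition that $\bot$ is a complex analytic automorphism of $\Ll(\H)$, with the remaining ingredients---closed-openness of the Murray--von Neumann classes via Proposition \ref{prop:11}, pulling back along $\Tt$ and $\Ss$, and the formal restriction of the groupoid operations---left implicit. You have simply made those implicit steps explicit, in the same order and using the same two facts the paper relies on.
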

					
					The all results of the previous sections concerning fibre-wise linear sub Poisson structures one can apply and investigate in the case of Banach Lie groupoids $\G(\H)\tto\Ll(\H)$, $\G_{Fred}(\H)\tto \Ll_{Fred}(\H)$ and $\G_{fin}(\H)\tto \Ll_{fin}(\H)$ important from geometrical as well as physical point of view. We back to this investigations in the next paper.

\section{Appendix}
	\subsection{$\mathcal{VB}$-groupoids}\label{Ap:VB}
	The concept of $\mathcal{VB}$-groupoids goes back to Pradines, \cite{Pradines:1988}. It is abstracted the vector bundle structure of groupoid $TG\tto TM$ tangent to a groupoid $G\tto M$.
	
	In a diagramatic presentation a $\mathcal{VB}$-groupoid is a  structure 
	 \unitlength=5mm \be\label{App1}\begin{picture}(11,4.6)
    \put(1,4){\makebox(0,0){$\Omega$}}
    \put(8,4){\makebox(0,0){$\Gamma$}}
    \put(1,-1){\makebox(0,0){$E$}}
    \put(8,-1){\makebox(0,0){$M$}}
    \put(1.2,3){\vector(0,-1){3}}
    \put(1,3){\vector(0,-1){3}}
    \put(7.9,3){\vector(0,-1){3}}
    \put(7.7,3){\vector(0,-1){3}}
		 \put(0.5,0){\vector(0,1){3}}
    \put(8.5,0){\vector(0,1){3}}
    \put(3,4){\vector(1,0){3}}
    \put(2.4,-1){\vector(1,0){3.7}}
		\put(6,3.6){\vector(-1,0){3}}
    \put(6.1,-1.4){\vector(-1,0){3.7}}
    \put(0.8,1.4){\makebox(0,0){$ \tilde\Tt  $}}
    \put(1.5,1.4){\makebox(0,0){$ \tilde\Ss  $}}
    \put(8.2,1.4){\makebox(0,0){$ \Ss  $}}
    \put(7.5,1.4){\makebox(0,0){$\Tt   $}}
     \put(0.3,1.4){\makebox(0,0){$ \tilde{\mathbf{1} } $}}
    \put(8.8,1.4){\makebox(0,0){$ \mathbf{1}  $}}
		\put(4.5,4.5){\makebox(0,0){$\tilde\lambda $}}
    \put(4.5,-0.5){\makebox(0,0){$\lambda  $}}		
		 \put(4.5,3.1){\makebox(0,0){$\tilde  0$}}
    \put(4.5,-2){\makebox(0,0){$0  $}}
    \end{picture},\ee
    \bigskip\newline
		in which 
		\begin{enumerate}[(i)]
		\item $\Omega\tto E$ and $\Gamma\tto M$ are Lie groupoids;
		\item $\Omega\stackrel{\tilde\lambda}{\to }\Gamma$ and $E\stackrel{\lambda}{\to }M$ are vector bundles;
		\item the above structures are subjected to the consistency conditions  such that the groupoids structural maps: $\Ss, \Tt, \mathbf{1}, \tilde\Ss, \tilde\Tt ,\tilde{\mathbf{1}}$ and groupoid operations, i.e. the product and the inverse map, are vector bundle morphisms; 
		\item the vector bundle projection $\tilde\lambda$ and $\lambda$ as well as null sections $\tilde 0$ and $0$ define groupoid morphisms;
		\item the "double source map" $(\tilde\lambda,\tilde\Ss):\Omega\to \Gamma\times_M E$, where $\Gamma\times_M E:=\{(\gamma,e)\in \Gamma\times E:\quad \Ss(\gamma)=\lambda(e)\}$, is surjective submersion,
		\item for   $w_1,w_2,\nu_1,\nu_2\in \Omega$ such that $\tilde\Tt(\nu_1)=\tilde\Ss(w_1)$, $\tilde\Tt(\nu_2)=\tilde\Ss(w_2)$, $\tilde\lambda(w_1)=\tilde\lambda(w_2)$, $\tilde\lambda(\nu_1)=\tilde\lambda(\nu_2)$ one has the condition
		$$(w_1+w_2)(\nu_1+\nu_2)=w_1\nu_1+w_2\nu_2$$
		called the interchange law.
	\end{enumerate}
	\subsection{The dual of a $\mathcal{VB}$-groupoid}
	Here we disscuss briefly the procedure of the dualization of (\ref{App1}). For use of this procedure one defines the core of a $\mathcal{VB}$-groupoid to be
	\be\label{core} K:=\{\omega\in \Omega:\quad \exists_{m\in M}\ \tilde\Ss(\omega)=0_m\quad and \quad \tilde\lambda(\omega)=\mathbf{1}_m\}\ee
	where $0:M\to E$ is the zero section of $E\stackrel{\lambda}{\to }M$  and $\mathbf{1}:M\to \Gamma$ the identity map of $\Gamma\tto M$. Defining $\lambda_K:K\to M$ by $\lambda_K:=\Ss\circ\tilde\lambda$ one easily verifies thet the core is a vector bundle over $M$. 
	The $\mathcal{VB}$-groupoid
	 \unitlength=5mm \be\label{App1dual}\begin{picture}(11,4.6)
    \put(1,4){\makebox(0,0){$\Omega^*$}}
    \put(8,4){\makebox(0,0){$\Gamma$}}
    \put(1,-1){\makebox(0,0){$K^*$}}
    \put(8,-1){\makebox(0,0){$M$}}
     \put(1.2,3){\vector(0,-1){3}}
    \put(1,3){\vector(0,-1){3}}
    \put(7.9,3){\vector(0,-1){3}}
    \put(7.7,3){\vector(0,-1){3}}
		 \put(0.3,0){\vector(0,1){3}}
    \put(8.5,0){\vector(0,1){3}}
    \put(3,4){\vector(1,0){3}}
    \put(2.4,-1){\vector(1,0){3.7}}
		\put(6,3.6){\vector(-1,0){3}}
    \put(6.1,-1.4){\vector(-1,0){3.7}}
     \put(0.8,1.4){\makebox(0,0){$ \tilde\Tt_*  $}}
    \put(1.6,1.4){\makebox(0,0){$ \tilde\Ss_*  $}}
    \put(8.2,1.4){\makebox(0,0){$ \Ss  $}}
    \put(7.5,1.4){\makebox(0,0){$\Tt   $}}
     \put(0.1,1.4){\makebox(0,0){$ \tilde{\mathbf{1} }_* $}}
    \put(8.8,1.4){\makebox(0,0){$ \mathbf{1}  $}}
    \put(4.5,4.5){\makebox(0,0){$\tilde\lambda_* $}}
    \put(4.5,-0.5){\makebox(0,0){$\lambda_*  $}}
		 \put(4.5,3.1){\makebox(0,0){$\tilde 0_* $}}
    \put(4.5,-2){\makebox(0,0){$0_*  $}}
    \end{picture},\ee
    \bigskip\newline
		dual to a $\mathcal{VB}$-groupoid (\ref{App1}) is defined as follows. 
		
		Take $\gamma\in \Gamma_m^n:=\Ss^{-1}(m)\cap\Tt^{-1}(n)$ and $\varphi\in \Omega_\gamma^*$, where $ \Omega_\gamma:=\tilde\lambda^{-1}(\gamma)$, then 
		\be \langle \tilde\Ss_*(\varphi),k\rangle:=\langle \varphi,-\tilde 0_\gamma k^{-1}\rangle \quad for \quad k\in K_m\ee
		and 
		\be \langle \tilde\Tt_*(\varphi),k\rangle:=\langle \varphi,k \tilde 0_\gamma\rangle \quad for \quad k\in K_n.\ee
		The composition $\varphi\psi\in \Omega_{\gamma\delta}^*$ of  $\varphi\in \Omega_{\gamma}^*$ and  $\psi\in \Omega_{\delta}^*$ with $\tilde\Ss_*(\varphi)=\tilde\Tt_*(\psi)$ one defines by 
		\be \langle \varphi\psi,wv\rangle:=\langle \varphi,w\rangle+\langle \psi,v\rangle,\ee
		where $w\in \Omega_\gamma$ and $v\in \Omega_\delta$ satisfy $\tilde\Ss(w)=\tilde\Tt(v)$. In order to define the identity map $\tilde{\mathbf{1}}_*:K^*\to \Omega^*$ at $\lambda\in K_m^*$ we note that $k=\omega-\tilde{\mathbf{1}}_{\tilde\lambda(\omega)}\in K_m$ where $m=\lambda(\tilde\Ss(\omega))=\Ss(\tilde\lambda(\omega))$. Now one defines
		\be \langle\tilde{\mathbf{1}}_{*\lambda},w\rangle=\langle\tilde{\mathbf{1}}_{*\lambda},k+\tilde{\mathbf{1}}_{\tilde\lambda(\omega)}\rangle:=\langle \lambda,k\rangle.\ee
		Since the inverse map $\iota:\Omega\to \Omega$ is an automorphism of the vector bundle $\tilde\lambda:\Omega\to \Gamma$ one defines the inverse map $\iota_*:\Omega^*\to \Omega^*$ as 
		\be\langle \iota_*\varphi,w\rangle:=-\langle\varphi,\iota(w)\rangle.\ee
		
		The straightforward verification shows the correctness of the above definitions.
		
		If the $\mathcal{VB}$-groupoid structure of (\ref{App1}) is modelled on the reflexive Banach spaces (what has a place in the finite dimensional case) then dualizing (\ref{App1dual}) we obtain back the initial $\mathcal{VB}$-groupoid. 
		\subsection{Short exact sequence of $\mathcal{VB}$-groupoids}\label{Ap:shortexact}
		A short exact sequence of $\mathcal{VB}$-groupoids
		 \be\label{App3}\begin{picture}(11,4.6)
    \put(-5.5,4){\makebox(0,0){$\Omega_1$}}
    \put(-5,0){\makebox(0,0){$E_1$}}
    \put(-0.4,4){\makebox(0,0){$\Gamma$}}
    \put(-1,0){\makebox(0,0){$M$}}
     \put(-5.1,3.5){\vector(0,-1){2.7}}
    \put(-4.9,3.5){\vector(0,-1){2.7}}
      \put(-1.1,3.5){\vector(0,-1){2.7}}
    \put(-0.9,3.5){\vector(0,-1){2.7}}
    \put(-3,4){\vector(1,0){0.8}}
    \put(-4,0){\vector(1,0){2.2}}
    \put(-4.5,3.5){\vector(4,-1){4.4}}
    \put(-4.2,-0.3){\vector(4,-1){5.2}}
      \put(-0,3.9){\line(4,-1){5}}
       \put(-0.1,3.8){\line(4,-1){5}}
         \put(-0.4,-0.1){\line(4,-1){5.5}}
       \put(-0.5,-0.2){\line(4,-1){5.5}}

  \put(-3,2.7){\makebox(0,0){$F$}}
    \put(-3,-1){\makebox(0,0){$f$}}

    \put(1.8,2){\makebox(0,0){$\Omega_2$}}
     \put(2,-2){\makebox(0,0){$E_2$}}
      \put(6.4,2){\makebox(0,0){$\Gamma$}}
     \put(6,-2){\makebox(0,0){$ M$}}
       \put(1.9,1.5){\vector(0,-1){2.7}}
    \put(2.1,1.5){\vector(0,-1){2.7}}
      \put(5.9,1.5){\vector(0,-1){2.7}}
    \put(6.1,1.5){\vector(0,-1){2.7}}
      \put(4,2){\vector(1,0){0.8}}
    \put(3,-2){\vector(1,0){2.2}}
     \put(2.5,1.5){\vector(4,-1){4.8}}
    \put(2.6,-2.3){\vector(4,-1){4.7}}
     \put(6.4,-2){\line(4,-1){5.9}}
       \put(6.3,-2.1){\line(4,-1){5.9}}
       \put(7,2){\line(4,-1){5.5}}
       \put(6.9,1.9){\line(4,-1){5.5}}

  \put(4,0.7){\makebox(0,0){$H$}}
     \put(4,-3){\makebox(0,0){$h$}}
		
    \put(9,0){\makebox(0,0){$\Omega_3$}}
   \put(8.6,-4){\makebox(0,0){$E_3$}}
     \put(13.4,0){\makebox(0,0){$\Gamma$}}
   \put(13,-4){\makebox(0,0){$M$}}
        \put(8.9,-0.5){\vector(0,-1){2.7}}
    \put(9.1,-0.5){\vector(0,-1){2.7}}
      \put(12.9,-0.5){\vector(0,-1){2.7}}
    \put(13.1,-0.5){\vector(0,-1){2.7}}
        \put(10.5,0){\vector(1,0){1.3}}
    \put(10.5,-4){\vector(1,0){1.7}}
		\end{picture}\ee
\vspace{2.5 cm}
\newline
consists of the groupoids $\Omega_k\tto E_k$, $k=1,2,3$, having $\Gamma\tto M $ as a side groupoid  and the groupoids morphisms $(F,f)$ and $(H,h)$ are such that
\be \label{App4}\Omega_1\stackrel{F}{\longrightarrow}\Omega_2\stackrel{H}{\longrightarrow}\Omega_3\ee
		is a short exact sequence of vector bundles over $\Gamma$. 
		
		The following statements are valid:
		\begin{enumerate}[(i)]
		\item $E_1\stackrel{f}{\to}E_2\stackrel{h}{\to}E_3$ is a short exact sequence of vector bundles over $M$
		\item $K_1\stackrel{F_K}{\to}K_2\stackrel{H_K}{\to}K_3$ is a short exact sequence of vector bundles over $M$, where the morphisms $F_K$ and $H_K$ are induced by $F$ and $H$ respectively.
		\item The dualization  of (\ref{App3}) results in 
		 \be\label{App5}\begin{picture}(11,4.6)
    \put(-5.5,4){\makebox(0,0){$\Omega_3^*$}}
    \put(-5,0){\makebox(0,0){$K_3^*$}}
    \put(-0.4,4){\makebox(0,0){$\Gamma$}}
    \put(-1,0){\makebox(0,0){$M$}}
     \put(-5.1,3.5){\vector(0,-1){2.7}}
    \put(-4.9,3.5){\vector(0,-1){2.7}}
      \put(-1.1,3.5){\vector(0,-1){2.7}}
    \put(-0.9,3.5){\vector(0,-1){2.7}}
    \put(-3,4){\vector(1,0){0.8}}
    \put(-4,0){\vector(1,0){2.2}}
    \put(-4.5,3.5){\vector(4,-1){4.4}}
    \put(-4.2,-0.3){\vector(4,-1){5.2}}
      \put(-0,3.9){\line(4,-1){5}}
       \put(-0.1,3.8){\line(4,-1){5}}
         \put(-0.4,-0.1){\line(4,-1){5.5}}
       \put(-0.5,-0.2){\line(4,-1){5.5}}

  \put(-3,2.7){\makebox(0,0){$H^*$}}
    \put(-3,-1){\makebox(0,0){$h^*$}}

    \put(1.8,2){\makebox(0,0){$\Omega_2^*$}}
     \put(2,-2){\makebox(0,0){$K_2^*$}}
      \put(6.4,2){\makebox(0,0){$\Gamma$}}
     \put(6,-2){\makebox(0,0){$ M$}}
       \put(1.9,1.5){\vector(0,-1){2.7}}
    \put(2.1,1.5){\vector(0,-1){2.7}}
      \put(5.9,1.5){\vector(0,-1){2.7}}
    \put(6.1,1.5){\vector(0,-1){2.7}}
      \put(4,2){\vector(1,0){0.8}}
    \put(3,-2){\vector(1,0){2.2}}
     \put(2.5,1.5){\vector(4,-1){4.8}}
    \put(2.6,-2.3){\vector(4,-1){4.7}}
     \put(6.4,-2){\line(4,-1){5.9}}
       \put(6.3,-2.1){\line(4,-1){5.9}}
       \put(7,2){\line(4,-1){5.5}}
       \put(6.9,1.9){\line(4,-1){5.5}}

  \put(4,0.7){\makebox(0,0){$F^*$}}
     \put(4,-3){\makebox(0,0){$f^*$}}
		
    \put(9,0){\makebox(0,0){$\Omega_1^*$}}
   \put(8.6,-4){\makebox(0,0){$K_1^*$}}
     \put(13.4,0){\makebox(0,0){$\Gamma$}}
   \put(13,-4){\makebox(0,0){$M$}}
        \put(8.9,-0.5){\vector(0,-1){2.7}}
    \put(9.1,-0.5){\vector(0,-1){2.7}}
      \put(12.9,-0.5){\vector(0,-1){2.7}}
    \put(13.1,-0.5){\vector(0,-1){2.7}}
        \put(10.5,0){\vector(1,0){1.3}}
    \put(10.5,-4){\vector(1,0){1.7}}
		\end{picture}\ee
\vspace{2.5 cm}
\newline
which is also a short exact sequence of $\mathcal{VB}$-groupoids.
	\end{enumerate}	
	\subsection{Poisson groupoid}  Let $\Gamma\tto M$ be a Lie groupoid with a Poisson structure  $\pi$ on the manifold 
		$\Gamma$. Then $(\Gamma,\pi)$ is called a Poisson groupoid if the Poisson anchor $\pi^\# :T^*\Gamma\to T\Gamma$ is a morphism of groupoids over some map $A^*\Gamma\to TM$.

\end{document}